\title{Quadratic Riemann-Roch formulas}
\author{Fr\'ed\'eric D\'eglise}
\address{ENS de Lyon, UMPA, UMR 5669, 46 all\'ee d'Italie, 69364 Lyon Cedex 07, France}
\email{frederic.deglise@ens-lyon.fr}
\urladdr{http://perso.ens-lyon.fr/frederic.deglise/}
\author{Jean Fasel}
\address{Institut Fourier - UMR 5582, Universit\'e Grenoble-Alpes, CNRS, CS 40700, 38058 Grenoble Cedex 9, France}
\email{Jean.Fasel@univ-grenoble-alpes.fr}
\urladdr{https://www-fourier.univ-grenoble-alpes.fr/~faselj/}
\date{\today}
\newtheorem{thm}{Theorem}[subsection]
\newtheorem{prop}[thm]{Proposition}
\newtheorem{lm}[thm]{Lemma}
\newtheorem{cor}[thm]{Corollary}
\newtheorem{thmi}{Theorem}
\newtheorem{propi}[thmi]{Proposition}
\newtheorem{dfi}[thmi]{Definition}
\theoremstyle{remark}
\newtheorem{rem}[thm]{Remark}
\newtheorem{ex}[thm]{Example}
\theoremstyle{definition}
\newtheorem{df}[thm]{Definition}
\newtheorem{num}[thm]{}
\numberwithin{equation}{thm}
\newtheorem{thm*}{Theorem}
\DeclareMathOperator{\base}{\mathscr Sch_\Sigma}
\DeclareMathOperator{\SH}{SH}
\DeclareMathOperator{\Ab}{\mathscr Ab}
\DeclareMathOperator{\Cat}{\mathscr Cat}
\DeclareMathOperator{\Tors}{Tors} 
\DeclareMathOperator{\VB}{\mathcal V} 
\DeclareMathOperator{\Id}{Id} 
\newcommand{\modd}{\!-\!\operatorname{mod}} 
\renewcommand{\AA}{\mathbb A}
\newcommand{\PP}{\mathbb P}
\newcommand{\Proj}{{\mathbb P}}
\newcommand{\HP}{\mathrm{H}\mathbb P}
\newcommand{\GG}{\mathbb G_m}
\newcommand{\Gr}{\mathrm{Gr}} 
\newcommand{\HGr}{\mathrm{HGr}} 
\newcommand{\OO}{\mathcal O}
\newcommand{\cU}{\mathcal U}
\newcommand{\cV}{\mathcal V}
\newcommand{\cW}{\mathcal W}
\newcommand{\cE}{\mathcal E}
\newcommand{\BGL}{\mathrm{BGL}}
\newcommand{\BSp}{\mathrm{BSp}}
\newcommand{\un}{\mathbbm 1}
\newcommand{\E}{\mathbb E}
\newcommand{\EE}{\mathbb E}
\newcommand{\F}{\mathbb F}
\newcommand{\G}{\mathbb G}
\newcommand{\MGL}{\mathbf{MGL}}
\newcommand{\MSp}{\mathbf{MSp}}
\newcommand{\KSp}{\mathbf{KSp}}
\newcommand{\KO}{\mathbf{KO}}
\newcommand{\KGL}{\mathbf{KGL}}
\newcommand{\GWsp}{\mathbf{GW}}
\newcommand{\ZZ}{\mathbb{Z}}
\newcommand{\ZZtwo}{\mathbb{Z}[\nicefrac 1 2]}
\newcommand{\ZZe}{\mathbb{Z}_\epsilon}
\newcommand{\ZZep}{\mathbb{Z}_{\epsilon+}}
\newcommand{\ZZem}{\mathbb{Z}_{\epsilon-}}
\newcommand{\QQe}{\mathbb{Q}_\epsilon}
\newcommand{\QQ}{\mathbb{Q}}
\newcommand{\RR}{\mathbb{R}}
\newcommand{\NN}{\mathbb{N}}
\newcommand{\Orth}{\mathrm{O}}
\newcommand{\Spin}{\mathrm{Spin}}
\newcommand{\Sp}{\mathrm{Sp}}
\newcommand{\GL}{\mathrm{GL}}
\newcommand{\SL}{\mathrm{SL}}
\newcommand{\cO}{\mathcal O}
\newcommand{\cL}{\mathcal L}
\newcommand{\MForm}{\mathscr F} 
\newcommand{\FTL}{\mathcal{FTL}}
\renewcommand*{\H}{\operatorname{H}}        
\newcommand*{\HMW}{\mathbf{H}_\mathrm{MW}}
\newcommand*{\HM}{\mathbf{H}_\mathrm{M}}
\DeclareMathOperator{\Hom}{Hom}
\DeclareMathOperator{\colim}{colim}
\DeclareMathOperator{\Spec}{Spec}
\DeclareMathOperator{\rk}{rk}
\DeclareMathOperator{\GW}{GW}
\DeclareMathOperator{\W}{W}
\DeclareMathOperator{\PicSp}{Pic^{Sp}}
\newcommand{\CHt}{\widetilde{\operatorname{CH}}}
\DeclareMathOperator{\CH}{CH}
\DeclareMathOperator{\Th}{Th} 
\renewcommand{\th}{\operatorname{th}} 
\newcommand{\tw}[1]{\langle #1 \rangle} 
\newcommand{\thom}{\mathfrak{t}}
\DeclareMathOperator{\td}{td}
\DeclareMathOperator{\pur}{\mathfrak p} 
\DeclareMathOperator{\bo}{bo} 
\newcommand{\tdeg}{\tilde{\operatorname{deg}}} 
\DeclareMathOperator{\uK}{\underline K} 
\DeclareMathOperator{\uKSp}{\underline{KSp}} 
\DeclareMathOperator{\uZ}{\underline \ZZ} 
\DeclareMathOperator{\cupp} {\scriptstyle\cup\textstyle} 
\newcommand{\ux}{\underline x}
\newcommand{\uy}{\underline y}
\begin{document}


\maketitle

\setcounter{tocdepth}{2}
\tableofcontents

\section*{Introduction}

\subsubsection*{Orientation theory and Riemann-Roch formulas after Grothendieck}

Starting from an astonishing six pages paper by Quillen \cite{QuiCob}, the intrication between \emph{formal group laws}
 and homotopy theory has emerged and flourished for more than fifty years.
 The picture of chromatic homotopy theory has arisen from Ravenel's conjectures and yielded deep insight in stable homotopy groups of spheres.
 In particular, the stable homotopy category can not only be divided into the rational and $p$-local parts,
 but each $p$-local components can be subsequently divided into $v_n$-local parts reflecting
 Lazard's classification of $p$-local formal group laws by height (see \cite{BBchrom} for a historical account).
 This colorful filtration is at the heart of many developments in stable homotopy theory since then.
 The main players here are the spectrum of complex cobordism
 and its subsequent cohort of complex oriented ring spectra.

Motivic homotopy theory is driven by its analogy with its topological realization.
 In particular, complex orientation has a well-known motivic analogue, the theory of $\GL$-orientation.
 In the latter, the theory of formal group laws (\emph{aka} FGL) has been successfully developed, and algebraic cobordism $\MGL$
 is known to possess the universal formal group law by works of Morel and
 Levine.\footnote{Over fields of characteristic $0$: see \cite{LMcobord}
 and \cite{LevMGL}.}
 However, from the chromatic point of view, $\GL$-oriented theories missed an important
 part of motivic stable homotopy: the quadratic part (or more accurately the symmetric bilinear part). This part is visible
 in the computation of the $0$-th stable homotopy groups of spheres, whose sections over a field $k$
 are given by the Grothendieck-Witt group of $k$. From the $\GL$-oriented point of view,
 only the rank of the corresponding class of quadratic forms is visible.
 Another indication is that rational motivic ring spectra
 are not $\GL$-oriented in general --- whereas rational (topological) ring spectra are complex
 oriented. However, they are both oriented with respect to the algebraic group
 $\SL$ and $\Sp$, in the sense of Panin and Walter.

\bigskip

Whereas $\SL$-orientations are very close to $\GL$-orientations from the point of view of Thom classes,
 it is only with \emph{$\Sp$-orientations} that one can develop a satisfactory theory of characteristic
 (\emph{Borel}) classes, analogous to the $\GL$-oriented case (Chern classes).
 The price to pay is to restrict one's attention to symplectic vector bundles
 (\emph{i.e.} vector bundles equipped with a symplectic form, or equivalently an $\Sp$-torsor).
Once this restriction is accepted, one can, following Panin and Walter, transport
 all concepts of classical orientations theory:
 starting from Thom classes, one derives Euler classes, proves the \emph{symplectic projective bundle} theorem,
 gets \emph{higher Borel classes} following the method of Grothendieck, and derives all their properties
 thanks to a \emph{symplectic splitting principle}.\footnote{For all this,
 the reader may consult the original paper of Panin and Walter \cite{PW_MSP},
 and for more precise references, the current text: Thom classes: \Cref{df:G-orientation} and \Cref{thm:PW_Sp-oriented},
 Euler classes: \Cref{num:Sp-Euler}, symplectic projective bundle theorem \Cref{num:Sp-proj-bdl},
 Borel classes: \Cref{num:Borel_Sp}, symplectic splitting: \Cref{num:sp-splitting-principle}.}

In \cite{DF3, DFJK}, we started the study of the analog of FGL for $\Sp$-orientations,
 the theory of \emph{Formal Ternary Laws}, \emph{aka} FTL. Recall that the FGL associated
 to an oriented ring spectrum is the power series given by expressing the first Chern class of a tensor product of two line
 bundles in terms of the respective first Chern classes of the line bundles.
 In the symplectic case, the analog of line bundles are rank $2$ symplectic bundles.
 However, the tensor product of two symplectic bundles is not symplectic but orthogonal.
 According to an original idea of Walter, one has to consider the tensor product of three copies
 of the tautological symplectic bundles over the threefold product $\BSp^{\times 3}$ of the classifying space $\BSp$, which gives a rank $8$ vector bundle having
four non trivial $4$ Borel classes. In particular, FTL are given by power series in $3$ variables,
 with $4$ components --- an algebraic structure that we called a \emph{$(4,3)$-series} in \cite{CDFH}.

This gives the theory of formal ternary laws a more intricate aspect compared to its parent,
 the formal group laws.
 Nonetheless, we were able to develop substantially this theory in the aforementioned papers.
 We described a good analog of the additive (resp. multiplicative) FGL in the symplectic case,
 and proved that it arises from the Chow-Witt groups or more generally the Milnor-Witt motivic cohomology
 (resp. the higher Grothendieck-Witt groups).\footnote{See Examples \ref{ex:FTP_low_complexity}
 and \ref{ex:representableFTL} for a reminder.}
 As for formal groups, the existence of a universal FTL and its universal ring of coefficients,
 called the \emph{Walter ring}, can be easily established.
 We have set up computational tools to understand the latter in \cite{DFJK}, and derived
 an explicit description of certain sub-quotients (see \emph{loc. cit.}, A.3 and A.4).
 
\subsubsection*{Generalized orientations and GRR formulas}

In this paper, we are interested by another classical aspect of orientation theory,
 the Grothendieck-Riemann-Roch formulas. Fundamentally, these formulas describe
 the effect of a change of orientation on a fixed ring spectrum. There exists Todd
 classes which precisely describe how to go from the characteristic classes of one orientation
 to the other. Moreover, this Todd class can be computed explicitly via the isomorphism
 between the two FGLs associated with each orientations.
 In the present article,
 we extend these results to generalized orientations in the sense of Panin and Walter,
 and specialize these theoretical results to the case of symplectic orientations
 and their associated FTLs. 

To explain the context of our generalized Riemann-Roch formulas,
 let us first recall that the idea behind Panin and Walter's orientation theory
 is that certain cohomology theories possess Thom classes only for vector bundles
 with additional data, such as a symplectic form or more generally
 $G_*$-torsors for a graded algebraic sub-group $G_*$ of the
 general linear group $\GL_*$ (see \Cref{df:G-orientation} for a reminder).
 To formulate our Grothendieck-Riemann-Roch formulas, we need to
 extend these Thom classes to \emph{virtual} $G$-torsors (see \Cref{prop:virt_thom_G-orient}).
 This leads us to introduce a new, more general, definition of orientations.
 With that in mind, we introduce the notion of stable structures on vector bundles over a scheme $X$.
 Recall that Deligne has defined the category $\uK(X)$ of virtual vector bundles over $X$,
 which is a \emph{Picard groupoid} whose isomorphism classes are given by the 
 K-theory group $\mathrm{K}_0(X)$.
In this context, a stable structure on vector bundles is a collection of maps (or a so-called pseudo-functor)
$$
\sigma:\uK^\sigma(X) \rightarrow \uK(X)
$$
for various schemes $X$ with suitable properties: compatibility with pullbacks
 and with the monoidal structure (see \Cref{df:stable_structure} for more precision).
 One example is given by the functor $\sigma_G:\uK^G(X) \rightarrow \uK(X)$ from virtual $G$-torsors
 to virtual vector bundles that ``forgets'' the $G$-structure.
 Let us roughly state our main definition:
\begin{dfi}(See \Cref{df:sigma-orientation})
A $\sigma$-orientation on the ring spectrum $\E$ is 
 the data of Thom classes $\th(\tilde v) \in \E^{2r,r}(\Th(v))$ 
 associated to any object $\tilde v \in \uK^\sigma(X)$, where $v=\sigma(\tilde v)$
 is a virtual vector bundle of rank $r$ and (motivic) Thom space $\Th(v)$.
 These Thom classes are required to be compatible with isomorphisms, pullbacks
 and the monoidal structure on $\uK^\sigma(X)$.
\end{dfi}
As in the classical case, the properties listed in the preceding definition imply
 that $\thom(\tilde v)$ is a basis of the $\E^{**}(X)$-module
 $\E^{**}(\Th(v))$ (as expected). The reader will be relieved 
 to know that $\sigma_G$-orientations are in bijection with $G$-orientations
 in the sense of Panin and Walter (\Cref{prop:stable_G-orientations}).
 One easily deduces from the above definition
 a new family of characteristic (or fundamental) classes.
 
\begin{propi}(See \Cref{def:orientedfundamentalclass})\label{propi:fdl_classes}
Let $\E$ be a $\sigma$-oriented ring spectrum, and let $f:X \rightarrow S$ be a smoothable lci morphism. A \emph{$\sigma$-orientation} of $f$ is a class $\tilde \tau_f \in \uK^\sigma(X)$ such
 that $\sigma(\tilde \tau_f)$ is isomorphic to the image $\tau_f$ of the cotangent complex
 $\mathcal L_f$ of $f$ in $\uK(X)$. One associates to the pair $(f,\tilde \tau_f)$ a canonical fundamental class
 with coefficients  in $\E$:
$$
\tilde \eta_f^\E \in \E_{2d,d}(X/S)
$$
where $\E_{n,i}(X/S)=\Hom_{\SH(S)}(\un_S(i)[n],f^!\un_S)$
 is the bivariant theory associated with $\E$ (see \Cref{sec:bivariant} for more information).
\end{propi}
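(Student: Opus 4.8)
The plan is to construct the fundamental class $\tilde\eta_f^\E$ by reducing to the classical (oriented) construction of fundamental classes for lci morphisms, but using the Thom class supplied by the $\sigma$-orientation on the class $\tilde\tau_f$ rather than a Thom class coming from a $\GL$-orientation. Concretely, the bivariant group $\E_{2d,d}(X/S)=\Hom_{\SH(S)}(\un_S(d)[2d],f^!\un_S)$ is, by the purity/deformation-to-the-normal-cone machinery available in $\SH$, computed in terms of the Thom space of the cotangent complex $\mathcal L_f$: for $f$ smoothable lci of relative dimension $d$, one has a purity equivalence $f^!\un_S\simeq \mathrm{Th}(\tau_f)\wedge f^*\un_S$ in the appropriate sense (this is Ayoub–Cisinski-Déglise purity, valid for the smoothable lci morphisms considered here), so giving a class in $\E_{2d,d}(X/S)$ amounts to giving a Thom class in $\E^{2d,d}(\mathrm{Th}(\tau_f))$. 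The data of the $\sigma$-orientation of $f$, namely $\tilde\tau_f\in\uK^\sigma(X)$ with $\sigma(\tilde\tau_f)\cong\tau_f$, together with the $\sigma$-orientation on $\E$, produces exactly such a Thom class $\thom(\tilde\tau_f)\in\E^{2d,d}(\mathrm{Th}(v))$ with $v=\sigma(\tilde\tau_f)\cong\tau_f$ of rank $d$. Transporting through the purity equivalence and the chosen isomorphism $v\cong\tau_f$ gives the desired $\tilde\eta_f^\E$.

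The key steps, in order, are: (1) recall the construction of purity/forget-supports and the identification $f^!\un_S\simeq\mathrm{Th}(\mathcal L_f)$ (as a Thom spectrum of the cotangent complex) for a smoothable lci morphism, including the fact that $\mathrm{Th}$ depends only on the $\uK$-class $\tau_f$ and sends isomorphisms in $\uK(X)$ to canonical equivalences of Thom spectra --- this is where we use that $\mathrm{Th}$ factors through Deligne's Picard groupoid $\uK(X)$; (2) invoke the $\sigma$-orientation of $\E$ (our main \Cref{df:sigma-orientation}) applied to $\tilde\tau_f\in\uK^\sigma(X)$ to obtain the Thom class $\thom(\tilde\tau_f)\in\E^{2r,r}(\mathrm{Th}(v))$ with $v=\sigma(\tilde\tau_f)$ and $r=\rk v=d$; (3) use the fixed isomorphism $\sigma(\tilde\tau_f)\cong\tau_f$ in $\uK(X)$ and the functoriality of $\mathrm{Th}$ on isomorphisms to transport this class to $\E^{2d,d}(\mathrm{Th}(\tau_f))$; (4) compose with the purity equivalence, unwinding the adjunction $\Hom_{\SH(X)}(\un_X(d)[2d],\mathrm{Th}(\tau_f))\cong\Hom_{\SH(S)}(\un_S(d)[2d]\wedge f_!\un_X,\un_S)\cong\E_{2d,d}(X/S)$ (using $f_!\simeq f_*$ away from the issues that do not arise here, or more precisely the bivariant formalism of \Cref{sec:bivariant}), to land in the claimed group; (5) check that the resulting $\tilde\eta_f^\E$ is independent of the auxiliary choices (the isomorphism $\sigma(\tilde\tau_f)\cong\tau_f$ is part of the data, but one should check naturality in it up to the coherence built into $\uK^\sigma$), so that it is genuinely canonical given the pair $(f,\tilde\tau_f)$.

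The main obstacle I expect is step (1) combined with the coherence bookkeeping in steps (3)–(5): one must be careful that the purity isomorphism is compatible with the Picard-groupoid structure on $\uK(X)$ in a way that matches the monoidal compatibility required of a $\sigma$-orientation, so that the Thom class $\thom(\tilde\tau_f)$ really does transport to a well-defined element and not merely to an element well-defined up to a unit. In the $\GL$-oriented case this is handled by the normalization $\thom$ is a basis and the additivity of Thom classes under direct sums; here the analogous input is exactly the clause in \Cref{df:sigma-orientation} that $\thom$ is compatible with the monoidal structure on $\uK^\sigma(X)$, together with \Cref{prop:virt_thom_G-orient} extending Thom classes to virtual objects. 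A secondary, more technical point is verifying that smoothable lci morphisms are exactly the class for which both the purity equivalence and the existence of a $\sigma$-structure on $\mathcal L_f$ can coexist --- but since the $\sigma$-orientation of $f$ is taken as given data rather than constructed, this reduces to the standard purity statement and poses no real difficulty. Everything else (the adjunction manipulations, the unwinding of $\mathrm{Th}$) is routine once the coherence is in place.
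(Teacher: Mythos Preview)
Your proposal contains a genuine gap in step (1): you assert a purity \emph{equivalence} $f^!\un_S\simeq\Th(\tau_f)$ for smoothable lci morphisms, attributing it to ``Ayoub--Cisinski--D\'eglise purity.'' This is false in general. Relative purity in $\SH$ holds for \emph{smooth} morphisms; for an arbitrary smoothable lci morphism one only has the purity \emph{transformation} $\eta_f:\Th(\tau_f)\to f^!\un_S$ (equivalently, the fundamental class of \cite[Th.~3.3.2]{DJK}), and this map is an isomorphism precisely when $\E$ is $f$-pure in the sense of \Cref{df:f-pure}---a strong extra hypothesis (e.g.\ absolute purity for closed immersions between regular schemes) that is not assumed here. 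Consequently your claim that ``giving a class in $\E_{2d,d}(X/S)$ amounts to giving a Thom class in $\E^{2d,d}(\Th(\tau_f))$'' is unfounded, and the adjunction chain in step (4) does not make sense as written.

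The paper's construction avoids this by reversing the roles you assign to the two ingredients. One starts from the already-existing fundamental class $\eta_f^\E\in\E_{0,0}(X/S,\tau_f)$ of \cite{DJK}, which is just a \emph{class} (equivalently a map $\E_X\otimes\Th(\tau_f)\to f^!\E_S$, see \eqref{eq:fdl_biv}), with no claim that it is an isomorphism. What \emph{is} an isomorphism is the Thom isomorphism $\gamma_{\tilde\tau_f}:\E_{2d,d}(X/S)\xrightarrow{\sim}\E_{0,0}(X/S,\tau_f)$ coming from the $\sigma$-orientation on $\E$ together with the datum $\tilde\tau_f$ (see \Cref{num:sigma-Thom&Euler}). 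One then simply defines $\tilde\eta_f^\E:=\gamma_{\tilde\tau_f}^{-1}(\eta_f^\E)$, as spelled out in \Cref{def:orientedfundamentalclass} and the cotrace composite \eqref{eq:cotrace}. In short: the Thom class furnishes the isomorphism used to untwist, while the DJK class furnishes the element being untwisted---not the other way around.
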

These $\sigma$-oriented fundamental classes satisfy the usual formalism of their parent
 the motivic fundamental classes: compatibility with composition,
 transverse pullback formula, excess of intersection (see \cite{DJK}).
 They also fulfill the formalism of Fulton and MacPherson \cite{FMP}.
 Note in particular that they induce various Gysin morphisms.
 For example, if $f$ is in addition proper, one gets a pushforward in cohomology:
$$
f^\E_!:\E^{n,i}(X) \rightarrow \E^{n+2d,i+d}(S), a \mapsto f_!(a.\tilde \eta_f^\E)
$$
where one uses the action of cohomology and the axiomatic pushforward on bivariant theory
 (see also e.g. \cite[Def. 3.3.2]{Deg16}).
Additionally, they satisfy our sought-for Grothendieck-Riemann-Roch formula,
 in the style of \cite[1.4]{FMP}.
\begin{thmi}(\Cref{prop:computeToddclass} and \Cref{thm:GRR})\label{thmi:GRR}
Let $\psi:\E \rightarrow \F$ be a morphism of ring spectra.
 Assume further that $\E$ and $\F$ admit $\sigma$-orientations. Then there exists a unique \emph{Todd class}
\[
\td_\psi\colon \mathrm{K}_0^{\sigma}(X) \rightarrow \F^{00}(X)^\times
\]
which is a morphism of abelian groups natural in $X$ such that
 the following relation holds
$
\thom(\tilde v,\F)=\td_\psi(\tilde v)\cdot\psi_*(\thom(\tilde v,\E)).
$

Moreover, for any $\sigma$-oriented morphism $(f:X \rightarrow S,\tilde \tau_f)$,
 one gets the following Grothendieck-Riemann-Roch formula:
$$
\psi_*(\tilde\eta_f^\E)=\td_\psi(\tilde \tau_f)\cdot\tilde\eta_f^\F.
$$
\end{thmi}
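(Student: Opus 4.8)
The plan is to establish the two assertions in turn: first construct the Todd class, then deduce the Grothendieck--Riemann--Roch identity, which becomes almost formal once the Todd class is available.

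For the Todd class, I would begin with the observation that a morphism of ring spectra $\psi\colon\E\to\F$ transports $\sigma$-orientations: if $\thom(-,\E)$ is a $\sigma$-orientation of $\E$, then $\tilde v\mapsto\psi_*(\thom(\tilde v,\E))$ satisfies all the conditions of \Cref{df:sigma-orientation} for $\F$, since $\psi_*$ is a ring homomorphism on cohomology, natural in the base, hence compatible with isomorphisms, pullbacks and the monoidal structure on $\uK^\sigma$. The ``Thom isomorphism'' consequence of \Cref{df:sigma-orientation}, applied to this induced orientation, then says that for $\tilde v\in\uK^\sigma(X)$ with $v=\sigma(\tilde v)$ of rank $r$ the class $\psi_*(\thom(\tilde v,\E))\in\F^{2r,r}(\Th(v))$ is again a free generator of the $\F^{**}(X)$-module $\F^{**}(\Th(v))$. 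As $\thom(\tilde v,\F)$ is another free generator, of the same bidegree, the two differ by a unique unit $\td_\psi(\tilde v)\in\F^{00}(X)^\times$ with $\thom(\tilde v,\F)=\td_\psi(\tilde v)\cdot\psi_*(\thom(\tilde v,\E))$; this gives existence and uniqueness of the scalar. Compatibility of the Thom classes with isomorphisms in $\uK^\sigma(X)$ makes $\td_\psi$ descend to $\mathrm K_0^\sigma(X)$; the Whitney/monoidality formula for $\thom(-,\E)$ and $\thom(-,\F)$ together with multiplicativity of $\psi_*$ gives $\td_\psi(\tilde v\oplus\tilde w)=\td_\psi(\tilde v)\,\td_\psi(\tilde w)$, so that $\td_\psi$ is a homomorphism into $\F^{00}(X)^\times$; and naturality in $X$ follows from the pullback-compatibility of the Thom classes and of $\psi_*$.

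For the Riemann--Roch formula I would unwind \Cref{def:orientedfundamentalclass}: the class $\tilde\eta_f^\E$ is produced from the canonical fundamental class of $f$ with sphere coefficients --- which exists for every smoothable lci $f$ by \cite{DJK} and encodes the purity isomorphism identifying $f^!\un_S$ with the Thom spectrum $\Th(\tau_f)$ --- by using the Thom class $\thom(\tilde\tau_f,\E)$ to trivialize that Thom spectrum over $\E$. This construction is natural with respect to morphisms of ring spectra, so applying $\psi_*$ reproduces the same recipe with $\psi_*(\thom(\tilde\tau_f,\E))$ in place of $\thom(\tilde\tau_f,\E)$. By the first part $\psi_*(\thom(\tilde\tau_f,\E))=\td_\psi(\tilde\tau_f)^{-1}\cdot\thom(\tilde\tau_f,\F)$, and since trivializing by a generator rescaled by $\td_\psi(\tilde\tau_f)^{-1}$ rescales the resulting fundamental class by $\td_\psi(\tilde\tau_f)$, one concludes $\psi_*(\tilde\eta_f^\E)=\td_\psi(\tilde\tau_f)\cdot\tilde\eta_f^\F$.

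I expect the main obstacle to lie not in the Todd scalar itself --- ``two Thom generators of a free rank-one module differ by a unit'' is essentially immediate --- but in arranging the construction of $\tilde\eta_f^\E$ in \Cref{def:orientedfundamentalclass} to be canonical and functorial enough in the coefficient spectrum that the GRR identity follows from naturality alone. Concretely, one must keep track of exactly how the Thom class enters the definition (and of the inversion that appears when a Thom twist is trivialized by rescaling a generator), and handle the general smoothable lci case, presumably by factoring $f$ as a regular closed immersion followed by a smooth morphism and invoking compatibility of oriented fundamental classes with composition, together with the evident behaviour of $\tilde\tau_f$ under such a factorization; the auxiliary point that $\psi_*$ of a Thom class is again a Thom class should be verified as well, but is harmless. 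Although not needed for the statement above, the Todd class can moreover be made explicit from the isomorphism of the associated formal group laws (in the $\GL$-oriented case), respectively formal ternary laws (in the $\Sp$-oriented case), which is the content of \Cref{prop:computeToddclass}.
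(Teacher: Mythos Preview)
Your argument is correct and matches the paper's approach: the Todd class is the unique unit relating the two bases $\thom(\tilde v,\F)$ and $\psi_*(\thom(\tilde v,\E))$ of the free rank-one module $\F^{**}(\Th(v))$, and the GRR formula follows by unwinding the cotrace map \eqref{eq:cotrace} and using that $\psi_*$ carries the unoriented fundamental class $\eta_f^\E$ to $\eta_f^\F$. Two small clarifications: the purity transformation $\eta_f:\Th(\tau_f)\to f^!\un_S$ is a \emph{map}, not in general an isomorphism (it is an isomorphism only when $\E$ is $f$-pure), but only the map is needed here; and your anticipated obstacle of factoring $f$ into a regular immersion followed by a smooth morphism never arises---the paper's proof is entirely direct, since $\psi_*$ already intertwines the unoriented $\eta_f$'s by naturality of the purity transformation. (Also, \Cref{prop:computeToddclass} is precisely the existence of the Todd class that you prove in your first paragraph, not the FGL/FTL computation you allude to at the end.)
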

As underlined by Fulton and MacPherson, one deduces from the above formula
 the usual formulations of Grothendieck-Riemann-Roch formulas,
 involving Gysin morphisms. Namely, when $f$ is in addition proper, the formula reads
$$
\psi(f^\E_!(a))=f^\F_!(\td_\psi(\tilde \tau_f).\psi(a)).
$$
One also deduces analogous formulas in homology
 and, without the properness assumptions, formulas in cohomology
 with compact support or bivariant theories. The reader
 can find details in the statement of \Cref{thm:GRR}.
 The problem with this abstract formula is to be able to compute
 the relevant Todd class. This is one of the motivations to restrict our attention
 to the symplectic case.
 
\subsubsection*{Symplectic orientations and formal ternary laws}

Symplectic orientations are very close to $\GL$-orientations.
 Firstly, there is an appropriate variant of the projective bundle theorem,
 for the so-called symplectic projective spaces $\HP^n_S$
 (built out of the classifying spaces $\BSp_{2n}$, see \Cref{num:sp-grassmanian}).
 This allowed Panin and Walter to characterize $\Sp$-orientations of a ring spectrum $\E$
 as the datum of a class $b \in \tilde \E^{4,2}(\HP^\infty_S)$ in reduced cohomology
 which restricts to the tautological class on $\HP^1_S\simeq \un(2)[4]$
 (see \Cref{thm:PW_Sp-oriented} for a reminder).

Our first task is to relate the theory of Thom classes
 associated to symplectic orientations with the theory of virtual Thom classes
 mentioned previously.
 For this, we show that these Thom classes not only extend to 
 virtual classes of symplectic bundles, but also factors modulo the so-called metabolic relation.
 More precisely, we introduce a stable structure
 $f_\Sp:\uKSp(X) \rightarrow \uK(X)$ whose source is the associated homotopy groupoid of the $\infty$-category $\mathrm{KSp}(X)$ and we show that $\Sp$-orientations
 on a ring spectrum $\E$ are in bijection with $f_\Sp$-orientations in the
 sense explained above (see \Cref{prop:sp-or&KSp-or}).

Our second task is to improve the properties of symplectic Thom classes
 by establishing the following formula:
\begin{thmi}(see \Cref{thm:epsilon} and its corollary)\label{thmi:epsilon}
Let $\E$ be an $\Sp$-oriented ring spectrum with Thom classes $\thom$.
 Then for any symplectic vector bundle $(\cU,\psi)$ over a scheme $X$,
 any unit $\lambda \in \cO_X(X)^\times$ with associated class
 $\langle \lambda \rangle \in [\un_X,\Sigma^\infty \GG]_{\SH(X)}$,
 the following relations hold:
\begin{align*}
\thom(\cU,\lambda.\psi)&=\langle u \rangle.\thom(\cU,\psi), \\
b_i(\cU,\lambda.\psi)&=\langle \lambda^i \rangle.b_i(\cU,\psi), i \geq 0.
\end{align*}
\end{thmi}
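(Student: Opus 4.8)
The plan is to reduce the two identities to the case of a single hyperbolic plane by means of the symplectic splitting principle, and to settle that case by exhibiting an explicit isometry.

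First, observe that both sides of each asserted identity are natural in $(\cU,\psi)$ under pullback: Thom classes and Borel classes are natural by the axioms of an $\Sp$-orientation, and the operation $\psi\mapsto\lambda.\psi$ commutes with pullback. Applying the symplectic splitting principle (\Cref{num:sp-splitting-principle}), one finds a morphism $p\colon X'\to X$ inducing a split monomorphism on $\E^{\ast\ast}$, and likewise on the cohomology of the pertinent Thom spaces, along which $p^\ast(\cU,\psi)$ becomes a direct sum of hyperbolic planes $\bigoplus_{j=1}^n(\cL_j\oplus\cL_j^\vee,h_j)$ with $h_j$ the standard hyperbolic form. Since $\thom$ is multiplicative for direct sums of symplectic bundles, since the total Borel class satisfies $b(\cU)=\prod_j b(\cL_j\oplus\cL_j^\vee)$ by construction, and since $\langle\lambda^i\rangle=\langle\lambda\rangle^i$ in $\GW$, everything will follow from the rank-two statement $\thom(\cL\oplus\cL^\vee,\lambda.h)=\langle\lambda\rangle.\thom(\cL\oplus\cL^\vee,h)$: the general Thom class then acquires the factor $\langle\lambda\rangle^n=\langle\lambda^n\rangle$, while $b_i(\cU,\lambda.\psi)=e_i(\langle\lambda\rangle\beta_1,\dots,\langle\lambda\rangle\beta_n)=\langle\lambda\rangle^i\,b_i(\cU,\psi)=\langle\lambda^i\rangle\,b_i(\cU,\psi)$, where $\beta_j=b_1(\cL_j\oplus\cL_j^\vee)$ is the Euler class of the $j$-th summand and $e_i$ the elementary symmetric polynomial; the Euler-class identity $b_1=\langle\lambda\rangle.b_1$ itself is obtained from the rank-two Thom identity by pulling back along the zero section, as $b_1=e$ is the restriction of the Thom class.

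For the hyperbolic plane $H=\cL\oplus\cL^\vee$ with standard form $h$, given by $h\big((x,\xi),(x',\xi')\big)=\xi'(x)-\xi(x')$, I would consider the automorphism $g\colon H\to H$, $(x,\xi)\mapsto(\lambda^{-1}x,\xi)$; this is a well-defined automorphism of the underlying vector bundle since $\lambda^{-1}\in\cO_Y(Y)^\times$, and a one-line computation gives $h(gv,gw)=\lambda^{-1}h(v,w)$, so $g$ is an isometry $(H,h)\xrightarrow{\ \sim\ }(H,\lambda.h)$. Compatibility of the $\Sp$-oriented Thom class with isometries then yields $\Th(g)^\ast\thom(H,\lambda.h)=\thom(H,h)$, so the whole point reduces to computing $\Th(g)^\ast$. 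Under the identification $\Th(H)=\Th(\cL)\wedge_Y\Th(\cL^\vee)$ the map $g$ is $(\lambda^{-1}\cdot\mathrm{id}_{\cL})\wedge\mathrm{id}$, and $\Th(\lambda^{-1}\cdot\mathrm{id}_{\cL})^\ast$ is multiplication by $\langle\lambda^{-1}\rangle\in\GW(Y)=\E^{00}(Y)$ --- ultimately Morel's identification of multiplication by a unit $u$ on $\mathbb A^1$ with the endomorphism $\langle u\rangle$ of $\mathbb P^1$. Hence $\langle\lambda^{-1}\rangle.\thom(H,\lambda.h)=\thom(H,h)$, and since $\langle\mu\rangle^2=\langle\mu^2\rangle=\langle1\rangle$ so that $\langle\lambda^{-1}\rangle^{-1}=\langle\lambda\rangle$, this gives $\thom(H,\lambda.h)=\langle\lambda\rangle.\thom(H,h)$, as desired.

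The conceptual steps are short; the hard part is the identification of $\Th(g)^\ast$ with multiplication by $\langle\lambda^{-1}\rangle$, i.e.\ the precise interaction between rescaling a line bundle and the $\E$-cohomology of its Thom space. One settles it either by reducing, via Zariski-local triviality of $\cL$ and the projection formula, to the universal tautological scaling of $\mathbb P^1$ over $\GG$, where it follows from Morel's computation of $[\mathbb P^1,\mathbb P^1]$, or by invoking the analogous statement already recorded in \cite{PW_MSP}. A secondary technical point is to ensure that the form of the splitting principle used genuinely produces hyperbolic planes, and not merely rank-two symplectic subbundles, while keeping $\E^{\ast\ast}(X)$ and the cohomology of the relevant Thom spaces a direct summand; this is arranged by first splitting off rank-two symplectic subbundles and then projectivising each of them, since over $\mathbb P(\cV)$ a rank-two symplectic bundle $\cV$ contains the Lagrangian line subbundle $\mathcal O(-1)$ and hence becomes hyperbolic, and $\mathbb P(\cV)\to X$ induces a split monomorphism on $\E$-cohomology.
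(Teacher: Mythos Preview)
Your argument contains a genuine gap at the projectivisation step.  The symplectic splitting principle only produces rank-$2$ symplectic summands, and you attempt to reduce these to hyperbolic planes by passing to $\pi\colon\PP(\cV)\to X$.  The assertion that $\pi^\ast$ is a split monomorphism on $\E$-cohomology, however, is precisely what fails (or at least is not available) for a ring spectrum that is only $\Sp$-oriented: there is no projective bundle theorem for ordinary $\PP^1$-bundles in this generality, and the relative tangent bundle of $\pi$ is a line bundle, so there is no $\Sp$-oriented Gysin map to manufacture a retraction.  This is not a technicality that can be patched; the paper explicitly remarks that the result was previously known only under an $\SL$-orientation hypothesis (where different tools are available) and that the proof for bare $\Sp$-orientations is ``surprisingly involved''.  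Your hyperbolic-plane computation with the isometry $g$ is correct and pleasant, but reaching the hyperbolic case is the entire difficulty.

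For comparison, the paper's proof avoids any reduction to split bundles.  It first reduces to $\E=\MSp$ and to the tautological bundle $(U,\varphi)$ on $\BSp_2=\HP^\infty$, then constructs an explicit $\GG_m$-action on each $\HP^n$ via $m_\lambda=\bigl(\begin{smallmatrix}\lambda&0\\0&1\end{smallmatrix}\bigr)$ whose effect on the symplectic form is scaling by $\lambda$.  The core of the argument is an induction on $n$ using the Panin--Walter cell decomposition of $\HP^n$: one identifies $\HP^n\simeq\Th(U_{|\HP^{n-1}})$, checks that the $\GG_m$-action is compatible with this equivalence, anchors the induction at $\HP^1$ (where $\mu_\lambda$ is visibly $\langle\lambda\rangle$), and propagates upward by showing that the restriction map $j_{n-1}^\ast$ on $\MSp$-cohomology of Thom spaces is injective.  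That injectivity is obtained from the symplectic Thom isomorphism, not from any $\GL$-type projective bundle formula, which is why the argument goes through for arbitrary $\Sp$-oriented spectra.
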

The proof of this formula is surprisingly involved. It was known before only
 for $\SL$-oriented theories (see \cite[Prop. 2.2.8]{DF3}).
 It is nevertheless central for the theory of FTL,
 as it implies the so-called \emph{$\epsilon$-linearity axiom} of the FTL
 associated to a symplectically oriented spectrum $\E$
 (see \Cref{df:FTL} and \Cref{num:SP-orient->FTL}).
 Such FTL are called \emph{representable}.

Once all these preliminaries are settled,
 the link between FTL and $\Sp$-orientations takes the same
 form as for $\GL$-orientations: given an $\Sp$-oriented
 ring spectrum with associated FTL $F_t$, choosing another
 FTL amounts to choose a strict automorphism of $F_t$
 (see \Cref{prop:sp-orient&FTL}).
 This result contains the key to compute Todd classes
 as we will see later in this introduction.
 One also deduces the following
 result, that confirms the analogy between FTL and FGL.
\begin{thmi}(see \Cref{thm:log_FTL}) \label{thmi:log_FTL}
Any representable rational formal ternary law $F_t$ admits a logarithm,
 \emph{i.e.} a strict isomorphism $\log_{F_t}$ with the additive FTL.
 Moreover, $\log_{F_t}$ is unique among such strict isomorphisms.
\end{thmi}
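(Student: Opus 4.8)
The plan is to adapt the classical construction of the logarithm of a one-dimensional formal group law over a $\QQ$-algebra. There one sets $\log_F(x)=\int_0^x\partial_2F(t,0)^{-1}\,dt$, proves the additivity $\log_F(F(x,y))=\log_F(x)+\log_F(y)$ from the translation invariance of the differential $\partial_2F(t,0)^{-1}\,dt$ (a formal consequence of associativity), notes that rational coefficients enter only through the primitive $\int t^{n-1}\,dt=t^n/n$, and deduces uniqueness from the fact that $\widehat{\mathbb G}_a$ has no nontrivial strict automorphism over a $\QQ$-algebra. For a representable rational formal ternary law $F_t$, the single binary law is replaced by the $(4,3)$-series recording the four Borel classes of $\cU_1\otimes\cU_2\otimes\cU_3$, and the bookkeeping must keep track of the $\epsilon$-twists; otherwise the shape of the argument is the same.

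First I would fix what has to be produced. A strict isomorphism of formal ternary laws is a one-variable power series $\varphi(u)=u+\sum_{n\geq 2}a_nu^n$ with coefficients in the rational ground ring $R$, subject to the compatibility of $\varphi$ with the $(4,3)$-series structures of $F_t$ and of the additive formal ternary law (\Cref{ex:FTP_low_complexity}); spelled out, this is a system of four identities in $R[[u_1,u_2,u_3]]$, one per Borel class, relating $F_{a,k}(\varphi(u_1),\varphi(u_2),\varphi(u_3))$ to the universal change-of-orientation polynomial in $F_{t,1},\dots,F_{t,4}$ (the same polynomials that produce the Todd classes of \Cref{thmi:GRR}). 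I would then solve this system by induction on $n$: assuming $a_2,\dots,a_{n-1}$ known, reading off a suitable coefficient from one component---the analogue of extracting $\partial_2F$---yields a linear equation of the form $n\,a_n=P_n(a_2,\dots,a_{n-1};\,\text{coefficients of }F_t)$ with explicit right-hand side. Since $R$ is a $\QQ$-algebra this determines $a_n$ uniquely, so that existence and uniqueness at order $n$ come together, and the whole statement follows once the induction goes through.

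The crux is the consistency of this overdetermined system: one must check that the value of $a_n$ just produced makes all four Borel identities hold modulo degree $n+1$, not only the one it was read off from. This is exactly where the axioms of $F_t$ are used. The associativity and symmetry axioms of a formal ternary law provide the cocycle-type relations among the coefficients of the $F_{t,k}$, and the $\epsilon$-linearity axiom---which holds precisely because $F_t$ is \emph{representable} (\Cref{thmi:epsilon}: rescaling the symplectic form by a unit $\lambda$ multiplies $b_i$ by $\langle\lambda^i\rangle$)---controls the behaviour of the $F_{t,k}$ under the $\epsilon$-twists and pins down the remaining relations; without it the four components need not be forced to agree. Conceptually this amounts to showing that the $1$-form built from the first partial derivatives of the $F_{t,k}$ at the origin is invariant under the formal ternary law structure, $\varphi$ being its formal primitive, and I expect that isolating this invariant $1$-form from the outset, rather than working blindly component by component, is what makes the consistency transparent; this is the step I anticipate as the main obstacle. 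If a more hands-on route is needed, one can feed in the explicit description of the relevant sub-quotients of the Walter ring, and of rational formal ternary laws, obtained in \cite{DFJK}.

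Finally, uniqueness among strict isomorphisms $F_t\to F_a$ is already contained in the construction, each $a_n$ being forced by a linear equation with invertible leading coefficient; equivalently, two such isomorphisms differ by a strict automorphism of the additive formal ternary law, which over a $\QQ$-algebra is the identity (such an automorphism is an ``additive'' power series, hence $u\mapsto u$ rationally, as for $\widehat{\mathbb G}_a$). That $\varphi$ is an isomorphism, and not merely a morphism, is automatic since its underlying series has leading term $u$.
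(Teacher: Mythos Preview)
Your approach is entirely different from the paper's, and the gap you yourself flag is real and unfilled.

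The paper does not construct $\log_{F_t}$ by solving for the coefficients of a power series. Instead it exploits representability in a much stronger way than you do. If $F_t$ is represented by an $\Sp$-oriented rational ring spectrum $(\E,b)$, then by \cite[Cor.~6.2]{DFJK} the rational sphere spectrum is isomorphic to $\HMW\QQ$, so $\E$ is automatically an $\HMW\QQ$-algebra. Since $\HMW\QQ$ carries the additive FTL (\Cref{ex:representableFTL}(1)), the induced $\Sp$-orientation on $\E$ also has additive FTL, and \Cref{prop:sp-orient&FTL} (together with its corollary) produces the required strict isomorphism and its uniqueness. The whole argument is three lines; no coefficient-by-coefficient analysis appears.

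By contrast, your plan is to mimic the classical construction of the logarithm of a one-dimensional FGL. You correctly identify that the induction produces an overdetermined system---four Borel identities must hold simultaneously---and that verifying consistency is ``the step I anticipate as the main obstacle.'' You do not carry it out; you only gesture at an invariant $1$-form and at the axioms. This is not a proof. In fact the authors themselves remark, just after the theorem, that they \emph{expect} the representability hypothesis to be unnecessary---which is exactly what your purely algebraic argument would establish if it went through---but they do not claim to have done it. So you are attempting something the paper leaves open.

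A smaller point: your invocation of representability is misplaced. You say $\epsilon$-linearity ``holds precisely because $F_t$ is representable,'' but $\epsilon$-linearity is axiom (5) in \Cref{df:FTL}, hence holds for every FTL by definition. \Cref{thmi:epsilon} is used in the paper to show that the $(4,3)$-series attached to an $\Sp$-oriented spectrum \emph{satisfies} this axiom, not to supply it as an extra hypothesis. If your algebraic route worked, it would presumably use only the FTL axioms and make no use of representability at all.
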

We expect that the representability assumption is unnecessary.
 In fact, we believe that much more properties of FTL could be developed
 in analogy with FGL. This includes a general chromatic study
 of motivic homotopy theory.
 On the other hand, the properties of the Walter ring classifying FTL
 seems richer than its analog the Lazard ring, due to
 the presence of torsion phenomena.
 
\subsubsection*{The quadratic Riemann-Roch formula}

Our main application is a quadratic enrichment
 of the usual Grothendieck-Riemman-Roch formula.
 In the former, the Borel character defined in \cite{DF3} plays the role
 of the Chern character.

Recall that, for a field of characteristic different from $2$,\footnote{this
 assumption can probably be removed using \cite{Calmes24}}
 and any smooth $k$-scheme, the (total) Borel character is an isomorphism 
 of ring spectra over $k$
$$
\bo_t:\KO_\QQ \rightarrow \bigoplus_{i \in \ZZ} \big(\HMW\QQ(4i)[8i] \oplus \HM\QQ(4i+2)[8i+4]\big)
$$
where $\KO_\QQ$, $\HMW\QQ$, $\HM\QQ$ are respectively the ring spectra representing
 higher-Grothendieck-Witt groups (aka Hermitian $K$-theory), Milnor-Witt motivic cohomology and motivic cohomology over $k$,
 all with rational coefficients. 
 Plainly, the \emph{general quadratic Grothendieck-Riemann-Roch theorem(s)}
 are the formulas that one obtains by applying \Cref{thmi:GRR}
 to this morphism of ring spectra, each equipped with their canonical
 symplectic orientation.
 The interesting point is that this particular case is amenable to calculations.
 Our first result is a computation of the associated Todd class for a smooth $k$-scheme $X$
$$
\tilde \td:=\td_{\bo_t}:\KSp_0(X) \rightarrow  \CH{}^{4*}(X)\oplus \CHt{}^{4*+2}(X)
$$
using the symplectic splitting principle (see Propositions \ref{prop:Toddforgetful},
 \ref{prop:Toddquadratic}).

\bigskip

We specialize the general quadratic GRR-formulas attached to $\bo_t$
 to the \emph{Hirzebruch-Riemann-Roch case}.
 To describe it, we need a little more notation.
 Let $X$ be a smooth proper $k$-scheme of even dimension $d=2n$,
 which is \emph{symplectically oriented} by a class $\tilde \tau_X \in \KSp_0(X)$,
 as in \Cref{propi:fdl_classes}. Note that this induces
 an isomorphism $f:\omega_X \simeq \OO_X$ where $\omega_X=\det(\Omega_X)$ is the canonical
 bundle of $X/k$.

We first note that, given the $(8,4)$-periodicity of the ring spectra $\GW$,
 the Borel character induces in particular isomorphisms
\begin{align*}
\bo_t^{0,0}:\KO_0(X)_\QQ &\rightarrow \CHt{}^{4*}(X)_\QQ \oplus \CH^{4*+2}(X)_\QQ \\
\bo_t^{4,2}:\KSp_0(X)_\QQ &\rightarrow \CHt{}^{4*+2}(X)_\QQ \oplus \CH^{4*}(X)_\QQ
\end{align*}
where the first $\KO_0$ and $\KSp_0$ are respectively the orthogonal
 and symplectic K-theory. One will retain that $\bo_t$ is defined on stable metabolic
 classes of symmetric and symplectic vector bundles.

We next consider $(V,\varphi)$ either a symmetric bundle if $d=0\pmod 4$ or a symplectic bundle if $d=2\pmod 4$.
The orientation $f$ of $X/k$ allows us to define a \emph{quadratic Euler characteristic}
 (\Cref{df:quad-Euler-char}) as the element in $\GW(k)$:
$$
\tilde \chi(X,f;V,\varphi)=
[\H^n(X,V),\varphi_n]+(-1)^{\lfloor d/2 \rfloor}\sum_{i=0}^{n-1}(-1)^{i}\mathrm{dim}_k\H^i(X,V)\cdot h
$$
where $h$ is the class of the hyperbolic form, and we have considered the composite isomorphism:
$$
\varphi_n:\H^n(X,V) \xrightarrow \sim \H^n(X,(V)^\vee\otimes\omega_{X/k})
 \xrightarrow \sim \H^{n}(X,V)^*
$$
where the first map is induced by $f$ and the second one by Serre duality.
\begin{thmi}[quadratic HRR formula](\Cref{thm:HRR}) \label{thmi:HRR}
Consider the above notation. Then the following formula holds in $\GW(k)$:
$$
\tilde \chi(X,f;V,\varphi)=\tdeg_{\tilde \tau_f}\big(\tilde \td(\tilde \tau_X)\cdot \bo_t(V,\varphi)\big)
$$
where $\tdeg_{\tilde \tau_f}:\CHt{}^d(X) \stackrel{\tilde \tau_X}\simeq \CHt_0(X,\omega_X) \xrightarrow{p_*} \GW(k)$
 is the quadratic degree map associated with the orientation $f$ of $X/k$.
\end{thmi}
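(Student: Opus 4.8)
The plan is to obtain the formula as a special case of the abstract Grothendieck--Riemann--Roch theorem~(\Cref{thmi:GRR}), applied to the Borel character $\psi=\bo_t\colon\KO_\QQ\to\F:=\bigoplus_{i\in\ZZ}\big(\HMW\QQ(4i)[8i]\oplus\HM\QQ(4i+2)[8i+4]\big)$, taken with the canonical symplectic orientations on source and target, and to the structure morphism $p\colon X\to\Spec k$ equipped with the symplectic orientation $\tilde\tau_X$. The pair $(V,\varphi)$ defines a class $a\in\KO^{0,0}(X)=\KO_0(X)$ when $d\equiv 0\pmod 4$, and $a\in\KO^{4,2}(X)=\KSp_0(X)$ when $d\equiv 2\pmod 4$. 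Since the bidegree shift of the Gysin map $p^{\KO_\QQ}_!$ is governed by $d$, one checks using the $(8,4)$-periodicity of $\KO$ that $p^{\KO_\QQ}_!(a)$ lands in $\KO^{0,0}(\Spec k)_\QQ=\GW(k)_\QQ$ in both cases --- and this is exactly what forces the symmetric-versus-symplectic dichotomy in the hypothesis. Over the point, $\bo_t$ restricts to the identity of $\GW(k)_\QQ=\CHt^0(k)_\QQ$ (the other summands of $\F^{**}(\Spec k)_\QQ$ in the relevant degree being zero), so the Gysin form of \Cref{thmi:GRR} reads
\[
p^{\KO_\QQ}_!(V,\varphi)=p^{\F}_!\!\big(\tilde\td(\tilde\tau_X)\cdot\bo_t(V,\varphi)\big)\quad\text{in }\GW(k)_\QQ,
\]
where $\tilde\td=\td_{\bo_t}$ is the Todd class computed in Propositions~\ref{prop:Toddforgetful} and~\ref{prop:Toddquadratic}. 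It therefore remains to identify the two push-forwards.

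First I would identify the left-hand side $p^{\KO_\QQ}_!(V,\varphi)$ with the quadratic Euler characteristic $\tilde\chi(X,f;V,\varphi)$ of \Cref{df:quad-Euler-char}. The key input is the compatibility of the motivic Gysin transfer in Hermitian $K$-theory with the derived push-forward $Rp_*$ on the bounded derived category of coherent sheaves with Grothendieck--Serre duality: since $p^!\un=\omega_X[d]$ and $f$ trivialises $\omega_X$, the proper transfer sends the class $a=[V,\varphi]$ to the class of the perfect complex $R\Gamma(X,V)$ equipped with the symmetric (resp. symplectic) form built from $\varphi$, the trivialisation $f$ and the duality equivalence $R\Gamma(X,V)\simeq R\Gamma(X,V)^{\vee}[-d]$; the parity hypothesis on $(V,\varphi)$ is precisely what makes the total shift by $[d]=[2n]$ yield a genuine symmetric, i.e. $\KO^{[0]}$, class over $k$. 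One then evaluates the class of this self-dual bounded complex: filtering $R\Gamma(X,V)$ by its truncations, the layer pairing $\H^i(X,V)$ against $\H^{2n-i}(X,V)$ is hyperbolic for $i\neq n$, while the middle layer carries the form $\varphi_n\colon\H^n(X,V)\xrightarrow{\sim}\H^n(X,V)^{*}$; assembling these contributions with their Koszul signs produces exactly the expression $[\H^n(X,V),\varphi_n]+(-1)^{\lfloor d/2\rfloor}\sum_{i=0}^{n-1}(-1)^i\dim_k\H^i(X,V)\cdot h$, i.e. $\tilde\chi(X,f;V,\varphi)$.

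Second, on the target $\F$ over $\Spec k$, I would identify the Gysin map $p^{\F}_!$ attached to $(p,\tilde\tau_X)$ with the quadratic degree map $\tdeg_{\tilde\tau_f}$. This follows from the construction of $p^{\F}_!$ out of the $\F$-fundamental class, from the fact that $\tilde\tau_X$ induces exactly the isomorphism $\CHt^d(X)\simeq\CHt_0(X,\omega_X)$ entering the definition of $\tdeg_{\tilde\tau_f}$, and from the standard identification of the push-forward in Milnor--Witt motivic cohomology with $p_*\colon\CHt_0(X,\omega_X)\to\GW(k)$ (the push-forward in ordinary motivic cohomology only contributing hyperbolic classes, and everything outside total codimension $d$ dying for degree reasons, so that only the component $\tilde\td(\tilde\tau_X)\cdot\bo_t(V,\varphi)\in\CHt^d(X)_\QQ$ survives). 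Combining the two identifications with the above display gives the formula in $\GW(k)_\QQ$; since the left-hand side $\tilde\chi(X,f;V,\varphi)$ is by construction an element of $\GW(k)$, the asserted equality holds in $\GW(k)$.

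The main obstacle is the first identification, and within it the sign bookkeeping around the shift $[d]$ in Serre duality: it governs both the matching of $\KO$-degrees --- hence the dependence of the hypothesis on $d\bmod 4$ --- and the extraction of the factor $(-1)^{\lfloor d/2\rfloor}$ together with the alternating signs defining $\tilde\chi$ from the self-dual complex $R\Gamma(X,V)$. One must in addition make precise the comparison between Hermitian $K$-theory, as an $\Sp$-oriented ring spectrum with its motivic proper transfer, and the Grothendieck--Witt spectra of the categories $D^b(X)$ with their shifted dualities, so that the transfer genuinely computes $[R\Gamma(X,V),\varphi_n]$ with the correct form; this is where I expect most of the technical work to concentrate.
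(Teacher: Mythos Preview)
Your approach is essentially the same as the paper's: apply the general GRR formula (\Cref{thm:GRR}) to $\psi=\bo_t$ and the symplectically oriented morphism $(p,\tilde\tau_X)$, then identify the two Gysin push-forwards. In the paper this is done in exactly the order you describe: the identification $p^{\KO}_!(V,\varphi)=\tilde\chi(X,f;V,\varphi)$ is isolated as a separate Proposition (the computation of the transfer in Grothendieck--Witt theory, stated just before \Cref{df:quad-Euler-char}), after which the theorem is simply read off from \Cref{thm:GRR}; your sketch of that Proposition via Serre duality on $R\Gamma(X,V)$ and the hyperbolic-plus-middle-form decomposition is the expected argument, and you correctly flag it as the place where the real work and the sign bookkeeping lie.

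One small point: your last sentence (``since the left-hand side lies in $\GW(k)$, the equality holds in $\GW(k)$'') is not a valid inference---an equality in $\GW(k)_\QQ$ with one side integral does not force the other side to be integral. The paper does not address this either and simply asserts the equality in $\GW(k)$; the honest statement is in $\GW(k)_\QQ$, with integrality to be checked separately in concrete cases (as in the K3 example, where the denominators cancel).
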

We compute explicitly the above formula in the case of K3-surfaces (see \Cref{ex:HRR-K3}).

\subsubsection{Plan of the article}
This work can be roughly divided into two parts.
 In the first one, comprising Sections 1 and 2,
 we establish a framework of generalized orientations
 (or Thom classes attached to vector bundles with additional structures)
 that allows one to define fundamental classes, Todd classes and prove several
 Riemann-Roch formulas.
 In the second part, comprising Sections 3 and 4,
 we specialize the above discussion to the case of symplectic orientations,
 whose specificity is to be tied with the algebraic notion of formal ternary laws.
 The reader which is only interested in the symplectic case, can
 start reading sections 3 and 4 independently.

In more details, Section 1 is mainly concerned with a theory
 that extends Panin-Walter's orientation theory. The main objective is to define 
 a good notion of \emph{virtual Thom classes},
 which are associated with ``virtual vector bundles with additional structures'',
 which are rigorously axiomatized in \Cref{df:stable_structure}.
 We also remind  in Section 1.1 some material about bivariant theories in $\AA^1$-homotopy.
 Section 2.1 is then devoted to the application of the general theory of virtual Thom classes
 to define fundamental classes in the sense of Fulton and MacPherson,
 associated with the bivariant theories discussed in Section 1.1.
Section 2.2 finally presents the various Riemann-Roch formula that one
 can obtain out of an arbitrary morphism of ring spectra which are suitably
 \emph{oriented}.

Section 3 is more involved.
 Let us first describe the sections that can be read independently of sections 1 and 2.
 We first recall in 3.1 the algebraic theory of formal ternary laws,
 and in 3.2 Panin-Walter's notion of symplectic orientations on a ring spectrum.
 Section 3.4 is devoted to prove \Cref{thm:epsilon} (stated above as \Cref{thmi:epsilon}),
 which is crucial to prove that each $\Sp$-oriented ring spectrum
 has a canonically associated FTL. This association is studied in detail in 3.5.
 We derive some interesting facts about FTL such as \Cref{thm:log_FTL}
 (stated as \Cref{thmi:log_FTL} above).
 Section 3 is connected with the two previous one by 3.3,
 where we show that Panin-Walter's Thom classes can be extended as Thom classes
 of virtual metabolic symplectic bundles. The final section 3.6 explains
 how to compute Todd classes in the symplectic case
 (by perfect analogy with the $\GL$-orientation case).

The final section is devoted to computations, determining respectively the Todd classes associated with Adams operations (1.1)
 and with the Borel character (1.2). We conclude by a proof of the
 quadratic HRR-formula stated as \Cref{thmi:HRR} above.

\section*{Notations}

We will fix throughout the paper a base scheme $\Sigma$
 (eg: $\Spec(\ZZ)$, $\Spec(\ZZtwo)$, $\Spec(k)$ for a field $k$).
 All schemes are assumed to be quasi-compact and quasi-separated (qcqs) $\Sigma$-schemes;  we denote by $\base$ the category whose objects are such schemes. 

Given a scheme $X$, we let $\VB(X)$ be the category of vector bundles over $X$,
 that we view as Zariski (or Nisnevich) locally trivial $\GL_*$-torsors.

We let $\ZZe=\GW(\ZZ)$ be the Grothendieck-Witt ring of $\ZZ$:
 in other words, $\ZZe=\ZZ[\epsilon]/(\epsilon^2-1)$,
 $\epsilon$ corresponding to the opposite of the non-degenerate symmetric bilinear form $(x,y) \mapsto -xy$
 (see eg. \cite[Rem. 2.1.7]{Deg18}).

We use the terminology ``ring spectrum'' over a scheme $S$ for a commutative monoid object of the (homotopy category) $\SH(S)$
 - we will not need $E_\infty$-structures. Without further precision, a (ring) spectrum is defined
 over the fixed base scheme $\Sigma$.
 Given a scheme $S$, we let $[-,-]_S$ denote the bifunctor of morphisms in $\SH(S)$,
 and usually do not mention the indexing $S$ when it is clear from the context.

Given a vector bundle $V$ over a scheme $X$, we let $\Th(V)=\Sigma^\infty (V/V-X)$ be the Thom space
 associated with $V$ in the stable homotopy category $\SH(X)$.
 Recall that this construction extends canonically to virtual vector bundles $v$ over $X$,
 giving an object $\Th(v)$ in $\SH(X)$, such that $\Th(v+w)= \Th(v) \otimes \Th(w)$
 (see \cite[4.1]{Riou}).
 The rank of a virtual bundle over a scheme $X$ is considered as a $\ZZ$-valued Zariski locally
 constant function on $X$; we denote by $\uZ(X)$ the abelian groups of such functions.
 Similarly, given  $r \in \uZ(X)$, we let $\tw r$ be the constant virtual bundle
 with rank $r(x)$ on the connected component corresponding to a generic point $x$ of $X$.

For an lci morphism $f:X \rightarrow S$, we will denote by $\tau_f$ the virtual vector bundle
 on $X$ associated with its cotangent complex $\mathcal L_f$.


\subsubsection*{Acknowledgments}

The authors thank Adeel Khan and Fangzhou Jin for first exchanges
 that initially shaped this work.
They are also grateful towards Adrien Dubouloz and Michel Brion for several insightful conversations,
 and especially around the proof of Theorem \ref{thm:epsilon}. The second author also wishes to thank Marc Levine for useful discussions about K3-surfaces.
 The authors were supported by the ANG project HQDIAG, grant number ANR-21-CE40-0015.

\section{Orientation theory and fundamental classes}

\subsection{Recollections on bivariant theories}\label{sec:bivariant}

\begin{num}
In this section, we recall the bivariant formalism that automatically comes out of
the six functors formalism for ring spectra, as explained in \cite{Deg16}.\footnote{Recall
that bivariant theories were introduced by Fulton and MacPherson in \cite{FMP}. We refer the 
interested reader to the introduction of \cite{Deg16} for more historical remarks.}

Let $\EE$ be a spectrum (resp. ring spectrum) over $\Sigma$. For any ($\Sigma$-)scheme $S$
with structural map $p:S \rightarrow \Sigma$, we put $\EE_S=p^*\EE$  (resp. equipped with its ring structure)
--- this produces an \emph{absolute (resp. ring) spectrum} over $\base$
in the sense of \cite[Def. 1.1.1]{Deg16} (for $\mathscr T=\SH$).

Recall from \cite{LZ} and \cite[Th. 2.34]{Khan} that the existence of the exceptional functors
$(f_!,f^!)$ on the motivic category $\SH$ has been extended to all finite type morphisms,
instead of separated morphisms of finite type as in \cite{CD3}.
Therefore, one can slightly extend the definitions of \cite[Sec. 1.2]{Deg16}.\footnote{Unlike in \cite[Def. 1.2.2]{Deg16},
we will not call this the \emph{Borel-Moore homology} of $X/S$ with coefficients in $\E$.
This terminology appears a bit unfortunate as $f^!(\E_S)$ is not a dualizing object of $\SH(X)$ in general.}
\end{num}
\begin{df}\label{df:bivariant}
Let $\E$ be a spectrum over $\Sigma$.
We define the \emph{bivariant theory} with coefficients in $\E$ as follows.
Let $f:X \rightarrow S$ be a morphism of finite type and  $(n,m)$ be a couple of integers,
or more generally of Zariski locally constant functions $X \rightarrow \ZZ$.
The bivariant group of $X/S$ in degree $(n,m)$ and coefficients in $\E$ is:
$$
\E_{n,m}(X/S)=[\un_X(m)[n],f^!\E_S].
$$
The natural cohomology associated with this bivariant theory is for any scheme $X$:
\begin{equation}\label{eq:biv->coh}
\E^{n,m}(X)=\E_{-n,-m}(X/X)=[\un_X,\E_X(m)[n]].
\end{equation}
\end{df}

\begin{rem} \emph{Twisting conventions}. There are three possible, and useful, choices of indexes for a bivariant (and therefore also for cohomology/homology theories)
associated with (ring) spectra. Apart from the above choice, one can also introduce:
\begin{enumerate}
\item (See \cite[\textsection 1]{DJK}) For any integer $n \in \ZZ$ and any virtual vector bundle $v$ over $X$,
$$
\E_{n}(X/S,v)=[\Th(v)[n],f^!\E_S].
$$
\item For an integer $n \in \ZZ$ and a graded line bundle $(\cL,r)$ over $X$,
$$
\E_{n}(X/S,\cL,r)=[\Th(\cL)(r-1)[n+2r-2],f^!\E_S].
$$
\end{enumerate}
The indexing (1) is the most general one, and covers all the other cases according to
the following rules:
\begin{align*}
\E_{n,m}(X/S)&=\E_{n-2m}(X/S,\tw m), \\
\E_{n}(X/S,\cL,r)&=\E_n(X/S,[\cL] \oplus \tw{r-1}),
\end{align*}
where $[\cL]$ (resp. $\tw n$) denotes the virtual vector bundles associated with $\cL$
(resp. $\AA^n$).
When dealing with $\SL$-oriented theories,
convention (2) is the natural one (see \cite[\textsection 7]{DFJK}).
When dealing with ($\mathrm{GL}$ or $\Sp$)-oriented ring spectra, convention \eqref{eq:biv->coh} is the most relevant,
because of Thom isomorphisms. 

When following the indexing convention (1), note that the associated cohomology theory is of the form
\[
\E^{n}(X,v)=\E_{-n}(X/X,-v)\simeq [\un_X,\E_X\otimes \Th(v)[n]].
\] 
\end{rem}

Thanks to the motivic six functors formalism, bivariant theories enjoy a rich structure
that we will briefly survey for completeness. We start by the functorial properties.\footnote{We state
 these properties taking care about possible twists even though we will be mainly interested in trivial twists
 when dealing with $G$-oriented spectra.}
\begin{prop}\label{prop:biv_functorial}
Consider the assumptions of the previous definition.
\begin{enumerate}
\item \emph{Base change map}. For any cartesian square
$$
\xymatrix@=10pt{
Y\ar[r]^-{f'}\ar[d]\ar@{}|\Delta[rd] & X\ar[d] \\
T\ar_-f[r] & S,
}
$$
any virtual vector bundle $v$ over $X$ and any integer $n\in \ZZ$
there exists a morphism $\Delta^*:\E_{n}(X/S,v) \rightarrow \E_{n}(Y/T,(f')^*v)$, functorial
with respect to horizontal composition of cartesian squares. We sometimes write abusively: $f^*=\Delta^*$.
\item \emph{Proper covariance}. For any proper morphism $p:Y \rightarrow X$ of $S$-schemes, any virtual vector bundle $v$ on $X$ and any integer $n\in\ZZ$
there exists a morphism $p_*:\E_{n}(Y/S,p^*v) \rightarrow \E_{n}(X/S,v)$,
corresponding to a covariant functor on the category of $S$-schemes (with proper morphisms as morphisms).
\item \emph{\'Etale contravariance}. For any étale morphism $f:Y \rightarrow X$ of $S$-schemes, any virtual vector bundle $v$ on $X$ and any integer $n\in\ZZ$
there exists a morphism $f^*:\E_{n}(X/S,v) \rightarrow \E_{n}(Y/S,f^*v)$,
corresponding to a contravariant functor on the category of $S$-schemes and étale morphisms.
\end{enumerate}
These structural maps satisfy the following properties:
\begin{enumerate}[resume]
\item \emph{Homotopy invariance}. For any vector bundle $p:E \rightarrow S$, any $S$-scheme $X$ and any virtual vector bundle $v$ over $X$
the base change map $p^*:\E_{n}(X/S,v) \rightarrow \E_{n}(X \times_S E/E,(p')^*v)$ is an isomorphism.
\item \emph{\'Etale invariance}. For any étale morphism $f:T \rightarrow S$,
there exists an isomorphism $f^\sharp:\E_{n}(X/S,v) \rightarrow \E_{n}(X/T,v)$ which is natural
with respect to base change maps, proper covariance and étale contravariance.
\item \emph{Base change formulas}. Base change maps are compatible with proper covariance and étale covariance.
Moreover, for any cartesian square of $S$-schemes
$$
\xymatrix@=10pt{
W\ar^g[r]\ar_q[d] & Y\ar^p[d] \\
V\ar_f[r] & X
}
$$
such that $p$ is proper and $f$ is étale, one has: $f^*p_*=q_*g^*$.
\item \emph{Localization}. For any closed immersion $i:Z \rightarrow X$ of $S$-schemes, 
with complementary open immersion $j:U \rightarrow X$, and any virtual vector bundle $v$ on $X$ there exists a long exact sequence of the form:
$$
\E_{n}(Z/S,i^*v) \xrightarrow{i_*}
\E_{n}(X/S,v) \xrightarrow{j^*}
\E_{n}(U/S,j^*v) \xrightarrow{\partial}
\E_{n-1}(Z/S,i^*v)
$$
which is natural with respect to base change, proper covariance and  \'etale contravariance.
\item \emph{Nisnevich descent}. For any Nisnevich distinguished square\footnote{i.e. the square is cartesian, $p$ \'etale, $j$ open immersion,
and $p$ induces an isomorphism $(V-W)_{red} \rightarrow (X-U)_{red}$} of $S$-schemes 
$$
\xymatrix@=10pt{
W\ar^-l[r]\ar_-q[d] & V\ar^-p[d] \\
U\ar_-j[r] & X
}
$$
and any virtual vector bundle $v$ on $X$ there exists a long exact sequence:
$$
\E_{n}(X/S,v)
\xrightarrow{j^*+p^*} \E_{n}(U/S,j^*v) \oplus \E_{n}(V/S,p^*v)
\xrightarrow{q^*-l^*} \E_{n}(W/S,q^*j^*v)
\rightarrow \E_{n-1}(X/S,v),
$$
functorial with respect to base change maps, proper covariance and étale contravariance.
\item \emph{cdh-descent}. For any cdh-distinguished square\footnote{i.e. the square is cartesian, $p$ proper, $i$ closed immersion,
and $p$ is an isomorphism over $(X-Z)$} of $S$-schemes 
$$
\xymatrix@=10pt{
T\ar_-q[d]\ar^-k[r] & Y\ar^-p[d] \\
Z\ar_-i[r] & X
}
$$
and any virtual vector bundle $v$ on $X$ there exists a long exact sequence
\[
\E_{n}(T/S,q^*i^*v)
\xrightarrow{q_*+k_*} \E_{n}(Z/S,i^*v) \oplus \E_{n}(Y/S,p^*v)
\xrightarrow{i_*-p_*} \E_{n}(X/S,v)
\rightarrow \E_{n-1}(T/S,q^*i^*v)
\]
which is again functorial with respect to base change maps, proper covariance and étale contravariance.
\end{enumerate}
\end{prop}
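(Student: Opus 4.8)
The statement is a formal consequence of the six functors formalism on $\SH$, carried out essentially as in \cite[Sec.~1.2]{Deg16}; the only extra input is that we work with morphisms of finite type rather than separated of finite type, so that $(f_!,f^!)$ are the functors produced in \cite{LZ} and \cite[Th.~2.34]{Khan}. The plan is to first isolate the structures used: the adjunctions $(f^*,f_*)$ and $(f_!,f^!)$, the isomorphism $f_!\simeq f_*$ for $f$ proper, the relative purity isomorphism $f^!\simeq f^*\otimes\Th(\tau_f)$ for $f$ smooth (hence $f^!\simeq f^*$ for $f$ étale), the projection formulas, the localization triangles $j_!j^*\to\mathrm{id}\to i_*i^*\to[1]$ attached to a closed immersion $i$ with open complement $j$, the exchange $2$-morphisms among these functors together with their coherences, and the fact that $\SH$ satisfies Nisnevich and cdh descent --- all of which remain valid for finite type morphisms once $(f_!,f^!)$ has been extended. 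Throughout one freely uses the identities $\Th(v+w)\simeq\Th(v)\otimes\Th(w)$ and $\Th(f^*v)\simeq f^*\Th(v)$ of \cite[4.1]{Riou}, and the identification $f^*\E_S\simeq\E_T$ for $f\colon T\to S$.

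\textbf{The structural maps.} For the base change map~(1) attached to the square with $f'\colon Y\to X$, $g\colon Y\to T$ and $\pi\colon X\to S$, I would apply $f'^*$ to a class $\Th(v)[n]\to\pi^!\E_S$ and postcompose with the exchange $f'^*\pi^!\to g^!f^*$ (the mate of proper base change $f^*\pi_!\simeq g_!f'^*$, built from the unit of $(g_!,g^!)$, proper base change, and the counit of $(\pi_!,\pi^!)$) and with $f^*\E_S\simeq\E_T$, getting a class in $\E_n(Y/T,f'^*v)=[\Th(f'^*v)[n],g^!\E_T]$; functoriality for horizontal pasting is the corresponding coherence of the exchange. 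For proper covariance~(2) along $p\colon Y\to X$ over $S$, write $\Th(p^*v)=p^*\Th(v)$ and $(\pi p)^!=p^!\pi^!$; given $\alpha\colon p^*\Th(v)[n]\to p^!\pi^!\E_S$, apply $p_!=p_*$, then compose the projection isomorphism $p_!p^*\Th(v)\simeq\Th(v)\otimes p_!\un_Y$, the unit $\un_X\to p_*\un_Y=p_!\un_Y$ of $(p^*,p_*)$ on the source, and the counit $p_!p^!\to\mathrm{id}$ on the target, to obtain $p_*(\alpha)\in\E_n(X/S,v)$. For étale contravariance~(3) along $f\colon Y\to X$ étale, apply $f^*$ and use $f^!\simeq f^*$ and $(\pi f)^!=f^*\pi^!$. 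Covariant, resp. contravariant, functoriality in~(2), resp.~(3), follows from compatibility of units and counits, resp. of the purity isomorphism, with composition.

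\textbf{The distinguished properties.} Homotopy invariance~(4) holds because for a vector bundle $p\colon E\to S$ the exchange $p'^*\pi^!\to\pi'^!p^*$ is an isomorphism (relative purity for the smooth morphism $p$) and $p'^*$ is fully faithful ($p'$ is an $\AA^1$-equivalence), so $\Delta^*$ is an isomorphism. Étale invariance~(5) is the canonical identification $g^!f^!\E_S\simeq g^!f^*\E_S=g^!\E_T$ coming from étale purity, with naturality inherited from that of the purity isomorphism. The base change formulas~(6) are the standard coherences between the exchange $2$-morphisms relating $f^*$, $f_!$ and $f^!$; the last one, $f^*p_*=q_*g^*$ with $p$ proper and $f$ étale, additionally uses proper base change $f^*p_!\simeq q_!g^*$. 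For localization~(7), apply $[-[n],\pi^!\E_S]$ to the localization triangle $j_!j^*\Th(v)\to\Th(v)\to i_*i^*\Th(v)\to[1]$ and rewrite $[i_*i^*\Th(v)[n],\pi^!\E_S]\simeq\E_n(Z/S,i^*v)$ and $[j_!j^*\Th(v)[n],\pi^!\E_S]\simeq\E_n(U/S,j^*v)$ via the adjunctions $(i_!,i^!)$ and $(j_!,j^!)$; one then checks that the first two maps are precisely the $i_*$ and $j^*$ constructed above, and naturality comes from that of the localization triangle. Finally, Nisnevich descent~(8) and cdh descent~(9) follow by splicing the two localization long exact sequences attached to the horizontal, resp. vertical, closed--open decompositions of the given square, using that $(V-W)_{red}\simeq(X-U)_{red}$, resp. that $p$ is an isomorphism over $X-Z$; equivalently they are instances of Nisnevich, resp. cdh, descent for $\SH$.

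\textbf{Main obstacle.} There is no genuine mathematical difficulty: everything is manipulation of adjunctions and exchange $2$-morphisms in a six functors formalism, exactly as in \cite{Deg16}. The one point requiring care is to ensure that these manipulations remain licit after enlarging the class of morphisms for which $(f_!,f^!)$ is defined --- i.e. that the coherences used above still hold for the functors of \cite{LZ} and \cite{Khan} --- and, secondarily, to verify that the independently defined maps $i_*$, $j^*$ and $\Delta^*$ really coincide with the maps occurring in the localization, Nisnevich and cdh sequences; both are routine but unavoidable bookkeeping.
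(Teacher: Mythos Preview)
Your proposal is correct and follows precisely the approach indicated by the paper: the paper gives no detailed proof, stating only that ``these properties follow from the Grothendieck six functors formalism, as stated in \cite[Introduction]{CD3}'' and referring to \cite{Deg16} (after Prop.~1.2.4) for hints, which is exactly the derivation you sketch in detail.
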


These properties follow from the Grothendieck six functors formalism,
as stated in \cite[Introduction]{CD3}. This is a good exercise for the interested reader
(see \cite{Deg16} after Prop. 1.2.4 for hints).

\begin{rem}\label{rem:biv->cohsup}
As can be guessed from the localization exact sequence, the bivariant theory can be thought of as an extension of
cohomology with support. Indeed, when considering a closed immersion $i:Z \rightarrow X$ and a virtual vector bundle $v$ on $X$, if we view $Z$ as
an $X$-scheme (of finite type!), then one has:
$$
\E^{n}_Z(X,v)=\E_{-n}(Z/X,-i^*v),
$$
where one can define the left hand-side, cohomology of $X$ with support in $Z$, as:
$$
\E^{n}_Z(X,v)=[\Sigma^\infty X/(X-Z),\E_X\otimes \Th(v)[n]].
$$
So bivariant theory of closed immersions is really cohomology with support (see also \cite[Rem. 1.2.5]{Deg16}).
This can shed light on the bivariant formalism.
\end{rem}

\begin{num}\textit{Theories with proper support}.\label{num:biv_support}
It is possible to introduce
two variants of the bivariant theory for a (ring) spectrum $\E$ over $\base$,
a morphism $f:X \rightarrow S$ of finite type, integers $(n,m) \in \ZZ^2$
and a virtual vector bundle $v$ over $X$
(\cite[Def. 2.2.1]{DJK}):
\begin{itemize}
\item Bivariant theory with proper support:
\begin{align*}
\E_{n,m}^c(X/S)&=[\un_S(m)[n],f_!(f^!\E_S)] \\
\E_{n}^c(X/S,v)&=[\un_S[n],f_!(f^!\E_S \otimes \Th(-v))]
\end{align*}
\item Cohomology with proper support:
\begin{align*}
\E^{n,m}_c(X/S)&=[\un_S,f_!(\E_X)(m)[n]] \\
\E_{n}^c(X/S,v)&=[\un_S,f_!(\E_X \otimes \Th(v))[n]].
\end{align*}
\end{itemize}
Using again the six functors formalism, one deduces that they satisfy properties similar
to that of (plain) bivariant theory:
bivariant theory (resp. cohomology) with proper support is covariant (resp. contravariant) 
in $X$ with respect to any morphism (resp. proper morphisms), contravariant (resp. covariant)
in $X$ with respect to \'etale morphisms. The base change map (point (1)) exists for any cartesian square
in cohomology with proper support,
but only when $f$ is smooth for bivariant homology with proper support.

The properties (4) to (9) hold true for these two theories, with the following changes:
\begin{itemize}
\item for (5), one needs to assume that $f$ is \'etale and finite;
\item for (7) and cohomology with proper support, one needs to consider $j_*$ and $i^*$
instead of $i_*$ and $j^*$;
\item for (9) and cohomology with proper support,
the exact sequence  must be formulated in terms of pullbacks.
\end{itemize}
\end{num}

We next explain product structures on (plain) bivariant theories coming from ring spectra. We state the definition and properties for bivariant theories under the convention \eqref{eq:biv->coh}. The reader may consult \cite[\S 2.2.7 (4)]{DJK} for the most general indexing convention.

\begin{prop}\label{prop:biv_product}
Let $\E$ be a ring spectrum over $\Sigma$ and consider the assumptions of the previous definition.

For any composable morphisms of finite type $Y \xrightarrow g X \xrightarrow f S$,
there exists a product of the form
$$
\E_{n,m}(Y/X)\otimes \E_{s,t}(X/S) \rightarrow \E_{n+s,m+t}(Y/S), (y,x) \mapsto y.x
$$
which satisfies the following properties:
\begin{enumerate}
\item \emph{Associativity}.-- Consider four morphisms of finite type $Z/Y/X/S$,
and a triple $(z,y,x) \in  \E_{**}(Z/Y) \times \E_{**}(Y/X) \times \E_{**}(X/S)$, we have:
$$(z.y).x=z.(y.x).$$
\item \emph{Compatibility with pullbacks}.-- Consider morphisms of finite type $Y/X/S$ and any morphism $f:S' \rightarrow S$.
We let $g:X' \rightarrow X$ be the pullback of $f$ along $X/S$.
Then for any pair $(y,x) \in \E_{**}(Y/X) \times \E_{**}(X/S)$, the following formula holds:
$$
f^*(y.x)=g^*(y).f^*(x).
$$
\item \emph{(First) projection formula}.-- Consider morphisms $Z \xrightarrow f Y \rightarrow X \rightarrow S$ of finite type
such that $f$ is proper. Then for any pair $(z,y) \in  \E_{**}(Z/Y) \times \E_{**}(Y/X)$, one has:
$$
f_*(z.y)=f_*(z).y.
$$
\item \emph{(Second) projection formula}.-- Given a cartesian square in the category
of $S$-schemes of finite type:
$$
\xymatrix@=10pt{
Y'\ar^g[r]\ar[d]\ar@{}|\Delta[rd] & Y\ar[d] \\
X'\ar_f[r] & X,
}
$$
such that $f$ is proper, for any pair $(y,x') \in \E_{**}(Y/X) \times \E_{**}(X'/S)$, one has: 
$$
g_*(\Delta^*(y).x')=y.f_*(x').
$$
\end{enumerate}
\end{prop}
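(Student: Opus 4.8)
The plan is to construct the product and verify all four properties directly from the six functors formalism, mimicking the classical Fulton--MacPherson bivariant product as implemented in \cite[\S 2.2.7]{DJK} and \cite[\S 1.2]{Deg16}. First I would recall that for composable morphisms of finite type $Y \xrightarrow{g} X \xrightarrow{f} S$, the canonical isomorphism $(fg)^! \simeq g^! f^!$ identifies $g^!(f^!\E_S)$ with $(fg)^!\E_S$; combining this with the exchange structure relating $g^!$ to the monoidal structure (the projection-type isomorphism $g^!(M) \otimes g^*(N) \to g^!(M \otimes N)$, or more directly the pairing $g^! f^!\E_S \otimes g^* f^*(\text{-}) \to \dots$) yields the desired pairing. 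Concretely, given $y \in \E_{n,m}(Y/X) = [\un_Y(m)[n], g^!\E_X]$ and $x \in \E_{s,t}(X/S) = [\un_X(t)[s], f^!\E_S]$, one forms $g^*(x) \in [\un_Y(t)[s], g^*f^!\E_S]$, multiplies by $y$ using the ring structure map $\mu: \E_X \otimes \E_X \to \E_X$ pulled back to $X$ and the lax monoidality of $g^!$ over $g^*$, and obtains an element of $[\un_Y(m+t)[n+s], g^!f^!\E_S] = \E_{n+s,m+t}(Y/S)$. I would write this out as the composite of the three standard maps: the external product, the associativity constraint for $\otimes$ against $g^!$, and the ring multiplication of $\E$.

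The bulk of the argument is then the verification of the four listed properties, each of which reduces to a compatibility already encoded in the six functors formalism. \emph{Associativity} follows from the coherence of the iterated composition isomorphisms $h^! g^! f^! \simeq (fgh)^!$ together with the associativity of $\mu$ and of the monoidal structure; this is a diagram chase among canonical isomorphisms. \emph{Compatibility with pullbacks} uses the base change isomorphism $f^* g^! \simeq (g')^! (f'')^*$ for the relevant cartesian squares (available without properness since we only need the base change map $\Delta^*$, not an inverse), together with the fact that $g^*$ commutes with external products; one checks that the pullback of the product composite equals the product of the pullbacks by inserting the exchange 2-cells. \emph{The two projection formulas} rest on the projection formula $p_!(M \otimes p^*N) \simeq p_!(M) \otimes N$ and on the compatibility $p_*$ (which here is induced by the unit/counit of $(p_!, p^!) = (p_*, p^!)$ for proper $p$) with the monoidal structure; the first projection formula is the statement that $f_*$ is a map of $\E_{**}(Y/X)$-modules, and the second is its base-changed version, obtained by combining the first with base change as in \cite[Prop. 1.2.4]{Deg16}.

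The main obstacle I anticipate is not any single step but the bookkeeping of the various exchange and coherence 2-cells: making sure that the lax-monoidality data of $f^!$ over $f^*$, the base change isomorphisms, and the proper pushforward are threaded through consistently, so that the diagrams genuinely commute rather than merely commute up to an unexamined sign or twist. In particular the interaction between the twists $(m)[n]$ and the Thom-space reindexing in the more general convention of \cite[\S 2.2.7 (4)]{DJK} is where care is needed; restricting, as we do here, to the convention \eqref{eq:biv->coh} with Tate twists keeps this manageable. Since all of this is standard and carried out in the cited references, I would state that the construction and verification proceed exactly as in \cite[\S 2.2.7]{DJK} and \cite[Prop. 1.2.4]{Deg16}, and leave the routine diagram chases to the reader, as is done for \Cref{prop:biv_functorial}.
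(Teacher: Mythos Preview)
Your proposal is correct and follows essentially the same approach as the paper: construct the product explicitly via the six functors formalism and leave the verification of properties (1)--(4) as routine diagram chases in that formalism, citing prior work. The paper is slightly more explicit than you are about the construction itself: it writes $y.x$ as the composite
\[
\un_Y(m+t)[n+s] \xrightarrow{y(t)[s]} g^!(\E_X)(t)[s] \simeq g^!(\E_X \otimes \un_X(t)[s]) \xrightarrow{g^!(\Id \otimes x)} g^!\big(f^*\E_S \otimes f^!\E_S\big) \xrightarrow{Ex^{!*}_\otimes} (fg)^!(\E_S \otimes \E_S) \xrightarrow{(fg)^!(\mu)} (fg)^!\E_S,
\]
where the key exchange map $Ex^{!*}_\otimes$ is for $f$ (obtained by adjunction from the projection formula $f_!(f^*\E_S \otimes f^!\E_S) \simeq \E_S \otimes f_!f^!\E_S \to \E_S \otimes \E_S$), rather than the lax-monoidality of $g^!$ over $g^*$ that you emphasize. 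Your phrasing ``the ring structure map $\mu:\E_X\otimes\E_X\to\E_X$ pulled back to $X$'' is slightly off---one does not multiply in $\E_X$ but rather pushes $f^*\E_S\otimes f^!\E_S$ into $f^!(\E_S\otimes\E_S)$ via $Ex^{!*}_\otimes$ and then applies $(fg)^!(\mu)$---but this is a wording issue, not a mathematical gap. For the properties, the paper simply says they follow from the six functors formalism and refers to \cite[Prop.~1.2.10]{Deg12} for hints, so your more detailed sketch of each property is if anything more thorough than what the paper provides.
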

For the construction of the product, we start with maps
$$
y:\un_Y(m)[n] \rightarrow g^!(\E_X), x:\un_X(t)[s] \rightarrow f^!(\E_S)
$$
and then define $y.x$ as the following composite:
\begin{align*}
\un_Y(m+t)[n+s] &\xrightarrow{y(t)[s]} g^!(\E_X)(t)[s] \simeq g^!(\E_X \otimes \un_X(t)[s]) \\
& \xrightarrow{g^!(Id \otimes x)} g^!\big(\E_X \otimes f^!(\E_S)\big)=g^!\big(f^*(\E_S) \otimes f^!(\E_S)\big) \\
& \xrightarrow{Ex^{!*}_\otimes} g^!\big(f^!(\E_S \otimes \E_S)\big) \simeq (fg)^!(\E_S \otimes \E_S) \\
& \xrightarrow{(fg)^!(\mu)} (fg)^!(\E_S).
\end{align*}
Here, the isomorphism on the first line holds as Tate twists are invertible.
Second, the map labeled $Ex_\otimes^{!*}$ stand for the map obtained by adjunction
from canonical map:
$$
f_!\big(f^*(\E_S) \otimes f^!(\E_S)\big) \xrightarrow \sim \E_S \otimes f!f^!(\E_S) \xrightarrow{ad(f_!,f^!)} \E_S \otimes \E_S
$$
where the first map is the projection formula isomorphism and the second one is the unit map of the indicated adjunction.
Finally, $\mu$ is the product map of the ring spectrum $\E$.

The stated properties now follows from the six functors formalism,
again left as an exercise to the reader (hints can be found in the proof of \cite[Prop. 1.2.10]{Deg12}).

\begin{rem}
\begin{enumerate}[leftmargin=*]
\item The above product corresponds to the usual cup-product on cohomology, via formula \eqref{eq:biv->coh}.
More generally, it also induces the cup-product on cohomology with support (recall \Cref{rem:biv->cohsup}).
Given a scheme $X$ and closed subschemes $Z$ and $T$ of $X$, one can indeed recover the usual cup product as follows:
\begin{align*}
\E^{n,m}_T(X) \otimes \E^{s,t}_Z(X)=\E_{-n,-m}(T/X) & \otimes \E_{-s,-t}(Z/X)
\rightarrow \E_{-n,-m}(T/X) \otimes \E_{-s,-t}(Z \times_X T/T) \\
& \rightarrow \E_{-n-s,-m-t}(Z \times_X T/X)=\E^{n+s,m+t}_{Z \times_X T}(X) \\
(\tau,\zeta) &\mapsto \tau.\Delta^*(\zeta)=:\tau \cupp \zeta
\end{align*}
where $\Delta$ stands for the obvious cartesian square.

It is the opportunity to recall that cup-products in $\AA^1$-homotopy are not merely graded commutative, because of the double indexing.
Rather, using the above notation, one gets:
$$
\tau \cupp \zeta=(-1)^{(n-m)(s-t)}\epsilon^{mt}.\zeta \cupp \tau.
$$
This follows from the fact the permutation on the sphere $S^1$ is $(-1)$
and that on the sphere $\GG=\un(1)[1]$ is $\epsilon$ (see e.g. \cite[Lem. 6.1.1]{MorTri}).
\item Note also that given $X/S$,
one gets as a particular case both a left and right action of cohomology on bivariant theory:
\begin{align*}
\E^{s,t}(X) \otimes \E_{n,m}(X/S) & \rightarrow \E_{n-s,m-t}(X/S), \\
\E_{n,m}(X/S) \otimes \E^{s,t}(S) & \rightarrow \E_{n-s,m-t}(X/S).
\end{align*}
\end{enumerate}
\end{rem}

\begin{num}\textit{Fundamental classes}.\label{num:fdl_class}
Let $f:X \rightarrow S$ be a smoothable lci morphism, with virtual tangent bundle $\tau_f$.
According to \cite[Th. 3.3.2, Def. 4.1.3]{DJK}, one associates to $f$ its \emph{fundamental
class}:
$$
\eta_f^\E \in \E_{0,0}(X/S,\tau_f).
$$
By definition, this is the map: $\Th(\tau_f) \rightarrow f^!(\E_S)$ in $\SH(X)$.
Working with $\E$-modules, one obtains by adjunction an $\E$-linear morphism:
\begin{equation}\label{eq:fdl_biv}
\eta_f^\E:\E_X \otimes \Th(\tau_f) \rightarrow f^!(\E_S).
\end{equation}
Note also that this map is simply obtained by evaluating the purity
natural transformation $\pur_f$ of \cite[4.3.1]{DJK} at $\E$.
In particular, if $f$ is smooth, $\eta_f^\E$ is an isomorphism (\cite[Rem. 4.3.8]{DJK}).
Recall more generally the following definition (\cite[Def. 4.3.7]{DJK}):
\end{num}
\begin{df}\label{df:f-pure}
Let $f:X \rightarrow S$ be a smoothable lci map,
 and $\E$ be a ring spectrum over $X$.
 One says that $\E$ is \emph{$f$-pure} if the natural transformation $\eta_f^\E$ defined above is an isomorphism.
\end{df}

\begin{rem}
A spectrum $\E$ over $\ZZ$ is said to be \emph{absolutely pure} (\cite[4.3.11]{DJK}, \cite[1.3.2]{Deg12})
 if for any smoothable morphism $f:X \rightarrow S$ between regular schemes, $\E_S$ is $f$-pure.
 It is known that the K-theory spectrum, the rational motivic Eilenberg MacLane spectrum (\cite[14.4.1]{CD3}),
 the rational sphere spectrum (\cite{DFJK}), and the rational cobordism spectrum are absolute pure.
 It is conjectured that the sphere spectrum, and therefore, the algebraic cobordism spectrum, is absolutely pure.
\end{rem}

\subsection{Orientations and associated virtual Thom classes}

%

In this section, we will make use of the following terminology.

\begin{df}\label{df:graded_lin_alg_group}
Given a positive integer $d$, a \emph{linear algebraic group of degree $d$} is a $\ZZ$-graded linear algebraic group $G_*$
with a homogeneous embeding $G_* \rightarrow \GL_*$ of degree $d$.
To simplify notation, we simply write $G$ for $G_*$.
\end{df}

\begin{num}\label{num:lin_gps&torsors}
Let us recall the theory of orientations in motivic homotopy theory after Panin and Walter,
following the point of view of \cite[\textsection 2]{DF3}.

Let $G$ be a linear algebraic group of degree $d$.\footnote{The most relevant cases for us are $G_*=\GL_*, \SL_*, \Sp_*$,
corresponding to $d=1, 1, 2$. Recall from \emph{loc. cit.} that one can also consider
the case of $\SL_*^c$. Besides, the orthogonal and spinor cases $ \Orth_*$ and $\Spin_*$ are also of interest.}
We will consider the category $\Tors_G(X)$ of Nisnevich $G$-torsors over a
scheme $X$.\footnote{Recall
this category is a groupoid: every morphism must be an isomorphism.}
In the classical cases, this can be viewed as the category of vector bundles over $X$
with some additional data. In any case, there is a canonical functor
$$
o:\Tors_G(X) \rightarrow \Tors_{\GL}(X)=\VB(X).
$$
The rank $r$ of a $G$-torsor $\cV$ is the rank of the associated vector bundle,
seen as a Zariski locally constant function $r:X \rightarrow \NN$.
We assume there exists a $G$-torsor $\un^G_X$ (called the \emph{unit} $G$-torsor)
such that $o(\un^G_X)=\AA^d_X$.

For convenience, let us recall the definition of $G$-orientation (under the form we use, this definition is due to Panin and Walter,
see \cite[Def. 2.1.3]{DF3})
\end{num}
\begin{df}\label{df:G-orientation}
Let $\E$ be a ring spectrum over $\Sigma$.
A $G$-orientation $\thom$ (or absolute orientation) of $\E$
is the data for all schemes $X$ and all $G$-torsors $\cV$ of rank $r$ of an element
$\thom(\cV) \in \E^{2r,r}(\Th(o(\cV)))$ such that the following properties hold:
\begin{enumerate}
\item \textit{Isomorphisms compatibility}. $\phi^*\thom(\cW)=\thom(\cV)$ for an isomorphism
$\phi:\cV \xrightarrow \sim \cW$.
\item \textit{Pullbacks compatibility}. $f^*\thom(\cV)=\thom(f^{-1}\cV)$ for a morphism of schemes $f:Y \rightarrow X$.
\item \textit{Products compatibility}. $\thom(\cV \oplus \cW)=\thom(\cV) \cdot \thom(\cW)$.
\item \textit{Normalization}. $\thom(\un_X^G)=1_X^\E$ via the identification
$\E^{2d,d}(\Th(\AA^d_X)) \simeq \E^{0,0}(X)$, where $d$ is the degree of $G$.
\end{enumerate}
One also says that $(\E,\thom)$ is a $G$-oriented ring spectrum.
\end{df}

\begin{ex} Recall the following classical examples:
\begin{enumerate}
\item $\GL$-oriented: $\mathbf{H}_{\mathrm{M}}R$ ($R$-linear motivic cohomology), $\KGL$, $\MGL$, any ring spectrum associated
with a mixed Weil theory (\cite{CD2}).
\item $\mathrm{SL}^c$-oriented: $\mathbf{H}_{\mathrm{MW}}R$ ($R$-linear MW-motivic cohomology), 
the ring spectrum $\mathbf{H}M$ associated to any MW-module $M$ over a perfect field (\cite{Feld1, Feld2}),
more generaly any ring spectrum which belong to the perverse ($\delta$-)homotopy heart of $\SH(S)$ (\cite{BD1}).
These examples include the Chow-Witt groups. Other examples include $\KO$, the ring spectrum representing higher Grothendieck-Witt groups, and
$\W=\KO[\eta^{-1}]$ (higher Balmer-Witt groups).
\end{enumerate}
Recall moreover:
\begin{itemize}
\item  If $\E$ is $G$-oriented over $S$, any $\E$-algebra in $\SH(S)$ inherits
an induced $G$-orientation. 
\item We have the following string of implications: $\GL$-oriented $\Rightarrow$ $\SL^c$-oriented $\Rightarrow$ $\SL$-oriented $\Rightarrow$ $\Sp$-oriented.
\end{itemize}
\end{ex}

\begin{num}
Recall that a (commutative) Picard groupoid is a symmetric monoidal category $\mathscr C$
which is a groupoid and such that all objects are invertible for the tensor product.
We will generically denote by $+$ the tensor product of all the groupoids considered
in this paper.

Quillen's Q-construction is a basic tool to construct Picard groupoids (see \cite{Del_det}).
Given a scheme $X$, we let $\uK(X)$ be the Picard groupoid of \emph{virtual vector bundles} over $X$:
this is the groupoid associated with the category $Q\VB(X)$.
Similarly, in the notation of the preceding definition, we let $\uK^G(X)$ be
the groupoid of \emph{virtual $G$-torsors} associated with the category $Q\Tors_G(X)$,
where the exact structure on $\Tors^G(X)$ is induced by the exact structure on $\VB(X)$
(\emph{i.e.} from exactness of sequences of $\cO_X$-modules).
We let $0^G_X$ be the $0$-object of $\uK^G(X)$.
In particular, we get a canonical functor 
\begin{equation}\label{eq:forget_virtual}
o_X:\uK^G(X) \rightarrow \uK(X)
\end{equation}
A morphism of schemes $f:Y \rightarrow X$ induces a pullback functor $f^*:\uK^G(X) \rightarrow \uK^G(Y)$
compatible with the functors $o_X$ and $o_Y$ (see also \Cref{num:fibred_cat&K} for more discussion).
The rank $\rk(v)$ of a virtual $G$-torsor $v$ is the rank of the associated virtual vector bundle.

Using homotopy invariance, one can easily extend the existence of Thom classes
from the previous definition to virtual objects.
\end{num}
\begin{prop}\label{prop:virt_thom_G-orient}\label{prop:thom_virtual}
Let $(\E,\thom)$ be a $G$-oriented ring spectrum over $\Sigma$.
Then for any virtual $G$-torsor $\tilde v$ of rank $r$,
 there exists a unique class $\thom(\tilde v) \in \E^{2r,r}(\Th(o(\tilde v)))$ such that:
\begin{enumerate}
\item For a $G$-torsor $\cV$, $\thom([\cV])=\thom(\cV)$.
\item The Thom class of virtual $G$-torsors satisfies the same properties as for $G$-torsors:
$$
\phi^*\thom(\tilde w)=\thom(\tilde v), f^*\thom(\tilde v)=\thom(f^*\tilde v), \thom(\tilde v+\tilde w)=\thom(\tilde v).\thom(\tilde w), \thom(0^G_X)=1_X^\EE,
$$
where $\phi:\tilde v \rightarrow \tilde w$ is an isomorphism in the first relation.
\end{enumerate}
\end{prop}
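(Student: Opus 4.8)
The plan is to reduce the statement to the case of honest $G$-torsors by exploiting the Picard-groupoid structure of $\uK^G(X)$. For uniqueness, observe that any virtual $G$-torsor $\tilde v$ of rank $r$ can be written as $[\cV]-[\cW]$ with $\cV,\cW\in\Tors_G(X)$, so that $\tilde v+[\cW]\simeq[\cV]$; if $\thom$ has been extended with the properties (1) and (2), the product compatibility forces the relation $\thom(\tilde v)\cdot\thom(\cW)=\thom(\cV)$ inside $\E^{**}(\Th(o(\cV)))$, and since $\thom(\cW)$ is invertible for the cup product --- this is the Thom isomorphism theorem, which holds for $G$-orientations after Panin and Walter --- the class $\thom(\tilde v)=\thom(\cV)\cdot\thom(\cW)^{-1}$ is completely determined. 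This formula is also what the construction below will produce; the substance of the proof is to check that it does not depend on the chosen presentation, which I would handle through a universal property rather than by a direct computation.

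For existence, fix $X$ and introduce the Picard groupoid $\mathscr T_\E(X)$ whose objects are triples $(r,L,t)$, where $r\in\uZ(X)$, $L$ is a $\otimes$-invertible object of $\SH(X)$, and $t\colon L\to\E_X(r)[2r]$ is a map becoming an isomorphism after $-\otimes\E_X$; a morphism $(r,L,t)\to(r',L',t')$ exists only if $r=r'$ and is then an isomorphism $L\xrightarrow{\sim}L'$ compatible with $t$ and $t'$, the monoidal structure adding the ranks, tensoring the $L$'s and multiplying the $t$'s by means of $\mu$. The axioms (1), (3) and (4) of \Cref{df:G-orientation}, together with the graded-commutativity of the cup product in motivic homotopy, say exactly that $\cV\mapsto(\rk\cV,\Th(o(\cV)),\thom(\cV))$ is a symmetric monoidal functor $(\Tors_G(X),\oplus)\to\mathscr T_\E(X)$.

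The crucial point will be that this functor is additive on short exact sequences: for $0\to\cV'\to\cV\to\cV''\to 0$ one needs $\thom(\cV)=\thom(\cV')\cdot\thom(\cV'')$, compatibly with the canonical isomorphism $\Th(o(\cV))\simeq\Th(o(\cV'))\otimes\Th(o(\cV''))$. This is where homotopy invariance enters, exactly as announced in the text: one deforms the extension to the split one over $\AA^1_X$ (by the Rees / deformation-to-the-normal-cone construction), and uses pullback-stability of $\thom$ together with $\AA^1$-invariance of $\E$-cohomology of the resulting Thom spaces to reduce the identity to axiom (3). Granting this, the universal property of the Picard groupoid of virtual objects (\cite{Del_det}) --- monoidal functors on $Q\Tors_G(X)$ that are additive on short exact sequences extend uniquely to monoidal functors on $\uK^G(X)$ --- yields a unique monoidal functor $\uK^G(X)\to\mathscr T_\E(X)$ restricting to the one above. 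Its last component, $\tilde v\mapsto\thom(\tilde v)\in\E^{2r,r}(\Th(o(\tilde v)))$, is the desired extension: property (1) is the fact that the functor restricts to the original data along $\Tors_G(X)\hookrightarrow\uK^G(X)$, the isomorphism-, pullback- and product-compatibilities of (2) are respectively functoriality, naturality of all the constructions in $X$, and monoidality, and the normalization $\thom(0^G_X)=1_X^\E$ is unit-preservation.

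I expect the main obstacle to be the additivity on short exact sequences: unlike direct sums, a general extension of $G$-torsors is not split, so one genuinely needs the deformation argument and $\AA^1$-invariance to bring it back to axiom (3); equivalently, in the hands-on formulation, this is precisely what makes $\thom(\cV)\cdot\thom(\cW)^{-1}$ independent of the presentation $\tilde v\simeq[\cV]-[\cW]$. A secondary technical input, used above to make sense of $\thom(\cW)^{-1}$ and to check that $\mathscr T_\E(X)$ is a Picard groupoid, is the Thom isomorphism theorem for $G$-oriented ring spectra recalled above, which is standard after Panin and Walter.
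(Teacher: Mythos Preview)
Your argument is correct and hits the same logical skeleton as the paper: the whole content is additivity of $\thom$ on short exact sequences of $G$-torsors, after which the universal property of Deligne's Picard groupoid of virtual objects (this is exactly what the paper's citation of \cite[proof of 4.1.1]{Riou} amounts to) gives the extension. Your packaging via the auxiliary Picard groupoid $\mathscr T_\E(X)$ is a clean way to say this.

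Where you and the paper diverge is in how that additivity step is handled. You deform the extension to the split one over $\AA^1_X$ and use pullback-compatibility of $\thom$ plus $\AA^1$-invariance. The paper instead invokes Nisnevich descent of $\SH$: the desired identity $\thom(\cV)=\thom(\cV')\cdot\thom(\cV'')$ is local, and over a henselian local scheme every $G$-torsor is trivial, so the sequence splits and axiom~(3) applies directly. Your route matches Riou's original argument for $\GL$ and makes the role of homotopy invariance explicit, but it tacitly requires the deformed family over $\AA^1_X$ to carry a $G$-structure so that $\thom$ is defined along it; this is immediate for $\GL$, and for $\Sp$ extensions are already orthogonally split so no deformation is needed, but for a general linear $G$ it deserves a sentence. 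The paper's descent argument sidesteps this by never leaving the category of $G$-torsors over the (localized) base, which makes it more uniform in $G$, at the price of replacing ``homotopy invariance'' by ``Nisnevich descent'' as the structural input.
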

\begin{proof}
The proof works as in \cite[proof of 4.1.1]{Riou}. Given an exact sequence of $G$-torsors:
$$
0 \rightarrow \cV' \rightarrow \cV \rightarrow \cV'' \rightarrow 0,
$$
we have to show that one has $\thom(\cV)=\thom(\cV').\thom(\cV'')$ through
the canonical identification $\Th(o(\cV))=\Th(o(\cV')) \otimes \Th(o(\cV''))$ produced in \emph{loc. cit.}
As the $\infty$-category $\SH$ satisfies Nisnevich descent, is it sufficient to check
this over local henselian schemes. In other words, one can assume the $G$-torsors
in the above sequence are trivial, which implies that the sequence is split,
the splitting being $G$-equivariant.
In this latter case, the desired relation follows from property (3) of \Cref{df:G-orientation}.
\end{proof}

\begin{num}\textit{Cohomology of Thom spaces}.
Let $\E$ be a spectrum over $\Sigma$.
One usually extends the definition of the cohomology associated with $\E$ 
(\Cref{df:bivariant}) to Thom spaces. Given a virtual vector bundle $v$ over a scheme $X$,
and $(n,m) \in \uZ(X)^2$, one puts:
$$
\E^{n,m}(\Th(v)):=[\Th(v),\E_X(m)[n]].
$$
More generally, for any virtual vector bundle $w$ on $X$ and any $n\in\uZ(X)$ we set
\[
\E^n(\Th(v),w):=[\Th(v),\E_X\otimes\Th(w)[n]]\simeq [\Th(v-w),\E_X[n]],
\]
so that $\E^{n,m}(\Th(v))=\E^{n-2m}(\Th(v),\langle m\rangle)=\E^{n-2m}(\Th(v-\langle m\rangle)$. Therefore the locally constant integer $m$ is somewhat redundant, but we keep it for the sake of clarity.

The above abelian group is functorial in $v$ with respect to isomorphisms of virtual vector bundles,
thus producing a functor
$$
\uZ(X)^2 \times \uK(X) \rightarrow \Ab, (n,m,v) \mapsto \E^{n,m}(\Th(v)).
$$
If $\E$ has a ring structure, one obtains a product:
$$
\E^{n,m}(\Th(v)) \otimes_\ZZ \E^{s,t}(\Th(w)) \rightarrow \E^{n+s,m+t}(\Th(v+w)).
$$
In other words, $\E^{**}(\Th(*))$ is a $\uZ(X)^2 \times \uK(X)$-graded ring.
Note in particular that, fixing $v$, $\E^{**}(\Th(v))$ is an $\E^{**}(X)$-graded module.
\end{num}

As in classical cases, one deduces from Thom classes the so-called \emph{Thom isomorphisms}.
\begin{prop}
Let $(\E,\thom)$ be a $G$-oriented ring spectrum.
Then for any virtual $G$-torsor $\tilde v$ of rank $n$ over a scheme $X$, the map
$$
\E^{**}(X) \rightarrow \E^{*+2n,*+n}(\Th(\tilde v)), \lambda \mapsto \lambda.\thom(\tilde v)
$$
is an isomorphism. In other words, $\E^{**}(\Th(\tilde v))$ is a free rank one $\E^{**}(X)$-module
with basis given by the singleton $\big(\thom(\tilde v)\big)$.
\end{prop}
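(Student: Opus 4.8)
The plan is to exhibit an explicit two-sided inverse to the multiplication map $\psi_{\tilde v}\colon\lambda\mapsto\lambda.\thom(\tilde v)$, built from the Thom class of the opposite virtual $G$-torsor $-\tilde v$. Write $v=o_X(\tilde v)\in\uK(X)$, so that $\thom(\tilde v)\in\E^{2n,n}(\Th(v))$ by \Cref{prop:virt_thom_G-orient}; the same proposition produces $\thom(-\tilde v)\in\E^{-2n,-n}(\Th(o_X(-\tilde v)))$. Since the forgetful functor $o_X$ is monoidal there is a canonical identification $o_X(-\tilde v)\simeq -v$, and, using the monoidality of $V\mapsto\Th(V)$ recalled in the Notations (\cite[4.1]{Riou}), a canonical identification of Thom spaces $\Th(v)\otimes\Th(o_X(-\tilde v))\simeq\Th(v+(-v))=\Th(0)=\un_X$. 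Under it, the product and normalization properties of \Cref{prop:virt_thom_G-orient} give
$$
\thom(\tilde v).\thom(-\tilde v)=\thom\big(\tilde v+(-\tilde v)\big)=\thom(0^G_X)=1_X^\E
$$
in $\E^{0,0}(\Th(0))=\E^{0,0}(X)$, and symmetrically $\thom(-\tilde v).\thom(\tilde v)=1_X^\E$.

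Then I would define $\psi_{-\tilde v}\colon\E^{*+2n,*+n}(\Th(v))\to\E^{**}(X)$, $\beta\mapsto\beta.\thom(-\tilde v)$, using the graded product $\E^{a,b}(\Th(v))\otimes_\ZZ\E^{c,d}(\Th(w))\to\E^{a+c,b+d}(\Th(v+w))$ on Thom cohomology recalled above (with $w=o_X(-\tilde v)$) together with the identification $\Th(v+w)\simeq\un_X$. This product is associative, being induced by the multiplication $\mu$ of $\E$ and the associativity constraint of $\otimes$ in $\SH(X)$; hence, for all $\lambda\in\E^{**}(X)$,
$$
\psi_{-\tilde v}\big(\psi_{\tilde v}(\lambda)\big)=\big(\lambda.\thom(\tilde v)\big).\thom(-\tilde v)=\lambda.\big(\thom(\tilde v).\thom(-\tilde v)\big)=\lambda.1_X^\E=\lambda,
$$
and symmetrically $\psi_{\tilde v}\big(\psi_{-\tilde v}(\beta)\big)=\beta.\big(\thom(-\tilde v).\thom(\tilde v)\big)=\beta$ for all $\beta$. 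Note that only associativity is used, no graded-commutativity, so there are no signs to track. This shows $\psi_{\tilde v}$ is bijective with inverse $\psi_{-\tilde v}$, which is precisely the assertion (the freeness statement being the same thing, bidegree by bidegree).

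The only step that genuinely requires care — and the one I expect to be the main, if minor, obstacle — is the coherence of the canonical identifications of Thom spaces used implicitly above: one must check that reassociating $\Th(v)\otimes\Th(o_X(-\tilde v))\otimes\Th(v)$ in the two ways occurring in the two displayed computations recovers the identity of $\Th(v)$ after the unit identifications, and that $\Th(0)=\un_X$ is compatible with the monoidal unit. This is a formal consequence of the symmetric monoidal (Picard groupoid) structure on $\uK(X)$ and the monoidality of $V\mapsto\Th(V)$, but it should be written out rather than left implicit. An alternative that sidesteps the bookkeeping is to reduce, by Nisnevich descent, to the trivial genuine $G$-torsor $\un_X^G$: since $\E^{**}(\Th(-))$ sends Nisnevich distinguished squares to Mayer–Vietoris long exact sequences, a five-lemma induction on a trivializing Nisnevich cover — comparing the maps $\psi$ via \Cref{prop:virt_thom_G-orient} — reduces everything to the case $\Th(\AA^n_X)\simeq\un_X(n)[2n]$, where the normalization axiom makes the statement obvious. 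The inverse-map argument is the shorter of the two, so I would present it.
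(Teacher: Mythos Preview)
Your proof is correct and takes a genuinely different route from the paper. The paper argues by reduction: using the product compatibility of virtual Thom classes it reduces to the case of an honest $G$-torsor $\cV$, then uses Nisnevich locality to assume $\cV$ is trivial, and finally reduces by isomorphism and product compatibility to $\cV=\un_X^G$, where the normalization axiom gives the result. You instead construct an explicit two-sided inverse $\beta\mapsto\beta.\thom(-\tilde v)$ directly from the properties of virtual Thom classes in \Cref{prop:virt_thom_G-orient}. Your approach is shorter and more algebraic, and it makes transparent that the Thom class is a unit in the $\uK^G(X)$-graded ring $\E^{**}(\Th(*))$; the paper's approach is more geometric and sidesteps the (admittedly routine) coherence check you flag, at the cost of invoking Nisnevich descent. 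Amusingly, the alternative you sketch at the end is essentially the paper's own argument.
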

\begin{proof}
By the compatibility with product of Thom classes, one reduces to the case where
$\tilde v$ is associated to the class of a $G$-torsor $\cV$. The question is Nisnevich local
in $X$, so we can assume that $\cV$ is a trivial $G$-torsor. By compatibility with isomorphisms
and products, one can assume that $\cV=\un_X^G$, in which case the result follows
from the normalization property. 
\end{proof}

\begin{num}\label{num:E-modules}
Let $(\E,\thom)$ be a $G$-oriented ring spectrum over $\Sigma$.
For a scheme $X$, we can consider the category $\E_X\modd$ of $\E_X$-modules
in the monoidal category $\SH(X)$.\footnote{Beware that this category is only additive, which is sufficient for our needs. See \ref{} to get a theory with more structure.}
Given a virtual $G$-torsor $v$ of rank $r$,
the Thom class is by definition a map (in the homotopy category)
$$
\thom(\tilde v):\Th(o(\tilde v)) \rightarrow \E_X(r)[2r].
$$
By adjunction, one gets a morphism in $\E_X\modd$:
$$
\gamma_{\thom(\tilde v)}:\E_X \otimes \Th(o(\tilde v)) \rightarrow \E_X(r)[2r]
$$
which after forgetting the module structure can be written as the composite
\begin{equation}\label{eq:thomclass}
\E_X \otimes \Th(\tilde v) \xrightarrow{\Id_{\E_X} \otimes \thom(\tilde v)} \E_X \otimes \E_X(r)[2r]
\xrightarrow{\mu_X} \E_X(r)[2r]
\end{equation}
where $\mu_X$ is the multiplication map of the ring spectrum $\E_X$. In other words,
$\gamma_{\thom(\tilde v)}$ is the multiplication by the class $\thom(\tilde v)$ seen internally.
The next result is merely a reformulation of the preceding proposition.
\end{num}
\begin{cor}\label{cor:stable-thom}
Consider the above notation. Then the map $\gamma_{\thom(v)}$ is an isomorphism
of $\E_X$-modules.
\end{cor}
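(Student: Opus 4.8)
The plan is to recognise $\gamma_{\thom(\tilde v)}$ as the Thom isomorphism of the preceding proposition, now spelled out at the level of $\E_X$-modules. Since $\gamma_{\thom(\tilde v)}$ is, by construction (\Cref{num:E-modules}), a morphism in $\E_X\modd$, and since an $\SH(X)$-inverse of an $\E_X$-linear map is automatically $\E_X$-linear, the forgetful functor $\E_X\modd\to\SH(X)$ is conservative; hence it suffices to prove that the underlying morphism given by the composite \eqref{eq:thomclass} is an equivalence in $\SH(X)$.

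To check this I would use that $\SH(X)$ is compactly generated by the objects $\Sigma^\infty Y_+(q)[p]$ with $g\colon Y\to X$ smooth and $(p,q)\in\ZZ^2$, so that $\gamma_{\thom(\tilde v)}$ is an equivalence as soon as $[\Sigma^\infty Y_+(q)[p],\gamma_{\thom(\tilde v)}]$ is an isomorphism for all such data. By the adjunction $(g_\sharp,g^*)$, the monoidality of $g^*$ together with $g^*\E_X\simeq\E_Y$ and $g^*\Th(o(\tilde v))\simeq\Th(o(g^*\tilde v))$, and the compatibility $g^*\thom(\tilde v)=\thom(g^*\tilde v)$ of \Cref{prop:thom_virtual}(2), this reduces to the following: for every virtual $G$-torsor $w$ of rank $r$ over a scheme $Y$ and every $(p,q)\in\ZZ^2$, the morphism induced by $\gamma_{\thom(w)}$ on $[\un_Y(q)[p],-]$ is an isomorphism.

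Finally, tensoring with the invertible object $\Th(-o(w))$ of $\SH(Y)$ and rewriting source and target as (Tate-twisted) $\E$-cohomology groups of $Y$ — using the description of $\gamma_{\thom(w)}$ as internal multiplication by $\thom(w)$ recorded before \eqref{eq:thomclass} — identifies this morphism with cup-product by the Thom class $\thom(w)$, which is an isomorphism by the preceding proposition. Note that only Tate twists of $\E^{**}(Y)$ occur here, precisely the range covered by that proposition, so no more general twisted Thom isomorphism is needed. The only real work is this last bookkeeping step: tracking the shifts and Tate twists produced by the $(g_\sharp,g^*)$-adjunction and by invertibility of $\Th(o(w))$, and verifying that the resulting map is genuinely cup-product with $\thom(w)$. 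There is no deeper obstacle — as the surrounding text indicates, the corollary is merely the Thom isomorphism repackaged at the level of $\E_X$-modules.
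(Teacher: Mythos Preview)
Your argument is correct and does derive the corollary from the preceding proposition, but it is considerably more elaborate than what the paper has in mind. The paper gives no proof beyond calling the result ``merely a reformulation of the preceding proposition''; the intended reading is that the very same argument used there---reduce by multiplicativity of Thom classes to a genuine $G$-torsor, pass Nisnevich-locally to a trivial one, and invoke normalization---applies verbatim to the spectrum-level map $\gamma_{\thom(\tilde v)}$. Even more directly, the relation $\thom(\tilde v)\cdot\thom(-\tilde v)=\thom(0)=1$ from \Cref{prop:thom_virtual}(2) shows that a suitable twist of $\gamma_{\thom(-\tilde v)}$ is an explicit two-sided inverse to $\gamma_{\thom(\tilde v)}$, so no detection on compact generators is needed at all. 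Your route, by contrast, genuinely \emph{deduces} the spectrum-level statement from the cohomological one via compact generation and the $(g_\sharp,g^*)$-adjunction; this is a legitimate and instructive exercise, just not the shortest path.

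One small imprecision in your final step: after tensoring with $\Th(-o(w))$ and applying $[\un_Y(q)[p],-]$, the source does become $\E^{-p,-q}(Y)$, but the target is $\E^{2r-p,r-q}(\Th(o(w)))$, not a Tate-twisted cohomology group of $Y$ itself. This does not harm the argument---that pairing is exactly what the preceding proposition controls, and the induced map is indeed cup-product with $\thom(w)$---but the sentence ``only Tate twists of $\E^{**}(Y)$ occur here'' should be amended accordingly.
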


\begin{num}\label{num:virtual_Thom_iso}
One deduces from the previous corollary Thom isomorphisms for the various theories associated
 with a ring spectrum (\Cref{df:bivariant}, \Cref{num:biv_support}). As an example,
 for a finite type $S$-scheme $X$, locally constant integers $(n,m) \in \uZ(X)^2$ and a virtual $G$-torsor $\tilde v$ of rank $r$,
 one gets Thom isomorphisms:
\begin{equation}\label{eq:virtual_Thom_iso}
\begin{split}
\gamma_{\thom(\tilde v)}&:\E_{n,m}(X/S,\tilde v) \simeq \E_{n+2r,m+r}(X/S) \\
\gamma_{\thom(\tilde v)}&:\E^{n,m}(X,\tilde v) \simeq \E^{n-2r,m-r}(X)
\end{split}
\end{equation}
The second case is obviously a particular case of the first one!
 Both isomorphisms are induced by multiplication with another form of the Thom
 class: $\thom(v) \in \E^{2r,r}(X,-v)$, which corresponds to the Thom class of \Cref{prop:thom_virtual}
 via the isomorphism: $\E^{2r,r}(Th(v)) \simeq \E^{2r,r}(X,-v)$
 (which comes from definitions and the invertibility of the Thom space $\Th(v)$).
\end{num}

\subsection{Euler classes}

As usual, one can associate to a Thom class an Euler class (e.g. \cite[Def. 2.1.7]{DF3}). We give here a convenient definition for our purposes.

\begin{df}\label{df:Euler_class}
Consider a $G$-oriented ring spectrum $(E,\thom)$ as above.

Given a vector bundle $V$ over a scheme $X$,
 a stable $G$-orientation of $V$ is a pair $(\cV,\omega)$, where $\cV \in \uK^G(X)$
 and $\omega\colon V\simeq o(\cV)$ in $\uK(X)$.
Given an orientation $(\cV,\omega)$ on $V$,
 one defines the ($G$-oriented) \emph{Euler class} of $(V,\cV,\omega)$ as
$$
e(V,\cV,\omega)=p^*(\th(\cV)) \in  \E^{2r,r}(X),
$$ 
where $r=\mathrm{rank}(V)$ and $p$ is the composite
\[
p:X \xrightarrow{s_0} V \rightarrow \Th(V)\xrightarrow{\omega}\Th(o(\cV))
\]
of the zero section and the quotient map. We say that the stable orientations $(\cV,\omega)$ and $(\cW,\psi)$ are isomorphic if there exists an isomorphism $f:\cV\to \cW$ in $\uK^G(X)$ making the diagram
\[
\xymatrix{V\ar[r]^-{\omega}\ar@{=}[d] & o(\cV)\ar[d]^-{o(f)} \\
V\ar[r]_-{\psi} & o(\cW)}
\]
commutative.
\end{df}

\begin{rem}\label{rem:invariantGisom}
It is easy to check that the $G$-oriented Euler class of $(V,\cV,\omega)$ depends only on the isomorphism class
 of the $G$-orientation $(\cV,\omega)$. 
\end{rem}

\begin{rem}
When $E$ is $\GL$-oriented, the Euler class of a vector bundle $V$ is the top Chern class of $V$
(see eg. \cite[Rem. 2.4.5]{Deg12}). In Chow-Witt groups (of MW-motivic cohomology),
the Euler class of an $\SL^c$-torsor was first considered by Barge and Morel, and the second author
(see \cite{Fasel08a}).
\end{rem}

\begin{rem}\label{ex:virtual_Euler_classes}
Note that the definition of Euler classes (\Cref{df:Euler_class})
cannot be extended to virtual $G$-torsors $\tilde v$ over a scheme $X$,
as there is no map $X \rightarrow \Th(\tilde v)$ in general.
\end{rem}

\subsection{Fibred categories and virtual Thom classes}

\begin{num}\label{num:fibred_cat&K}
There is a more concise way of encoding
a $G$-orientation on a ring spectrum, using the language of \emph{fibred categories}.
We refer the reader to \cite{Vistoli} (see also \cite{Gray}).
 By what is classicaly called the Grothendieck construction,
 a fibred category $\phi:\mathscr S \rightarrow \mathscr B$ (\cite[Def. 3.5]{Vistoli})
 can equivalently be described by a (contravariant) \emph{pseudo-functor} (\cite[Prop. 3.13]{Vistoli}):
$$
\psi:\mathscr B^{\mathrm{op}} \rightarrow \Cat
$$
where $\Cat$ is the category of (small) categories, such that for any object $S$ of $\mathscr B$,
$\psi(S)$ is the \emph{fibre category} of $\phi$ over $S$: \emph{i.e.} the full subcategory of $\mathscr S$ whose objects
$X$ satisfy $\phi(X)=S$. Such a pseudo-functor is uniquely associated to the choice of a \emph{cleavage},
which amounts for any morphism $f:T \rightarrow S$ of a pullback functor
$f^*:\psi(S) \rightarrow \psi(T)$. Recall that the word pseudo-functor refers to the fact that one
only gets isomorphism $(\Id_S)^* \simeq \Id_{\psi(S)}$ and $g^*f^* \simeq (gf)^*$, satisfying
natural compatibility conditions and uniquely determined by the cleavage (see \cite[Def. 3.10]{Vistoli}).

All our fibred categories come with an exclicit choice of cleavage, so that we will consider them
as pseudo-functors. Beware however that a morphism of fibred categories (\cite[Def. 3.6]{Vistoli})
only corresponds to what is called a pseudonatural transformation (see \cite[end of 1.5 and 1.6(c)]{Gray}).
Considering a sub-category $\Cat_0$ of $\Cat$,
a fibred category over $\mathscr B$ in $\Cat_0$ (or simply \emph{$\mathscr B$-fibred $\Cat_0$-category})
will be a fibred category whose pseudo-functor has
the form $\psi_0:\mathscr B \rightarrow \Cat_0$.
In our main example, $\Cat_0$ will the be category of Picard groupoids,
but many other cases can appear (additive, triangulated monoidal).

Consider a graded linear algebraic group $G$ as in \Cref{num:lin_gps&torsors}.
Then $X \mapsto \Tors_G(X)$ is a fibred additive category over $\base$,
whose pullback functor is given by the pullback of $G$-torsors.
One deduces (from the functoriality of the associated Picard groupoid)
that $X \mapsto \uK^G(X)$ is a fibred Picard groupoid over $\base$.
Moreover, the functor \eqref{eq:forget_virtual} induces a morphism
of $\base$-fibred Picard groupoids:
$$
o:\uK^G \rightarrow \uK.
$$

The following definition gives an axiomatization of this situation
(it will be useful in the symplectic and orthogonal cases,
see \Cref{sec:sp_orientaiton}).
\end{num}
\begin{df}\label{df:stable_structure}
Let $d>0$ be an integer.
A \emph{stable structure} of degree $d$ on vector bundles (over $\Sigma$) will be:
\begin{itemize}
\item A pseudo-functor $\uK^\sigma:\base \rightarrow \mathscr P$ 
from  $\base$ to the category of Picard groupoids.
\item A natural transformation of pseudo-functors:
$o:\uK^\sigma \rightarrow \uK$.
\item An integer $d>0$ and a cartesian section\footnote{\emph{i.e.} a collection of objects $\un^\sigma_X
\in \uK^\sigma(X)$ indexed by schemes $X$ in $\base$ and stable under pullbacks} $\un^\sigma$ of $\uK^\sigma$
such that $o(\un_X^\sigma)=\tw d$.
\end{itemize}
As a companion, we slightly extend the first part of \Cref{df:Euler_class}. 
 Let $v$ be a virtual vector bundle $v \in \uK(X)$ (resp. $V$ be a vector bundle over $X$).
 A \emph{$\sigma$-orientation} of $v$ (resp. $V$) is a pair $(\tilde v,\omega)$, where $\tilde v \in \uK^\sigma(X)$
 and $\omega$ is an isomorphism $o(\tilde v) \simeq v$ (resp. $o(\tilde v) \simeq \tw V$).
\end{df}

\begin{ex}\label{ex:stable_G-structure}
Let $G$ be a linear algebraic group of degree $d$.
The fibered Picard groupoid $\uK^G$ and the functor $o$ defines a stable structure $\sigma_G$
on vector bundles, as explained in \Cref{num:lin_gps&torsors},
which we call the \emph{$G$-stable structure}.
The main cases for us are $G=\SL$ ($d=1$) and $G=\Sp$ ($d=2$).
Note that the tautological case $G=\GL$ ($\sigma=\Id$, $d=1$) is also interesting
from the point of view of orientations.
\end{ex}

\begin{num}
The stable homotopy category $\SH$ is a monoidal triangulated fibred category over $\base$.
We will consider $\SH^\times$ the underlying Picard groupoid of $\otimes$-invertible objects:
for a scheme $S$, $\SH^\times(S)$ is the monoidal category of spectra which are $\otimes$-invertible,
and whose morphisms are isomorphism is $\SH(S)$.

Given a ring spectrum $\E$ (over $\Sigma$), the fibred structure of $\SH$ induces
a fibred structure on $X \mapsto \E_X\modd$, which is therefore an additive $\base$-fibred category $\E\modd$.
As the ring structure on $\E$ is only considered in the homotopy category,
it is not possible to put a monoidal structure on the latter fibred category. However,
there is a morphism $L_{L_\E}:\SH \rightarrow \E\modd$ of additive fibred categories, induced by the
free $\E$-module functor. Given a scheme $S$, we consider the category $\E_X\modd^\times$
whose objects are the $\otimes$-invertible spectra $\F$, and morphismes from $\F$ to $\F'$ are given 
by the sub-abelian group of
$$
\Hom_{\SH(X)}(\F,\E_X \otimes \F') \simeq \Hom_{\E_X\modd}(\E_X \otimes \F,\E_X \otimes \F')
$$
consisting of isomorphisms of $\E_X$-modules.
There is a natural tensor product $\otimes_\E$ on this category, given on objects by the tensor structure
on $\SH(X)$ and on morphisms by the cup-product:
\begin{align*}
&u:\F \rightarrow \E_X \otimes \G, v:\F' \rightarrow \E_X \otimes \G', \\
& u \otimes_\E v:\F \otimes \F' \xrightarrow{u \otimes v} \E_X \otimes \E_X \otimes \G \otimes \G'
\xrightarrow{\mu_X^\E} \E_X \otimes \G \otimes \G'.
\end{align*}
This defines a $\base$-fibred Picard groupoid $\E\modd^\times$, and the free $\E$-module functor
induces a morphism of fibred Picard groupoids: $\SH^\times \xrightarrow{L_\E} \E\modd^\times$.
\end{num}
\begin{df}\label{df:sigma-orientation}
Consider a stable structure $\sigma$ on vector bundles
and an absolute ring spectrum $\E$.

A \emph{$\sigma$-orientation} on the ring spectrum $\E$ will be the data of a natural isomorphism
$\gamma$ 
pictured by the double arrow in the following diagram 
whose objects are pseudo-functors,
and simple arrows are natural transformations of pseudo-functors of fibred Picard groupoids:
$$
\xymatrix@R=6pt@C=30pt{
\uK^\sigma\ar^\sigma[r]\ar_{\rk \circ \sigma}[ddd]\ar@{=>}_{\gamma}[rdd]
& \uK\ar^-{\Th}[r]
& \SH^\times\ar^-{L_\E}[r]
&  \E\modd^\times \\
&&& \\
&&& \\
\uZ\ar_{tw^\E_{\PP^1}}[rrruuu] &&&
}
$$
where $\rk:\uK \rightarrow \uZ$ is the pseudo-functor induced by the rank of a virtual bundle,
and $tw^\E_{\PP^1}$ is the pseudo-functor which to an element $n \in \uZ(X)$, associates
the object
$$
tw_{\PP^1}(n)=\bigoplus_{x \in X^{(0)}} \E(n(x))[2n(x)].
$$
One requires further the following normalization condition:
For any scheme $X$, there exists an isomorphism
$\gamma_{(\un^\sigma_X)} \simeq \Id_{\E(d)[2d]}$ compatible with pullbacks.

In this situation, we call $\gamma$ the \emph{$\sigma$-stable Thom isomorphism} on $\E$.
\end{df}

\begin{num}\label{num:Thom_iso_explicit}
Explicitly, a $\sigma$-orientation is the data for any scheme $X$,
and for every $\tilde v \in \uK^\sigma(X)$, of a \emph{Thom isomorphism} in the
category of $\E$-modules over $X$:
\begin{equation}\label{eq:sigma_Thom_iso_map}
\gamma_{\tilde v}:\E_X \otimes \Th(v) \xrightarrow{\sim} \E_X(r)[2r],
\end{equation}
where $v=o(\tilde v)$ is the associated virtual bundle, and $r$ is the rank of $v$.
In addition, this Thom isomorphism is functorial in $\tilde v$ with respect to isomorphisms,
compatible with base change, and compatible with the addition of $v$.

The isomorphism $\gamma_{\tilde v}$ defines a cohomology class:
\begin{equation}\label{eq:sigma-Thom_class}
\th(\tilde v) \in \E^{2r,r}(\Th(v)) \simeq \Hom_{\E\modd_X}(\E_X \otimes \Th(v),\E_X(r)[2r])
\end{equation}
which will be called the \emph{Thom class} of $\tilde v$ with coefficients in $\E$.
 One readily deduces that multiplication by this Thom class induces isomorphisms:
$$
\E^{*,*}(X) \xrightarrow{\sim} \E^{*+2r,*+r}(\Th(v))
$$
and similarly for the other theories associated with $E$ (as in \Cref{num:virtual_Thom_iso}).
\end{num}

The following result is merely a reformulation of \Cref{cor:stable-thom},
showing that \Cref{df:sigma-orientation} is in fact a generalization of 
\Cref{df:G-orientation}.
\begin{prop}\label{prop:stable_G-orientations}
Let $\E$ be a ring spectrum and $G$ a linear algebraic group of degree $d$ (\Cref{df:graded_lin_alg_group}).
Then the map $\thom \mapsto \gamma_\thom$ of \Cref{cor:stable-thom} gives a bijection between the following structures:
\begin{enumerate}
\item the $G$-orientations $\thom$ on $\E$;
\item the $\sigma_G$-stable Thom isomorphisms $\gamma$ on $\E$ (\Cref{ex:stable_G-structure}).
\end{enumerate}
\end{prop}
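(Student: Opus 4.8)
The plan is to observe that both structures unwind, objectwise, to the same kind of datum --- a cohomology class on a Thom space, equivalently, by the free-module adjunction, an isomorphism of $\E_X$-modules --- so that the real content of the statement is merely the matching of the respective compatibility axioms, most of which is already packaged in \Cref{prop:virt_thom_G-orient} and \Cref{cor:stable-thom}.

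First I would check that $\thom \mapsto \gamma_\thom$ actually lands in the set of $\sigma_G$-stable Thom isomorphisms. Given a $G$-orientation $\thom$, \Cref{prop:virt_thom_G-orient} extends it uniquely to virtual $G$-torsors, \Cref{cor:stable-thom} shows that each associated $\gamma_{\thom(\tilde v)}$ --- the multiplication-by-$\thom(\tilde v)$ map of \eqref{eq:thomclass} --- is an isomorphism of $\E_X$-modules, and the four clauses of \Cref{prop:virt_thom_G-orient}(2), namely invariance under isomorphisms, compatibility with pullbacks, additivity $\thom(\tilde v+\tilde w)=\thom(\tilde v)\cdot\thom(\tilde w)$ and $\thom(0^G_X)=1$, translate exactly into the functoriality, base-change compatibility, monoidality and normalization demanded of a $\sigma_G$-stable Thom isomorphism in \Cref{df:sigma-orientation}. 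Here one uses that the monoidal structure $\otimes_\E$ on $\E\modd^\times$ corresponds, under the identification \eqref{eq:sigma-Thom_class}, to the cup-product $\cdot$ of the associated classes.

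Injectivity is then immediate: by construction $\gamma_{\thom([\cV])}$ is the image of $\thom(\cV)\in\E^{2r,r}(\Th(o(\cV)))$ under the bijection \eqref{eq:sigma-Thom_class}, so $\gamma_\thom=\gamma_{\thom'}$ forces $\thom(\cV)=\thom'(\cV)$ for every $G$-torsor $\cV$, hence $\thom=\thom'$. For surjectivity, starting from a $\sigma_G$-stable Thom isomorphism $\gamma$, I would set $\thom(\cV)$, for a $G$-torsor $\cV$ of rank $r$ on a scheme $X$, to be the class in $\E^{2r,r}(\Th(o(\cV)))$ corresponding to $\gamma_{[\cV]}$ via \eqref{eq:sigma-Thom_class}, and then read off the four axioms of \Cref{df:G-orientation}: isomorphism compatibility from the functoriality of $\gamma$ in isomorphisms of $\uK^G(X)$; pullback compatibility from the base-change compatibility of $\gamma$; product compatibility from $[\cV\oplus\cW]=[\cV]+[\cW]$ together with the monoidality of $\gamma$ and the identification of $\otimes_\E$ with $\cdot$; normalization from the normalization clause on $\gamma$ and the identification $\E^{2d,d}(\Th(\AA^d_X))\simeq\E^{0,0}(X)$. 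It then remains to see that $\gamma_\thom=\gamma$: the family of classes attached to $\gamma$ on \emph{all} virtual $G$-torsors via \eqref{eq:sigma-Thom_class} restricts to $\thom$ on classes of honest torsors and satisfies properties (1) and (2) of \Cref{prop:virt_thom_G-orient}, so by the uniqueness asserted there it coincides with the canonical extension of $\thom$, which is by definition the family of classes attached to $\gamma_\thom$; hence $\gamma=\gamma_\thom$.

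The main obstacle --- bookkeeping rather than a genuine difficulty --- is to be scrupulous about the two monoidal structures in play: one must verify that $\otimes_\E$ on $\E\modd^\times$, defined via $\mu_X$ and the cup-product in the paragraph preceding \Cref{df:sigma-orientation}, is carried by \eqref{eq:sigma-Thom_class} precisely to the product $\cdot$ on the graded ring $\E^{**}(\Th(*))$, and similarly that the coherence data of the two fibred Picard groupoids (associativity and unit constraints, pullback $2$-morphisms) are respected, so that the abstract ``natural isomorphism of pseudo-functors'' formulation of \Cref{df:sigma-orientation} genuinely unwinds to the explicit conditions used above. Once this is pinned down, the proof is a direct translation.
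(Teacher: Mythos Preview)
Your proposal is correct and follows essentially the same approach as the paper, which treats the statement as ``merely a reformulation of \Cref{cor:stable-thom}'' and gives only a one-sentence proof noting that the pseudo-naturality of $\gamma_\thom$ follows from the pullback and product compatibilities of \Cref{df:G-orientation}. Your version simply unpacks in detail the injectivity, surjectivity, and axiom-matching that the paper leaves implicit.
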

\begin{proof}
Indeed, the fact that $\gamma_\thom$ is a natural transformation of pseudo-functors comes from the construction
and the compatibility with pullbacks and products in \Cref{df:G-orientation}. 
\end{proof}

\begin{rem}
Using hermitian K-theory, we will give a finer statement in the symplectic case in \Cref{sec:virt_symplect}.
\end{rem}


\begin{num}\textit{$\sigma$-oriented Euler classes}.\label{num:sigma-Thom&Euler}
As in \Cref{num:virtual_Thom_iso}, one gets Thom isomorphisms \eqref{eq:virtual_Thom_iso}
 associated with an element $\tilde v \in \uK^\sigma(X)$ in the various theories
 (cohomology with/without proper support, bivariant with/without proper support)
 with coefficients in $\E$.

One can therefore extend \Cref{ex:virtual_Euler_classes}.
 Let $V/X$ be a vector bundle of rank $r$,
  with a given $\sigma$-orientation $(\tilde v,\omega)$, where $\tilde v \in \uK^{\sigma}(X)$ and $\omega:V\simeq o(\tilde v)$
  (\Cref{df:sigma-orientation}). Then one defines the associated Euler class as follows:
$$
e(V,\tilde v,\omega,\E)=\gamma_{\thom(\tilde v)}\big(e(V,\E)\big) \in \E^{2r,r}(X).
$$
Note that when $\sigma=\sigma_G$ as above, this definition obviously extend
 \Cref{df:Euler_class}.
 Moreover, as in \Cref{rem:invariantGisom},
 one deduces that once $\omega$ is chosen this $\sigma$-oriented Euler class depends only on the isomorphism
 class of $\tilde v$ in $\uK^\sigma(X)$. In other words,
 one can consider that $\tilde v$ belongs to the abelian group
 $K_0^\sigma(X)=\pi_0(\uK^\sigma(X))$.
\end{num}

%

\section{Fundamental classes and Grothendieck-Riemann-Roch formulas}

\subsection{Oriented fundamental classes}

\begin{df}
Let $\sigma$ be a stable structure on vector bundles (\Cref{df:stable_structure}).

Let $f:X \rightarrow S$ be an lci morphism with virtual tangent bundle $\tau_f$,
 seen as an object of the category $\uK(X)$.
 A \emph{$\sigma$-orientation} on $f$ is an element $\tilde \tau_f \in \uK^\sigma(X)$ and an isomorphism
 $o_\sigma(\tilde \tau_f) \simeq \tau_f$ --- in other words, a $\sigma$-orientation of its virtual
 tangent bundle (\Cref{df:sigma-orientation}).
 When $\sigma=\sigma_G$ is the stable structure associated with a linear algebraic group $G$ of degree $d$
 (\Cref{ex:stable_G-structure}), we simply say $G$-orientation.
\end{df}

%

\begin{ex}\label{ex:orientations}
\begin{enumerate}
\item Consider a smooth projective K3-surface over a field. 
 By definition, its canonical sheaf is trivial. In particular, it admits a symplectic form
 $\psi:\Lambda^2 T_X \rightarrow \cO_X$.
 Examples include for instance smooth quartics in $\PP^3_k$, or a generic intersection of a quadric and a cubic in $\PP^4_k$.
 More generally, any fibre product of $\mathrm{K}3$-surfaces admits a symplectic orientation.
 Thus, there are varieties over $k$ with a symplectic orientation for any even dimension. More complicated examples in any even dimension can be found in \cite{Beauville83}.
\item An $\SL^c$-orientation of a smooth morphism $f$ corresponds is a choice of a square root
 of the determinant $\omega_f$ of the cotangent sheaf $\Omega_f$: $\phi:\omega_f \simeq \mathcal L^{\otimes 2}$.
This is sometimes simply called an orientation of $f$ (or of $X/S$).

When $f:C \rightarrow \Spec(k)$ is a projective smooth curve over a field,
such an ($\SL^c$-)orientation is more classically called a $\Theta$-characteristic of $C$.
\item A symplectic orientation $\tilde \tau_f$ of $f$ uniquely determines
 an ($\SL^c$-)orientation of $f$.
\end{enumerate}
\end{ex}

We can now introduce the oriented fundamental class in the most general case of $\sigma$-structures.
\begin{df}\label{def:orientedfundamentalclass}
Let $\sigma$ be a stable structure on vector bundles, as in the previous definition.

Let $\E$ be a $\sigma$-oriented ring spectrum,
 and $f:X \rightarrow S$ be a $\sigma$-orientable smoothable lci morphism,
 with virtual dimension $d$, virtual tangent bundle $\tau_f$ and $\sigma$-orientation
 $\varphi:o_\sigma(\tilde \tau_f)\to \tau_f$.

One defines the \emph{$\sigma$-oriented fundamental class} of $f$ with coefficients in $\E$ as
 the class 
$$\tilde \eta_f^\E \in \E_{2d,d}(X/S)$$
obtained as the image of the
 fundamental class $\eta_f^\E$ (\Cref{num:fdl_class}) under the inverse of
 the Thom isomorphism $\gamma_{\tilde \tau_f}:\E_{2d,d}(X/S) \rightarrow \E_{0,0}(X/S,\tau_f)$
 associated with $\tilde \tau_f$ -- see \Cref{num:sigma-Thom&Euler}.
\end{df}

\begin{rem}
Beware that the class $\tilde \eta_f^\E$ depends on the choice of
 both the $\sigma$-orientation on $\E$ and on $f$: in our notation,
 it is intended that $\E$ and $f$ are $\sigma$-oriented objects.
 Similar conventions are taken below.
\end{rem}

\begin{num}\label{num:sigma_fundamental_classes_ppty}
The preceding fundamental classes satisfy good properties: 
 when restricted to the class $\mathcal C$ consisting of $\sigma$-oriented smoothable lci morphisms,
 they form a system of fundamental classes in the sense of \cite[Def. 2.3.4]{DJK},
 where the \emph{twist} associated to $f \in \mathcal C$ is the element
 $e_f=\tw{r_f}$ of $K_0(X)$, the trivial virtual vector bundle whose rank is the of rank of $f$.

In particular, they are normalized, compatible with composition (associativity formula),
 and satisfy the transversal base change (see \emph{loc. cit.} for details). This follows
 from the analogue properties of the classes $(\eta_f)$, and the compatibility of Thom isomorphism
 with pullbacks, and addition (\Cref{num:Thom_iso_explicit}).
 One also has an excess intersection formula.
\end{num}
\begin{prop}\label{prop:sigma_excess_intersection}
Consider a $\sigma$-oriented ring spectrum, and a cartesian square of schemes
$$
\xymatrix@=10pt{
Y\ar^g[r]\ar_v[d]\ar@{}|\Delta[rd] & T\ar^u[d] \\
X\ar_f[r] & S
}
$$
such that $f$ and $g$ are smoothable lci and $\sigma$-oriented, $n=\dim(f)$.
 Let $\xi$ be the excess intersection bundle associated with
 $\delta$.\footnote{To define $\xi$, one consider a factorisation $X \rightarrow P \rightarrow S$
 into a regular closed immersion and a smooth morphism, and put, let $Y \rightarrow Q \rightarrow T$
 be its pullback along $T/S$, and put: $\xi=v^{-1}N_XP/N_YQ$. See e.g. \cite[3.3.3]{DJK}.}
 We put $\tilde \xi=v^{-1}\tilde \tau_f-\tilde \tau_g$, and see it as a $\sigma$-orientation of $\xi$.

Then the following \emph{excess intersection formula} holds in $\E^{2n,n}(Y/T)$:
$$
\Delta^*(\tilde \eta^\E_f)=e(\tilde \xi,\E).\tilde \eta^\E_g.
$$
\end{prop}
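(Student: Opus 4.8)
The plan is to reduce the $\sigma$-oriented excess intersection formula to the known excess intersection formula for the (non-oriented) motivic fundamental classes $\eta_f^\E$, stated in \cite[3.3.3]{DJK} (or its bivariant reformulation), by translating everything through the Thom isomorphisms attached to the chosen $\sigma$-orientations. Recall that the classical excess formula reads, in the twisted theory,
\[
\Delta^*(\eta_f^\E) = e(\xi,\E)\cdot \eta_g^\E,
\]
where now $\Delta^*(\eta_f^\E)\in \E_{0,0}(Y/T, v^*\tau_f)$, $\eta_g^\E\in\E_{0,0}(Y/T,\tau_g)$, the Euler class $e(\xi,\E)$ lives in $\E^{2n,n}(Y,\xi)$ with $\xi = v^*\tau_f - \tau_g$ (as virtual bundles), and the product is the pairing of cohomology against bivariant theory from \Cref{prop:biv_product}. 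The heart of the argument is then purely bookkeeping: apply the inverse Thom isomorphism $\gamma_{\tilde\tau_f}^{-1}$, respectively $\gamma_{\tilde\tau_g}^{-1}$, to pass from $\eta_f^\E,\eta_g^\E$ to $\tilde\eta_f^\E,\tilde\eta_g^\E$, and use that $\tilde\xi = v^{-1}\tilde\tau_f - \tilde\tau_g$ is by construction a $\sigma$-orientation of $\xi$, so that $e(\tilde\xi,\E) = \gamma_{\thom(\tilde\xi)}(e(\xi,\E))$ by \Cref{num:sigma-Thom&Euler}.

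First I would record the compatibility of the base change map $\Delta^*$ with Thom isomorphisms: since $\gamma$ is a natural transformation of pseudo-functors of fibred Picard groupoids (\Cref{df:sigma-orientation}), and the Thom isomorphisms on bivariant theory of \Cref{num:virtual_Thom_iso} are induced by the $\E$-module map $\gamma_{\tilde\tau_f}$, base change in the sense of \Cref{prop:biv_functorial}(1) commutes with $\gamma_{\tilde\tau_f}$ up to the canonical identification $v^*(o_\sigma(\tilde\tau_f))\simeq o_\sigma(v^*\tilde\tau_f)$. Hence $\Delta^*(\tilde\eta_f^\E)$, which by definition is $\gamma_{v^*\tilde\tau_f}^{-1}$ applied to $\Delta^*(\eta_f^\E)$ read in the twist $v^*\tau_f$, equals $\gamma_{v^*\tilde\tau_f}^{-1}\big(e(\xi,\E)\cdot\eta_g^\E\big)$ by the classical formula. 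Next I would use the additivity of Thom classes together with the multiplicativity of the pairing (the convention $\E_n(X/S,\tilde v) = \E_{n-2r}(X/S)$ of \Cref{num:virtual_Thom_iso}, extended to arbitrary twists) to split the Thom isomorphism along the decomposition $v^*\tilde\tau_f = \tilde\xi + \tilde\tau_g$ in $\uK^\sigma(Y)$. Concretely, $\gamma_{v^*\tilde\tau_f} = \gamma_{\tilde\xi}\cdot\gamma_{\tilde\tau_g}$ (cup-product of Thom classes), so inverting gives $\gamma_{v^*\tilde\tau_f}^{-1}\big(e(\xi,\E)\cdot\eta_g^\E\big) = \big(\gamma_{\thom(\tilde\xi)}^{-1}\text{-nothing}\big)$... more precisely, multiplying the cohomology class $e(\xi,\E)\in\E^{2n,n}(Y,\xi)$ by $\thom(\tilde\xi)$ converts it to $e(\tilde\xi,\E)\in\E^{2n,n}(Y)$, and applying $\gamma_{\tilde\tau_g}^{-1}$ to $\eta_g^\E$ converts it to $\tilde\eta_g^\E$, so the right-hand side becomes exactly $e(\tilde\xi,\E)\cdot\tilde\eta_g^\E$.

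The step I expect to be the main obstacle is the careful matching of twists and the verification that the two Thom-isomorphism factorizations (the one on the cohomology factor carrying $e(\xi,\E)$ and the one on the bivariant factor carrying $\eta_g^\E$) are compatible with the product pairing $\E^{2n,n}(Y,\xi)\otimes\E_{0,0}(Y/T,\tau_g)\to\E_{2n,n}(Y/T, v^*\tau_f)$ of \Cref{prop:biv_product}; this is where one must invoke that $\gamma$ is monoidal, i.e.\ compatible with addition of virtual $\sigma$-torsors, so that $\thom(\tilde\xi)\cdot\thom(\tilde\tau_g) = \thom(v^*\tilde\tau_f)$ under the canonical identification $\Th(\xi)\otimes\Th(\tau_g)\simeq\Th(v^*\tau_f)$ coming from $o_\sigma$. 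Once this single coherence diagram is checked — which is routine given the properties in \Cref{num:Thom_iso_explicit} and \Cref{prop:biv_product} — the identity $\Delta^*(\tilde\eta_f^\E) = e(\tilde\xi,\E)\cdot\tilde\eta_g^\E$ in $\E^{2n,n}(Y/T) = \E_{2n,n}(Y/T)$ follows. I would close by remarking that the well-definedness of $e(\tilde\xi,\E)$ (dependence only on the isomorphism class of $\tilde\xi$, hence on $v^{-1}\tilde\tau_f - \tilde\tau_g \in K_0^\sigma(Y)$) is \Cref{rem:invariantGisom} and \Cref{num:sigma-Thom&Euler}, so the formula is intrinsic.
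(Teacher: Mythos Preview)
Your proposal is correct and follows exactly the approach the paper indicates. In fact, the paper does not give a self-contained proof of this proposition at all: it is stated immediately after \Cref{num:sigma_fundamental_classes_ppty}, where the authors simply say that the good properties of $\sigma$-oriented fundamental classes (including the excess intersection formula) ``follow from the analogue properties of the classes $(\eta_f)$, and the compatibility of Thom isomorphism with pullbacks, and addition (\Cref{num:Thom_iso_explicit}).'' Your write-up is a careful unpacking of precisely this sentence: you invoke the non-oriented excess formula from \cite[3.3.3]{DJK}, then push it through the Thom isomorphisms using their compatibility with base change (the pseudo-functoriality of $\gamma$) and their monoidality (additivity of Thom classes), which is exactly what the authors had in mind.
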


\begin{num}\textit{Oriented Gysin maps}.\label{num:general_Gysin}
 As explained in \cite[\textsection 2.4]{DJK} (following \cite{FMP}),
 one deduces from the previous fundamental classes the existence of Gysin maps in the theories with coefficients
 in a $\sigma$-oriented ring spectrum.
 Explicitly, let us consider a $\sigma$-oriented lci smoothable morphism $f:Y \rightarrow X$ of relative dimension $d$.
 Then one gets:
\begin{align*}
f_!^\E&:\E^{n,m}(Y) \rightarrow \E^{n-2d,m-d}(X), &f \text{ proper}, \\
f^!_\E&:\E_{n,m}(X/S) \rightarrow \E_{n+2d,m+d}(Y/S), &f \text{ $S$-morphism}, \\
f_!^\E&:\E^{n,m}_c(Y/S) \rightarrow \E^{n-2d,m-d}_c(X/S), &f \text{ $S$-morphism}, \\
f^!_\E&:\E_{n,m}^c(X/S) \rightarrow \E_{n+2d,m+d}^c(Y/S), &f \text{ proper $S$-morphism}.
\end{align*}
These maps are essentially induced by multiplication with the fundamental class $\tilde \eta_f^\E$:
 see \cite[3.3.2]{Deg16}, \cite[2.4.1]{DJK}.
 As an example, for $y \in \E^{n,m}(Y)$, one has:
$$
f_!(y)=f_*(y.\tilde\eta_f^\E)
$$
where the product is taken in the underlying bivariant theory,
 and lands in $\E_{2d-n,d-m}(Y/X)$, while
 $f_*:\E_{2d-n,d-m}(Y/X) \rightarrow \E_{2d-n,d-m}(Y/Y)=\E^{n-2d,m-d}(Y/Y)$
 is the pushforward in bivariant theory. See also \cite[2.4.1]{DJK} in the second case.
\end{num}

\begin{rem}
From the properties of $\sigma$-oriented fundamental classes (\Cref{num:sigma_fundamental_classes_ppty}), \Cref{prop:sigma_excess_intersection},
 one deduces the ``usual'' formulas for these Gysin maps: compatibility with composition,
 projection formula in the transversal case, excess intersection formula.
\end{rem}

\begin{num}\textit{Internal Gysin maps}. It is possible to give a more categorical
 formulation of Gysin maps, in the spirit of Grothendieck's trace map in coherent duality
 (\cite[III, Th. 10.5]{Hart}) or trace map in the \'etale formalism (\cite[XVIII, Th. 2.9]{SGA4}).

Through the isomorphism $\E_{2d,d}(X/S) \simeq \Hom_{\E\modd_X}(\E_X(d)[2d],f^!\E_S)$,
 the $\sigma$-oriented fundamental class corresponds to the following composite map,
 called the \emph{cotrace map}:
\begin{equation}\label{eq:cotrace}
\mathrm{cotr}_f^\E:\E_X(d)[2d] \xrightarrow{\gamma_{\tilde v}^{-1}} \E_X\otimes \Th(o(\tilde v))\xrightarrow{\varphi^*}\E_X \otimes \Th(v)
 \xrightarrow{\eta_f^\E} f^!(\E_S).
\end{equation}
The first morphism is the (inverse of the) Thom isomorphism \eqref{eq:sigma_Thom_iso_map}, the second one the stable orientation of $v$
 and the third one is the fundamental class \eqref{eq:fdl_biv}.
 By adjunction, one deduces the \emph{trace map}:
\begin{equation}\label{eq:trace}
\mathrm{tr}_f^\E:f_!(\E_X)(d)[2d] \rightarrow \E_S.
\end{equation}
Both maps can be considered in the category of $\E$-modules or
 (after forgetting the $\E$-module structure) in the stable homotopy category.
 They directly induce the Gysin morphisms defined above
 (see also \cite[\textsection 4.3]{DJK}).
\end{num}

%
%

The following result is a tautology, given that the first map
 in \eqref{eq:cotrace} is always an isomorphism.
\begin{prop}
Let $\E$ be  a $\sigma$-oriented ring spectrum
 and $f$ a smoothable lci $\sigma$-oriented morphism, of relative dimension $d$.
 Assume that $\E$ is $f$-pure (\Cref{df:f-pure}).

Then, the cotrace map \eqref{eq:cotrace} associated with $f$ and $\E$ is an isomorphism.
In particular, it induces by functoriality \emph{duality isomorphisms}:
\begin{align*}
\E^{n,m}(X) &\xrightarrow{\ \sim\ } \E_{2d-n,d-m}(X/S) \\
\E^{n,m}_c(X/S) &\xrightarrow{\ \sim\ } \E^c_{2d-n,d-m}(X/S).
\end{align*}
In fact, both morphisms are given by multiplication with the $\sigma$-oriented fundamental class $\tilde \eta_f^\E$.
\end{prop}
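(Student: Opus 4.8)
The plan is to read the statement off directly from the factorization \eqref{eq:cotrace} of the cotrace map into three arrows, each of which is separately an isomorphism. The first arrow $\gamma_{\tilde\tau_f}^{-1}\colon\E_X(d)[2d]\to\E_X\otimes\Th(o_\sigma(\tilde\tau_f))$ is an isomorphism of $\E_X$-modules by the very definition of a $\sigma$-orientation, the natural transformation $\gamma$ of \Cref{df:sigma-orientation} being required to be an \emph{isomorphism} (cf.\ \Cref{num:Thom_iso_explicit}). The second arrow $\varphi^*\colon\E_X\otimes\Th(o_\sigma(\tilde\tau_f))\to\E_X\otimes\Th(\tau_f)$ is induced by the $\sigma$-orientation isomorphism $\varphi\colon o_\sigma(\tilde\tau_f)\simeq\tau_f$ of $f$, hence is an isomorphism because $\Th$ and the free $\E$-module functor send isomorphisms to isomorphisms. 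The third arrow $\eta_f^\E\colon\E_X\otimes\Th(\tau_f)\to f^!(\E_S)$ of \eqref{eq:fdl_biv} is an isomorphism precisely by the standing hypothesis that $\E$ is $f$-pure (\Cref{df:f-pure}); this is the only place the hypothesis enters. Composing, $\mathrm{cotr}_f^\E$ is an isomorphism of $\E_X$-modules, and the forgetful functor carries it to an isomorphism in $\SH(X)$ as well.

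Next I would deduce the two duality isomorphisms by pure functoriality. Since Tate twists and $\PP^1$-suspensions are invertible in $\SH(X)$, one has the tautological reindexing $\E^{n,m}(X)=[\un_X,\E_X(m)[n]]\simeq[\un_X(d-m)[2d-n],\E_X(d)[2d]]$; post-composition with the isomorphism $\mathrm{cotr}_f^\E$ then gives a bijection onto $[\un_X(d-m)[2d-n],f^!(\E_S)]=\E_{2d-n,d-m}(X/S)$, which is the first duality isomorphism. For the compact-support statement, apply the functor $f_!$ to $\mathrm{cotr}_f^\E$: since $f_!$ preserves isomorphisms one gets an isomorphism $f_!(\E_X)(d)[2d]\xrightarrow{\ \sim\ }f_!f^!(\E_S)$, whose composite with the counit $f_!f^!\E_S\to\E_S$ is the trace map \eqref{eq:trace}; repeating the reindexing argument with $[\un_S(d-m)[2d-n],-]$ in place of $[\un_X(d-m)[2d-n],-]$ yields the bijection $\E^{n,m}_c(X/S)=[\un_S,f_!(\E_X)(m)[n]]\xrightarrow{\ \sim\ }[\un_S(d-m)[2d-n],f_!f^!(\E_S)]=\E^c_{2d-n,d-m}(X/S)$.

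Finally, to see that these bijections are nothing but multiplication by $\tilde\eta_f^\E$, I would recall from \Cref{num:general_Gysin} that under the adjunction $\E_{2d,d}(X/S)\simeq\Hom_{\E\modd_X}(\E_X(d)[2d],f^!\E_S)$ the fundamental class $\tilde\eta_f^\E$ corresponds exactly to $\mathrm{cotr}_f^\E$, and then unwind the construction of the bivariant product recalled after \Cref{prop:biv_product} in the special case $Y=X$, $g=\mathrm{id}_X$: there the exchange map $Ex^{!*}_\otimes$ and the ring multiplication $\mu$ assemble into the $\E_X$-module structure map of $f^!\E_S$, so that $a\cdot\tilde\eta_f^\E$ is precisely the composite of the reindexing of $a\in\E^{n,m}(X)$ with $\mathrm{cotr}_f^\E$ described above; the compact-support case follows by applying $f_!$. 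The only ``obstacle'' here is bookkeeping --- tracking Tate twists and shifts, and verifying this last compatibility between the bivariant product and post-composition with the cotrace map. There is no mathematical content beyond the observation that the first arrow of \eqref{eq:cotrace} is always invertible, which is why the statement is, as the authors put it, a tautology.
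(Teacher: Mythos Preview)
Your proposal is correct and matches the paper's approach exactly: the paper declares the result ``a tautology, given that the first map in \eqref{eq:cotrace} is always an isomorphism'' and gives no further argument, so your spelling-out of the three-arrow factorization (Thom isomorphism, orientation isomorphism, $f$-purity) is precisely the intended reasoning. Your additional bookkeeping for the two duality isomorphisms and the identification with multiplication by $\tilde\eta_f^\E$ is more detail than the paper provides, but is consistent with what the authors leave implicit.
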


\begin{rem}
In fact, one can extend the first duality isomorphism.
 Given any $X$-scheme $Y$ of finite type, mutliplication by the class $\tilde \eta_f^\E$ induces an isomorphism:
$$
\E^{n,m}(Y/X) \xrightarrow{\ \sim\ } \E_{2d-n,d-m}(Y/S).
$$
This statement is actually a generalization of the axiom considered in Bloch-Ogus twisted duality theory
 (\cite{BlochOgus}).
\end{rem}

\subsection{Todd classes and GRR formulas}

\begin{num}\label{num:todd}
As in the preceding section, we consider a stable structure $\sigma$
 of rank $r$ on vector bundles
 (\Cref{df:stable_structure}).
 We also consider $\sigma$-oriented ring spectra $(\E,\thom(-,\E))$ and $(\F,\thom(-,\F))$,
 and a morphism of ring spectra $\psi:\E \rightarrow \F$.
 We will generically denote by $\psi_*$ the map of cohomology theories or bivariant theories,
 with or without support, induced by $\psi$.
 Given a scheme $X$, we let $\mathrm{K}_0^\sigma(X)$ be the abelian group of isomorphisms classes of objects
 of $\uK^\sigma(X)$.

From the formalism developed previously,
 the following fundamental proposition is now obvious.
\end{num}
\begin{prop}\label{prop:computeToddclass}
Consider the above assumption.

Then for any scheme $X$, there exists a unique morphim of abelian groups:
$$
\td_\psi:\mathrm{K}_0^\sigma(X) \rightarrow \F^{00}(X)^\times
$$
contravariantly functorial in $X$ and such that for any element $\tilde v \in \mathrm{K}^\sigma_0(X)$
 of rank $n$,
 the following relation holds in $\F^{2n,n}(\Th(o(\tilde v)))$
\begin{equation}\label{eq:Todd&Thom}
\thom(\tilde v,\F)=\td_\psi(\tilde v)\cdot\psi_*(\thom(\tilde v,\E))
\end{equation}
using the product of bivariant theories (\Cref{prop:biv_product}).

Moreover, for any $\sigma$-oriented vector bundle $V/X$,
 with orientation $(\tilde v,\omega)$ where $\tilde v \in \uK^\sigma(X)$ is of rank $n$ and $\omega:o(\tilde v)\simeq V$,
 one has the following relation between Euler classes in $\F^{2n,n}(X)$:
\begin{equation}\label{eq:Todd&Euler}
e(V,\tilde v,\omega,\F)=\td_\psi(\tilde v)\cdot\psi_*(e(V,\tilde v,\omega,\E)).
\end{equation}
\end{prop}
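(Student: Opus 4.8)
The plan is to define $\td_\psi$ directly from the defining relation \eqref{eq:Todd&Thom} and then check that this formula is well-posed and has the stated properties. First I would observe that for a fixed $\tilde v \in \uK^\sigma(X)$ of rank $n$, both $\thom(\tilde v,\F)$ and $\psi_*(\thom(\tilde v,\E))$ are bases of the free rank-one $\F^{**}(X)$-module $\F^{**}(\Th(o(\tilde v)))$: for $\thom(\tilde v,\F)$ this is the Thom isomorphism coming with the $\sigma$-orientation of $\F$ (see \Cref{num:Thom_iso_explicit}); for $\psi_*(\thom(\tilde v,\E))$ one uses that $\thom(\tilde v,\E)$ is a basis of the free rank-one $\E^{**}(X)$-module $\E^{**}(\Th(o(\tilde v)))$, that $\psi_*$ is a morphism of ring spectra hence sends the generator to a generator after base extension along $\psi_*:\E^{**}(X)\to\F^{**}(X)$, and the normalization $\psi_*(1^\E_X)=1^\F_X$. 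Consequently there is a \emph{unique} element of $\F^{00}(X)$, necessarily a unit, that I call $\td_\psi(\tilde v)$, realizing \eqref{eq:Todd&Thom} after identifying $\F^{2n,n}(\Th(o(\tilde v)))\simeq \F^{00}(X)$ via the Thom isomorphism of $\F$. The fact that it lies in $\F^{00}(X)^\times$ follows because changing basis between two free rank-one module generators is multiplication by a unit, and one checks the unit lands in degree $(0,0)$ by tracking bidegrees.

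Next I would verify that $\td_\psi(\tilde v)$ depends only on the isomorphism class of $\tilde v$ in $\uK^\sigma(X)$: this is immediate from the compatibility of all the Thom classes (for both $\E$ and $\F$) with isomorphisms of virtual $\sigma$-torsors, and from naturality of $\psi_*$. To see that $\td_\psi$ is a morphism of abelian groups $\mathrm{K}_0^\sigma(X)\to \F^{00}(X)^\times$, I would use the product-compatibility of Thom classes: for $\tilde v$, $\tilde w$ one has $\thom(\tilde v+\tilde w,-)=\thom(\tilde v,-)\cdot\thom(\tilde w,-)$ for both $\E$ and $\F$, and $\psi_*$ is multiplicative on bivariant products (it is induced by a morphism of ring spectra, hence compatible with the product of \Cref{prop:biv_product}); combining these inside \eqref{eq:Todd&Thom} gives $\td_\psi(\tilde v+\tilde w)=\td_\psi(\tilde v)\cdot\td_\psi(\tilde w)$. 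Contravariant functoriality in $X$ is proved the same way, using pullback-compatibility of Thom classes on both sides and compatibility of $\psi_*$ with pullbacks (\Cref{prop:biv_product}(2)), together with the uniqueness clause to identify $g^*\td_\psi(\tilde v)$ with $\td_\psi(g^*\tilde v)$.

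Finally, for the Euler class statement \eqref{eq:Todd&Euler}: recall from \Cref{num:sigma-Thom&Euler} that for a $\sigma$-oriented vector bundle $(V,\tilde v,\omega)$ one has $e(V,\tilde v,\omega,-)=\gamma_{\thom(\tilde v,-)}\big(e(V,-)\big)$, and $e(V,-)=p^*\thom(\tilde v,-)$ where $p:X\to\Th(o(\tilde v))$ is the composite of the zero section, the quotient map and $\omega$ (\Cref{df:Euler_class}); concretely, applying $p^*$ to \eqref{eq:Todd&Thom} and using that $p^*\psi_*=\psi_*p^*$ (naturality of $\psi_*$ under the pullback $p^*$) together with $p^*(\td_\psi(\tilde v))=\td_\psi(\tilde v)$ (since $\td_\psi(\tilde v)\in\F^{00}(X)$ is pulled back from $X$ along $p$ composed with the projection $\Th(o(\tilde v))\to X$, and $p$ is a section of that projection) yields the claimed formula. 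The main obstacle I anticipate is not any single deep step but the careful bookkeeping of the three identifications in play — the Thom isomorphism for $\F$, the Thom isomorphism for $\E$ base-changed along $\psi_*$, and the bidegree shifts — to make sure the resulting unit genuinely sits in $\F^{00}(X)^\times$ and that the multiplicativity computation is carried out consistently with the sign/twist conventions recalled after \Cref{prop:biv_product}; since we are in trivial twists throughout (the $\sigma$-oriented setting), those signs will in fact not intervene, but this should be checked rather than assumed.
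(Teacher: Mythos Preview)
Your proposal is correct and follows essentially the same approach as the paper's own proof: define $\td_\psi(\tilde v)$ as the unique coefficient relating the two Thom classes via the Thom isomorphism for $\F$, check invariance under isomorphisms in $\uK^\sigma(X)$, and deduce the Euler-class relation by pulling back along the zero-section map. Your write-up is in fact more detailed than the paper's, which leaves the additivity and contravariant functoriality implicit; the one point where your phrasing is slightly loose is the claim that $\psi_*(\thom(\tilde v,\E))$ is again a basis ``after base extension'' --- the cleanest justification is to use $\thom(\tilde v,\E)\cdot\thom(-\tilde v,\E)=1$, apply $\psi_*$, and conclude that the coefficient expressing $\psi_*(\thom(\tilde v,\E))$ in terms of $\thom(\tilde v,\F)$ is a unit in $\F^{00}(X)$.
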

\begin{proof}
Consider an element $\tilde v \in \uK^\sigma(X)$, and put $v=o(\tilde v)$.
 According to \Cref{num:Thom_iso_explicit}, the Thom class $\th(v)$
 forms a basis of the bigraded $\F^{**}(X)$-module $\F^{**}(\Th(v))$.
 Therefore, the existence and uniqueness of $\td_\psi(\tilde v)$ is obvious,
 except that one has to prove that the latter class depends only on the isomorphism
 class of $\tilde v$.
 Assume there exists an isomorphism $\tilde \mu:\tilde v \rightarrow \tilde v'$ in $\uK^\sigma(X)$.
One deduces an isomorphism $\mu:v=o(\tilde v) \rightarrow o(\tilde v')=v'$ of virtual vector
 bundles. By compatibility of Thom classes with isomorphisms,
 one deduces the following relations, where $\epsilon=\E,\F$:
$$
\mu_*(\thom^\epsilon(\tilde v))=\thom^\epsilon(\tilde v').
$$
Thus the statement follows from relation \eqref{eq:Todd&Thom} and the uniqueness
 of $\td_\varphi(-)$.
 Finally, relation \eqref{eq:Todd&Euler} follows from the definition given in \Cref{num:sigma-Thom&Euler}
 and relation \eqref{eq:Todd&Thom}. 
\end{proof}

\begin{rem}
The main problem of the theory of Todd classes given by the above proposition
 is to find methods to obtain them explicitly. In the $\GL$ (resp. $\Sp$) oriented case,
 we have a theory of characteristic classes and a splitting principle
 that will allow us to express Todd classes as formal power series and to have
 an effective tool to compute them.
\end{rem}

As an immediate corollary, one obtains the generalized
 Grothendieck-Riemann-Roch formula \emph{à la} Fulton-MacPherson:
\begin{thm}[Grothendieck-Riemann-Roch formula]\label{thm:GRR}
Consider the assumptions of the previous proposition.
 Let $f:Y \rightarrow X$ be a smoothable lci morphism of dimension $d$,
 which is $\sigma$-oriented with $\sigma$-orientation $\tilde \tau_f$. Then the following GRR formula holds in $\F^{2d,d}(Y/X)$:
$$
\psi_*(\tilde\eta_f^\E)=\td_\psi(\tilde \tau_f)\cdot\tilde\eta_f^\F.
$$
Consequently, the following diagrams commute
\[
\xymatrix@=16pt@C=44pt{
\E^{**}(Y)\ar^{f_!^\E}[r]\ar_{\td_\psi(\tilde \tau_f).\psi_*}[d]\ar@{}|{(1)}[rd] & \E^{**}(X)\ar^{\psi_*}[d] 
 & \E_{**}(X/S)\ar^{f^!_\E}[r]\ar_{\td_\psi(\tilde \tau_f).\psi_*}[d]\ar@{}|{(2)}[rd] & \E_{**}(Y/S)\ar^{\psi_*}[d] \\
\F^{**}(Y)\ar_{f_!^\F}[r] & \F^{**}(X)
 & \F_{**}(X/S)\ar_{f^!_\F}[r] & \F_{**}(Y/S) \\
\E^{**}_c(Y/S)\ar^{f_!^\E}[r]\ar_{\td_\psi(\tilde \tau_f).\psi_*}[d]\ar@{}|{(3)}[rd] & \E^{**}_c(X/S)\ar^{\psi_*}[d]
 & \E_{**}^c(X/S)\ar^{f^!_\E}[r]\ar_{\td_\psi(\tilde \tau_f).\psi_*}[d]\ar@{}|{(4)}[rd] & \E_{**}^c(Y/S)\ar^{\psi_*}[d] \\
\F^{**}_c(Y/S)\ar_{f_!^\F}[r] & \F^{**}_c(X/S)
 & \F_{**}^c(X/S)\ar_{f^!_\F}[r] & \F_{**}^c(Y/S) 
}
\]
where the $!$-maps are the Gysin morphisms as defined in \Cref{num:general_Gysin},
that exist under the following assumptions:
\begin{enumerate}
\item[(a)] $f$ is proper in cases (1) and (4);
\item[(b)] $f$ is a morphism of finite type $S$-schemes in cases (2), (3) and (4).
\end{enumerate}
\end{thm}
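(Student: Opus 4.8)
The plan is to reduce everything to two inputs: the naturality of the \emph{un}-oriented fundamental class $\eta_f^\E$ of \Cref{num:fdl_class} in the coefficient ring spectrum, and the defining relation \eqref{eq:Todd&Thom} of the Todd class; once these are available, both the Riemann--Roch formula and the four diagrams become formal. First I would record that $\psi_*(\eta_f^\E)=\eta_f^\F$ in $\F_{0,0}(Y/X,\tau_f)$. Indeed, by \cite{DJK} (see \Cref{num:fdl_class}) the class $\eta_f^\E$ is obtained by evaluating the purity natural transformation $\pur_f$ at the object $\E_X$ of $\SH(X)$; since $\pur_f$ is a natural transformation, evaluating it at the morphism $\psi_X\colon\E_X\to\F_X$ gives a commutative square whose content is precisely the asserted identity, because $\psi_*$ on bivariant theory is, by construction, induced by post-composition with $f^!(\psi_X)$ (using that $\un_Y\to\E_Y\xrightarrow{\psi_Y}\F_Y$ is the unit of $\F_Y$). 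Equivalently, $\eta_f^\E$ and $\eta_f^\F$ are the images of the motivic fundamental class $\eta_f$ under the two unit maps.

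Next I would establish the Riemann--Roch formula. Writing $\tilde\tau=\tilde\tau_f$, recall from \Cref{def:orientedfundamentalclass} that $\tilde\eta_f^\E$ is characterised by $\gamma_{\tilde\tau}^\E(\tilde\eta_f^\E)=\eta_f^\E$, where $\gamma_{\tilde\tau}^\E$ is the Thom isomorphism of \Cref{num:virtual_Thom_iso}, given by multiplication with the Thom class $\thom(\tilde\tau,\E)\in\E^{2d,d}(Y,-\tau_f)$ --- and likewise for $\F$. Since $\psi_*$ is multiplicative for the (twisted) bivariant products (\Cref{prop:biv_product} and \cite[2.2.7]{DJK}), the previous step together with \eqref{eq:Todd&Thom} yields, for any bivariant class $x$,
\[
\psi_*\big(\gamma_{\tilde\tau}^\E(x)\big)=\psi_*\big(\thom(\tilde\tau,\E)\big)\cdot\psi_*(x)=\td_\psi(\tilde\tau)^{-1}\cdot\thom(\tilde\tau,\F)\cdot\psi_*(x)=\td_\psi(\tilde\tau)^{-1}\cdot\gamma_{\tilde\tau}^\F\big(\psi_*(x)\big).
\]
Applying this to $x=\tilde\eta_f^\E$ and using $\psi_*(\eta_f^\E)=\eta_f^\F=\gamma_{\tilde\tau}^\F(\tilde\eta_f^\F)$, one gets $\gamma_{\tilde\tau}^\F(\tilde\eta_f^\F)=\td_\psi(\tilde\tau)^{-1}\cdot\gamma_{\tilde\tau}^\F\big(\psi_*(\tilde\eta_f^\E)\big)$; as $\gamma_{\tilde\tau}^\F$ is an isomorphism commuting with multiplication by the unit $\td_\psi(\tilde\tau)$, this gives $\psi_*(\tilde\eta_f^\E)=\td_\psi(\tilde\tau_f)\cdot\tilde\eta_f^\F$. (Alternatively one may run the same computation on the cotrace map \eqref{eq:cotrace}, the only effect of applying $\psi$ being that the factor $(\gamma_{\tilde\tau}^\E)^{-1}$ transforms into $\td_\psi(\tilde\tau)\cdot(\gamma_{\tilde\tau}^\F)^{-1}$, while $\varphi^*$ and $\eta_f$ are unchanged.)

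Finally, the four diagrams are formal consequences. By \Cref{num:general_Gysin} each Gysin map is multiplication by $\tilde\eta_f^\E$ in a suitable bivariant theory followed by a structural map (a proper pushforward or a pullback, with or without proper support), and $\psi_*$ commutes with the bivariant products and with all the structural maps of the bivariant formalism --- these being induced by $f^!(\psi_X)$ and the product $\mu$ --- so the Riemann--Roch formula propagates. For example in case (1), with $f$ proper and $y\in\E^{**}(Y)$,
\[
\psi_*\big(f_!^\E(y)\big)=\psi_*\big(f_*(y\cdot\tilde\eta_f^\E)\big)=f_*\big(\psi_*(y)\cdot\td_\psi(\tilde\tau_f)\cdot\tilde\eta_f^\F\big)=f_!^\F\big(\td_\psi(\tilde\tau_f)\cdot\psi_*(y)\big),
\]
which is the commutativity of square~(1); cases (2)--(4) are handled identically, using the proper-support variants of the bivariant formalism recalled in \Cref{num:biv_support}, the hypotheses (a) and (b) being exactly those under which the relevant Gysin maps exist. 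The only genuinely non-formal ingredient is the naturality recorded in the first paragraph (together with the fact that every bivariant operation is functorial in the coefficient spectrum through the six operations); I expect that, and the routine bookkeeping of the $\tau_f$-twists through the twisted cohomology--bivariant products, to be where whatever care is needed lies, the rest being an immediate consequence of \Cref{prop:computeToddclass}.
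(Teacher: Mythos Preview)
Your proposal is correct and follows essentially the same approach as the paper: both arguments rest on the naturality $\psi_*(\eta_f^\E)=\eta_f^\F$ of the un-oriented fundamental class and the defining relation \eqref{eq:Todd&Thom}, then deduce the Riemann--Roch formula by comparing Thom isomorphisms (the paper phrases this via the cotrace map \eqref{eq:cotrace}, which you mention as the alternative route), and finally observe that the four squares are formal from the definition of the Gysin maps.
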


\begin{proof}
Recall first from \eqref{eq:cotrace} that the $\sigma$-oriented fundamental class is obtained from the cotrace map
\[
\tilde\eta_f^\E\colon\E_X(d)[2d] \xrightarrow{(\gamma_{\tilde \tau_f}^{\E})^{-1}} \E_X\otimes \Th(o(\tilde \tau_f))\xrightarrow{\varphi^*}\E_X \otimes \Th(\tau_f)
\xrightarrow{\eta_f^\E} f^!(\E_S).
\]
Applying $\psi_*$ and using the fact that it respects the \emph{non oriented} fundamental classes of \Cref{num:fdl_class}, we obtain 
\[
\psi_*(\tilde\eta_f^\E)\colon\F_X(d)[2d] \xrightarrow{\psi_*(\gamma_{\tilde \tau_f}^{\E})^{-1}} \F_X\otimes \Th(o(\tau_f))\xrightarrow{\varphi^*}\F_X \otimes \Th(\tau_f)
\xrightarrow{\eta_f^\F} f^!(\F_S).
\]
From \eqref{eq:Todd&Thom} and \eqref{eq:thomclass} we deduce that $\psi_*(\gamma_{\tilde \tau_f}^{\E})^{-1}=(\gamma_{\tilde \tau_f}^{\F})^{-1}\cdot \td_{\psi}(\tilde \tau_f)$, yielding the equality $\psi_*(\tilde\eta_f^\E)=\td_\psi(\tilde \tau_f)\cdot\tilde\eta_f^\F$. The displayed diagrams then commute in view of definition \Cref{num:general_Gysin}.
\end{proof}


\section{Symplectic orientations and formal ternary laws}

\subsection{Recollections on FTL}

\begin{num}\label{num:multi-valued} \textit{Multi-valued series}.--
Formal ternary laws were introduced by Walter in an unpublished work,
 and then formally developped in \cite{DF3}. More recently, we further expanded the theory
 in \cite{CDFH}, introducing in particular a slightly more accurate framework that we now briefly recall,
 referring the reader to \cite[\textsection 1]{CDFH} for more details. 

 Let $R$ be a (commutative) ring. Given a pair of integers $(n,d) \in \ZZ^2$, $\ux=(x_1,...,x_d)$ a $d$-tuple of integers,
 an $n$-valued $d$-dimensional series, $(n,d)$-series for short,
 with coefficients in $R$ will be an element 
$$
F_t(\ux)=1+F_1(\ux)t+\hdots+F_n(\ux)t^n
$$
in $R[[\ux]][t]$, polynomial in $t$ of degree $n$.
 These $(n,d)$-series form an $R$-module $\MForm_{n,d}(R)$.
 We will use a \emph{$\ZZ$-filtration} on $\MForm_{n,d}(R)$ by assigning to the variables $x_i$ degree $+1$
 and to the variable $t$ degree $-1$. Therefore, an $(n,d)$-series has valuation greater or equal to $(-n)$
 and can have infinite degree.

One can define a generalized composition on these $(*,*)$-series
 (see \cite[Def. 2.1.13]{CDFH} for more details).
 Let $G_t(\uy)$ be an $(m,r)$-series. We say that $G_t(\uy)$ is \emph{composable}
 if $G_t(\underline 0)=1$. We then introduce formal variables $(G^{[1]},\hdots,G^{[m]})$,
 called the \emph{roots} of $G_t(\uy)$, satisfying the formal identity
\begin{equation}\label{eq:roots}
G_t(\uy)=\prod_{l=1}^m (1+G^{[l]}t).
\end{equation}
In other words, the coefficients of $G_t(\uy)$ are given by the elementary symmetric functions
 in the variables $G^{[l]}$.

Then we define the \emph{substitution of $G_t(\uy)$ in $F_t(\ux)$ at the variable $x_i$}
 as the $(nm,d+r-1)$-series in the variables $(x_1,\hdots,\cancel{x_i},\hdots,x_d,y_1,\hdots,y_r)$
 by the following equality:
$$
F_t\big(x_1,...,x_{i-1},G_t(\uy),x_{i+1},...,x_d\big)
 :=\prod_{l=1}^m F_t\big(x_1,...,x_{i-1},G^{[l]},x_{i+1},...,x_d\big).
$$
More precisely, the expression on the right is obviously symmetric in the variables $G^{[l]}$.
 Thus it can be expressed as a polynomial in the elementary symmetric polynomials $e_s$ in the
 $G^{[l]}$, and we can therefore substitute to the $e_s$ the actual coefficients of the
 polynomial $G_t(\uy)$, following the rule \eqref{eq:roots}.

The following algebraic notion is an analog of formal group laws,
 especially relevant in motivic homotopy theory (see also \cite[Def. 3.1.5]{CDFH}).
\end{num}
\begin{df}\label{df:FTL}
Let $R$ be a $\ZZe$-algebra.
A \emph{formal ternary law}, FTL for short,
 with coefficients in $R$ is a $(4,3)$-series with coefficients in $R$
$$
F_t(\ux)=1+F_1(x,y,z)t+F_2(x,y,z)t^2+F_3(x,y,z)t^3+F_4(x,y,z)t^4
$$
satisfying the following properties:
\begin{enumerate}
\item \textit{Neutral element}. The $(4,1)$-series $F_t(x,0,0)$ is split
with roots $x$ and $-\epsilon x$ each with multiplicity $2$,
 \emph{i.e.} $F_t(x,0,0)=(1+xt)^2(1-\epsilon xt)^2$.
\item \textit{Semi-neutral element}. The following relation holds:
$$
F_4(x,x,0)=0.
$$
\item \textit{Symmetry}. The element $F_t(x,y,z)$ of $R[[x,y,z]][t]$ is fixed
 under the action of the group $\mathfrak S(x,y,z)$ permuting the formal variables.
\item \textit{Associativity}. Given formal variables $(x,y,z,u,v)$,
 one has the following equality of $(16,5)$-series:
$$
F_t\big(F_t(x,y,z),u,v\big)=F_t\big(x,F_t(y,z,u),v\big)
$$
where we have used the above substitution operation as $F_t(x,y,z)$ is composable by property (1).
\item \textit{$\epsilon$-Linearity}. One has the following relation in $\MForm_{4,3}(R)$:
$$
F_t(-\epsilon x,y,z)=F_{-\epsilon t}(x,y,z).
$$
\end{enumerate}
We frequently display an FTL by its coefficients:
\begin{equation}\label{eq:generitc_FTL}
F_t(x,y,z)=1+\sum_{i,j,k\geq 0, 1 \leq l \leq 4} a_{ijk}^lx^iy^jz^kt^l.
\end{equation}
Then, according to the above paragraph, the \emph{degree} of $F_t(\ux)$ is the integer
$$
d=\max \{i+j+k-l \mid a_{ijk}^l \neq 0\}.
$$
\end{df}

\begin{rem} Most of the above axioms are easy to translate in terms of coefficients.
 Using the notation \eqref{eq:generitc_FTL}, one obtains the following translations:
\begin{enumerate}
\item \textit{Neutral element}:
 $a_{i00}^l=\begin{cases}
1 & i=l=4, \\
2(1-\epsilon) & i=l=1,3, \\
2(1-2\epsilon) & i=l=2, \\
0 & \text{otherwise.}
\end{cases}$
\item \textit{Semi-neutral element}: For all $n \geq 0$, $\sum_{i+j=n} a^4_{ij0}=0$.
\item \textit{Symmetry}: for all $l$, $a_{ijk}^l$ is invariant under permutations of the triple $(i,j,k)$.
\item[(5)] \textit{$\epsilon$-Linearity}. The relation $(1+\epsilon)a_{ijk}^l=0$ holds
 whenever $l$ and one of the $i,j,k$ do not have the same parity.
\end{enumerate}
The associativity axiom is computationaly very involved and we refer the reader to \cite[Appendix]{CDFH}
 for a computer-based calculation.
\end{rem}

\begin{ex}\label{ex:FTP_low_complexity}
 The following FTL are examples with bounded degree. As such, they are respective analogues
 of the additive and multiplicative formal group laws (FGL for short).
\begin{enumerate}
\item We have an FTL of \emph{degree $0$} with coefficients in $\ZZe$ whose non-zero coefficients are:
$$
\begin{array}{llll}
a^1_{100}=2(1-\epsilon) & & & \\
a^2_{200}=2(1-2\epsilon) & a^2_{110}=2(1-\epsilon) & & \\
a^3_{300}=2(1-\epsilon) & a^3_{210}=-2(1-\epsilon) & a^3_{111}=8(2-3\epsilon) & \\
a^4_{400}=1 & a^4_{310}=-2(1-\epsilon) & a^4_{220}=2(1-2\epsilon) & a^4_{211}=2(1-\epsilon).
\end{array}
$$
After inverting $2$, this is one of the two universal FTL of degree $0$, and even $\leq 0$: see \cite[Th. 3.1.12]{CDFH}.
 The other FTL of degree $0$ is given by the same coefficients except that $a^3_{111}=8(3-2\epsilon)$.
 Note that the universality statement tells us that these coefficients will appear in any FTL.
\item We have an FTL of \emph{degree $2$}, having parameters $\tau$ and $\gamma$,
 \emph{i.e.} with coefficients in the polynomial ring
 $\ZZe^{mul}:=\ZZe[\tau,\gamma^{\pm 1}]/\langle \tau^2-2(1-\epsilon)\gamma, (1+\epsilon)\tau\rangle$,
 whose non-zero coefficients are, in addition to the ones appearing in the preceding law:
$$
\begin{array}{llll}
a^1_{110}=\tau\gamma^{-1} & a^1_{111}=\gamma^{-1} & & \\
a^2_{210}=2\tau\gamma^{-1} & a^2_{111}=-3\tau\gamma^{-1} & a^2_{220}=\gamma^{-1} & \\
a^3_{310}=\tau\gamma^{-1} & a^2_{220}=-2\tau\gamma^{-1} & a^3_{211}=3\tau\gamma^{-1} & a^3_{220}=\gamma^{-1} \\
a^4_{311}=-\tau\gamma^{-1} & a^4_{221}=2\tau\gamma^{-1} & a^4_{222}=\gamma^{-1}. &
\end{array}
$$
\end{enumerate}
See \cite[Theorem 6.6]{FH21} for more details. Note that these two FTL have no non-trivial coefficients in degrees less than $0$.
\end{ex}

By analogy with the case of formal group laws as underlined above,
 we will adopt the following definition.
\begin{df}\label{df:addmulFTL}
The FTL with coefficients in $\ZZe$ (resp. $\ZZe^{mul}$) of point (1) of the above example
 will be called the \emph{additive} (resp. \emph{multiplicative}) FTL
 and denoted by $F_t^{add}$ (resp. $F_t^{mul}$).

We will use the same terminology for any FTL obtained by extension of scalars
 from the two previous ones.
\end{df}

\begin{num}
As for FGL, FTL can be organized in a category.
 First note that if $F_t(x,y,z)$ is an FTL with coefficients $a_{ijk}^l$ in a ring $R$,
 and $\varphi:R \rightarrow R'$ is a morphism of rings,
 one obviously obtains a new FTL with coefficients in $R'$ by applying $\varphi$ to all
 coefficients of $F_t(x,y,z)$. We will denote this new FTL by
$$
\varphi_*F_t(x,y,z):=1+\sum_{i,j,k\geq 0, 1 \leq l \leq 4} \varphi(a_{ijk}^l)x^iy^jz^kt^l.
$$
\end{num}
\begin{df}\label{df:morph_FTL}
Let $F_t(x,y,z)$ and $G_t(x,y,z)$ be FTL with coefficients respectively
 in $\ZZe$-algebras $R$ and $R'$. A morphism from $F_t(x,y,z)$ to $G_t(x,y,z)$ is pair $(\varphi,\Theta)$ where $\varphi:R \rightarrow R'$
 is a morphism of $\ZZe$-algebras,
 $\Theta \in R[[x]]$ is a composable power series such that
 the following equality of $(4,3)$-series holds:
$$
\Theta_t\big(\varphi_*F_t(x,y,z)\big)=G_t\big(\Theta_t(x),\Theta_t(y),\Theta_t(y)\big),
$$
where $\Theta_t(x)=1+\Theta(x).t$ as a $(1,1)$-series.
 Note that the two terms of this equality are well defined,
 using the substitution operation of \Cref{num:multi-valued},
 as both $\Theta_t(x)$ and $F_t(x,y,z)$ are composable. Explicitly, the above equality reads as
 \[
 \prod_{i=1}^4(1+\Theta(F^{[i]})t)=G_t\big(\Theta_t(x),\Theta_t(y),\Theta_t(y)\big).
 \]

The composition of these morphisms are defined by composition of rings for $\varphi$,
 and by composition of power series for $\Theta(x)$.
 Therefore an isomorphism of FTL is a pair $(\varphi,\Theta)$ such that $\varphi$ is an isomorphism
 of $\ZZe$-algebras, and $\Theta$ is an invertible power series.
 Such an isomorphism will be called \emph{strict} if $\theta(x)=x+\sum_{i\geq 2}a_ix^i$.

The corresponding category will be denoted by $\FTL$.
\end{df}

\begin{rem}
The category $\FTL$ is a $\ZZe$-linear category, cofibred over the category of $\ZZe$-algebras.
In particular, any morphism $(\varphi,\Theta)$ of FTL can be factorised as
 $(\varphi,\Theta)=(\Id,\Theta)\circ (\varphi,\Id)$.
We can then restrict to the case where FTL have the same coefficient ring.
\end{rem}

\begin{ex}
The \emph{$\epsilon$-linearity axiom} for an FTL $F_t(x,y,z)$ with coefficients in $R$ 
 can be translated by saying that the series $\Theta(x)=-\epsilon.x$, seen as a power series
 with coefficients in $R$,
 induces an automorphism $(\Id_R,\Theta(x))$ of $F_t(x,y,z)$.
\end{ex}

\begin{num}
Let us briefly recall the discussion of \cite[3.1.1]{CDFH}.
 We define the plus and minus parts of $\ZZe$ as the quotient
 rings $\ZZep=\ZZe/(1+\epsilon)$, $\ZZem=\ZZe/(1-\epsilon)$.
 There is an injective map $\ZZe \rightarrow \ZZep \times \ZZem$
 which is an isomorphism after inverting $2$.
 The same definition applies to an arbitrary $\ZZe$-algebra $R$,
 and $R \rightarrow R^+ \times R^-$ is injective if $R/\ZZe$ is flat
 and bijective if $2 \in R^\times$.

Consequently, given any FTL $(R,\F_t(x,y,z))$,
 one defines its plus and minus parts by base change 
 respectively to $R^+/R$ and $R^-/R$.
 Equivalently, the FTLs $\big(R^+,F_t^+(x,y,z)\big)$ and $\big(R^-,F_t^-(x,y,z)\big)$
 are respectively obtained by specializing the symbol $\epsilon$ to $(-1)$ and $(+1)$.
\end{num}

\begin{ex}
The minus part of the additive formal group law can be written explicitly:
$$
F_t^{add-}(x,y,z)=1-2x^2.t^2-8xyz.t^3+(x^4-2x^2y^2).t^4.
$$
\end{ex}

\subsection{$\Sp$-oriented theories}\label{sec:sp_orientaiton}

\begin{num}\label{num:sp-grassmanian}
In this section, we recall the links between $\Sp$-oriented ring spectra and symplectic cobordism, following Panin and Walter.

We first start with the construction of symplectic cobordism.
 Recall that Voevodsky introduced in \cite{VoeMilnor} the algebraic cobordism spectrum $\MGL_S$
 over a base scheme $S$, as the algebraic analogue of complex cobordism.
 As in the classical algebraic topological case,
 it is defined as the colimits of the Thom spaces of the universal rank $n$ vector bundle
 $\gamma_n$ on the grassmanian scheme $\Gr_n$ of $n$-vector subspaces of the infinite dimension affine
 space:
$$
\MGL_S=\colim_{n>0} \Omega^n \Th(\gamma_n).
$$
This is a (commutative) ring spectrum, in the strict sense
 (see \cite{PPR} for the construction in terms of symmetric spectra,
 and \cite[\textsection 16]{BH} for that in terms of monoidal $\infty$-categories).
 Recall lastly that thanks to a theorem of Morel and Voevodsky,
 $\Gr_n$ is weakly equivalent in the $\AA^1$-homotopy category to the classifying space $\BGL_n$
 of the general linear group $\GL_n$.

Panin and Walter introduced in \cite{PWcobord} the analogous construction
 by replacing the family $\GL_n$ by that of the symplectic group $\Sp_{2n}$,
 following again classical algebraic topology. The main difference is that
 one naturally produces a $(\PP^1)^{\wedge 2}$-spectrum.

First, the classifying space $\BSp_{2n}$ also admits a geometrical model,
 the hyperbolic (or ``quaternionic'') Grassmannian.
 To fix notations, we consider the standard symplectic form $h$ on $\AA^2_S$,
 and denote by $\omega_2$ the corresponding symplectic bundle.
 We also set $\omega_{2n}=\omega_{2n-2}\perp \omega_2$ for any $n\geq 2$.
 For any integer $0\leq r\leq n$, we can consider the \emph{hyperbolic Grassmannian} $\HGr_S(2r,\omega_{2n})$ defined to be the open subscheme of the Grassmannian $\Gr_S(2r,2n)$ of $2r$-subvector bundles of $\AA^{2n}$
 on which the restriction of $\omega_{2n}$ is non degenerate.
 It satisfies the following compatibilities:
\begin{enumerate}
\item We have a canonical embedding $\HGr_S(2r,\omega_{2n})\to \HGr_S(2r,\omega_{2n+2})$
 which classifies flags of vector bundles of the form $U_{2r}\subset p^*\omega_{2n}\subset  p^*\omega_{2n+2}$.
\item We have a canonical embedding $\HGr_S(2r,\omega_{2n})\to \HGr_S(2r+2,\omega_{2n+2})$ which classifies flags of vector bundles
 $U_{2r}\perp p^*\omega_2\subset p^*\omega_{2n}\perp p^*\omega_{2}$.
\end{enumerate}

Then we get an $\AA^1$-weak equivalence of pointed spaces:
$$
\BSp_{2r}=\colim_{n>0} \HGr_S(2r,\omega_{2n}).
$$
We also consider the \emph{hyperbolic projective space} $\HP^n_S=\HGr_S(2,\omega_{2n+2})$,
 of dimension $2n$ over $S$, and $\HP^\infty_S=\colim_{n>0} \HGr_S(2,\omega_{2n+2})$,
 so that $\HP^\infty_S=\BSp_2$. Note that $\HP^1_S \simeq (\PP^1)^{\wedge 2}$, the two-fold smash-power
 of the Tate sphere.

We then define $\MSp_{2n}=\Th(\cU_{2n})$, as a space over $S$,
 $\cU_{2r}$ being the tautological symplectic bundle on $\BSp_{2n}$.
 The second of the above properties allows to define $(\PP^1)^{\wedge 2}$-suspension maps as follows.
 The embedding $f:\HGr_S(2n,\omega_{2n})\to \HGr_S(2n+2,\omega_{2n+2})$ yields a bundle of rank $2n+2$ on the first factor,
 namely $f^*\cU_{2n+2}$ which splits as $f^*\cU_{2n+2}\simeq \cU_{2n}\oplus \omega_2$.
 We obtain a morphism of Thom spaces
\[
\Th(\cU_{2n})\wedge th(\omega_2)\simeq \Th(\cU_{2n}\oplus \omega_2)\simeq \Th(f^*\cU_{2n+2})\to \Th(\cU_{2n+2})
\]
which is compatible with the inclusions of item 1. This yields the desired suspension maps:
\[
\sigma_{2n}:(\PP^1)^{\wedge 2} \wedge \MSp_{2n} \simeq \Th(\omega_2)\wedge \MSp_{2n}\to \MSp_{2n+2},
\]
and therefore a $(\PP^1)^{\wedge 2}$-spectrum, which in $\infty$-categorical terms can be defined as:
\begin{equation}\label{eq:MSp_as_colim}
\MSp_S=\colim_{r>0} \Omega^{2n} \MSp_{2n}.
\end{equation}
To get the ring structure, one considers the rank $2(n+m)$ symplectic bundle
 $\cU_{2n} \boxplus \cU_{2m}$ over $\BSp_{2n} \times_S \BSp_{2m}$,
 obtained by exterior sum. It is classified by a map
 $$f:\BSp_{2n} \times_S \BSp_{2m} \rightarrow \BSp_{2(n+m)}$$ such that
 $f^{-1}(\cU_{2(n+m)})=\cU_{2n} \boxplus \cU_{2m}$. Taking Thom spaces, one deduces:
$$
\Th(\cU_{2n}) \otimes \Th(\cU_{2m})
 \simeq \Th(\cU_{2n} \boxplus \cU_{2m}) \xrightarrow{f'_*} \Th(\cU_{2(n+m)})
$$
as required. This gives a (commutative) ring structure on the spectrum $\MSp_S$,
 which can be lifted to a strict ring structure at the level of symmetric $(\PP^1)^{\wedge 2}$-spectra
 according to \cite{PWcobord}.\footnote{For a more conceptual construction
 of the corresponding $E_\infty$-ring structure, we refer the reader to \cite[\textsection 16]{BH}.
 However, in this paper, we will only need the (commutative) monoid structure in $\SH(S)$
 on $\MSp_S$.}
\end{num}
\begin{df}
The ring spectrum $\MSp_S$ is called the symplectic cobordism spectrum over $S$.
\end{df}

\begin{num}\label{num:symplectic_Thom_Grass_Euler}
 \textit{Thom classes for symplectic cobordism}.
As in topology, the previous ring spectrum is built to possess ``symplectic Thom classes''.
 First note that the identity of $\MSp_{2r}$ corresponds by construction of $\MSp_S$
 to a map:
\[
\tau_r:\Sigma^{\infty}\MSp_{2r}\to \MSp(2r)[4r],
\]
or in other words, an element $\tau_r \in \tilde \MSp^{4r,2r}(\MSp_{2r})$
 in the reduced symplectic cobordism.
 Let $X$ be a smooth $S$-scheme which is affine.
 Then a symplectic bundle $\cE$ of rank $2r$ is classified by a map:
\[
f:X\to \BSp_{2r}
\] 
such that $\cE=f^{-1}(\cU_{2r})$. Taking Thom spaces and composing with $\tau_r$, we get:
\[
Th(\cE)=Th(f^{-1}\cU_{2r}) \xrightarrow{f_*} Th(\cU_{2r})=\MSp_{2r} \xrightarrow{\tau_r} \MSp(2r)[4r]
\]
This is the Thom class $\thom^\MSp(\cE) \in \tilde \MSp^{4r,2r}(\Th(\cE))$
 associated with $\cE$
 with coefficients in $\MSp$. Equivalently, $\thom(\cE)=f^*(\tau_r)$.

Using Jouanolou's trick, this construction can be extended to any smooth $S$-scheme.
 One can check the basic properties of Thom classes, as stated in \cite[Def. 2.1.3, $G=\Sp_*$]{DF3}:
 compatibility with isomorphisms and pullbacks, products, normalisation.
 In other words, $\MSp_S$ is symplectically oriented in the sense of \emph{loc. cit.}
 (see \cite[\textsection 10, Th. 1.1]{PWcobord} for the proof, analogous to the
 topological case).
Let us introduce some terminology for the statement of the next result.
 Let $\E$ be a ring spectrum over $S$.
 When $\E$ admits Thom classes for symplectic bundles satisfying the properties
 mentioned above, we will say that $\E$ admits symplectic Thom classes --- this corresponds
 to being $\Sp$-oriented in the sense of \cite[Def. 2.1.3]{DF3}.
 Recall that one can define the \emph{Euler class} $e(\cE)$ as the image of the Thom class $\thom(\cE)$
 under the canonical map:
$$
\E^{2r,r}(\Th(\cE)) \rightarrow \E^{2r,r}(\cE) \xrightarrow{\sim} \E^{2r,r}(X).
$$
Moreover, when $\E$ admits a Thom structure, one deduces an isomorphism:
\begin{equation}\label{coh_HPinfty}
\E^{n,m}(\HP^\infty_S) \simeq \colim_{n>0} \E^{n,m}(\HP^n_S)
\end{equation}
\emph{i.e.} the $\colim^{(1)}$ vanishes here.
 This follows from the symplectic projective bundle theorem for $\E$
 (see \cite[Th. 2.2.3]{DF3})

An $\MSp$-algebra structure on $\E$ (over $S$) will be a morphism $\varphi:\MSp_S\rightarrow \E$
 of ring spectra.
 We denote by $\Sigma_T^2 1_\E$ the image of the unit $1_\E \in \E^{00}(S)$ of the ring spectrum $\E$
 under the $(\PP^1)^{\wedge 2}$-suspension isomorphism $\E^{00}(S) \simeq \tilde \E^{4,2}(\HP^1_S)$.
\end{num}
\begin{thm}[Panin-Walter]\label{thm:PW_Sp-oriented}
Let $\E$ be a ring spectrum over $S$. Then the following data are in bijective
 correspondance:
\begin{enumerate}
\item the cohomology classes $b \in \tilde \E^{4,2}(\HP^\infty_S)$ such
 that $b|_{\HP^1_S}=\Sigma_T^2 1_\E$.
\item The $\Sp$-orientation on $\E$ over $S$ in the sense of \Cref{df:G-orientation}.
\item The $\MSp$-algebra structure on $\E$ over $S$: $\varphi:\MSp_S \rightarrow \E$
\end{enumerate}
Moreover, the bijections are constructed as follows:
\begin{itemize}
\item (3) $\rightarrow$ (2): for any symplectic bundle $\cU$ over a smooth $S$-scheme $X$,
 one defines $\thom^\E(\cU)=\varphi_*(\thom^\MSp(\cU))$.
\item (2) $\rightarrow$ (1): one defines: $b_n=\varphi_*(e(\cV_{2})) \in \E^{4,2}(\HP^n_S)$,
 where $\cV_{2}$ is the tautological rank $2$ symplectic bundle over $\HP^n_S$.
 Using formula \eqref{coh_HPinfty}, this defines a class $b \in \E^{4,2}(\HP^\infty_S)$.
\end{itemize}
\end{thm}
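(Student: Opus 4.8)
The plan is to organize the three data as the vertices of a triangle of constructions
\[
(3)\xrightarrow{\ \alpha\ }(2),\qquad (2)\xrightarrow{\ \beta\ }(1),\qquad (1)\xrightarrow{\ \gamma\ }(3)
\]
— the first two being those already indicated in the statement — and to check that the three round trips $\gamma\beta\alpha$, $\alpha\gamma\beta$ and $\beta\alpha\gamma$ are identities. That forces each of $\alpha$, $\beta$, $\gamma$ to be a bijection, with two-sided inverse the composite of the other two. The maps $\alpha$ and $\beta$ are formal given the material recalled above; all the substance lies in $\gamma$, i.e.\ in reconstructing a whole system of symplectic Thom classes — equivalently an $\MSp$-algebra structure — out of the single class $b$.

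\emph{The maps $\alpha$ and $\beta$.} For $\alpha$: given a ring map $\varphi\colon\MSp_S\to\E$, put $\thom^\E(\cU)=\varphi_*\thom^{\MSp}(\cU)$ for every symplectic bundle $\cU$. Since $\MSp_S$ is $\Sp$-oriented — Panin--Walter's theorem \cite[\textsection 10]{PWcobord}, recalled in \Cref{num:symplectic_Thom_Grass_Euler} — and $\varphi_*$ is multiplicative, compatible with pullbacks and carries the unit to the unit, the four axioms of \Cref{df:G-orientation} pass from $\thom^{\MSp}$ to $\thom^\E$. For $\beta$: the $\Sp$-orientation provides $\Sp$-oriented Euler classes, and we set $b_n=e(\cV_2)\in\E^{4,2}(\HP^n_S)$ with $\cV_2$ the tautological rank-$2$ symplectic bundle on $\HP^n_S=\HGr_S(2,\omega_{2n+2})$. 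The inclusions $\HP^n_S\hookrightarrow\HP^{n+1}_S$ pull $\cV_2$ back to $\cV_2$, so by pullback-compatibility of Euler classes the $b_n$ are compatible; \eqref{coh_HPinfty} — the vanishing of $\colim^{(1)}$, a byproduct of the symplectic projective bundle theorem for the now-oriented $\E$, \cite[Th. 2.2.3]{DF3} — assembles them into $b\in\tilde\E^{4,2}(\HP^\infty_S)$, and a direct computation from the normalization axiom, via the equivalence $\HP^1_S\simeq(\PP^1)^{\wedge 2}$, identifies $b|_{\HP^1_S}$ with $\Sigma_T^2 1_\E$.

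\emph{The map $\gamma$, the crux.} Starting from $b$ alone — before any orientation is available — I would first bootstrap a projective bundle theorem: using the motivic cell structure of $\HGr_S(2,\omega_{2n+2})$, whose successive cofibres are the Tate spheres $(\PP^1)^{\wedge 2k}$, together with the hypothesis $b|_{\HP^1_S}=\Sigma_T^2 1_\E$, one shows by induction on $n$ that $\E^{**}(\HP^n_S)$ is $\E^{**}(S)$-free on $1,b,\dots,b^n$. A Zariski--Mayer--Vietoris patching extends this to an arbitrary symplectic bundle, producing Borel classes $b_i(\cE)$ (the top one being the Euler class), and via the split injection $(\HP^\infty_S)^{\times r}\to\BSp_{2r}$ classifying $\cV_2^{\boxplus r}$ one obtains the symplectic splitting principle and a description of $\E^{**}(\BSp_{2r})$. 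Since $\MSp_{2r}=\Th(\cU_{2r})$ is built from hyperbolic Grassmannians, the same machinery yields a canonical Thom class $\tau^\E_r\in\tilde\E^{4r,2r}(\MSp_{2r})$ agreeing with $b$ in bottom degree; the splitting $f^*\cU_{2r+2}\simeq\cU_{2r}\oplus\omega_2$ and multiplicativity of Thom classes give $\sigma_{2r}^*\tau^\E_{r+1}=\tau^\E_r\wedge\thom^\E(\omega_2)$, so the $\tau^\E_r$ are compatible with the suspension maps in \eqref{eq:MSp_as_colim} and assemble into a morphism of $(\PP^1)^{\wedge 2}$-spectra $\varphi\colon\MSp_S\to\E$; multiplicativity of these Thom classes with respect to the exterior-sum map $\BSp_{2n}\times_S\BSp_{2m}\to\BSp_{2(n+m)}$ makes $\varphi$ a morphism of ring spectra.

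The three round trips are then bookkeeping: $\beta\alpha\gamma$ sends $b$ to the Euler class of $\cV_2$ computed from the $\tau^\E_r$, which restricts to $b$ by construction; $\gamma\beta\alpha$ and $\alpha\gamma\beta$ come down to the fact that a ring map out of $\MSp_S$ (resp.\ an $\Sp$-orientation) is determined by the image of $b$ (resp.\ by the Borel class of $\cV_2$ on $\HP^\infty_S$), which is the splitting principle again. I expect the genuine obstacle to be the opening step of $\gamma$ — obtaining the projective bundle theorem for the $\HP^n_S$ directly from $b$, with no orientation yet in hand — together with the careful check that the extracted classes $\tau^\E_r$ are honestly multiplicative and suspension-compatible; everything past that point is the symplectic transcription of Grothendieck's classical construction, and a good deal of it may simply be cited from \cite{PWcobord} and \cite[\textsection 2]{DF3}. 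Throughout, one works first with affine smooth $X$ and extends by Jouanolou's trick, invoking \eqref{coh_HPinfty} whenever passing between $\HP^\infty_S$ and its finite stages.
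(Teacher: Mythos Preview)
The paper does not prove this theorem itself: immediately after the statement it writes ``For the proof of that theorem, we refer the reader to \cite[Th.~1.1]{PWcobord}.'' Your outline is a faithful sketch of the Panin--Walter argument cited there --- the construction of $\gamma$ via the cell structure of $\HP^n_S$, the bootstrapped projective bundle theorem from $b$ alone, the assembly of compatible Thom classes $\tau^\E_r$ into a map of $(\PP^1)^{\wedge 2}$-spectra, and the round-trip checks --- so there is nothing to compare: you have reconstructed precisely the proof the paper chose to outsource.
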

For the proof of that theorem, we refer the reader to \cite[Th. 1.1]{PWcobord}.
The condition (1) being much more cost effective,
 and analogous to that of $\GL$-oriented ring spectra,
 we chose to use it as an equivalent replacement of \Cref{df:G-orientation} in the case $G=\Sp$
 (or also Definition 2.3.1 of \cite{DF3}).
\begin{df}\label{df:Sp-orientation}
A symplectic orientation of a ring spectrum $\E$ will be a class $b$
 as in point (1) of the previous proposition.
 We also say that $(\E,b)$ is a symplectically oriented ring spectrum over $S$.

A morphism of symplectically oriented ring spectra
 $\varphi:(\E,b^\E) \rightarrow (\F,b^\F)$ is a morphism of ring spectra
 such that $\varphi_*(b^\E)=b^\F$.
\end{df}
Note that by the above theorem, given such a morphism $\varphi$,
 one immediately deduces for an $\Sp$-bundle $\cU$ over a smooth $S$-scheme $X$,
 the relation
\begin{equation}\label{eq:Sp-morphisms&Thom_classes}
\varphi_*(\thom^\E(\cU))=\thom^\F(\cU).
\end{equation}

\begin{rem} The symplectic orientation theory is completely analogous to
 the classical orientation theory, both in $\AA^1$-homotopy (see \cite[Section 2.1]{Deg12}), 
 and in classical topology (see \cite[Definition 4.1.1]{Ra03}):
\begin{enumerate}
\item Note as a byproduct of the above theorem that $\MSp_S$ equipped with its canonical
 orientation $b^\MSp$ (coming from the above construction of Thom classes),
 is the universal symplectically oriented ring spectrum over $S$.
 That is, it is the initial object among symplectically oriented ring spectra over $S$.
\item From the previous theorem, a symplectically oriented ring spectrum is just
 an $\MSp$-algebra, and a morphism of such is simply a morphism of $\MSp$-algebras.
\item Let $f:T \rightarrow S$ be a morphism of schemes.
 Given an $\Sp$-oriented ring spectrum $(\E,b)$ over $S$,
 we immediately deduce an $\Sp$-orientation $f^*(b)$ on $f^*(\E)$ over $T$.
 This will be very useful to reduce arguments to the base scheme.
\end{enumerate}
\end{rem}

As a corollary of the 

\begin{num}\label{num:Sp-Euler}
One can interpret Euler classes in analogy with the $\GL$-oriented case.
 Given any scheme $S$, according to \cite[Prop. 1.16, p. 130]{MV},
 there exists a canonical map:
$$
\mathrm{H}^1(S,\Sp_2)  \rightarrow [S_+,\BSp_2^S]=[S_+,\HP^\infty_S].\footnote{According to \cite[4.1.2]{AHW},
 this map is an isomorphism whenever $S$
 is affine and ind-smooth over a Dedeking ring but we will not use that fact.}
$$
 The left hand-side can be interpreted
 as the set of isomorphisms classes of rank $2$ symplectic bundle $\PicSp(S)$.
 Note that this map is compatible with pullbacks in $S$. Besides, given a smooth $S$-scheme $X$,
 one similarly gets a canonical map:
$$
\mathrm{H}^1(X,\Sp_2) \rightarrow [X_+,\BSp_2^S]=[X_+,\HP^\infty_S].
$$
One should be aware of an essential difference with the $\GL$-case here:
 there is no group structure on $\PicSp(S)$. First, the tensor product of two symplectic bundles
 is an orthogonal bundle. Secondly, even if one considers triple tensor products,
 the rank becomes $8$. The correct algebraic structure will be explored
 through the notion of formal ternary laws.

However, given an $\Sp$-oriented spectrum $(\E,b)$, one gets a streamlined interpretation of the associated
 Euler classes in terms of $b$.
 First one can view $b$ as a map $b:\Sigma^\infty \HP^\infty_S \rightarrow \E(2)[4]$.
 Then given any smooth $S$-scheme $X$, one gets a natural map: 
\begin{align*}
\PicSp(X)=\mathrm{H}^1(X,\Sp_2) &\rightarrow [X_+,\HP^\infty_S] \xrightarrow{\Sigma^\infty}
 [\Sigma^\infty X_+,\Sigma^\infty \HP^\infty_S]^{st} \\
& \xrightarrow{b_*} [\Sigma^\infty X_+,\E(2)[4]]^{st}=\E^{4,2}(X).
\end{align*}
One easily checks that this is the \emph{Euler class map}, which to the class of a rank $2$
 symplectic bundle $\cU$ associates $e(\cU)$ whose construction was recalled in
 \ref{num:symplectic_Thom_Grass_Euler}. In this case, this coincides with the first
 Borel class $b_1(\cU)$ which will be recalled in the next paragraph.
\end{num}

\begin{rem}
In fact, one easily checks that the data of an $\Sp$-orientation
 on $\E$ over $S$ is equivalent to the data for any smooth $S$-scheme $X$ of
$$
b_1:\PicSp(X) \rightarrow \E^{4,2}(X)
$$
such that:
\begin{enumerate}
\item $b_1$ is natural with respect to the pullback on both functors
\item for the tautological symplectic bundle $\cV$ on $\HP^1_S$,
 one has $b_1(\cV)=(1,0)$ via the identification $\E^{4,2}(\HP^1_S)=\E^{0,0}(S) \oplus \E^{4,2}(S)$.
\end{enumerate}
\end{rem}

\begin{num}\label{num:Sp-proj-bdl}
As already said, a symplectically oriented ring spectrum $(\E,b)$ automatically satisfies
 the symplectic projective bundle theorem, \cite[Th. 2.2.3]{DF3}.
 In particular, putting $\E^{**}=\E^{**}(S)$ as a bigraded ring,
 one obtains an isomorphism of $\E^{**}$-algebra
$$
\E^{**} [[t]] \rightarrow \E^{**}(\HP^\infty_S), t \rightarrow b
$$
which is bihomogeneous, of bidegree $(0,0)$,
 when one decides that $t$ on the left hand-side has bidegree $(4,2)$.
 As a corollary of the previous theorem one therefore deduces:
\end{num}
\begin{cor}\label{cor:chg_Sp-orient}
Let $(\E,b)$ be a symplectically oriented ring spectrum over $S$.
 Then the symplectic orientations $b'$ of $\E$ are in one-to-one correspondence
 with the local parameters of bidegree $(4,2)$ of the power series ring $\E^{**}[[t]]$,
 \emph{i.e.} with the power series of the form
$$
\phi(t)=t+a_2t^2+a_3t^3+\hdots
$$
where $a_n \in \E^{4-4n,2-2n}$.  \\
 The bijection is given by $\phi \mapsto \phi(b)$ in
 $\E^{**}(\HP^\infty_S) \simeq \E^{**}[[b]]$.
\end{cor}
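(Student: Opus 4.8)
The plan is to deduce this directly from \Cref{thm:PW_Sp-oriented} and the symplectic projective bundle theorem recalled in \Cref{num:Sp-proj-bdl}. First I would invoke \Cref{thm:PW_Sp-oriented} to identify the set of symplectic orientations of $\E$ with the set of classes $b' \in \tilde \E^{4,2}(\HP^\infty_S)$ satisfying $b'|_{\HP^1_S} = \Sigma_T^2 1_\E$; so it suffices to parametrise this set by local parameters of bidegree $(4,2)$ via $\phi \mapsto \phi(b)$.

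Since $(\E,b)$ is already symplectically oriented, the symplectic projective bundle theorem (\cite[Th. 2.2.3]{DF3}) provides the bihomogeneous isomorphism of bigraded $\E^{**}$-algebras $\E^{**}[[t]] \xrightarrow{\sim} \E^{**}(\HP^\infty_S)$, $t \mapsto b$, where $t$ is declared to have bidegree $(4,2)$. The splitting $\E^{**}(\HP^\infty_S) = \E^{**} \oplus \tilde \E^{**}(\HP^\infty_S)$ induced by the basepoint corresponds under this isomorphism to $\E^{**}[[t]] = \E^{**} \oplus t\,\E^{**}[[t]]$ (as $b$ lies in reduced cohomology). Hence a class $b' \in \tilde \E^{4,2}(\HP^\infty_S)$ is exactly the datum of a power series $\phi(t) = \sum_{n \geq 1} a_n t^n$ with $b' = \phi(b)$, and requiring $b'$ to be homogeneous of bidegree $(4,2)$ forces $a_n \in \E^{4-4n,2-2n}(S)$, since $b^n$ has bidegree $(4n,2n)$.

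It remains to translate the normalisation $b'|_{\HP^1_S} = \Sigma_T^2 1_\E$. The embedding $i : \HP^1_S \hookrightarrow \HP^\infty_S$ is compatible with the classes of \Cref{thm:PW_Sp-oriented} by their very construction (the $b_n$ are pulled back from $\HP^\infty_S$), so $i^*$ intertwines the isomorphism above with the $n=1$ instance of the symplectic projective bundle theorem, $\E^{**}[t]/(t^2) \xrightarrow{\sim} \E^{**}(\HP^1_S)$; in particular $i^*(b) = \Sigma_T^2 1_\E$ and $(\Sigma_T^2 1_\E)^2 = 0$ there. Therefore $i^*(\phi(b)) = \phi(\Sigma_T^2 1_\E) = a_1 \cdot \Sigma_T^2 1_\E$, so the normalisation is equivalent to $a_1 = 1$. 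Thus $\phi(t) = t + a_2 t^2 + a_3 t^3 + \cdots$ is precisely a local parameter of bidegree $(4,2)$; conversely, every such $\phi$ yields a class $b' = \phi(b)$ meeting the two conditions, and $\phi \mapsto \phi(b)$ is the asserted bijection. The step needing the most care is the bookkeeping of the reduced-versus-unreduced decomposition of $\E^{**}(\HP^\infty_S)$ together with the bigrading and the compatibility of $i^*$ with the two projective bundle isomorphisms; beyond that there is no genuine obstacle, as everything reduces formally to \Cref{thm:PW_Sp-oriented} and the symplectic projective bundle theorem.
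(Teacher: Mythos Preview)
Your proof is correct and follows exactly the approach the paper intends: the corollary is stated immediately after \Cref{num:Sp-proj-bdl} with the remark ``As a corollary of the previous theorem one therefore deduces'', and your argument spells out precisely this deduction from \Cref{thm:PW_Sp-oriented} together with the symplectic projective bundle isomorphism $\E^{**}[[t]] \simeq \E^{**}(\HP^\infty_S)$.
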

Note moreover that when one has a class $b'$ as above associated
 with a power series $\phi$, then for any rank $2$ symplectic
 bundle $\cU$ over a smooth $S$-scheme $X$, one gets:
\begin{equation}\label{eq:firstBorel&chg_orient}
b'_1(\cU)=\phi(b_1(\cU)) \in \E^{4,2}(X),
\end{equation}
where the right hand-side makes sense as $b_1(\cU)$ is nilpotent
 (see \cite{DF3}).

\subsection{Virtual symplectic Thom classes}\label{sec:virt_symplect}

According to \Cref{prop:stable_G-orientations},
 symplectically oriented ring spectra admits virtual symplectic Thom classes,
 associated with stable symplectic bundles. However, hermitian K-theory of symplectic bundles
 is finer as it involves the equivalence relation modulo metabolic spaces (see e.g. \cite{Kneb}).
 We will show in this subsection that symplectic Thom classes are compatible with
 the metabolic equivalence relation.

\begin{num}
We first give a quick recollection on the hermitian $Q$-construction and the forgetful functor, following \cite{Schlichting09}.

We start with a triple $(\mathcal{E},*,\eta)$, where $\mathcal{E}$ is an exact category, $(-)^*$ is a duality (i.e. an exact functor $\mathcal E^{\mathrm{op}}\to \mathcal{E}$) and a canonical isomorphism $\eta:1\to (-)^{**}$ satisfying an extra condition \cite[Definition 2.1]{Schlichting09}. Recall that a symmetric form is a pair $(X,\varphi)$, where $X$ is an object of $\mathcal{E}$ and $\varphi:X\to X^*$ is a morphism which is symmetric in the sense that the diagram
\[
\xymatrix{X\ar[r]^-{\varphi}\ar[d]_-{\eta} & X^*\ar@{=}[d] \\
X^{**}\ar[r]_-{\varphi*} & X^*}
\]
is commutative. A symmetric form is called \emph{symmetric space} if $\varphi$ is an isomorphism. A morphism of symmetric forms $f:(X,\varphi)\to (Y,\psi)$ is a morphism $f:X\to Y$ such that the diagram
\[
\xymatrix{X\ar[r]^-f\ar[d]_-{\varphi} & Y\ar[d]^-\psi \\
X^* & Y^*\ar[l]^/-2pt/{f^*}}
\]
is commutative. Let now $(X,\varphi)$ be a symmetric space (i.e. $\varphi$ is an isomorphism). Let $i:L\to X$ be an admissible monomorphism. The orthogonal of $L$, denoted by $L^{\perp}$ is the kernel of $X\xrightarrow{i^*\varphi} L^*$, which is also admissible. We say that $L$ is totally isotropic if the composite $i^*\varphi i$ is trivial and the associated morphism $L\to L^{\perp}$ is admissible. In the special case where $L\to L^{\perp}$ is an isomorphism, we say that $L$ is a Lagrangian.

We now form the $Q$-construction in this context, following \cite[\S 4.1]{Schlichting09}. We let $Q^h\mathcal E$ be the category whose objects are symmetric spaces $(X,\varphi)$ and whose morphisms are equivalences classes of diagrams of the form
\[
\xymatrix{X & U\ar[l]_-p\ar[r]^-i & Y}
\] 
where $p$ is an admissible epimorphism, $i$ is an admissible monomorphism, and the diagram
\[
\xymatrix{U\ar[r]^-i\ar[d]_-p & Y\ar[d]^-{i^*\psi} \\
X\ar[r]_-{p^*\varphi} & U^*}
\]
is bicartesian. Two such diagrams $(U,i,p)$ and $(U',i',p')$ are equivalent if there exists an isomorphim $g:U\to U'$ such that $i'g=i$ and $p'g=p$. The composition is obtained as in Quillen's original $Q$-construction, and there is thus an obvious functor
\[
Q^h(\mathcal E,*,\eta)\to Q\mathcal E
\]
which induces a morphism on the nerves of the relevant categories. To simplify the notation, we will now write $\mathcal E$ for $(\mathcal E,*,\eta)$.
 Following Schlichting, we denote by $\mathrm{GW}(\mathcal E)$ the homotopy fiber (with respect to a zero object in $\mathcal E$) of the
 induced map of (Kan) simplicial sets:
\[
N(Q^h\mathcal E) \to N(Q\mathcal E).
\]
As a special case of an exact category with duality, we may consider the category $H\mathcal E=\mathcal E\times \mathcal E^{\mathrm{op}}$. There is an obvious duality, defined on objects as $(A,B)^{\sharp}=(B,A)$ and natural isomorphism $1\to {}^{\sharp\sharp}$ the identity. If $(\mathcal E,*,\eta)$ is an exact category with duality, we define a functor 
\[
F:(\mathcal E,*,\eta)\to H\mathcal E
\] 
on objects by $A\mapsto (A,A^*)$ and on morphisms by $f\mapsto (f,f^*)$. There is a natural isomorphism
\[
F(A^*)=(A^*,A^{**})\to F(A)^{\sharp}=(A^*,A)
\]
given by the pair $(\mathrm{Id}_A,\eta_A)$. Consequently, we obtain a commutative diagram
\[
\xymatrix{Q^h\mathcal E\ar[r]\ar[d] &  Q\mathcal E\ar[d]\\
Q^hH\mathcal E\ar[r] &  QH\mathcal E
}
\] 
Now, there is an equivalence of categories $Q^hH\mathcal E\to Q\mathcal E$ given on objects by $(A,B)\mapsto A$ and on morphisms by $(f,g)\mapsto f$ (see \cite[Remark 4.3]{Schlichting09}).
 It follows that the homotopy fiber of 
\[
N(Q^hH\mathcal E) \to N(QH\mathcal E) \simeq N(Q\mathcal E) \times N(Q\mathcal E)
\]
is $\Omega N(Q\mathcal E):=\mathrm{K}(\mathcal E)$ (see \cite[Proposition 4.7]{Schlichting09}).
 Consequently, we obtain the \emph{forgetful map}
\[
F:\mathrm{GW}(\mathcal E)\to \mathrm{K}(\mathcal E).
\]
Our main example here is the category $\mathcal E=\mathcal{V}(X)$ of vector bundles over a scheme $X$, with usual duality $V^*=\mathrm{Hom}(V,\OO_X)$ and canonical isomorphism $\eta=-\varpi$, where $\varpi:V\to V^{**}$ is the usual identification (given by the evaluation). In the usual notation, this gives the \emph{forgetful map}
\[
F:\mathrm{KSp}(X)\to  \mathrm{K}(X).
\]
We can view $F$ as a morphism of $\infty$-categories, and consider the induced map on the associated homotopy groupoids:
$$
f_\Sp:\uKSp(X) \rightarrow \uK(X).
$$
The pseudo-functoriality of the above construction actually gives
 a stable structure of degree $2$ in the sense of \Cref{df:stable_structure}.
\end{num}
\begin{df}\label{df:fine-sp-structure}
We will call the stable structure $f_\Sp:\uKSp \rightarrow \uK$ over $\base$ defined above
 the \emph{fine symplectic stable structure}.
\end{df}

\begin{num}
Note that, by construction, one gets a commutative diagram:
$$
\xymatrix@R=-2pt@C=40pt{
\uK^{\Sp}(X)\ar^{\sigma_\Sp}[rd]\ar_{\kappa_X}[dd] & \\
& \uK(X) \\
\uKSp(X)\ar_{f_\Sp}[ru]
}
$$
where $\sigma_\Sp$ is stable structure associated with
 the symplectic group in \Cref{ex:stable_G-structure}.
 Moreover, $\kappa_X$ is compatible with pullbacks in $X$, or in other words,
 corresponds to a morphism of $\base$-fibred categories in Picard groupoid
 (see \Cref{num:fibred_cat&K}).
 The next lemma says that Zariski locally, both stable structures coincide.
\end{num}
\begin{lm}
If $X$ an affine scheme, then the functor $\kappa_X$ is an equivalence of categories.
\end{lm}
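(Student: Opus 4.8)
The plan is to check that $\kappa_X$ is bijective on isomorphism classes and on automorphism groups: a monoidal functor between Picard groupoids is an equivalence if and only if it induces an isomorphism on $\pi_0$ and on $\pi_1$ (automorphisms of the unit object), since in a Picard groupoid $\pi_1$ acts simply transitively on every nonempty morphism set. By construction (the triangle just displayed, and the pseudo-functoriality recalled there) $\kappa_X$ is such a functor, so this reduction applies.

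First I would unwind both sides. By Deligne's description of the virtual category of an exact category as the fundamental groupoid of its $K$-theory space \cite{Del_det}, one has $\pi_0\big(\uK^\Sp(X)\big)=\mathrm{K}_0\big(\Tors_\Sp(X)\big)$ and $\pi_1\big(\uK^\Sp(X)\big)=\mathrm{K}_1\big(\Tors_\Sp(X)\big)$. Since every admissible short exact sequence of symplectic bundles is orthogonally split (pass to the orthogonal complement of the sub-bundle), the exact structure on $\Tors_\Sp(X)$ is the split one, and this $K$-theory is the direct-sum (group-completion) $K$-theory of the symmetric monoidal groupoid $(\Tors_\Sp(X),\perp)$. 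On the target side, $\pi_i\big(\uKSp(X)\big)$ is, for $i=0,1$, the $i$-th symplectic Grothendieck--Witt group $\GW^{\mathrm{sp}}_i(X)$ of $X$ in the sense of \cite{Schlichting09}, and under these identifications $\kappa_X$ realises the canonical comparison map $\mathrm{K}_i(\Tors_\Sp(X))\to \GW^{\mathrm{sp}}_i(X)$.

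It then remains to show this comparison is bijective in degrees $0$ and $1$ when $X$ is affine. For $\pi_0$ this is elementary: an admissible Lagrangian $L\subset(V,\varphi)$ yields an extension $0\to L\to V\to L^\vee\to 0$ which splits because $\operatorname{Ext}^1_X(L^\vee,L)=0$ on an affine scheme, and the induced symplectic form on $L\oplus L^\vee$ is then isometric to the standard hyperbolic form $H(L)$ (the remaining ``shear'' being governed by the same vanishing $\mathrm{H}^1$); more generally an admissible totally isotropic sub-bundle produces an honest isometry $(V,\varphi)\cong H(L)\perp(L^\perp/L)$. Hence the metabolic relations defining $\GW^{\mathrm{sp}}_0(X)$ already hold in $\mathrm{K}_0(\Tors_\Sp(X))$, and the two groups coincide. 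For $\pi_1$ one invokes the group-completion theorem for hermitian $K$-theory over affine bases (the symplectic ``$+=Q$'' comparison of \cite{Schlichting09}), which identifies, for $X$ affine, the Grothendieck--Witt space of symplectic forms with the group completion of $(\Tors_\Sp(X),\perp)$; comparing $\pi_1$ of the two models --- each a colimit of automorphism groups along a cofinal system of symplectic bundles --- then gives the claim. Note that the symplectic case requires no invertibility of $2$ here.

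The delicate point is the degree-$1$ comparison: one must know that over an affine base no metabolic-type relation is lost upon group completion, equivalently that the naive direct-sum symplectic $K$-theory already computes $\GW^{\mathrm{sp}}_1$. This is exactly what fails for a general (non-affine) scheme --- whence the hypothesis --- and it is what forces one to appeal to Schlichting's identification of the hermitian $Q$-construction with its group-completion counterpart in the affine case; everything else in the argument is formal.
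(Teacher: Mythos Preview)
Your argument is correct and, on the $\pi_0$ side, coincides with the paper's. The paper's proof simply observes that on isomorphism classes $\kappa_X$ is the quotient map
$\mathrm K^{\Sp}_0(X) \rightarrow \mathrm K^{\Sp}_0(X)/\mathcal I$,
where $\mathcal I$ is generated by metabolic symplectic spaces, and then cites \cite[\S 4, Prop.~1]{Kneb} to conclude that over an affine scheme this quotient is an isomorphism (every metabolic symplectic space is hyperbolic). You give the same argument with more detail: you spell out why the Lagrangian splits and why the residual form on the chosen complement can be killed, which are exactly the ingredients behind Knebusch's result.

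Where your proof genuinely goes further is on $\pi_1$. The paper's proof, as written, says nothing about automorphisms: it establishes the $\pi_0$ statement and asserts that ``the result follows.'' You instead make the reduction to $\pi_0$ and $\pi_1$ explicit and handle the latter by invoking the hermitian group-completion comparison of \cite{Schlichting09}, identifying $\mathrm{GW}^{\mathrm{sp}}_1(X)$ with the $K_1$ of the symmetric monoidal groupoid $(\Tors_\Sp(X),\perp)$ over an affine base. This is the honest content one needs for an equivalence of Picard groupoids, so your write-up is more complete; the cost is that the $\pi_1$ step rests on a non-trivial comparison theorem rather than an elementary computation. Your remark that the symplectic case requires no invertibility of $2$ is correct and is a useful sharpening relative to what one would need in the symmetric setting.
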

\begin{proof}
Indeed, by construction, the map induced by $\pi$ on the respective sets of isomorphism classes coincides
 with the quotient map
$$
\mathrm K^{\Sp}_0(X) \rightarrow \mathrm K^{\Sp}_0(X)/\mathcal I
$$
where $\mathcal I$ is the ideal generated by metabolic symplectic spaces.
 Then the result follows as in \cite[\textsection 4, Prop. 1]{Kneb}.
\end{proof}

We have seen in \Cref{prop:stable_G-orientations} that,
 on a given ring spectrum $\E$ over $S$,
 the data of a  symplectic orientations on $\E$ over $S$-schemes
 is equivalent to that of a $\sigma_\Sp$-orientations on $\E$.
 Thanks to the preceding result, we can make a finer statement.
\begin{prop}\label{prop:sp-or&KSp-or}
Let $\E$ be a ring spectrum over $S$.
 Then there are bijective correspondences between:
\begin{enumerate}
\item The symplectic orientations on $\E$ over $S$ in the sense of \Cref{df:Sp-orientation}.
\item The $\sigma_\Sp$-orientations on $\E$ over $S$ in the sense of \Cref{df:sigma-orientation}.
\item The $f_\Sp$-orientations on $\E$ over $S$ in the sense of \Cref{df:sigma-orientation}.
\end{enumerate}
Moreover, the map between (3) and (2) is simply obtained by composing with
 the pseudo-natural transformation $\kappa:\uK^\Sp \rightarrow \uKSp$.
\end{prop}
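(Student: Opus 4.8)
The equivalence between (1) and (2) requires no new work. Indeed, \Cref{thm:PW_Sp-oriented} identifies the symplectic orientations of \Cref{df:Sp-orientation}, \emph{i.e.} the classes $b\in\tilde\E^{4,2}(\HP^\infty_S)$, with the $\Sp$-orientations in the sense of \Cref{df:G-orientation}, and the latter are in bijection with the $\sigma_\Sp$-stable Thom isomorphisms by \Cref{prop:stable_G-orientations} applied with $G=\Sp$, $d=2$. So the only real content is the bijection between (2) and (3). The plan is to produce it from the commutative triangle displayed just before the statement (relating $\sigma_\Sp$, $f_\Sp$ and $\kappa$), the fact that $\kappa$ is a morphism of $\base$-fibred Picard groupoids sending the cartesian section $\un^\Sp_X$ to the unit of $\uKSp(X)$, and — crucially — the preceding lemma asserting that $\kappa_X$ is an equivalence of categories whenever $X$ is affine.

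First I would define the map $(3)\to(2)$. Given an $f_\Sp$-stable Thom isomorphism $\gamma'$ on $\E$, precompose the defining natural isomorphism with $\kappa$. Since $\sigma_\Sp=f_\Sp\circ\kappa$ (hence also $\rk\circ\sigma_\Sp=\rk\circ f_\Sp\circ\kappa$), the resulting natural isomorphism fits into the diagram of \Cref{df:sigma-orientation} for $\sigma_\Sp$; the normalization condition is inherited because $\kappa_X(\un^\Sp_X)$ is the unit of $\uKSp(X)$, compatibly with pullbacks. Concretely, this sends the Thom class $\th(\tilde w,\E)$ attached to a virtual metabolic symplectic bundle $\tilde w$ to the family $\th(\kappa_X(\tilde v),\E)$ indexed by $\tilde v\in\uK^\Sp(X)$, which is exactly the ``moreover'' clause of the statement.

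For the inverse $(2)\to(3)$, let $\gamma$ be a $\sigma_\Sp$-orientation on $\E$. When $X$ is affine, $\kappa_X$ is an equivalence by the lemma, so a choice of quasi-inverse transports $\gamma$ on $\uK^\Sp(X)$ to a Thom isomorphism on $\uKSp(X)$; this transport is independent of the choice because $\kappa_X$ is fully faithful and $\gamma$ is compatible with isomorphisms. For a general scheme $X$ and $\tilde w\in\uKSp(X)$ of rank $r$, I would then define the Thom class $\th(\tilde w,\E)\in\E^{2r,r}(\Th(f_\Sp(\tilde w)))$ by Nisnevich descent, exactly in the style of the proof of \Cref{prop:virt_thom_G-orient}: since $\SH$ satisfies Nisnevich descent, this group is controlled by an affine Nisnevich cover of $X$ (equivalently, one reduces to henselian local, hence affine, schemes); over each affine piece $U$ the object $\tilde w|_U$ lifts through the equivalence $\kappa_U$ and the affine construction supplies a class; on overlaps the two lifts are canonically isomorphic in $\uK^\Sp$, so the compatibility of $\gamma$ with isomorphisms forces the two classes to agree, and they glue to a well-defined global Thom class. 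One then verifies that the resulting $\gamma'$ is a genuine pseudo-natural transformation: functoriality in isomorphisms, compatibility with pullbacks and with the monoidal structure, and normalization on the unit object of $\uKSp$ — each of these identities being local on $X$ and therefore reducible to the corresponding property of $\gamma$ on affine schemes. Finally, the two constructions are mutually inverse: the composites restrict to the identity on affine schemes, where $\kappa$ is an equivalence, and since a stable Thom isomorphism is determined by its restrictions to affine schemes (Nisnevich descent again, together with naturality), they are the identity everywhere.

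I expect the descent step in $(2)\to(3)$ to be the main obstacle: one has to organize the local lifts through $\kappa$, the gluing of the local Thom classes, and the verification of all the coherence data of a pseudo-natural transformation into one clean argument rather than a sequence of \emph{ad hoc} checks. There is, however, no genuinely new input: the only ingredients are Nisnevich descent for $\SH$ (used pervasively in Section~1) and the preceding lemma identifying $\uK^\Sp(X)$ with $\uKSp(X)$ for $X$ affine, which itself rests on Knebusch's theorem (\cite{Kneb}) that metabolic symplectic spaces vanish stably over affine schemes.
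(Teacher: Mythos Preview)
Your argument is correct, and the treatment of (1)$\Leftrightarrow$(2) matches the paper's. For (2)$\Leftrightarrow$(3), however, the paper takes a shorter route than your Nisnevich-descent approach. Instead of covering $X$ by affines and gluing local Thom classes, the paper invokes Jouanolou's trick: for any $X$ there is an affine $\AA^1$-torsor $p:X'\to X$, and by homotopy invariance the pullback $p^*:\E\modd_X\to\E\modd_{X'}$ is an equivalence of categories. Since a $\sigma$-orientation on $\E$ over $X$ is data living in $\E\modd_X^\times$, it is equivalent to the corresponding data over the single affine scheme $X'$, where the preceding lemma says $\kappa_{X'}$ is already an equivalence. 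This sidesteps precisely the coherence and gluing verifications you flag as the main obstacle: there is no cover, no overlaps, and no need to check that local lifts patch into a global pseudo-natural transformation. Your approach has the mild advantage of staying within the Nisnevich-local toolkit already used in \Cref{prop:virt_thom_G-orient}, but the Jouanolou trick buys a one-line proof.
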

\begin{proof}
In view of \Cref{prop:stable_G-orientations}, we need only proving the equivalence
 between datas (2) and (3). Recall that, according to Jouanolou's trick,
 given any scheme $X$, there exists a (Zariski-local) $\AA^1$-torsor
 $p:X' \rightarrow X$
 such that $X'$ is affine.
 Then the result follows from the preceding lemma applied to $X'$,
 and the fact the pullback map: $p^*:\E\modd_X \rightarrow \E\modd_{X'}$
 is an equivalence of categories.
\end{proof}

\begin{num}\textit{Virtual symplectic Thom classes}.
Following \Cref{num:Thom_iso_explicit},
 we now make explicit the data of the $f_\Sp$-orientation
 associated with an $\Sp$-oriented ring spectrum $(\E,\thom)$ over $S$.

Given a stable symplectic bundle $\omega \in \uKSp(X)$,
 of rank $r:=\rk(f_\Sp(\omega))$, and with Thom space $\Th(\omega):=\Th(f_\Sp(\omega))$ in $\SH(X)$,
 one gets a \emph{virtual Thom class}:
$$
\thom(\omega) \in \E^{2r,r}(\Th(\omega))
$$
such that the natural map:
$$
\E^{**}(X) \rightarrow \E^{*+2r,*+r}(\Th(\omega)), \lambda \mapsto \lambda.\thom(\omega)
$$
is an isomorphism of bigraded $\E^{**}(X)$-modules.

Moreover, this class is compatible with isomorphisms in $\omega$, pullbacks in $X$
 in an obvious sense. Let us spell out explicitly the compatibility with 
 sums of two stable symplectic bundles $\omega, \omega'$ of respective ranks $r$ and $r'$.
 One gets a natural product map:
$$
\mu:\E^{2r,r}(\Th(\omega)) \otimes_\ZZ \E^{2r',r'}(\Th(\omega'))
  \rightarrow \E^{2r+2r',r+r'}(\Th(\omega+\omega')).
$$
Then the following relation holds: $\mu(\thom(\omega) \otimes \thom(\omega'))=\thom(\omega+\omega')$.
 We can equivalently write this as:
$$
\thom(\omega).\thom(\omega')=\thom(\omega+\omega').
$$
One deduces that $\mu$ induces an isomorphism of bigraded $\E^{**}(X)$-modules:
$$
\E^{**}(\Th(\omega)) \otimes_{\E^{**}(X)} \E^{*}(\Th(\omega'))
 \xrightarrow \sim \E^{**}(\Th(\omega+\omega')).
$$
Finally the virtual Thom class extends the Thom class associated with
 a symplectic vector bundle. Given a symplectic bundle $(V,\psi)$,
 one associates a virtual symplectic bundle $[V,\psi] \in \uKSp(X)$
 such that $f_\Sp([V,\psi])=[V]$ the stable vector bundles assocaited with $V$.
 One further gets an identification $\Th([V,\psi])=\Th([V])=\Th(V)$,
 by construction of the Thom space of the virtual bundle $[V]$.
 Given this identification, one has in the group $\E^{2r,r}(\Th(V))$ with $r=\rk(V)$:
$$
\thom([V,\psi])=\thom(V,\psi).
$$
\end{num}

\subsection{Higher Borel classes and $\GW$-linearity}

\begin{num}\textit{Borel classes}. \label{num:Borel_Sp}
Recall that to a symplectic orientation $b$ of $\E$ is associated by Panin and Walter
 a theory of Borel classes $b_n(\cU) \in \E^{4n,2n}(X)$, $\cU/X$ a symplectic vector bundle
 over a smooth $S$-scheme $X$. Besides, the Euler class introduced in Paragraph
 \ref{num:symplectic_Thom_Grass_Euler}
 satisfies the relation: $e(\cU)=b_r(\cU)$ when $\cU$ has rank $2r$.
 We refer the reader to \cite[Section 2.2]{DF3}.
 Following the classical scheme, one defines the total Borel class associated with
 $\cU$ as the following polynomial expression in $t$:
\begin{equation}\label{eq:total_borel}
b_t(\cU)=\sum_{n=0}^r b_n(\cU).t^r.
\end{equation}
Note moreover that, using relation \eqref{eq:Sp-morphisms&Thom_classes} and the construction
 of Borel classes, one deduces further that for a morphism $\varphi$ as in \Cref{df:Sp-orientation},
 we have the following relation in $\F^{4n,2n}(X)$:
\begin{equation}\label{eq:Sp-morphisms&Borel_classes}
\varphi_*(b_n^\E(\cU))=b_n^\F(\cU).
\end{equation}
\end{num}

\begin{num}\label{num:sp-splitting-principle}
\textit{The symplectic splitting principle}. As for Chern classes,
 one has a splitting principle for Borel classes.
 Indeed, given any symplectic bundle $\cV$ over a scheme $X$, there exists an affine morphism $p:X' \rightarrow X$
 inducing a monomorphism $p^*:\E^{**}(X) \rightarrow \E^{**}(X')$ with the property that $p^{-1}(\cV)$ splits
 as a direct sum of rank $2$ symplectic bundles: $p^{-1}(\cV)=\mathcal X_1 \perp \hdots \perp \mathcal X_n$.

One defines the {\it Borel roots} of $\cV$ as the Borel classes $\xi_i=b_1(\mathcal X_i)$ so that
 by the Whitney sum formula \cite[Theorem 10.5]{PWQuaternionic} the Borel classes of $\cV$ are the elementary symmetric polynomials
 in the variables $\xi_i$.

As in the classical case, one can compute universal formulas involving Borel classes of $\cV$ by introducing Borel roots
 $\xi_i$, which reduces to rank $2$ symplectic bundles, compute as if $\cV$ was completely split, and
 then express the resulting formula in terms of
 the elementary symmetric polynomials in the $\xi_i$. 
\end{num}

\begin{num}
We now give a complement to the theory of Borel classes which was known previously only under the stronger assumption of the existence of an $\SL$-orientation
(see \cite[2.2.8]{DF3}). To start with, let us recall the following definition.

\begin{df}
Let $\lambda\in \OO(S)^\times$. We denote by $\langle \lambda\rangle\in \mathrm{End}_{\mathcal{SH}(S)}(\mathbf{1})$ the endomorphism induced by the pointed morphism $\lambda:\PP^1_{S}\to \PP^1_{S}$ given by $[x:y]\mapsto [\lambda x:y]$.
\end{df}

As a result, $\lambda$ acts on the set $[X,Y]_{\mathcal{SH}(S)}$ for any objects $X,Y$ via 
\[
f\mapsto f\wedge \langle \lambda\rangle:X\wedge 1\to Y\wedge 1.
\]
By abuse of notation, we still denote by $\langle\lambda\rangle$ the endomorphism $\mathrm{Id}_X\wedge \langle\lambda\rangle$. The aim of this section is to prove the following theorem.

\begin{thm}\label{thm:epsilon}
Let $(\E,b)$ be an $\Sp$-oriented ring spectrum, and thom classes $\thom$.
 Then for any unit $\lambda \in \mathcal O_S(X)^\times$, 
 and any $\Sp$-vector bundle $(V,\psi)$ over $X$, the following relation holds:
\[
\thom(V,\lambda\psi)=\langle \lambda \rangle \thom(V,\psi).
\]
\end{thm}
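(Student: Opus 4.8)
The strategy is to reduce the statement to the universal case — the tautological rank $2$ symplectic bundle over $\HP^\infty_S$ — and there to compute directly how the class $\langle\lambda\rangle$ acts on the orientation $b$. First I would observe that, by the splitting principle (\Cref{num:sp-splitting-principle}), it suffices to treat rank $2$ symplectic bundles: a general $(V,\psi)$ becomes, after an affine morphism $p\colon X'\to X$ inducing a monomorphism on $\E^{**}$, an orthogonal sum $\mathcal X_1\perp\cdots\perp\mathcal X_n$ of rank $2$ pieces; since $\thom(V,\lambda\psi)=\prod_i\thom(\mathcal X_i,\lambda\psi_i)$ by the product compatibility of Thom classes, and $\langle\lambda\rangle$ is central and multiplicative for the product on cohomology, the rank $2$ case gives the general one (note one must check $\langle\lambda\rangle^n$ collapses to $\langle\lambda\rangle$ on the relevant bidegree, or rather keep track that the Thom class lives in a single $\GG$-suspension — this is where some care with the grading is needed, but it is bookkeeping). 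Then for a rank $2$ symplectic bundle classified by $f\colon X\to\HP^\infty_S$, we have $\thom(V,\psi)=f^*(\tau_1)$ with $\tau_1$ the universal Thom class over $\MSp_2=\Th(\cU_2)$; and the map classifying $(V,\lambda\psi)$ differs from $f$ only by the automorphism of $\HP^1_S\simeq(\PP^1)^{\wedge 2}$ induced by $\lambda$, because scaling the symplectic form on the tautological rank $2$ bundle over $\HP^1_S$ corresponds precisely to rescaling one of the $\PP^1$-coordinates.

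Second, I would make the last point precise. The space $\HP^1_S=\HGr_S(2,\omega_4)$ carries the tautological symplectic bundle $\cV_2$, and the Thom class $\thom(\cV_2,h)$ restricted to $\HP^1_S$ is the canonical $(\PP^1)^{\wedge 2}$-suspension of $1_\E$ (Panin–Walter, \Cref{thm:PW_Sp-oriented}(1)). Rescaling the symplectic form $h\mapsto\lambda h$ induces an $S$-automorphism of the pair $(\cV_2,h)$-data that, on the level of the Thom space $\Th(\cV_2)\simeq(\PP^1)^{\wedge 2}$, is exactly $\langle\lambda\rangle$ smashed with the identity on the second factor (or the first — it does not matter, the two $\GG$-permutation signs are both $\epsilon$ and $\langle\lambda\rangle$ is compatible with the swap). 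Hence $\thom(\cV_2,\lambda h)|_{\HP^1_S}=\langle\lambda\rangle\cdot\Sigma_T^2 1_\E$. By the symplectic projective bundle theorem this identity propagates from $\HP^1_S$ to $\HP^\infty_S$: the class $\thom(\cU_2,\lambda\cdot(\text{univ.\ form}))\in\tilde\E^{4,2}(\HP^\infty_S)$ is determined by its restriction to $\HP^1_S$ up to the ambiguity recorded in \Cref{cor:chg_Sp-orient}, and one checks that $\langle\lambda\rangle$ acts $\E^{**}$-semilinearly in a way that forces the full class to be $\langle\lambda\rangle$ times the original Thom class — using that $\langle\lambda\rangle$ fixes the unit and is a ring endomorphism of $\E^{**}(S)$, together with naturality of $\langle\lambda\rangle$ under pullback along the inclusions $\HP^n_S\hookrightarrow\HP^{n+1}_S$.

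Third, pulling back along $f$ and invoking naturality of $\langle\lambda\rangle$ (it is defined by smashing with an endomorphism of $\un_S$ and so commutes with every pullback and every $\E$-module map) gives $\thom(V,\lambda\psi)=f^*\big(\langle\lambda\rangle\cdot\tau_1\big)=\langle\lambda\rangle\cdot f^*(\tau_1)=\langle\lambda\rangle\cdot\thom(V,\psi)$, which is the theorem.

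\textbf{The main obstacle} I expect is the geometric identification in the second step: precisely matching the rescaling of the symplectic form on the tautological rank $2$ bundle with the endomorphism $\langle\lambda\rangle$ on the Thom space $\Th(\cV_2)\simeq(\PP^1)^{\wedge2}$, and in particular pinning down that it is \emph{one} copy of $\langle\lambda\rangle$ and not $\langle\lambda\rangle^2$ (the latter being trivial by $\langle\lambda^2\rangle=1$, which would collapse the statement). This requires an honest look at the Morel–Voevodsky model $\BSp_2\simeq\HP^\infty$, the identification $\HP^1\simeq(\PP^1)^{\wedge2}$, and how the $\GL_2$- versus $\Sp_2$-structure interacts with the $\GG$-factors — it is exactly the point where the authors remark the proof is ``surprisingly involved,'' and it is plausibly where one needs the input on quadric bundles or an explicit $\Sp_4$-computation that the acknowledgments (Dubouloz, Brion) allude to. Everything else — the splitting principle reduction, the propagation via the symplectic projective bundle theorem, and the naturality of $\langle\lambda\rangle$ — is formal once that identification is in hand.
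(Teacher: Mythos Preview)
Your overall strategy---reduce to the universal rank $2$ case and identify the rescaling with $\langle\lambda\rangle$ on $\HP^1$, then propagate---is indeed the paper's approach. But there are two genuine gaps.

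\textbf{The propagation step does not work as you describe it.} You claim that a class in $\tilde\E^{4,2}(\Th(\cU_2))$ (or $\tilde\E^{4,2}(\HP^\infty)$) is determined by its restriction to $\HP^1$ ``up to the ambiguity in \Cref{cor:chg_Sp-orient}'', and that naturality of $\langle\lambda\rangle$ kills that ambiguity. This is false: by the symplectic projective bundle theorem, restriction to $\HP^1$ only remembers the coefficient of $b$ (it kills $b^2,b^3,\ldots$), so the higher coefficients of the class---equivalently, the unit $c\in\E^{0,0}(\HP^\infty)^\times$ relating the two Thom classes---are simply not visible there. The fact that $\langle\lambda\rangle$ is a central ring endomorphism does nothing to pin those down. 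The paper instead argues by induction on $n$: it uses the Panin--Walter cell structure to produce a canonical weak equivalence $c_n\colon\HP^n\simeq\Th(U_{|\HP^{n-1}})$ and a compatible diagram (Lemma~\ref{lem:fundamental}) showing that the maps $j_{n-1}\colon\Th(U_{|\HP^{n-1}})\to\Th(U_{|\HP^n})$ and $\tau_n\colon\HP^n\to\Th(U_{|\HP^n})$ factor through each other up to $c_n$. The inductive step then rests on showing that $j_{n-1}^*$ is \emph{injective} on $[\Th(U_{|\HP^n}),\MSp(2)[4]]$, which follows because $\tau_n^*=c_n^*\circ j_{n-1}^*$ and $\tau_n^*$ sits in a split short exact sequence coming from the Thom cofiber sequence. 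None of this is visible from the projective bundle theorem alone; it requires the cell decomposition and the explicit $\mathbb G_m$-equivariant geometry of $\HP^n$.

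\textbf{The splitting-principle reduction is not how the paper proceeds, and your ``bookkeeping'' remark hides a real issue.} If each rank~$2$ summand contributes a factor $\langle\lambda\rangle$, a rank $2r$ bundle gives $\langle\lambda\rangle^r$, not $\langle\lambda\rangle$; this is not repaired by grading considerations. The paper avoids this entirely: it first reduces to $\E=\MSp_S$ via \Cref{thm:PW_Sp-oriented}, and then to the tautological bundle on $\BSp_2$ via the classifying map (and Jouanolou). The splitting principle enters only later, in the proof of the corollary on Borel classes, where the exponent $\langle\lambda^i\rangle$ on $b_i$ is exactly what the product of $r$ factors of $\langle\lambda\rangle$ produces on the $i$-th elementary symmetric function.

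You are right that the geometric identification on $\HP^1$ is the seed of the argument, but it is in fact the quick part (the paper's Lemma~\ref{lem:initialize}). The substantial content---and what makes the proof ``surprisingly involved''---is constructing the $\mathbb G_m$-action on all of $\HP^n$, checking it is compatible with the cell filtration and the equivalences $c_n$, and running the injectivity induction. That machinery is precisely what your propagation sketch is missing.
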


As a corollary, we obtain the following result whose proof is the same as \cite[2.2.8]{DF3}.

\begin{cor}\label{cor:epsilonlinearity}
Let $(\E,b)$ be an $\Sp$-oriented ring spectrum and let $\lambda \in \mathcal O_X(X)^\times$ be a unit. Then
\[
b_i(V,\lambda\psi)=\langle \lambda^i\rangle b_i(V,\psi)
\]
for any $\Sp$-vector bundle $(V,\psi)$ over $X$ and any $i\in\NN$.
\end{cor}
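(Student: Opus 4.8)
The plan is to reduce the identity to its universal instance and then to realise the rescaling $\psi\mapsto\lambda\psi$ geometrically, as pullback along a $\GG_m$-action on the hyperbolic Grassmannians.

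\emph{Reduction.} By \Cref{thm:PW_Sp-oriented} every $\Sp$-oriented ring spectrum $\E$ is an $\MSp$-algebra through a morphism $\varphi$ with $\varphi_*(\thom^{\MSp})=\thom^{\E}$; since $\langle\lambda\rangle$ is an endomorphism of $\un_X$ it commutes with $\varphi_*$, so it suffices to treat $\E=\MSp$. Using Jouanolou's trick we may assume $X$ smooth affine, and — since rank two symplectic bundles are the analogue of line bundles, and by multiplicativity of Thom classes together with the splitting principle the general case reduces to them — that $(V,\psi)$ has rank two, classified by a map $f\colon X\to\BSp_2=\HP^\infty$ with $f^{-1}(\cU_2,h)=(V,\psi)$. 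Note that a mere $\Sp$-orientation, unlike an $\SL$-orientation, does not record the Pfaffian trivialisation of $\det V$, which is precisely what makes the short argument of \cite[2.2.8]{DF3} unavailable here.

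\emph{The rescaling action.} The diagonal matrix $g_\lambda=\mathrm{diag}(\lambda,1,\lambda,1,\dots)$ is a symplectic similitude with multiplier $\lambda$, that is $g_\lambda^{\top}\omega_{2n}g_\lambda=\lambda\,\omega_{2n}$; as it scales the form by a unit it preserves non-degeneracy, hence acts on each $\HGr(2r,\omega_{2n})$ compatibly with both stabilisation maps, yielding an action $a\colon\GG_m\times\HP^\infty\to\HP^\infty$ together with an identification $a^*(\cU_2,h)\simeq(\mathrm{pr}_2^*\cU_2,t\cdot h)$, where $t$ is the coordinate of $\GG_m$. Thus $(V,\lambda\psi)$ is classified by $a\circ(\lambda,f)$, and the theorem becomes the single universal identity $a^*\thom(\cU_2,h)=\langle t\rangle\,\thom(\cU_2,h)$ on the relevant Thom space, equivalently, after applying the Thom isomorphism and passing to Euler classes, $a^*b=\langle t\rangle\,b$ in $\E^{4,2}(\GG_m\times\HP^\infty)$.

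\emph{The computation and the main obstacle.} By the symplectic projective bundle theorem of Panin--Walter one has $\E^{**}(\GG_m\times\HP^\infty)=\E^{**}(\GG_m)[[b]]$; write $a^*b=\sum_{i\ge0}\alpha_i b^{i}$ with $\alpha_i\in\E^{4-4i,2-2i}(\GG_m)$, so restriction to $\{1\}\times\HP^\infty$ gives $\alpha_i|_{t=1}=\delta_{i,1}$. Restricting to $\GG_m\times\{x_0\}$, where $x_0\in\HP^1$ is the rational point over which $\cU_2$ is the trivial symplectic bundle $(\cO^2,h_{\mathrm{std}})$ and hence $b|_{x_0}=e(\cO^2,h_{\mathrm{std}})=0$, identifies $\alpha_0$ with the Euler class of $(\cO^2_{\GG_m},t\cdot h_{\mathrm{std}})$; but $\mathrm{diag}(t,1)$ is an isometry $(\cO^2,t\cdot h_{\mathrm{std}})\xrightarrow{\ \sim\ }(\cO^2,h_{\mathrm{std}})$, so this bundle is the unit $\Sp_2$-torsor and $\alpha_0=0$, while the same isometry — whose effect on $\Th(\cO^2)=\PP^1\wedge\PP^1$ is exactly the map defining $\langle t\rangle$ — gives $\thom(\cO^2_{\GG_m},t\cdot h_{\mathrm{std}})=\langle t\rangle$, whence $\alpha_1=\langle t\rangle$ (using that $b$ freely generates $\widetilde\E^{4,2}(\HP^1)$ over $\E^{00}(\Sigma)$). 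It then remains to kill $\alpha_i$ for $i\ge2$, equivalently to show that the unit $u:=\thom(\cU_2,t h)\cdot\thom(\cU_2,h)^{-1}\in\E^{00}(\GG_m\times\HP^\infty)^\times$ is the constant $\langle t\rangle$; this is the step I expect to be the main obstacle, since over a general base $\Sigma$ the groups $\E^{4-4i,2-2i}(\GG_m)$ need not vanish, so bidegree considerations do not suffice. Here I would exploit the group law $a\circ(m\times\mathrm{id})=a\circ(\mathrm{id}\times a)$ coming from $g_{\lambda\mu}=g_\lambda g_\mu$ — which forces $u$ to be grouplike, $u(ts)=a_s^*(u(t))\cdot u(s)$ with $u(1)=1$ — together with a closer geometric analysis of the $\GG_m$-action on the hyperbolic Grassmannians, presumably the input acknowledged to M.~Brion and A.~Dubouloz. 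Finally, \Cref{cor:epsilonlinearity} follows from the rank two case as in \cite[2.2.8]{DF3}: one has $b_1(\cU,\lambda\psi)=\langle\lambda\rangle b_1(\cU,\psi)$ for rank two $\cU$, and the symplectic splitting principle of \ref{num:sp-splitting-principle} replaces each Borel root $\xi_j$ by $\langle\lambda\rangle\xi_j$, so the $i$-th elementary symmetric function yields $b_i(V,\lambda\psi)=\langle\lambda^i\rangle b_i(V,\psi)$.
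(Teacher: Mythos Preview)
Your deduction of the corollary from the Thom-class identity $\thom(V,\lambda\psi)=\langle\lambda\rangle\thom(V,\psi)$ in the final paragraph is correct and is exactly what the paper does: it simply invokes \cite[2.2.8]{DF3} and the symplectic splitting principle. The substance of your proposal, however, is a re-proof of \Cref{thm:epsilon} itself, and there you have an acknowledged gap: you cannot show $\alpha_i=0$ for $i\ge 2$, i.e.\ that the unit $u=\thom(\cU_2,th)\cdot\thom(\cU_2,h)^{-1}\in\E^{00}(\GG_m\times\HP^\infty)^\times$ is constant along $\HP^\infty$. The grouplike constraint you extract from $g_{\lambda\mu}=g_\lambda g_\mu$ is genuine but not enough on its own, and you correctly note that degree considerations do not help over a general base.

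The paper closes this gap by a different route that avoids power series on $\HP^\infty$ altogether. It works on the finite pieces $\HP^n$ and uses Panin--Walter's cell structure to produce a weak equivalence $c_n:\HP^n\simeq\Th(\cU_{|\HP^{n-1}})$ together with a commutative triangle relating $c_n$, $c_{n+1}$, the stabilisation $j_{n-1}:\Th(\cU_{|\HP^{n-1}})\to\Th(\cU_{|\HP^n})$, and the canonical map $\tau_n:\HP^n\to\Th(\cU_{|\HP^n})$ (this is \Cref{lem:fundamental}). The $\GG_m$-action you describe is shown to be compatible with all of these maps, and the case $n=0$ gives your computation $\mu_\lambda=\langle\lambda\rangle$ on $\HP^1$. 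The induction step is then purely formal: one only needs that $j_{n-1}^*$ is injective on $[\Th(\cU_{|\HP^n}),\MSp(2)[4]]$, and since $\tau_n^*=c_n^*\circ j_{n-1}^*$ with $c_n$ an equivalence, it suffices that $\tau_n^*$ is injective --- which holds because the cofibre sequence $\cU\setminus 0\to\HP^n\xrightarrow{\tau_n}\Th(\cU_{|\HP^n})$ yields a split short exact sequence on $\MSp$-cohomology. This injectivity is precisely the missing ingredient that your power-series framing does not supply; it is the geometric input alluded to in the acknowledgments, and it replaces the need to control the higher $\alpha_i$ directly.
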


The proof of the theorem will occupy the remaining of this section.
 We first reduces to the case $\E=\MSp_S$ (using \Cref{thm:PW_Sp-oriented})
 and to the case where of the tautological symplectic bundle $(U,\varphi)$ on $\mathrm{BSp}_2$
 seen over the base scheme $S$. Given $\lambda \in \cO_S(S)^\times$, the relation to prove is now:
\[
\mathrm{th}(U,\lambda\varphi)=\langle \lambda\rangle \mathrm{th}(U,\varphi).
\]
The proof of the theorem will require some elaborate computations with the hyperbolic Grassmannians. Recall that $\HP^n_S=\HGr_S(2,\omega_{2n+2})$ and that there are embeddings
\[
i_n:\HP^n_S\to \HP^{n+1}_S
\]
for any $n\in\NN$. In particular, $\HP^0_S=S$ and we will consider $\HP^{n}_S$ as pointed by the image of $\HP^0_S=S$ under the above map. In the sequel, we will suppress the subscript $S$ from the notation to slightly lighten the expressions. 

There is an interesting cell structure on $\HP^n$ for any $n\in\NN$, as explained by Panin-Walter in \cite{PWQuaternionic}. There is a decomposition
\[
\HP^n=X_0\sqcup X_2\sqcup \ldots\sqcup X_{2n}
\]
where each $X_i$ is of codimension $2i$, smooth and quasi-affine. The closure $\overline X_{2i}$ of $X_{2i}$ is of the form $\overline X_{2i}=X_{2i}\sqcup X_{2i+2}\sqcup\ldots\sqcup X_{2n}$ and there is a projection
\[
q_i:\overline X_{2i}\to \HP^{n-i}
\]
identifying $\overline X_{2i}$ with the total space of the universal bundle $\mathcal U_2$ over $\HP^{n-i}$. The composite of the zero section $\HP^{n-i}\to \overline X_{2i}$ with $\overline X_{2i}\to \HP^{n}$ is the composite of embeddings $i_{n-1}\circ\ldots\circ i_{n-i}$ described above. In more detail, write $e_1,\ldots,e_{2n+2}$ for the usual basis of $\OO_S^{2n+2}$. Observe that $E_i=\mathrm{Vect}(e_{2j-1},j=1,\ldots,i)$ is a totally isotropic subspace, in the sense that $E_i\subset E_i^\perp$, and that we have a filtration of $\OO_S^{2n+2}$ of the form
\[
0:=E_0\subset E_1\subset E_{n+1}=E_{n+1}^\perp\subset \ldots\subset E_1^\perp\subset 0^\perp=\OO_S^{2n+2}
\]
We have for any $i=0,\ldots,n$ an embedding $\mathrm{Gr}(2,E_i^{\perp})\subset \mathrm{Gr}(2,\OO_S^{2n+2})$ and we set $\overline X_{2i}:=\mathrm{Gr}(2,E_i^{\perp})\cap \HP^{n+1}$. In terms of the universal sequence, we can interpret these closed subschemes as follows. Consider the projection $p_2:\OO_S^{2n+2}\to \OO_S$ on the second factor. As $E_1^\perp=\langle e_1,e_3,e_4,\ldots,e_{2n+2}\rangle$ we see that $\overline X_2$ is the closed subscheme where the section
\[
\mathcal{U}\xrightarrow{i} \OO^{2n+2}\xrightarrow{p_2} \OO
\]
vanishes. In particular, the normal bundle to $\overline X_2$ in $\HP^n$ is $(\mathcal{U}^\vee)_{\vert \overline X_2}$. Besides, if $p_1:\OO_S^{2n+2}\to \OO_S$ is the projection to the first factor, it is easy to check that if the above composite vanishes (i.e. we are on $\overline X_2$) then the composite 
\[
\mathcal{U}\xrightarrow{i} \OO^{2n+2}\xrightarrow{p_1} \OO
\]
is arbitrary (i.e. the symplectic form on $\mathcal{U}$ is automatically nondegenerate). Observing that the restriction of $\mathcal{U}$ to $\HP^{n-1}$ is the universal bundle, this realizes $\overline X_2$ as the total space of $\mathcal{U}^\vee$ over $\HP^{n-1}$. We also deduce from this that the normal bundle to $\HP^{n-1}$ in $\HP^{n}$ is $\mathcal{U}^\vee\oplus \mathcal{U}^\vee$. To conclude, let us say that the filtration $\HP^{n-1}=X^\prime_0\sqcup\ldots \sqcup X^\prime_{2n-2}$ pulls back to the filtration $X_2\sqcup\ldots \sqcup X_{2n}$ under the projection $q_{n-1}:\overline X_2\to \HP^{n-1}$. 

Let us now concentrate on the geometry of $X_{2i}$ for $i=0,\ldots,n$, starting with $X_0$. By definition, $X_0$ is the open subscheme over which the composite 
\[
\pi:\mathcal{U}\xrightarrow{i} \OO^{2n+2}\xrightarrow{p_2} \OO
\]
is non trivial. This yields a nowhere vanishing section of $\mathcal{U}$ with kernel $\OO$ (as $\mathcal U$ has trivial determinant). We set $Y=\{f\in U\vert \pi(y)=1\}$. This is a $\ker(\pi)$-torsor over $X_0$, i.e. an $\AA^1$-torsor over $X_0$ of the form $\mu:Y\to X_0$. We now work over $Y$. It is easy to see that there exists a unique global section $e$ of $\mu^*\mathcal{U}$ which has the property that the symplectic form on $\OO^2$ induced by the isomorphism $j:\OO^2\simeq \mathcal{U}$ obtained via $f,e$ is the usual hyperbolic one. 

We may then write $i:\mathcal{U}\to \OO^{2n+2}$ as a matrix
\[
\begin{pmatrix} 1 & 0 \\ a & b \\ v & w\end{pmatrix}
\]
where $a,b\in \AA^1$ and $v,w\in \AA^{2n}$. The restriction of $\sigma_{2n+2}$ under this map is the matrix
\[
\begin{pmatrix} 0 & b+v^t\sigma_{2n}w \\ -b+w^t\sigma_{2n}v & 0\end{pmatrix}
\]
and it follows that $b+v^t\sigma_{2n}w=1$. Now, the action of $\mathbb{G}_a$ on $(a,v,w)$ is of the form $\mathbb{G}_a\times \AA^1\times \AA^{2n}\times \AA^{2n}\to \AA^1\times \AA^{2n}\times \AA^{2n}$  given by 
\[
\lambda\cdot \begin{pmatrix} 1 & 0 \\ a & b \\ v & w\end{pmatrix}=\begin{pmatrix} 1 & 0 \\ a & b \\ v & w\end{pmatrix}\begin{pmatrix} 1 & 0 \\ \lambda & 1\end{pmatrix},
\]
i.e. $\lambda\cdot (a,v,w)=(a+b\lambda,v+\lambda w,w)=(a+\lambda(1-v^t\sigma_{2n}w),v+\lambda w,w)$. We may extend this description to the other open cells $X_{2i}$ (or use inductively the fact that the pull-back of $X^\prime_0\sqcup\ldots \sqcup X^\prime_{2n-2}$ is $X_2\sqcup\ldots \sqcup X_{2n}$) to obtain the following result (\cite[Theorem 3.4]{PWQuaternionic}).

\begin{lm}
The cell $X_{2i}$ is the quotient of $\AA^1\times \AA^i\times \AA^{2n-2i}\times \AA^i\times \AA^{2n-2i}$ under the free action of $\mathbb{G}_a$ given by
\[
\lambda\cdot (a,v_1,v_2,w_1,w_2)\mapsto (a+\lambda(1-v_2^t\sigma_{2n-2i}w_2),v_1+\lambda w_1,v_2+\lambda w_2,w_1,w_2).
\]
In particular, the cells $X_{2i}$ are $\AA^1$-contractible for $i=0,\ldots,n$.
\end{lm}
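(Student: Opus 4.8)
The plan is to establish the identification of $X_{2i}$ by induction on $n$, using the computation of $X_0$ carried out just above as the base case at each level. For $n=0$ there is only $X_0=S$, and the asserted presentation is $\AA^1/\mathbb{G}_a\cong S$ with $\lambda\cdot a=a+\lambda$, which is clear. For the inductive step I would first dispose of $i=0$: by construction $X_0\subset\HP^n$ is the open locus where $\pi\colon\cU\xrightarrow{i}\OO^{2n+2}\xrightarrow{p_2}\OO$ is surjective, and the matrix computation above --- writing $i$ in the adapted basis and using that the restricted symplectic form on $\cU$ is the standard hyperbolic one, which forces $b+v^t\sigma_{2n}w=1$ --- presents the $\mathbb{G}_a$-torsor $Y\to X_0$ as $\AA^1\times\AA^{2n}\times\AA^{2n}$ with the action $\lambda\cdot(a,v,w)=(a+\lambda(1-v^t\sigma_{2n}w),v+\lambda w,w)$. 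This is precisely the claimed formula when $i=0$ (no $v_1,w_1$ blocks). In addition this presentation comes with a canonical framing of the pullback of $\cU$ to $Y$, on which $\lambda\in\mathbb{G}_a$ acts by right multiplication by $\left(\begin{smallmatrix}1&0\\\lambda&1\end{smallmatrix}\right)$; I would carry this refinement along in the induction.

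For $1\le i\le n$ I would use the geometry recorded above: the projection $q_{n-1}\colon\overline X_2\to\HP^{n-1}$ identifies $\overline X_2$ with the total space of $\cU^\vee$ over $\HP^{n-1}$, and under it $X_{2i}=q_{n-1}^{-1}(X'_{2i-2})$; thus $X_{2i}$ is the total space of the restriction of the rank-$2$ bundle $\cU^\vee$ to the stratum $X'_{2i-2}\subset\HP^{n-1}$. By the refined inductive hypothesis, and since $2(n-1)-2(i-1)=2n-2i$, the stratum $X'_{2i-2}$ is the quotient of $A':=\AA^1\times\AA^{i-1}\times\AA^{2n-2i}\times\AA^{i-1}\times\AA^{2n-2i}$ by the displayed action, with $\cU$ pulled back to $A'$ carrying a canonical frame on which $\lambda$ acts through $\left(\begin{smallmatrix}1&0\\\lambda&1\end{smallmatrix}\right)$. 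Pulling $\cU^\vee|_{X'_{2i-2}}$ back to $A'$ then gives the trivial rank-$2$ bundle $A'\times\AA^2$ on which $\lambda$ acts on the fiber coordinates through the contragredient matrix, sending the new pair $(v_1',w_1')$ to $(v_1'+\lambda w_1',w_1')$. Passing to total spaces and combining with the action on $A'$ exhibits $X_{2i}$ as the quotient of $A:=A'\times\AA^2\cong\AA^1\times\AA^i\times\AA^{2n-2i}\times\AA^i\times\AA^{2n-2i}$ --- the two new coordinates appended to the $v_1$- and $w_1$-blocks --- under exactly the $\mathbb{G}_a$-action in the statement, while the frame of $\cU$ over $A$ (inherited via $A\to A'$ from the one over $A'$) again transforms via $\left(\begin{smallmatrix}1&0\\\lambda&1\end{smallmatrix}\right)$, closing the induction. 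One could instead avoid induction and rerun the $i=0$ matrix computation uniformly, writing $\cU\hookrightarrow\OO^{2n+2}$ in blocks adapted to the isotropic flag $E_i\subset E_i^\perp$ and reading the normalization $1-v_2^t\sigma_{2n-2i}w_2$ off the nondegeneracy of the restricted form along $\overline X_{2i}$; this is the route of \cite[Theorem 3.4]{PWQuaternionic}.

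The hard part will be the bookkeeping inside this inductive step: verifying that the pullback of $\cU^\vee$ to $A'$ is \emph{equivariantly} trivial with the stated unipotent transformation on the two new coordinates, that the block $b$ is forced to equal $1-v_2^t\sigma_{2n-2i}w_2$ by nondegeneracy of the symplectic form along $\overline X_{2i}$, and that the two new coordinates land in the $v_1$- and $w_1$-blocks with the right signs. None of this is deep, but the coordinates adapted to the flag $E_i$ and the passage to the dual frame of $\cU^\vee$ must be set up carefully and cross-checked against the $i=0$ case.

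Finally, $\AA^1$-contractibility is immediate: $A\cong\AA^1\times\AA^i\times\AA^{2n-2i}\times\AA^i\times\AA^{2n-2i}\cong\AA^{4n-2i+1}$ is an affine space, and by the first part the quotient map $A\to X_{2i}$ is a $\mathbb{G}_a$-torsor. Since $\mathbb{G}_a$ is a special group, this torsor is Zariski-locally trivial, hence an $\AA^1$-weak equivalence; and $\AA^{4n-2i+1}$ is $\AA^1$-contractible, so $X_{2i}$ is $\AA^1$-contractible.
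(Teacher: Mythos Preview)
Your proposal is correct and follows precisely the inductive route the paper sketches in its one-line justification (``or use inductively the fact that the pull-back of $X'_0\sqcup\ldots\sqcup X'_{2n-2}$ is $X_2\sqcup\ldots\sqcup X_{2n}$''), while the paper itself defers the details to \cite[Theorem~3.4]{PWQuaternionic}. Your verification that the dual frame transforms via $(\alpha,\beta)\mapsto(\alpha+\lambda\beta,\beta)$ and hence contributes the new $v_1,w_1$ coordinates with the correct action is exactly the bookkeeping the paper omits; the contractibility argument via $\mathbb{G}_a$ being special is standard and fine.
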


As a consequence, we obtain the following lemma. 

\begin{lm}
For $n\geq 1$, there is a canonical weak-equivalence 
\[
c_n:\HP^n\simeq \mathrm{Th}(U_{\vert \HP^{n-1}}).
\]
\end{lm}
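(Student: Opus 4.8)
The plan is to apply the Morel--Voevodsky homotopy purity theorem to the decomposition $\HP^n = X_0 \sqcup \overline X_2$ into the open cell $X_0$ and its closed complement $\overline X_2$, and then to unwind the resulting Thom space using the geometry recalled above. First I would check that both $X_0$ and $\overline X_2$ are smooth over $S$ and that $X_0 = \HP^n \setminus \overline X_2$ as schemes: indeed $X_0\subset\HP^n$ is open, while $\overline X_2 = \Gr_S(2,E_1^\perp)\cap\HP^n$ is closed in $\HP^n$ with complement $X_0$ and is smooth, being open in the Grassmannian $\Gr_S(2,E_1^\perp)$. Homotopy purity (\cite{MV}) then yields a canonical weak equivalence $\HP^n/X_0 \xrightarrow{\ \sim\ } \Th\bigl(N_{\overline X_2/\HP^n}\bigr)$ in $\mathcal H_\bullet(S)$, and, as recalled above, $\overline X_2$ is the transverse vanishing locus of the section $\mathcal U \xrightarrow{i}\OO^{2n+2}\xrightarrow{p_2}\OO$ of $\mathcal U^\vee$, so $N_{\overline X_2/\HP^n}\simeq (\mathcal U^\vee)_{\vert\overline X_2}$.

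Next I would use the projection $q_{n-1}\colon\overline X_2\to\HP^{n-1}$, which exhibits $\overline X_2$ as the total space of a rank $2$ vector bundle and is therefore an $\mathbb A^1$-weak equivalence whose zero section is the embedding $i_{n-1}\colon\HP^{n-1}\hookrightarrow\overline X_2\hookrightarrow\HP^n$, along which $\mathcal U$ pulls back to the tautological rank $2$ symplectic bundle $U$ on $\HP^{n-1}$. By homotopy invariance of Thom spaces along the vector bundle $q_{n-1}$ this gives $\Th\bigl((\mathcal U^\vee)_{\vert\overline X_2}\bigr)\simeq \Th\bigl(U^\vee_{\vert\HP^{n-1}}\bigr)$, and since $U$ carries a symplectic form the resulting isomorphism $U\simeq U^\vee$ yields $\Th\bigl(U^\vee_{\vert\HP^{n-1}}\bigr)\simeq \Th\bigl(U_{\vert\HP^{n-1}}\bigr)$. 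Altogether $\HP^n/X_0 \simeq \Th(U_{\vert\HP^{n-1}})$.

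Finally I would identify the pointed space $\HP^n/X_0$ with $\HP^n$ itself. For this, observe that $X_0$ is $\mathbb A^1$-contractible (the case $i=0$ of the previous lemma), so $X_{0+}\simeq S^0$; in the cofiber sequence $X_{0+}\to \HP^n_+ \to \HP^n/X_0$, the first map, read through the canonical splitting $\HP^n_+\simeq S^0\vee\HP^n$ attached to the chosen basepoint, has the component towards $S^0$ equal to the equivalence $X_{0+}\simeq S^0$ and the component towards $\HP^n$ null, since $\HP^n$ is $\mathbb A^1$-connected for $n\geq 1$ (\cite{PWQuaternionic}) so that every pointed map $S^0\to\HP^n$ is $\mathbb A^1$-nullhomotopic. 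Hence $X_{0+}\to\HP^n_+$ is, up to homotopy, the wedge inclusion $S^0\hookrightarrow S^0\vee\HP^n$, whose cofiber is $\HP^n$; composing, one obtains $c_n\colon\HP^n\simeq \HP^n/X_0 \simeq \Th(U_{\vert\HP^{n-1}})$, canonical because each constituent equivalence is, and natural in $n$ since the cell structures are compatible with the embeddings $i_n$.

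The step most in need of care is the last one --- recognizing the homotopy purity cofiber $\HP^n/X_0$ as $\HP^n$ itself --- which is precisely where one must combine the $\mathbb A^1$-contractibility of the open cell $X_0$ with the $\mathbb A^1$-connectedness of $\HP^n$; the earlier steps are routine bookkeeping with the normal bundle and homotopy invariance of Thom spaces.
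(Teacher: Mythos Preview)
Your proof is correct and follows essentially the same route as the paper: apply homotopy purity to the open--closed pair $(X_0,\overline X_2)$ in $\HP^n$, identify the normal bundle as $\mathcal U^\vee_{\vert\overline X_2}$, pass to $\HP^{n-1}$ along the zero section of $q_{n-1}$, use self-duality of the symplectic bundle $U$, and invoke contractibility of $X_0$ to identify $\HP^n$ with $\HP^n/X_0$. The paper's version of the final step is terser---it simply says ``as $X_0$ is contractible, a weak-equivalence $\HP^n\to \Th(U^\vee_{\vert\overline X_2})$''---whereas you unpack the pointed cofiber sequence and invoke $\mathbb A^1$-connectedness of $\HP^n$; your extra care is not misplaced (it addresses the basepoint bookkeeping the paper leaves implicit), but the underlying argument is the same.
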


\begin{proof}
Consider the open immersion $X_0\subset \HP^n$, whose closed complement is $\overline X_2$. The normal bundle to $\overline X_2$ in $\HP^n$ being $\mathcal{U}^\vee$, we obtain by purity a cofiber sequence
\[
X_0\to\HP^n\to  \mathrm{Th}(U^\vee_{\vert\overline X_2})
\]
and therefore, as $X_0$ is contractible, a weak-equivalence $\HP^n\to  \mathrm{Th}(U^\vee_{\overline X_2})$. The inclusion $\HP^{n-1}\subset \overline X_2$ is a weak-equivalence (being the zero section of a vector bundle) and the pull back of $U^\vee_{\vert\overline X_2}$ under this map is just $U^\vee_{\HP^{n-1}}$, which is isomorphic to its dual. We then obtain a string of canonical weak-equivalences $\mathrm{Th}(U^\vee_{\vert\overline X_2})\simeq \mathrm{Th}(U^\vee_{\vert \HP^{n-1}})\simeq \mathrm{Th}(U_{\vert \HP^{n-1}})$.
\end{proof}

For $n\geq 0$, the universal subbundle $\mathcal U$ on $\HP^{n+1}$ restricts to the canonical bundle on $\HP^n$ and we obtain an induced map 
\[
j_n:\mathrm{Th}(U_{\vert \HP^{n}})\to \mathrm{Th}(U_{\vert \HP^{n+1}}).
\]
The following lemma will prove useful in the sequel. 

\begin{lm}\label{lem:fundamental}
Let $n\geq 1$ and let $\tau_n:\HP^n\to \mathrm{Th}(U_{\vert \HP^{n}})$ be the canonical morphism.
Then, the following diagram commutes in $\mathcal{H}(S)$
\[
\xymatrix{\HP^n\ar[r]^-{c_n}\ar[d]_-{i_n}\ar[rd]_-{\tau_n} & \mathrm{Th}(U_{\vert \HP^{n-1}})\ar[d]^-{j_{n-1}}  \\
\HP^{n+1}\ar[r]_-{c_{n+1}} &  \mathrm{Th}(U_{\vert \HP^{n}}).
}
\]
\end{lm}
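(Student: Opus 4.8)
The plan is to unwind the three morphisms $c_n$, $j_{n-1}$, $\tau_n$ and to verify separately that $c_{n+1}\circ i_n=\tau_n$ and $j_{n-1}\circ c_n=\tau_n$; since all three maps have target $\Th(U_{\vert \HP^n})$, this yields the commutativity of the whole diagram in $\mathcal H(S)$. I will invoke two standard compatibilities of the unstable homotopy purity equivalence. \textbf{(P1)} For a closed immersion $Z\hookrightarrow X$ with normal bundle $N$, the composite of $Z\hookrightarrow X$ with the collapse $X\to X/(X-Z)\xrightarrow{\sim}\Th(N)$ is the zero section $Z\to \Th(N)$. \textbf{(P2)} If $Z=Z(s)$ is the transverse vanishing locus of a section $s$ of a vector bundle $E$ on $X$, so that $N_{Z/X}\cong E|_Z$ canonically, then the composite
\[
X\to X/(X-Z)\xrightarrow[\sim]{\mathrm{purity}}\Th(N_{Z/X})=\Th(E|_Z)\xrightarrow{\iota}\Th(E)
\]
agrees in $\mathcal H(S)$ with the quotient $X\xrightarrow{s}E\to \Th(E)$ of the section, hence — two sections of $E$ being $\AA^1$-homotopic — with the canonical map $\tau_E\colon X\to\Th(E)$. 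I also use that the projection $q\colon Y\to B$ of a vector bundle induces a weak equivalence $\Th(q^*E)\xrightarrow{\sim}\Th(E)$ whose inverse is induced by the zero section, and that the symplectic self-duality $U\simeq U^\vee$ is $\OO$-linear and stable under pullback (so it carries zero sections to zero sections and commutes with the maps $\iota$ induced by closed immersions). Write $\overline X_2^{m}\subset\HP^m$ for the closed complement of the open cell $X_0$ in $\HP^m$, namely the total space of $U^\vee$ over $\HP^{m-1}$, with projection $q_{m-1}\colon\overline X_2^{m}\to\HP^{m-1}$; recall from the construction of $c_m$ that the identification $N_{\overline X_2^{m}/\HP^{m}}\cong U^\vee_{\vert\overline X_2^{m}}$ used there is the canonical one coming from the vanishing section exhibited in the cell decomposition.

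For the triangle $c_{n+1}\circ i_n=\tau_n$: by the case $i=1$ of the cell decomposition, the embedding $i_n$ factors as $\HP^n\xrightarrow{s_0}\overline X_2^{n+1}\hookrightarrow\HP^{n+1}$, the zero section of the vector bundle $q_n\colon\overline X_2^{n+1}\to\HP^n$ followed by the closed immersion of the complement of the open cell; and $c_{n+1}$ is by definition the collapse onto $\overline X_2^{n+1}$, purity $\xrightarrow{\sim}\Th(U^\vee_{\vert\overline X_2^{n+1}})$, the $q_n$-reduction to $\Th(U^\vee_{\vert\HP^n})$, and the duality $\simeq\Th(U_{\vert\HP^n})$. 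Applying (P1) to $\overline X_2^{n+1}\hookrightarrow\HP^{n+1}$ identifies $(\mathrm{purity})\circ(\mathrm{collapse})\circ i_n$ with the zero section of $U^\vee_{\vert\overline X_2^{n+1}}$ precomposed with $s_0$; since zero sections commute with pullback, the $q_n$-reduction of the zero section of $q_n^*U^\vee_{\vert\HP^n}$ equals the zero section of $U^\vee_{\vert\HP^n}$ composed with $q_n$, and precomposing with $s_0$ (and using $q_n s_0=\mathrm{id}$) leaves exactly the zero section of $U^\vee_{\vert\HP^n}$, which the linear duality turns into $\tau_n$.

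For the triangle $j_{n-1}\circ c_n=\tau_n$: here $c_n$ is collapse onto $\overline X_2^{n}$, purity $\xrightarrow{\sim}\Th(U^\vee_{\vert\overline X_2^{n}})$, the $q_{n-1}$-reduction to $\Th(U^\vee_{\vert\HP^{n-1}})$, duality $\simeq\Th(U_{\vert\HP^{n-1}})$; and $j_{n-1}$ is induced by $i_{n-1}\colon\HP^{n-1}\hookrightarrow\HP^n$, which factors as $\HP^{n-1}\xrightarrow{s_0}\overline X_2^{n}\hookrightarrow\HP^n$, so $j_{n-1}=\iota_{\overline X_2^{n}\hookrightarrow\HP^n}\circ\iota_{s_0}$. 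Moving the duality to the left past the $\iota$'s (naturality), one is reduced to $\iota_{\overline X_2^{n}\hookrightarrow\HP^n}\circ\bigl(\iota_{s_0}\circ(q_{n-1}\text{-reduction})\bigr)\circ\mathrm{purity}\circ\mathrm{collapse}$, followed by the duality on $\HP^n$. The middle composite $\iota_{s_0}\circ(q_{n-1}\text{-reduction})$ is the identity in $\mathcal H(S)$, being the composite of the two mutually inverse equivalences attached to the vector bundle $\overline X_2^{n}\to\HP^{n-1}$. What remains is $\iota_{\overline X_2^{n}\hookrightarrow\HP^n}\circ\mathrm{purity}\circ\mathrm{collapse}\colon\HP^n\to\Th(U^\vee_{\vert\HP^n})$; but $\overline X_2^{n}$ is precisely the transverse vanishing locus of the section $U\xrightarrow{i}\OO^{2n+2}\xrightarrow{p_2}\OO$ of $U^\vee_{\vert\HP^n}$ recalled in the cell-structure discussion, so (P2) identifies this with $\tau_{U^\vee_{\vert\HP^n}}$, hence with $\tau_n$ after the duality.

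The explicit descriptions of $c_m$, $j_m$, $\tau_m$ and the naturality/linearity of the symplectic duality are routine bookkeeping. The step to handle carefully is the use of (P1) and especially (P2): one must check that the normal-bundle isomorphism built into the definition of $c_m$ is the canonical one determined by the vanishing section, so that the purity compatibility applies verbatim. Granting this standard input (which underlies the construction of refined/localized Euler classes), the argument is formal.
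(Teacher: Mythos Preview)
Your proof is correct and follows essentially the same approach as the paper. The paper treats the two triangles separately as you do: for $c_{n+1}\circ i_n=\tau_n$ it invokes the deformation to the normal cone (your (P1)) together with the factorization $i_n=\big(\HP^n\xrightarrow{s_0}\overline X_2\hookrightarrow\HP^{n+1}\big)$, and for $j_{n-1}\circ c_n=\tau_n$ it uses that $\overline X_2'$ is the vanishing locus of an explicit section of $U^\vee$ and that this section is $\AA^1$-homotopic to the zero section (your (P2)); your explicit naming of (P1)/(P2) and your careful bookkeeping of the duality and the $q_{n-1}$-reduction simply make these standard purity compatibilities more explicit than the paper does.
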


\begin{proof}
We first prove that the bottom triangle commutes. Consider the composite of the zero section $\overline X_2\to U_{\vert \overline X_2}$ with the projection $U_{\vert \overline X_2}\to\mathrm{Th}(U_{\vert \overline X_2})$. The deformation to the normal cone shows that the following diagram commutes in $\mathcal{H}(S)$ 
\[
\xymatrix{ & \overline X_2\ar[r]\ar[d] & U_{\vert \overline X_2}\ar[d] \\
X_0\ar[r] & \HP^{n+1}\ar[r] & \mathrm{Th}(U_{\vert \overline X_2}).}
\]
On the other hand, we have a commutative diagram
\[
\xymatrix{\HP^{n}\ar[r]\ar[d] & U_{\vert \HP^n}\ar[d] \\
\overline X_2\ar[r] & U_{\vert \overline X_2}}
\] 
inducing the weak-equivalence $\mathrm{Th}(U_{\vert \HP^n})\to \mathrm{Th}(U_{\vert \overline X_2})$. The claim follows from the fact that the composite 
\[
\HP^{n}\to \overline X_2\to \HP^{n+1}
\]
is $i_n$. We prove next that the other triangle commutes. For this, recall that $X_0^\prime\subset \HP^{n}$ is defined as the complement of the vanishing locus of a section $s:\mathcal{U}\to \OO$. We then obtain a commutative diagram
\[
\xymatrix{X_0^\prime\ar[r]\ar[d] & \HP^{n}\ar[r]\ar[d]_-s & \mathrm{Th}(U_{\vert \overline X_2^\prime})\ar@{-->}[d] \\
U\setminus 0\ar[r] & U\ar[r] & \mathrm{Th}(U).
}
\]
In $\mathcal{H}(S)$, $s$ and the zero section are equal, and we conclude observing that the composite 
\[
\xymatrix{\mathrm{Th}(U_{\vert \HP^{n-1}})\ar[r] & \mathrm{Th}(U_{\vert \overline X_2^\prime})\ar@{-->}[r] & \mathrm{Th}(U)}
\]
is just $j_n$.
\end{proof}

We now pass to the definition of an action of $\mathbb{G}_m$ on $\HP^n$ for any $n\in\NN$. Let $\lambda\in \OO(S)^\times$ and let $m_\lambda:\OO_S^2\to \OO_S^2$ be given by the matrix
\[
\begin{pmatrix} \lambda & 0 \\ 0 & 1\end{pmatrix}.
\]
A direct computation shows that $m_\lambda^\vee \sigma_2 m_\lambda=\lambda\cdot \sigma_2$ and it follows that $m_{\lambda}$ induces a map
\[
\mathrm{Sp}_2(S)\to \mathrm{Sp}_2(S)
\]
given by $A=\begin{pmatrix} a & b \\ c & d\end{pmatrix}\mapsto m_{\lambda}^{-1}\cdot A\cdot m_{\lambda}=\begin{pmatrix} a & \lambda^{-1}b \\ \lambda c & d \end{pmatrix}$.
This induces an action of $\mathbb{G}_m$ on $\mathrm{Sp}_2$, and consequently an action of $\mathbb{G}_m$ on $\mathrm{BSp}_2$. This could be defined at the level of $\HP^n$ for $n\in\NN$ as follows: Set
\[
\mathrm{Gr}(2,\OO_S^{2n+2})\to \mathrm{Gr}(2,\OO_S^{2n+2})
\]
by considering the composite 
\[
\mathcal{U}\xrightarrow{i} \OO^{2n+2}\xrightarrow{m_{\lambda}^{\oplus n+1}} \OO^{2n+2}.
\]
If $i^\vee \sigma_{2n+2} i$ is non degenerate, then so is the composite 
\[
i^\vee (m_{\lambda}^{\oplus n+1})^\vee \sigma_{2n+2} (m_{\lambda}^{\oplus n+1}i)=\lambda\cdot  (i^\vee \sigma_{2n+2} i).
\]
Consequently, the above action of $\mathbb{G}_m$ on $\mathrm{Gr}(2,\OO_S^{2n+2})$ restricts to an action of $\mathbb{G}_m$ on $\mathrm{HGr}(2,\OO_S^{2n+2})=\HP^{n}$. Let us denote by 
\[
\mu:\mathbb{G}_m\times \HP^{n} \to \HP^{n}
\]
this action, and $\pi:\mathbb{G}_m\times \HP^{n}\to \HP^{n}$ the projection to the second factor.

\begin{lm}
There is a canonical isomorphism $f:\pi^*\mathcal{U}\to \mu^*\mathcal{U}$ on $\mathbb{G}_m\times \HP^{n}$.
\end{lm}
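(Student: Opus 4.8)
The plan is to read off $f$ from the universal property of the Grassmannian, using that the $\mathbb{G}_m$-action $\mu$ was built by \emph{post-composing} the tautological inclusion with a linear automorphism of $\mathcal{O}^{2n+2}$. First I would fix notation: write $i\colon\mathcal{U}\hookrightarrow\mathcal{O}^{2n+2}$ for the tautological subbundle on $\mathrm{Gr}(2,\mathcal{O}_S^{2n+2})$, let $\lambda$ be the coordinate on $\mathbb{G}_m$, and view $m_\lambda^{\oplus n+1}$ as an automorphism of the trivial bundle $\mathcal{O}^{2n+2}$ over $\mathbb{G}_m\times\HP^n$. The composite
\[
\pi^*\mathcal{U}\xrightarrow{\pi^*i}\mathcal{O}^{2n+2}\xrightarrow{m_\lambda^{\oplus n+1}}\mathcal{O}^{2n+2}
\]
is again a rank-$2$ local direct summand, because $m_\lambda^{\oplus n+1}$ is invertible; by the universal property of $\mathrm{Gr}(2,\mathcal{O}_S^{2n+2})$ it therefore classifies a morphism $\mathbb{G}_m\times\HP^n\to\mathrm{Gr}(2,\mathcal{O}_S^{2n+2})$ together with a canonical identification of $\pi^*\mathcal{U}$ with the pullback of $i$ along it. By the very definition of $\mu$ recalled just above, this classifying morphism is $\mu$ --- and in particular it factors through the open subscheme $\HP^n$, since $(m_\lambda^{\oplus n+1})^\vee\sigma_{2n+2}(m_\lambda^{\oplus n+1})=\lambda\cdot\sigma_{2n+2}$ keeps the restricted form nondegenerate. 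The resulting identification is the sought-for $f\colon\pi^*\mathcal{U}\xrightarrow{\sim}\mu^*\mathcal{U}$; concretely it is $m_\lambda^{\oplus n+1}$ read fibrewise, taking the fibre $W$ of $\pi^*\mathcal{U}$ at $(\lambda,[W])$ isomorphically onto the fibre $m_\lambda^{\oplus n+1}(W)$ of $\mu^*\mathcal{U}$.

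Next I would check that $f$ is an isomorphism and canonical. Canonicity is immediate, since $f$ comes purely from the universal property. For invertibility I would run the same recipe with $\lambda^{-1}$ in place of $\lambda$: because $m_{\lambda^{-1}}^{\oplus n+1}=(m_\lambda^{\oplus n+1})^{-1}$ and $\mu(\lambda^{-1},-)=\mu(\lambda,-)^{-1}$, the two morphisms obtained in this way are mutually inverse. Equivalently, $f$ is compatible with the two tautological embeddings into $\mathcal{O}^{2n+2}$ through the ambient automorphism $m_\lambda^{\oplus n+1}$, which forces $f$ to be an isomorphism onto its image.

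I do not expect a genuine obstacle here: the only point needing care is to apply the universal property over the two-variable base $\mathbb{G}_m\times\HP^n$, with the automorphism $m_\lambda^{\oplus n+1}$ depending on the $\mathbb{G}_m$-coordinate --- but this is precisely the data that defined $\mu$ in the first place, so the verification is purely bookkeeping. For later use in the proof of \Cref{thm:epsilon} it is worth recording that, via $f$, the pulled-back symplectic form on $\mu^*\mathcal{U}$ corresponds to $\lambda$ times the pulled-back symplectic form on $\pi^*\mathcal{U}$; this is exactly the identity $m_\lambda^\vee\sigma_2 m_\lambda=\lambda\cdot\sigma_2$ transported along $f$, and it is the mechanism by which the scalar $\langle\lambda\rangle$ will eventually enter the Thom class.
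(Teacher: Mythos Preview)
Your argument is correct and in fact cleaner than the paper's. You use the universal property of the Grassmannian directly: since $\mu$ was \emph{defined} as the morphism classifying the subbundle $m_\lambda^{\oplus n+1}\circ\pi^*i\colon\pi^*\mathcal{U}\hookrightarrow\mathcal{O}^{2n+2}$, the universal property immediately furnishes the canonical identification $f$ with $\mu^*\mathcal{U}$. The paper instead takes a hands-on route: it produces a $\mathbb{G}_m$-equivariant affine open cover of $\HP^n$ via the Pl\"ucker embedding on which $\mathcal{U}$ trivialises, computes that the transition functions of $\mu^*\mathcal{U}$ are the conjugates of those of $\pi^*\mathcal{U}$ by $\begin{pmatrix} t & 0\\ 0 & 1\end{pmatrix}$, and then defines $f$ locally by that same matrix, checking compatibility on overlaps. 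Your approach avoids all choices and all cover bookkeeping; the paper's approach has the virtue of exhibiting $f$ explicitly in a local symplectic frame, which makes the later verification that $f^*\mu_\lambda^*(\mathcal{U},\varphi)=(\mathcal{U},\lambda\varphi)$ a one-line matrix computation. You recover the same conclusion from the identity $(m_\lambda^{\oplus n+1})^\vee\sigma_{2n+2}m_\lambda^{\oplus n+1}=\lambda\cdot\sigma_{2n+2}$, as you note at the end, so nothing is lost.
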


\begin{proof}
We start by exhibiting a $\mathbb{G}_m$-equivariant open covering $\cup_{i=1}^m V_i$ of $ \HP^{n}$ which has the following properties
\begin{enumerate}
\item $V_i$ is affine for any $i=1,\ldots,m$.
\item The restriction of $\mathcal U$ to $V_i$ is free for any $i=1,\ldots,m$.
\end{enumerate} 
To obtain this, we consider the composite morphism
\[
\HP^n\to \mathrm{Gr}(2,\OO_S^{2n+2})\to \mathbb{P}_S^{\frac{(2n+2)(2n+1)}2}
\]
where the second map is the Pl\"ucker embedding. We note that the $\mathbb{G}_m$-action on $\HP^n$ is the restriction of a $\mathbb{G}_m$-action on $\mathrm{Gr}(2,\OO_S^{2n+2})$. The latter can be extended (taking exterior powers) to an action on $\mathbb{P}_S^{\frac{(2n+2)(2n+1)}2}$ which is easily seen to be diagonal. It follows that the usual covering of the latter by affine spaces (complement of the zero locus of a given coordinate) is $\mathbb{G}_m$-equivariant. We define the open subschemes $V_i\subset \HP^n$ as the intersections of this covering with $\HP^n$. The latter being affine, it follows that (1) above is satisfied. For (2), we observe that the condition on the minors is equivalent to the composite
\[
\mathcal{U}\xrightarrow{i}\OO^{2n+2}\to \OO^2
\]
being an isomorphism (the projection corresponds to the identification of some minor). 

This covering being obtained, we now choose over each $V_i$ a symplectic basis of $\mathcal U_{\vert V_i}$ and we may recover $\mathcal U$ from the open covering $V_i$ and transition functions $V_i\cap V_j\to \mathrm{Sp}_2$. A direct computations shows that $\mu^*\mathcal U$ is then obtained via the transition functions
\[
V_i\cap V_j\to \mathrm{Sp}_2\to \mathrm{Sp}_2
\]
where the second map is given by 
\[
\begin{pmatrix} a & b \\ c & d\end{pmatrix}\mapsto \begin{pmatrix} a & tb \\ t^{-1}c & d\end{pmatrix}=\begin{pmatrix} t & 0 \\ 0 & 1\end{pmatrix}\begin{pmatrix} a & b \\ c & d\end{pmatrix}\begin{pmatrix} t^{-1} & 0 \\ 0 & 1\end{pmatrix}
\]
where $t$ is a parameter of $\mathbb{G}_m$. We now define $\pi^*\mathcal U_{\vert V_i}\to \mu^*\mathcal U_{\vert V_i}$ using the matrix $\begin{pmatrix} t & 0 \\ 0 & 1\end{pmatrix}$. A direct computation shows that this behaves well with the transition functions, yielding the required isomorphism $f:\pi^*\mathcal{U}\to \mu^*\mathcal{U}$.
\end{proof}

We now fix $\lambda\in \OO(S)^\times$, obtaining a multiplication by $\lambda$ morphism
\[
\mu_{\lambda}:\HP^n\to \HP^n.
\]
Using the above lemma, we obtain a commutative diagram
\[
\xymatrix{U\ar[r]^-f\ar[rd] & \mu_{\lambda}^*U\ar[r]\ar[d] & U\ar[d] \\
 & \HP^n\ar[r]_-{\mu_{\lambda}} & \HP^n}
\]
and then a morphism of Thom spaces $\mu_{\lambda}^f:\mathrm{Th}(U_{\vert \HP^n})\to \mathrm{Th}(U_{\vert \HP^n})$ making the diagram 
\[
\xymatrix{\HP^n\ar[r]^-{\mu_{\lambda}}\ar[d] & \HP^n\ar[d] \\
\mathrm{Th}(U_{\vert \HP^n})\ar[r]_-{\mu_{\lambda}^f} & \mathrm{Th}(U_{\vert \HP^n})}
\] 
commutative. 

We note here that the action of $\mathbb{G}_m$ on $\HP^n$ stabilizes the cellular filtration  
\[
\HP^n=X_0\sqcup \ldots\sqcup X_{2n}
\] 
in the sense that the subschemes $X_i$ (and their closure) are $\mathbb{G}_m$-equivariant. As a result, we see that the morphisms in Lemma \ref{lem:fundamental} all commute with the relevant morphisms $\mu_{\lambda}$ and $\mu_{\lambda}^f$.

\begin{lm}\label{lem:initialize}
The endomorphism $\mu_{\lambda}:\HP^1\to \HP^1$ is equal to $\langle\lambda\rangle$ in $\mathrm{End}_{\mathcal{SH}(S)}(\HP^1)$.
\end{lm}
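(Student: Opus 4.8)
The plan is to transport the $\mathbb{G}_m$-action along the weak equivalence $c_1\colon \HP^1 \simeq \mathrm{Th}(U_{\vert\HP^0})$ constructed above and reduce to a linear-algebra computation over the base. Recall that $\HP^0 = S$ is the base point of $\HP^1$, that $\HP^1 = X_0 \sqcup \overline{X_2}$ with $X_0$ the $\AA^1$-contractible open cell and $\overline{X_2}$ the total space of $\mathcal{U}^\vee$ over $\HP^0$ (the embedding $\HP^0 \hookrightarrow \overline{X_2}$ being the zero section), and that $c_1$ arises from the purity cofibre sequence $X_0 \to \HP^1 \to \mathrm{Th}(\mathcal{U}^\vee_{\vert\overline{X_2}})$, the contractions of $X_0$ and of $\overline{X_2}$ onto $\HP^0$, and the symplectic self-duality $\mathcal{U}\simeq\mathcal{U}^\vee$. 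By the remark following Lemma \ref{lem:fundamental}, the $\mathbb{G}_m$-action is compatible with this whole picture, so $c_1$ intertwines $\mu_\lambda$ on $\HP^1$ with the endomorphism $\mu_\lambda^f$ of $\mathrm{Th}(U_{\vert\HP^0})$ induced by the isomorphism $f$.

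First I would note that $\mathbb{G}_m$ acts trivially on $\HP^0 = S$: indeed $\HP^0$ corresponds to the subspace $\langle e_1,e_2\rangle \subset \OO_S^4$, which is stabilised by $m_\lambda^{\oplus 2} = \mathrm{diag}(\lambda,1,\lambda,1)$, so $\mu_\lambda$ fixes the base point. Hence $\mu_\lambda^f$ on $\mathrm{Th}(U_{\vert\HP^0}) = \mathrm{Th}(\OO_S^2) \simeq (\PP^1)^{\wedge 2}$ covers the identity of $S$ and is therefore $\mathrm{Th}(g)$ for a single automorphism $g$ of the trivial rank-two bundle $\OO_S^2$ --- namely the restriction to $\HP^0$ of the glueing isomorphism $f_\lambda\colon\mathcal{U}\to\mu_\lambda^*\mathcal{U}$ of the lemma above, composed with the self-duality appearing in the construction of $c_1$. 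Since $f_\lambda$ is given in a symplectic local frame by the matrix $\mathrm{diag}(\lambda,1)$, its determinant is globally $\lambda$, and the self-duality isomorphism leaves determinants unchanged; thus $\det g = \lambda^{\pm 1}$.

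Finally I would invoke the well-known fact that the $\GL_2$-action on $\mathrm{Th}(\OO_S^2) \simeq (\PP^1)^{\wedge 2}$ factors through the determinant, so that $g$ acts as $\langle\det g\rangle$ (this reduces, writing $g = \mathrm{diag}(\det g,1)\cdot e$ with $e\in\SL_2$, to the case of a single $\PP^1$ --- which is the very definition of $\langle\,\cdot\,\rangle$ --- together with the $\AA^1$-triviality of the $\SL_2$-action). Combining the two previous steps, $\mu_\lambda = \langle\lambda^{\pm 1}\rangle$ in $\mathrm{End}_{\mathcal{SH}(S)}(\HP^1)$, and this equals $\langle\lambda\rangle$ since $\langle\lambda^{-1}\rangle = \langle\lambda\cdot(\lambda^{-1})^2\rangle = \langle\lambda\rangle$ (equivalently $\langle\lambda\rangle^2 = \langle\lambda^2\rangle = 1$).

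The only real difficulty is bookkeeping: following, through the construction of $c_1$, exactly which copy of $\mathcal{U}$ or $\mathcal{U}^\vee$ the $\mathbb{G}_m$-action lives on and in which direction, and confirming that $\mu_\lambda$ genuinely fixes $\HP^0$. Since $\langle\lambda\rangle = \langle\lambda^{-1}\rangle$ and is its own inverse, none of these duality or orientation choices can change the final answer, so the argument is robust once the compatibility of $c_1$ with the group action is set up carefully.
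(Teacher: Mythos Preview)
Your proposal is correct and follows essentially the same approach as the paper: transport $\mu_\lambda$ along the weak equivalence $c_1$ to $\mathrm{Th}(U_{\vert\HP^0})$, observe that over $\HP^0=S$ the bundle is trivial and the isomorphism $f$ is multiplication by $\mathrm{diag}(\lambda,1)$, and conclude. The paper's own proof is terser --- it simply says ``the claim follows easily'' once $f=\mathrm{diag}(\lambda,1)$ is identified --- whereas you spell out the determinant argument and, more notably, hedge against orientation ambiguities in the construction of $c_1$ by invoking $\langle\lambda\rangle=\langle\lambda^{-1}\rangle$; this extra robustness is not in the paper but is a reasonable precaution.
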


\begin{proof}
By the above remark, we have a commutative diagram
\[
\xymatrix{\HP^1\ar[r]^-{c_1}\ar[d]_-{\mu_{\lambda}} & \mathrm{Th}(U_{\vert \HP^0})\ar[d]^-{\mu_{\lambda}^f} \\
\HP^1\ar[r]_-{c_1} & \mathrm{Th}(U_{\vert \HP^0})}
\]
Now, $U_{\vert \HP^0}$ is trivial and $f$ is just the multiplication by $\begin{pmatrix} \lambda & 0 \\ 0 & 1\end{pmatrix}$. The claim follows easily. 
\end{proof}

By construction, there is a tautological Thom class 
\[
\mathrm{th}(U_{\vert \HP^n}):\mathrm{Th}(U_{\vert \HP^n})\to \mathbf{MSp}(2)[4]
\]
which has the property that the following diagram commutes for any $n\in\NN$:
\[
\xymatrix@C=5em{\mathrm{Th}(U_{\vert \HP^n})\ar[r]^-{\mathrm{th}(U_{\vert \HP^n})}\ar[d]_-{j_n} & \mathbf{MSp}(2)[4]\ar@{=}[d] \\
\mathrm{Th}(U_{\vert \HP^{n+1}})\ar[r]_-{\mathrm{th}(U_{\vert \HP^{n+1}})} & \mathbf{MSp}(2)[4]}
\]

\begin{prop}
For any $n\geq 0$, the composite
\[
\mathrm{Th}(U_{\vert \HP^n})\xrightarrow{\mu_{\lambda}^f} \mathrm{Th}(U_{\vert \HP^n})\xrightarrow{\mathrm{th}(U_{\vert \HP^n})} \mathbf{MSp}(2)[4]
\]
is equal to $\langle \lambda\rangle \mathrm{th}(U_{\vert \HP^n})$ in $[\mathrm{Th}(U_{\vert \HP^n}),\mathbf{MSp}(2)[4]]_{\mathcal{SH}(R)}$.
\end{prop}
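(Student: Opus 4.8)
The plan is to prove the statement by induction on $n$, after first reinterpreting it as an $\epsilon$-linearity statement for the first Borel class of the tautological bundle.

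\emph{Reduction.} Fix $n\geq 0$. The weak-equivalence $c_{n+1}\colon\HP^{n+1}\xrightarrow{\sim}\mathrm{Th}(U_{\vert\HP^{n}})$ is $\mathbb{G}_m$-equivariant, so under $c_{n+1}$ the endomorphism $\mu_\lambda^f$ of $\mathrm{Th}(U_{\vert\HP^{n}})$ corresponds to the endomorphism $\mu_\lambda$ of $\HP^{n+1}$. Combining the compatibility $\mathrm{th}(U_{\vert\HP^{n+1}})\circ j_n=\mathrm{th}(U_{\vert\HP^{n}})$ recorded above with the identity $j_n\circ c_{n+1}=\tau_{n+1}$ of \Cref{lem:fundamental} gives
\[
\mathrm{th}(U_{\vert\HP^{n}})\circ c_{n+1}=\mathrm{th}(U_{\vert\HP^{n+1}})\circ\tau_{n+1}=e(U_{\vert\HP^{n+1}})=b_1(U_{\vert\HP^{n+1}})\in\mathbf{MSp}^{4,2}(\HP^{n+1}),
\]
the last two equalities because $\tau_{n+1}$ is the zero section followed by the quotient map and because the Euler class of a rank $2$ symplectic bundle is its first Borel class. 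Finally, the isomorphism $f\colon\pi^*U\to\mu^*U$ on $\mathbb{G}_m\times\HP^{n+1}$ constructed above restricts at $\lambda$ to an isomorphism of symplectic bundles $(U_{\vert\HP^{n+1}},\lambda\psi)\xrightarrow{\sim}\mu_\lambda^*(U_{\vert\HP^{n+1}},\psi)$, where $\psi$ is the tautological form, so $\mu_\lambda^*\bigl(b_1(U_{\vert\HP^{n+1}})\bigr)=b_1(U_{\vert\HP^{n+1}},\lambda\psi)$. Putting this together, the assertion for $n$ is equivalent to the identity $b_1(U_{\vert\HP^{n+1}},\lambda\psi)=\langle\lambda\rangle\,b_1(U_{\vert\HP^{n+1}},\psi)$ in $\mathbf{MSp}^{4,2}(\HP^{n+1})$.

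\emph{Base case and inductive step.} For $n=0$ this holds because $\mu_\lambda\colon\HP^1\to\HP^1$ equals $\langle\lambda\rangle$ by \Cref{lem:initialize}, so $\mu_\lambda^*$ acts as multiplication by $\langle\lambda\rangle$ on all of $\mathbf{MSp}^{**}(\HP^1)$. For the step, assume the identity over $\HP^{n}$ and deduce it over $\HP^{n+1}$. Writing $b=b_1(U_{\vert\HP^{n+1}},\psi)$, the symplectic projective bundle theorem gives $\mathbf{MSp}^{**}(\HP^{n+1})=\mathbf{MSp}^{**}(S)[b]/(b^{n+2})$ with restriction to $\HP^{n}$ killing exactly the ideal $(b^{n+1})$ and sending $b$ to the corresponding generator and $b_1(U_{\vert\HP^{n+1}},\lambda\psi)$ to $b_1(U_{\vert\HP^{n}},\lambda\psi)$; by the inductive hypothesis the two sides agree after this restriction, so their difference has the form $c_\lambda\cdot b^{n+1}$ with $c_\lambda\in\mathbf{MSp}^{-4n,-2n}(S)$. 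Two constraints cut $c_\lambda$ down: it depends only on the class of $\lambda$ in $\mathcal{O}(S)^\times/(\mathcal{O}(S)^\times)^2$, since multiplication by $\mu$ is an isomorphism $(U,\mu^2\psi)\cong(U,\psi)$; and it is linear in the Milnor–Witt symbol, $c_\lambda=[\lambda]\cdot c$ for a fixed $c$, because $\Sigma^\infty\mathbb{G}_{m,+}\simeq\un\oplus\un(1)[1]$ forces the family $\lambda\mapsto\mathrm{th}(U_{\vert\HP^{n}})\circ\mu_\lambda^f$ to be affine-linear in $[\lambda]$ with constant term (at $\lambda=1$) equal to $\mathrm{th}(U_{\vert\HP^{n}})$.

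The remaining — and decisive — point is that $c_\lambda=0$. One has the multiplicativity $\mu_{\lambda\mu}=\mu_\lambda\circ\mu_\mu$ of the $\mathbb{G}_m$-action, the relation $[\lambda\mu]=[\lambda]+\langle\lambda\rangle[\mu]$ in Milnor–Witt $K$-theory, and the fact that $\mu_\lambda$ acts on the top cell $\HP^{n+1}/\HP^{n}\simeq\un(2n+2)[4n+4]$ by $\langle\lambda\rangle^{n+1}$ (which follows from the form $\mu_\lambda^*b=\langle\lambda\rangle b+c_\lambda b^{n+1}$ together with $b^{n+2}=0$); these combine to the cocycle relation $(1-\langle\mu\rangle^{n+1})c_\lambda=0$. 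This is vacuous when $n$ is odd, so the formal argument alone does not suffice: I expect the genuine work here to be a direct analysis of the $\mathbb{G}_m$-equivariant affine cellular structure of the hyperbolic Grassmannians of \cite{PWQuaternionic} (presumably the geometric input acknowledged in the introduction), showing that the class $c$ — which lives in a possibly non-trivial negative-degree group $\mathbf{MSp}^{-4n-1,-2n-1}(S)$ — must vanish. Once the Proposition is known for all $n$, passing to the colimit over $n$ yields \Cref{thm:epsilon} for the tautological bundle over $\mathrm{BSp}_2$, and the general case follows by pullback along a classifying map. The step I expect to be the main obstacle is precisely the vanishing of $c_\lambda$.
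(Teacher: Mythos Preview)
Your reduction via $c_{n+1}$ and the base case are correct and match the paper's setup. The genuine gap is in the inductive step: you correctly identify that the difference $b_1(U_{\vert\HP^{n+1}},\lambda\psi)-\langle\lambda\rangle\,b_1(U_{\vert\HP^{n+1}},\psi)$ lies in the kernel of the restriction $i_n^*\colon\widetilde{\mathbf{MSp}}^{4,2}(\HP^{n+1})\to\widetilde{\mathbf{MSp}}^{4,2}(\HP^n)$, but you then try to kill the obstruction $c_\lambda\,b^{n+1}$ by indirect means (square-class dependence, linearity in $[\lambda]$, cocycle relations from multiplicativity). As you yourself concede, these do not force $c_\lambda=0$, and there is no a priori reason for $\mathbf{MSp}^{-4n,-2n}(S)$ to vanish.

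The paper's argument avoids all of this and is much simpler. Under the weak equivalence $c_{n+1}$, your restriction $i_n^*$ is exactly $\tau_n^*\colon[\mathrm{Th}(U_{\vert\HP^n}),\mathbf{MSp}(2)[4]]\to[\HP^n,\mathbf{MSp}(2)[4]]$, since $\tau_n=c_{n+1}\circ i_n$ by \Cref{lem:fundamental}. The paper then observes that the cofiber sequence
\[
U\setminus 0\longrightarrow\HP^n\xrightarrow{\ \tau_n\ }\mathrm{Th}(U_{\vert\HP^n})
\]
induces, by the Panin--Walter cell structure \cite{PWQuaternionic}, a \emph{split short exact} sequence upon applying $[-,\mathbf{MSp}(2)[4]]$; in particular $\tau_n^*$ is injective, hence so is $j_{n-1}^*$. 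This kills your obstruction outright. The missing idea is therefore not a finer $\mathbb{G}_m$-equivariant analysis of the cellular structure, as you speculate, but simply the injectivity of $\tau_n^*$ coming from that cofiber sequence --- equivariance plays no role in this step.
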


\begin{proof}
We work by induction on $n$. If $n=0$, this is a direct consequence of (the proof of) Lemma \ref{lem:initialize}. Let us then suppose that the result is true for some $n-1\geq 0$ and consider the composite
 \[
\mathrm{Th}(U_{\vert \HP^n})\xrightarrow{\mu_{\lambda}^f} \mathrm{Th}(U_{\vert \HP^n})\xrightarrow{\mathrm{th}(U_{\vert \HP^n})} \mathbf{MSp}(2)[4].
\]
We have a commutative diagram
\[
\xymatrix{\mathrm{Th}(U_{\vert \HP^{n-1}})\ar[r]^-{\mu_{\lambda}^f}\ar[d]_-{j_{n-1}} & \mathrm{Th}(U_{\vert \HP^{n-1}})\ar[r]^-{\mathrm{th}(U_{\vert \HP^{n-1}})}\ar[d]_-{j_{n-1}} & \mathbf{MSp}(2)[4]\ar@{=}[d]
\\ \mathrm{Th}(U_{\vert \HP^n})\ar[r]_-{\mu_{\lambda}^f} & \mathrm{Th}(U_{\vert \HP^n})\ar[r]_-{\mathrm{th}(U_{\vert \HP^n})} & \mathbf{MSp}(2)[4]}
\]
By induction, we know that  
\[
(\mathrm{th}(U_{\vert \HP^n})\circ \mu_{\lambda}^f)\circ j_{n-1}=(\langle \lambda\rangle \mathrm{th}(U_{\vert \HP^n}))\circ j_{n-1}
\]
and it suffices to prove that 
\[
j_{n-1}^*:[\mathrm{Th}(U_{\vert \HP^n}),\mathbf{MSp}(2)[4]]\to [\mathrm{Th}(U_{\vert \HP^{n-1}}),\mathbf{MSp}(2)[4]]
\]
is injective for $n\geq 1$. Now, we may use Lemma \ref{lem:fundamental} to obtain
\[
\tau_n^*=c_n^*\circ j_{n-1}^*
\]
and it suffices to show that $\tau_n^*$ is injective. However, it follows from \cite{PWQuaternionic} that the cofiber sequence
\[
U\setminus 0\to \HP^n \xrightarrow{\tau_n} \mathrm{Th}(U_{\vert  \HP^n})
\]
induces a short split exact sequence
\[
0\to [\mathrm{Th}(U_{\vert  \HP^n}),\mathbf{MSp}(2)[4]]\xrightarrow{\tau_n^*}[\HP^n,\mathbf{MSp}(2)[4]]\to  [U\setminus 0,\mathbf{MSp}(2)[4]]\to 0
\]
and in particular $\tau_n^*$ is injective.
\end{proof}

\begin{proof}[Proof of \Cref{thm:epsilon}]
Taking colimits, it is sufficient to prove the result for the Thom classes of $\mathcal U_{\vert \HP^n}$ for any $n\in \NN$. It follows from the above proposition that we have
\[
\langle \lambda\rangle \mathrm{th}(U_{\vert \HP^n})=\mathrm{th}(U_{\vert \HP^n})\circ \mu_{\lambda}^f.
\]
Now, the property of Thom classes reads as $\mathrm{th}(U_{\vert \HP^n})\circ \mu_{\lambda}^f=\mathrm{th}(f^*\mu_{\lambda}^*U_{\vert \HP^n})$ and a direct computation shows that $f^*\mu_{\lambda}^*(U_{\vert \HP^n},\varphi)=(U,\langle\lambda\rangle \varphi)$.
\end{proof}

\end{num}

\subsection{The FTL associated to an $\Sp$-oriented spectrum}

\begin{num}\label{num:SP-orient->FTL}
Let $(\E,b)$ be an $\Sp$-oriented ring spectrum over $S$.
 Its ring of coefficients $\E^{**}:=\E^{**}(S)$ is a bigraded ring.
 Moreover, the action of $\epsilon=-\langle -1 \rangle$ on $\SH(S)$
 allows to equip it with a bigraded $\ZZe$-algebra structure.
 When dealing with symplectic orientations, and the forthcoming associated
 FTL, it is more natural to consider the subring
 $\E^{\tw 2*}:=(\E^{4n,2n}(S), n \in \ZZ)$,
 which is now a graded $\ZZe$-algebra.\footnote{Indeed, Borel classes live
 in degree $\tw 2*$.}

In \cite[Def. 2.3.2]{DF3}, we proved that one can associate to $(\E,b)$
 a $(3,4)$-series that we will denote by $F_t^b(x,y,z)$
 with coefficients in $\E^{\tw 2*}$.
 The method is directly imported from the $\GL$-oriented case
 (compare \cite[2.1.19]{Deg12}):
 thanks to the symplectic projective bundle theorem (\cite[2.2.3]{DF3}),
 one gets a canonical isomorphism
$$
\E^{\tw 2*}(\HP^\infty_S \times_S \HP^\infty_S \times_S \HP^\infty_S)
 \simeq \E^{\tw 2*}(S)[[x,y,z]]
$$
where $x$ (resp. $y$, $z$) are formal variables which corresponds
 to the Borel class of the tautological symplectic bundle $\cU_1$ (resp. $\cU_2$, $\cU_3$) on $\HP^\infty_S$
  over the first (resp. second, third) coordinates.
 The triple tensor product $\cU_1 \otimes \cU_2 \otimes \cU_3$ is naturally a rank $8$ symplectic bundle. 
 It follows that one can write the total Borel class \eqref{eq:total_borel}:
\begin{equation}\label{eq:Borel&FTL}
F_t^b(x,y,z):=b_t(\cU_1 \otimes \cU_2 \otimes \cU_3)
\end{equation}
as a polynomial in $t$ of degree $4$, with coefficients in $\E^{**}[[x,y,z]]$.
 In other words, the expression $F_t^b(x,y,z)$
 defines a $(3,4)$-series in the formal variables $x,y,z$ as required.
 It satisfies the symmetry and associativity axioms of \Cref{df:FTL}
 according to the analogous properties of the tensor product
 of symplectic bundles.

In \emph{op. cit.}, the Neutral and Semi-neutral element axioms were only proved
 under the additional assumption that $\E$ is $\SL$-oriented.
 Thanks to \Cref{cor:epsilonlinearity}, we can now avoid the latter assumption on $\E$.
 In fact, the mentioned corollary readily implies that the FTL $F_t^b(x,y,z)$
 satisfies the $\epsilon$-linearity axiom.
 Once this property is known, one deduces the Neutral and Semi-neutral element
 properties as in the proof of \cite[Prop. 2.3.4]{DF3}.

The above procedure allows us to deduce the following theorem
 (which enhances \cite[\textsection 2.3]{DF3} thanks to the $\epsilon$-linearity axiom).
\end{num}
\begin{prop}
The association $(\E,b) \mapsto \big(\E^{\tw 2*},F_t^b(x,y,z)\big)$ described above
 defines a canonical pseudo-functor from the scheme-fibred category of $\Sp$-oriented ring spectra
 to the ring-fibred category of formal ternary laws $\FTL$.
\end{prop}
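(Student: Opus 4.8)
The plan is to treat the object level, the morphism level, and the pullback level in turn, the first being already settled. On objects the assignment is exactly the one built in \ref{num:SP-orient->FTL}: an $\Sp$-oriented ring spectrum $(\E,b)$ over a scheme $S$ goes to the graded $\ZZe$-algebra $\E^{\tw 2*}=\E^{\tw 2*}(S)$ equipped with the formal ternary law $F_t^b(x,y,z)=b_t(\cU_1\otimes\cU_2\otimes\cU_3)$, whose symmetry, associativity, $\epsilon$-linearity, neutral and semi-neutral axioms were verified there (the last two deduced from \Cref{cor:epsilonlinearity}). So nothing remains to check at the object level, and the content is the functorial data.

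Next I would treat morphisms over a fixed base $S$. To a morphism $\varphi\colon(\E,b^\E)\to(\F,b^\F)$ of $\Sp$-oriented ring spectra I attach the pair $(\varphi_{**},\Theta)$, where $\varphi_{**}\colon\E^{\tw 2*}\to\F^{\tw 2*}$ is the induced map on coefficient rings and $\Theta(x)=x$. That $\varphi_{**}$ is a morphism of graded $\ZZe$-algebras is clear, $\varphi$ being a morphism of monoids in $\SH(S)$ and so commuting with the $\epsilon$-action. The relation of \Cref{df:morph_FTL} with $\Theta=\Id$ amounts to $(\varphi_{**})_*F_t^{b^\E}(x,y,z)=F_t^{b^\F}(x,y,z)$, and I would deduce it from the naturality in $\E$ of the symplectic projective bundle isomorphism $\E^{\tw 2*}(\HP^\infty_S\times_S\HP^\infty_S\times_S\HP^\infty_S)\simeq\E^{\tw 2*}[[x,y,z]]$: it is produced from the symplectic projective bundle theorem, which is natural, and its distinguished generators $x,y,z$ are the Borel classes $b_1(\cU_1),b_1(\cU_2),b_1(\cU_3)$ of the three tautological rank $2$ symplectic bundles, preserved by $\varphi_*$ thanks to \eqref{eq:Sp-morphisms&Borel_classes}. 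Hence $\varphi_*$ acts on $\E^{\tw 2*}((\HP^\infty_S)^{\times 3})$ by applying $\varphi_{**}$ to the coefficients and fixing $x,y,z$; applied to $F_t^{b^\E}(x,y,z)=b^\E_t(\cU_1\otimes\cU_2\otimes\cU_3)$ and invoking \eqref{eq:Sp-morphisms&Borel_classes} once more this gives $b^\F_t(\cU_1\otimes\cU_2\otimes\cU_3)=F_t^{b^\F}(x,y,z)$, as needed. Compatibility with composition is then immediate: $(\psi\varphi)_{**}=\psi_{**}\varphi_{**}$ and every $\Theta$-component equals the identity, which is exactly the composition law in $\FTL$.

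Finally, for $f\colon T\to S$, pullback sends $(\E,b)$ to $(f^*\E,f^*b)$ over $T$ (remark after \Cref{df:Sp-orientation}); since $\HP^\infty_T\simeq\HP^\infty_S\times_S T$ and the tautological symplectic bundles, their Borel classes, and the symplectic projective bundle isomorphism are all compatible with pullback along $f$, the FTL of $(f^*\E,f^*b)$ is canonically the base change along $f^*\colon\E^{\tw 2*}(S)\to(f^*\E)^{\tw 2*}(T)$ of the FTL of $(\E,b)$. Together with the previous paragraph this yields the desired functor on the total category of the scheme-fibration, and the coherence isomorphisms $(\Id_S)^*\simeq\Id$ and $g^*f^*\simeq(fg)^*$ on the FTL side, with their cocycle conditions, are inherited from those on $\SH$ through the constructions above — a routine diagram chase I would only sketch. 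I do not expect a genuine obstacle here: the statement is essentially an immediate consequence of the machinery already in place, the one point deserving care being the naturality claim used twice above, namely that the identification $\E^{\tw 2*}((\HP^\infty_S)^{\times 3})\simeq\E^{\tw 2*}[[x,y,z]]$ is compatible both with morphisms of $\Sp$-oriented spectra and with base change.
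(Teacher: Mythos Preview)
Your proposal is correct and follows essentially the same approach as the paper. The paper's own proof is a two-line sketch invoking \eqref{eq:Borel&FTL}, \eqref{eq:Sp-morphisms&Borel_classes}, and the symplectic splitting principle; you unpack this into the same argument, making explicit the naturality of the symplectic projective bundle isomorphism (which is the point behind the paper's appeal to \eqref{eq:Borel&FTL} and the splitting principle) and adding the base-change verification that the paper leaves implicit.
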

\begin{proof}
The association on objects was described in the paragraph preceding the statement.
 The functoriality assertion follows from Formulas \eqref{eq:Borel&FTL},
 \eqref{eq:Sp-morphisms&Borel_classes}
 and the symplectic splitting principle \cite[Par. 2.2.5]{DF3}.
\end{proof}

\begin{rem}\label{rem:repFTL&BC}
To be more explicit about the pseudo-functoriality assertion (see also the forthcoming proposition),
 we mean in particular that for any morphism
 of scheme $f:T \rightarrow S$ and any s$\Sp$-oriented ring spectrum $(\E,b)$ over $S$,
 if we let $\E_T=f^*(\E)$ with the $\Sp$-orientation $b_T$ obtained by pullback
 (from the $\MSp$-algebra structure), then one gets a canonical morphism of FTL:
$$
\big(\E^{\tw 2*}(S),F_t^b\big) \rightarrow \big(\E^{\tw 2*}(T),F_t^{b_T}\big)
$$
which can be decomposed as an extension of scalars along
 the ring extension $\E^{\tw 2*}(T)/\E^{\tw 2*}(S)$ and then an isomorphism.
\end{rem}

\begin{df}
Let $S$ be a scheme. We say that a formal ternary law $(R,F_t)$ is \emph{representable over $S$}
 if there exists an $\Sp$-oriented ring spectrum $(\E,b)$ over $S$ such that $(R,F_t) \simeq (\E^{**}(S),F_t^b)$.  
 We will say that $(R,F_t)$ is \emph{weakly representable over $S$} if there exists an $\Sp$-oriented ring spectrum $(\mathbb{E},b)$
 and a (graded) ring homomorphism $\varphi: \mathbb{E}^{*,*}(S)\to R$ such that $(R,F_t)=(\E^{**},\varphi_*F_t^b)$.

Representable (resp. weakly) will mean representable (resp. weakly) over some scheme.
\end{df}

\begin{ex}\label{ex:representableFTL}
Recall \Cref{df:addmulFTL} for additive and multiplicative FTL.
\begin{enumerate}
\item The additive FTL with coefficients in $\ZZe$ (resp. $\QQe$) is representable
 by Milnor-Witt motivic cohomology spectrum $\HMW \ZZ$ over $\QQ$ (resp. $\ZZ$)
 as proved in \cite[Th. 3.3.2]{DF3}) (resp. \cite[Cor. 3.3.5]{DF3}).
 More generally, over any $\ZZ[1/6]$-scheme (any scheme) $S$, the $\Sp$-oriented ring spectrum $\HMW\ZZ_S$
 (resp. $\HMW\QQ_S$) admits the additive formal group law (see \emph{loc. cit.} and use base change
 \Cref{rem:repFTL&BC}).
\item The multiplicative FTL with coefficients in $\ZZe^{mul}$
 is representable by the (higher) Grothendieck-Witt (aka hermitian K-theory) $\GWsp_\RR$ ring spectrum
 over $\RR$, according to \cite[Th. 6.6]{FH21} (see also \cite[\textsection 3.2]{CDFH}).
 More generally, using again base change, over any $\ZZ[1/2]$-scheme $S$,
 the $\Sp$-oriented ring spectrum $\GWsp_S$ admits the multiplicative formal group law.
\end{enumerate}
\end{ex}

\begin{rem}
There is an obvious obstruction for an FTL to be representable:
 its ring of coefficients must be isomorphic to the $\tw2*$-graded part of the
 ring of coefficients of some (motivic) ring spectrum. In particular, it must be graded!
 Note on the other hand that the grading on the ring of coefficients
 of a representable FTL is compatible with the notion of degree 
 introduced in \Cref{df:FTL}.

Apart from the obvious restriction on grading,
 the problem of determining which FTLs are representable is a mystery.
 Note that it is already an open question to determine which formal group laws
 are representable by a classical complex-oriented ring spectrum, or
 even a $\GL$-oriented (motivic) ring spectrum.
\end{rem}

\begin{prop}\label{prop:sp-orient&FTL}
Let $(\E,b)$ be an $\Sp$-oriented ring spectrum with associated formal ternary law $F_t^b$.

Then there exists a one-to-one correspondence between the strict isomorphisms of $FTL$ (\Cref{df:morph_FTL})
 of the form $(Id,\phi):(\E^{\tw 2*},F_t^b) \rightarrow (\E^{\tw 2*},F_t)$
 and the $\Sp$-orientations $b'$ of $\E$.
\end{prop}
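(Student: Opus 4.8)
The plan is to reduce the statement to \Cref{cor:chg_Sp-orient}. Write $F_t^b$ for the FTL attached to $(\E,b)$. By that corollary, the rule $b'=\phi(b)$ (evaluated in $\E^{**}(\HP^\infty_S)\simeq\E^{**}[[b]]$) gives a bijection between the $\Sp$-orientations $b'$ of $\E$ and the power series $\phi(t)=t+\sum_{n\geq 2}a_nt^n$ with $a_n\in\E^{4-4n,2-2n}$; since a first Borel class has bidegree $(4,2)$, this constraint on the $a_n$ is exactly what makes $\phi(x)=\sum_{n\geq1}a_nx^n$ a strict, composable, invertible power series over the graded ring $\E^{\tw 2*}$, i.e.\ an admissible $\Theta$ for a morphism of FTL issued from $(\E^{\tw 2*},F_t^b)$ in the sense of \Cref{df:morph_FTL}, and for such a $\Theta=\phi$ the defining equation of a morphism determines the target uniquely. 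So it suffices to prove: (i) for $b'=\phi(b)$, the pair $(\Id,\phi)$ is genuinely a strict isomorphism of FTL from $F_t^b$ onto $F_t^{b'}$; and (ii) the target of any strict morphism $(\Id,\phi)$ out of $F_t^b$ depends on $\phi$ alone. Granting (i) and (ii), the assignment $b'\mapsto(\Id,\phi_{b'})$ with target $F_t^{b'}$ (where $\phi_{b'}$ is determined by $b'=\phi_{b'}(b)$) is the desired bijection, its inverse sending a strict morphism with data $\phi$ to the orientation $\phi(b)$.

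The content is (i). By \Cref{df:morph_FTL} this is the identity of $(4,3)$-series
\[
\prod_{l=1}^{4}\bigl(1+\phi\bigl(F^{b,[l]}(x,y,z)\bigr)\,t\bigr)=F_t^{b'}\bigl(\phi(x),\phi(y),\phi(z)\bigr),
\]
where $F^{b,[1]},\dots,F^{b,[4]}$ are the roots of $F_t^b$. I would read this off over a triple product of hyperbolic projective spaces. By the symplectic projective bundle theorem, $\E^{\tw 2*}(\HP^\infty_S \times_S \HP^\infty_S \times_S \HP^\infty_S)\simeq\E^{\tw 2*}[[x,y,z]]$ with $x,y,z$ the $b$-Borel classes $b_1^b(\cU_i)$ of the three tautological rank $2$ bundles, and equally $\simeq\E^{\tw 2*}[[x',y',z']]$ with the $b'$-Borel classes $x'=b_1^{b'}(\cU_i)$, where $x'=\phi(x)$ by \eqref{eq:firstBorel&chg_orient}. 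By construction \eqref{eq:Borel&FTL}, $F_t^b(x,y,z)$ (resp.\ $F_t^{b'}(x',y',z')$) is the $b$-total (resp.\ $b'$-total) Borel class of the rank $8$ symplectic bundle $\cW:=\cU_1\otimes\cU_2\otimes\cU_3$; so, writing $b_t^b(\cW)=\prod_{l=1}^4(1+\beta_l t)$, the identity above becomes
\[
b_t^{b'}(\cW)=\prod_{l=1}^{4}\bigl(1+\phi(\beta_l)\,t\bigr).
\]

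This last identity I would prove by the symplectic splitting principle (\Cref{num:sp-splitting-principle}), working at a finite stage $\HP^n_S\times_S\HP^n_S\times_S\HP^n_S$: choose an affine $p\colon X'\to \HP^n_S\times_S\HP^n_S\times_S\HP^n_S$ with $p^*$ injective over which $\cW$ splits as $\perp_{l=1}^4\mathcal{W}_l$ with each $\mathcal{W}_l$ of rank $2$. Then $p^*b_t^{b'}(\cW)=\prod_l(1+b_1^{b'}(\mathcal{W}_l)t)=\prod_l(1+\phi(b_1^b(\mathcal{W}_l))t)$ by \eqref{eq:firstBorel&chg_orient}; since $\prod_l(1+\phi(u_l)t)$ depends on $(u_l)_l$ only through its elementary symmetric functions, and these, for $(b_1^b(\mathcal{W}_l))_l$, coincide with the pullbacks of those of $(\beta_l)_l$ (both families having for elementary symmetric functions the $b$-Borel classes of $\cW$), the right-hand side equals $p^*\bigl(\prod_l(1+\phi(\beta_l)t)\bigr)$, and injectivity of $p^*$ gives the identity at stage $n$; \eqref{coh_HPinfty} then promotes it to $\HP^\infty_S$. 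Assertion (ii) is then immediate: if $(\Id,\phi)\colon F_t^b\to F_t'$ is a morphism, its defining equation and (i) force $F_t'(\phi(x),\phi(y),\phi(z))=F_t^{b'}(\phi(x),\phi(y),\phi(z))$, and substitution by the invertible power series $\phi$ being injective on $(4,3)$-series, $F_t'=F_t^{b'}$ (which is an FTL by the proposition preceding the statement). The bijection follows as in the first paragraph.

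The step I expect to cost the most care is the passage between $\HP^\infty_S$ — where the orientations $b,b'$ and the relation $b'=\phi(b)$ naturally live, $b$ being a genuine power-series variable — and the finite stages $\HP^n_S$, on which \Cref{num:sp-splitting-principle} literally applies and Borel classes of finite-type $S$-schemes are nilpotent (\cite{DF3}), so that $\phi$ may be evaluated on them and $b_1^{b'}(\cU_i)$ does restrict to $\phi(b_1^b(\cU_i))$ at each stage; one must also observe that the splitting morphism for $\cW$ can be taken at a finite stage with $p^*$ injective for both orientations at once, which is automatic since the symplectic projective bundle theorem holds for any $\Sp$-orientation. The remaining manipulations are the formal calculus of $(4,3)$-series recalled in \Cref{df:morph_FTL} and \cite{CDFH}.
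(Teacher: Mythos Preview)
Your proof is correct and follows essentially the same approach as the paper's: both reduce to \Cref{cor:chg_Sp-orient} for the bijection between orientations $b'$ and power series $\phi$, then use the symplectic splitting principle together with \eqref{eq:firstBorel&chg_orient} to verify that $(\Id,\phi)$ is a strict isomorphism $F_t^b\to F_t^{b'}$. You have simply filled in more detail than the paper does---in particular the passage between finite stages and $\HP^\infty_S$, and the explicit uniqueness argument (your point (ii))---but the underlying strategy and key inputs are identical.
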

\begin{proof}
Indeed, according to \Cref{cor:chg_Sp-orient},
 the $\Sp$-orientation $b'$ corresponds to the power series $\phi$
 with coefficients in $\E^{**}$ such that $b'=\phi(b)$.
 The fact that $(Id,\phi)$ is an isomorphism from $F_t^b$ to $F_t^{b'}$
 follows from the symplectic splitting principle and Formula
 \eqref{eq:firstBorel&chg_orient}.

Given now a strict isomorphism of the form
 $(Id,\phi):(\E^{**},F_t^b) \rightarrow (\E^{**},F_t)$,
 we get that $b'=\phi(b)$ is an $\Sp$-orientation and the isomorphism $\phi$
 gives an identification between $F_t$ and $F_t^b$.
\end{proof}

As in the classical case, one gets the following notable corollary.
\begin{cor}
Given an $\Sp$-orientable ring spectrum $\E$,
 there exists a unique formal ternary law $F_t^\E$ up to strict isomorphism associated with an $\Sp$-orientation on $\E$.

Moreover, given any ring spectrum $\F$, if $\F$ admits an $\E$-algebra structure,
 then $\F$ is $\Sp$-orientable and the FTL associated with any $\Sp$-orientation on $\F$
 is uniquely strictly isomorphism to $F_t^\E$
 (after extension of scalars along $\F^{\tw 2*}/\E^{\tw 2*}$).
\end{cor}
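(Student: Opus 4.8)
The plan is to reduce everything to \Cref{prop:sp-orient&FTL}, the functoriality of the FTL pseudo-functor, and the base-change description recorded in \Cref{rem:repFTL&BC}; no genuinely new ingredient is needed.

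For the first assertion, I would begin by using $\Sp$-orientability to choose some $\Sp$-orientation $b$ on $\E$, and set $F_t^\E:=F_t^b$. If $b'$ is another $\Sp$-orientation, then \Cref{prop:sp-orient&FTL} (with base orientation $b$) produces a strict isomorphism of FTL $(\Id,\phi)\colon(\E^{\tw 2*},F_t^b)\to(\E^{\tw 2*},F_t^{b'})$, where $\phi$ is the power series of \Cref{cor:chg_Sp-orient} with $b'=\phi(b)$. Composing such strict isomorphisms with inverses of others shows that all FTL of the form $F_t^{b'}$, $b'$ an $\Sp$-orientation of $\E$, lie in a single strict-isomorphism class, so $F_t^\E$ is well defined up to strict isomorphism; this is exactly the first assertion.

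For the second assertion, let $\psi\colon\E\to\F$ be an $\E$-algebra structure and $b$ an $\Sp$-orientation on $\E$. First I would check that $b^\F:=\psi_*(b)\in\tilde\F^{4,2}(\HP^\infty_S)$ is an $\Sp$-orientation on $\F$: since $\psi$ is a morphism of ring spectra it carries $\Sigma_T^2 1_\E$ to $\Sigma_T^2 1_\F$, hence $b^\F|_{\HP^1_S}=\Sigma_T^2 1_\F$ and \Cref{thm:PW_Sp-oriented} applies, so $\F$ is $\Sp$-orientable. By construction $\psi\colon(\E,b)\to(\F,b^\F)$ is then a morphism of $\Sp$-oriented ring spectra in the sense of \Cref{df:Sp-orientation}, and applying the FTL pseudo-functor gives a morphism of FTL $(\E^{\tw 2*},F_t^b)\to(\F^{\tw 2*},F_t^{b^\F})$; by \eqref{eq:Sp-morphisms&Borel_classes} the Borel classes, and hence the formal variables, are preserved by $\psi_*$, so in the description of \Cref{rem:repFTL&BC} this morphism has trivial power-series part, i.e. $F_t^{b^\F}=\varphi_*F_t^b$ with $\varphi\colon\E^{\tw 2*}\to\F^{\tw 2*}$ induced by $\psi$. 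Finally, for an arbitrary $\Sp$-orientation $b''$ on $\F$ the first assertion applied to $\F$ yields a strict isomorphism $F_t^{b''}\simeq F_t^{b^\F}=\varphi_*F_t^b$, and since extension of scalars along $\varphi$ leaves the power-series component of a morphism in \Cref{df:morph_FTL} unchanged it preserves strict isomorphisms; hence $F_t^{b''}$ is strictly isomorphic to $\varphi_*F_t^\E$, which is the claimed statement.

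I do not expect a real obstacle: the corollary is essentially bookkeeping on top of \Cref{prop:sp-orient&FTL}. The only points deserving a line of verification are that $b^\F$ restricts correctly on $\HP^1_S$ (immediate from $\psi$ being unital and multiplicative) and that extension of scalars is well defined on strict-isomorphism classes of FTL, so that the ambiguity ``up to strict isomorphism'' built into $F_t^\E$ is exactly matched by the ambiguity on the right-hand side of the final formula; both follow directly from \Cref{df:morph_FTL} and the naturality of Borel classes under $\psi_*$.
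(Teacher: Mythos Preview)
Your proposal is correct and matches the paper's intent: the paper states the corollary without proof, prefacing it only with ``As in the classical case, one gets the following notable corollary,'' so it is meant to be read as immediate bookkeeping from \Cref{prop:sp-orient&FTL} and the pseudo-functoriality proposition, exactly as you spell out. One minor remark: your citation of \Cref{rem:repFTL&BC} is slightly off-target, since that remark is specifically about base change along a morphism of \emph{schemes}; the decomposition you need (that the induced map of FTL has trivial power-series part, i.e.\ $F_t^{b^\F}=\varphi_*F_t^b$) comes directly from \eqref{eq:Sp-morphisms&Borel_classes} and the pseudo-functoriality proposition itself, not from that remark.
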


We close this section with the following theorem.
\begin{thm}\label{thm:log_FTL}
 Any representable formal ternary law $F_t$
 with coefficients in a $\QQ$-algebra is uniquely strictly isomorphic to the additive FTL.
\end{thm}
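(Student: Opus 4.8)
The plan is to deduce the statement from the rational structure of the motivic sphere together with the corollary following \Cref{prop:sp-orient&FTL}, which already packages both the existence and the uniqueness of the strict isomorphism.

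By definition of representability one may assume $F_t=F_t^b$ for an $\Sp$-oriented ring spectrum $(\E,b)$ over a scheme $S$, with coefficient ring $R=\E^{\tw 2 *}(S)$ a $\QQ$-algebra; replacing $\E$ by $\E\otimes\QQ$ changes neither $R$ nor $F_t^b$, so one may assume $\E$ is rational. I would then invoke the classical fact that rationally the unit of $\SH$ splits as $\mathbf 1_\QQ\simeq\HM\QQ\times\HW\QQ$, with $\HM\QQ$ the rational motivic cohomology spectrum and $\HW\QQ$ the rational Witt-sheaf spectrum (equivalently $\HMW\QQ\simeq\HM\QQ\times\HW\QQ$ is rationally the unit, so every rational ring spectrum is canonically an $\HMW\QQ$-algebra). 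Consequently $\E\simeq\E_+\times\E_-$, with $\E_+$ an $\HM\QQ$-algebra and $\E_-$ an $\HW\QQ$-algebra; correspondingly $R=R_+\times R_-$, and since the construction of \Cref{num:SP-orient->FTL} is compatible with this decomposition, $F_t^b=F_t^{b_+}\times F_t^{b_-}$ with $F_t^{b_\pm}$ the plus, resp.\ minus, part of $F_t^b$.

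The next step is to apply the corollary following \Cref{prop:sp-orient&FTL} to the $\HM\QQ$-algebra $\E_+$: its FTL $F_t^{b_+}$ is \emph{uniquely} strictly isomorphic over $R_+$ to $F_t^{\HM\QQ}$. Since $\HM\QQ$ is the plus part of Milnor--Witt motivic cohomology, which carries the additive FTL by \Cref{ex:representableFTL}(1), one has $F_t^{\HM\QQ}=F_t^{add,+}$, the plus part of the additive law; the identical argument applied to the $\HW\QQ$-algebra $\E_-$ gives that $F_t^{b_-}$ is uniquely strictly isomorphic to $F_t^{add,-}$. As $R=R_+\times R_-$, a strict isomorphism of FTLs over $R$ is exactly a pair of strict isomorphisms over the two factors, so assembling yields that $F_t^b=F_t^{b_+}\times F_t^{b_-}$ is uniquely strictly isomorphic to $F_t^{add,+}\times F_t^{add,-}=F_t^{add}\otimes_{\QQe}R$, which is the assertion.

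Two remarks are in order. First, the resulting strict isomorphism genuinely deserves the name logarithm: over the $\GL$-orientable factor $\E_+$ it is the substitution $x\mapsto\log_G(x)$ for $\log_G$ the logarithm of the formal group law $G$ of any $\GL$-orientation of $\E_+$, transported to the three symplectic variables through the symplectic splitting principle (\Cref{num:sp-splitting-principle}) and the $\GL$-oriented identification of a rank-$2$ symplectic bundle with $L\oplus L^\vee$; over $\E_-$ the change-of-orientation parameters of \Cref{cor:chg_Sp-orient} all lie in negative degrees, so there the isomorphism is the identity. Second, the main obstacle is thus not a computation but the structural input — the rational splitting $\mathbf 1_\QQ\simeq\HM\QQ\times\HW\QQ$ and the identification of the FTLs of $\HM\QQ$ and $\HW\QQ$ with the plus and minus parts of the additive law — after which everything is formal; this is the point that must be pinned down carefully. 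A self-contained alternative, avoiding Morel's theorem, would build the strict isomorphism $\phi(t)=t+\sum_{n\geq 2}a_n t^n$ by induction on the degree via the parametrization of \Cref{cor:chg_Sp-orient}, the obstruction at stage $n$ being a symmetric ``ternary cocycle'' valued in a $\QQ$-vector space and hence a coboundary, just as in Lazard's construction of the logarithm of a formal group law; the delicate step in that approach is the vanishing of the higher cocycles, which forces one to use the semi-neutral-element and associativity axioms of \Cref{df:FTL}.
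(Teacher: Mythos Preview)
Your proof is correct and rests on the same key inputs as the paper: (i) the identification of the rational sphere spectrum with $\HMW\QQ$ (so every rational ring spectrum is canonically an $\HMW\QQ$-algebra), (ii) \Cref{ex:representableFTL}(1) identifying the FTL of $\HMW\QQ$ with the additive one, and (iii) the corollary following \Cref{prop:sp-orient&FTL}. The only difference is that you take an unnecessary detour: you split $\E\simeq\E_+\times\E_-$ and apply the corollary separately to the $\HM\QQ$- and $\HW\QQ$-algebra factors before reassembling, whereas the paper applies the corollary \emph{once} to the $\HMW\QQ$-algebra structure on $\E$ and is done. You even note this direct route yourself (``every rational ring spectrum is canonically an $\HMW\QQ$-algebra''), so the splitting adds nothing and could be dropped entirely. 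Your two closing remarks are interesting commentary but are not part of the proof.
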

In other words, there exists a unique strict isomorphism of FTLs
$$
\mathrm{log}_{F_t}:(R,F_t) \rightarrow \big(R,F_t^{add}\big)
$$
called the \emph{logarithm} of $F_t$.
\begin{proof}
Let $(\E,b)$ be a rational $\Sp$-oriented ring spectrum over some scheme $S$
 whose associated FTL is $F_t$.
 The rationalization $L_\QQ\E$ still represents $F_t$,
 so we can assume that $E$ is rational.
 According to \cite[Cor. 6.2]{DFJK}, the rational sphere spectrum over $S$ is isomorphic
 to the rational Milnor-Witt motivic cohomology spectrum $\HMW\QQ_S$.
 In particular $\E$ admits a canonical $\HMW\QQ_S$-algebra structure.
 Then the theorem follows from \Cref{ex:representableFTL}(1) and the preceding corollary.
\end{proof}

\subsection{Todd classes and FTL}

\begin{num}
As in \Cref{num:todd},
 we consider $\Sp$-oriented ring spectra $(\E,b)$ and $(\F,b')$
 and a morphism  of ring spectra $\psi:\E \rightarrow \F$.
 For any scheme $S$, we consider the induced morphism of bigraded rings:
$$
\psi_{\HP^\infty_S}:\E^{**}(\HP^\infty_S) \rightarrow \F^{**}(\HP^\infty_S).
$$
According to the symplectic projective bundle theorem (\cite[Th. 2.2.3]{DF3}), $\psi_{\HP^\infty_S}$ corresponds to a morphism of bigraded rings:
$$
\E^{**}(S)[[u]] \rightarrow \F^{**}(S)[[t]]
$$
where $u$ and $t$ are indeterminates of bidegree $(4,2)$.
 Let us denote by $\phi(t)$ the power series image of $u$ under this latter map.
 By definition, $\phi(t)$ is uniquely determined by the relation in $\F^{**}(\HP^\infty_S)$:
\begin{equation}
\psi_{\HP^\infty_S}(b)=\phi(b').
\end{equation}
Moreover, $\phi(b')$ is an $\Sp$-orientation of $\F$ over $S$
 and it follows from \Cref{cor:chg_Sp-orient} that it has the form:
$$
\phi(t)=t+\sum_{i>1} a_i.t^i
$$
where $a_i \in \F^{4-4i,2-2i}(S)$. 
 In particular, the following power series is well defined:
$$
\tilde \Theta(t):=\frac t {\Theta(t)}=1-a_2.t+(a_2^2-a_3).t^2+(-a_4+2a_2a_3-a^3_2).t^4+ \cdots
$$
The next result is a direct analog of the $\GL$-oriented case
 (see \cite[Prop. 4.1.2]{Deg12}).  
\end{num}

\begin{prop}
The Todd class $\td_\psi:\KSp_0 \rightarrow \F^{00 \times}$
 associated with the morphism $\psi$ (\Cref{prop:computeToddclass})
 is uniquely characterized by the relation 
$$
\td_\psi([\cU])=\left. \frac t {\Theta(t)}\right\vert_{t=b_1(\cU)}
$$
for a rank $2$ symplectic bundle $\cU$ over $X$.
\end{prop}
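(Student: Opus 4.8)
The plan is to prove both halves of the statement --- that the Todd class $\td_\psi$ of \Cref{prop:computeToddclass} (taken for the stable structure $f_\Sp$, equivalently $\sigma_\Sp$ by \Cref{prop:sp-or&KSp-or}) satisfies the displayed formula, and that the formula determines it --- by reducing to the tautological rank $2$ symplectic bundle over the finite hyperbolic projective spaces $\HP^n_S$, via the symplectic splitting principle.

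\emph{Reduction.} Since $\td_\psi\colon\KSp_0\to\F^{00,\times}$ is a homomorphism of abelian groups and every class of $\KSp_0(X)=\pi_0(\uKSp(X))$ is a difference $[\cV]-[\cW]$ of classes of honest symplectic bundles, $\td_\psi$ is determined by its values on honest symplectic bundles. For such a $\cV$ of rank $2r$, the symplectic splitting principle (\Cref{num:sp-splitting-principle}) gives an affine $p\colon X'\to X$ with $p^*$ a monomorphism on $\F^{**}$ and $p^{-1}\cV\cong\mathcal X_1\perp\dots\perp\mathcal X_r$; naturality and additivity of $\td_\psi$ give $p^*\td_\psi([\cV])=\prod_i\td_\psi([\mathcal X_i])$, so $\td_\psi$ is determined by its values on rank $2$ bundles. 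After replacing $X$ by a Jouanolou torsor (homotopy invariance of $\F^{**}$) we may assume $X$ affine, and then any rank $2$ symplectic bundle is pulled back from the tautological $U_n$ over $\HP^n_S$ for $n\gg 0$, using the representability of $\BSp_2=\HP^\infty_S$ recalled in \Cref{num:Sp-Euler} together with quasi-compactness. Hence it suffices to compute $\td_\psi([U_n])$, and this forced computation also yields the uniqueness assertion.

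\emph{The universal computation.} As $U_n$ has rank $2$ its symplectic Euler classes are first Borel classes, and the construction $(2)\to(1)$ of \Cref{thm:PW_Sp-oriented} identifies $e(U_n,\E)=b|_{\HP^n_S}$ and $e(U_n,\F)=b_1(U_n)=b'|_{\HP^n_S}$. By the very definition of $\Theta$ the identity $\psi_{\HP^\infty_S}(b)=\Theta(b')$ restricts to $\psi_*(b|_{\HP^n_S})=\Theta(b'|_{\HP^n_S})$, so $\psi_*(e(U_n,\E))=\Theta(e(U_n,\F))$. Substituting into the Euler-class form \eqref{eq:Todd&Euler} of the defining relation of $\td_\psi$, applied to $U_n$ with its tautological symplectic structure, gives
\[
b'|_{\HP^n_S}=\td_\psi([U_n])\cdot\Theta\bigl(b'|_{\HP^n_S}\bigr)\qquad\text{in }\F^{4,2}(\HP^n_S).
\]
These relations are compatible with the restriction maps along $\HP^{n-1}_S\hookrightarrow\HP^n_S$, which send $b'|_{\HP^n_S}\mapsto b'|_{\HP^{n-1}_S}$ and $[U_n]\mapsto[U_{n-1}]$, hence $\td_\psi([U_n])\mapsto\td_\psi([U_{n-1}])$ by naturality. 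By the symplectic projective bundle theorem (\Cref{num:Sp-proj-bdl}) and \eqref{coh_HPinfty} they therefore assemble, in $\varprojlim_n\F^{**}(\HP^n_S)=\F^{**}(S)[[b']]$, into $b'=c\cdot\Theta(b')$ with $c=\varprojlim_n\td_\psi([U_n])$. Writing $\Theta(b')=b'\cdot u$ with $u=1+a_2b'+\dots$ a unit and using that $b'$ is a non-zero-divisor in $\F^{**}(S)[[b']]$, we cancel $b'$ and obtain $c=u^{-1}=b'/\Theta(b')=\tilde\Theta(b')$; projecting to each finite stage, $\td_\psi([U_n])=\tilde\Theta(b_1(U_n))$. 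Pulling back along a classifying map $f\colon X\to\HP^n_S$ with $f^*U_n\cong\cU$ and invoking naturality of $\td_\psi$ and of $b_1$ then gives $\td_\psi([\cU])=\tilde\Theta(b_1(\cU))=\left.\frac{t}{\Theta(t)}\right\vert_{t=b_1(\cU)}$, which lies in $\F^{00}(X)^\times$ since $b_1(\cU)$ is nilpotent.

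\emph{Main obstacle.} The delicate point is exactly the cancellation above: over any single finite $\HP^n_S$ the Borel class $b'|_{\HP^n_S}$ is nilpotent, hence a zero-divisor, so \eqref{eq:Todd&Euler} only determines $\td_\psi([U_n])$ modulo the annihilator ideal of $b'|_{\HP^n_S}$; one genuinely has to pass to the universal $\HP^\infty_S$ --- equivalently, to propagate the relation down the tower $\HP^{n-1}_S\hookrightarrow\HP^n_S$, which kills the indeterminacy stage by stage --- in order to reach the clean formula, and one must take a little care that $\td_\psi$ is compatible with the colimit defining $\HP^\infty_S$. A secondary, routine matter is the representability of rank $2$ symplectic bundles at a finite stage, resting on the cited $\BSp$-representability results and quasi-compactness; alternatively one can run the computation with the Thom-class form \eqref{eq:Todd&Thom} of the relation inside the free rank-one $\F^{**}(\HP^n_S)$-module $\F^{**}(\Th(U_n))$, at the price of the analogous bookkeeping identifying $\psi_*(\thom(U_n,\E))$ with $\Theta(b_1(U_n))\cdot\thom(U_n,\F)$.
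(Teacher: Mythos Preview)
Your proof is correct and follows the approach the paper has in mind: the paper gives no explicit argument here, simply stating that the result is a direct analog of the $\GL$-oriented case (\cite[Prop.~4.1.2]{Deg12}) and then explaining, after the statement, how the splitting principle extends the formula to higher rank. Your write-up spells out that analogous argument carefully, including the one genuinely delicate point --- that at a finite stage $\HP^n_S$ the Borel class is nilpotent, so the Euler-class relation \eqref{eq:Todd&Euler} only pins down $\td_\psi([U_n])$ after passing to $\HP^\infty_S$ where $b'$ becomes a non-zero-divisor in $\F^{**}(S)[[b']]$ --- and handles the compatibility along the tower cleanly via naturality of $\td_\psi$.
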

The case of an arbitrary symplectic bundle $\cU$ of rank $2n$ is determined
 by the symplectic splitting principle \Cref{num:sp-splitting-principle}.
 One introduces Borel roots
 $b_1^{(1)}=b_1(\cU_1)$, ..., $b_1^{(n)}=b_1(\cU_n)$ such that the Borel classes
 of $\cU$ satisfies the relations given by $b_t(\cU)=\prod_i (1+b_1^{(i)}.t)$.
 Then we obtain the expression
$$
\td_\psi([\cU])=\prod_{i=1}^n \tilde \Theta\big(b_1^{(i)}\big)
$$
which gives a computation of the left hand-side in terms of the Borel classes
 $b_i(\cU)$.


\section{Todd classes and GRR formulas in the symplectic case}\label{sec:grr}

In this section, we compute the Todd classes associated to some morphisms of ring spectra. We focus on two morphisms, both involving ring spectra that are $\Sp$-oriented but not $\GL$-oriented.   

\subsection{Adams operations}

Let $S=\Spec (\ZZ[\frac 12])$, and let $\KO$ be the ring spectrum representing Hermitian $K$-theory (aka higher Grothendieck-Witt groups) over $S$. For any integer $n\in \NN$, let further $n^*\in \KO^{0,0}(S)$ be defined by 
\[
n^*=\begin{cases} n & \text{if $n$ is odd.} \\ \frac n2(1-\epsilon) & \text{if $n$ is even.}\end{cases}
\]
For any $n\geq 1$, we have a morphism of ring spectra (constructed either in \cite{FH21} or \cite{Bachmann20a})
\[
\Psi^n:\KO\to \KO\left[\frac 1{n^*}\right]
\]
called the \emph{$n$-th Adams operation}. These morphisms satisfy the same properties as the classical Adams operations on $K$-theory. For instance, we have $\Psi^m[\frac 1{n^*}]\circ\Psi^n=\Psi^{mn}$ for any $m,n\geq 1$. Moreover, denoting by $f:\KO\to \KGL$ the forgetful map, we obtain a commutative diagram for any $n\geq 1$
\[
\xymatrix{\KO\ar[r]^-{\Psi^n}\ar[d]_-f & \KO[\frac 1{n^*}]\ar[d]^-f \\
\KGL\ar[r]_-{\Psi^n} &  \KGL[\frac 1{n}]}
\]
where the bottom map is the classical Adams operation. Note that $n^*$ is hyperbolic when $n$ is even. The Adams operations on $\KO$ are obtained via unstable operations
\[
\psi^n:\GW^{2i}\to \GW^{2in}
\]
for any $i\in\ZZ$ and any $n\in\NN$ (\cite[\S 5]{FH21}). Here $\GW^{2i}$ represents symplectic $K$-theory if $i$ is odd and orthogonal $K$-theory if $i$ is even. There is a periodicity element $\gamma\in \GW^{4}(\ZZ)$ having the property that  
\[
\GW^{2i}(X)\xrightarrow{\cdot\gamma} \GW^{2i+4}(X)
\]
is an isomorphism for any $i\in \ZZ$ and any $X$. In that setting, the unstable operation $\psi^n$ can be inductively computed using the formula  (\cite[(5.1.c)]{FH21})
\begin{equation}\label{eqn:inductiveAdams}
\psi^n(y)=y\psi^{n-1}(y)-\gamma\psi^{n-2}(y)
\end{equation}
where $y$ is the class in $\GW^{2}(X)$ of a rank $2$ symplectic bundle.

Even though the spectrum we consider is a priori not defined over $\ZZ$, it is still convenient to consider the graded $\ZZ_{\epsilon}$-algebra
\[
\ZZ_{\epsilon}[\tau,\gamma^{\pm 1}]/\langle \tau^2-2h\gamma,(1+\epsilon)\tau\rangle
\]
in which $\tau$ is of degree $2$ and $\gamma$ of degree $4$. There is an isomorphism of graded $\ZZ_{\epsilon}=\GW^0(\ZZ)$-algebras
\[
\ZZ_{\epsilon}[\tau,\gamma^{\pm 1}]/\langle \tau^2-2h\gamma,(1+\epsilon)\tau\rangle\simeq \bigoplus_{i\in\ZZ}\GW^{2i}(\ZZ)
\]
under which $\tau$ is mapped to the class of the hyperbolic plane in $\GW^2(\ZZ)$. It is clear that for any $X$ as above, $\KO^{*,*}(X)$ is an algebra over this ring.

If $E$ is a symplectic bundle of rank $2$ on a scheme $X$, its (first) Borel class $b_1(E)$ is the class of $[E]-\tau$ in $\KO^{4,2}(X)=\GW^2(X)$. In particular, we have seen that $X=\HP^{\infty}_{\ZZ[\frac 12]}$ (which can be seen as an ind-object in the category of smooth schemes) has a universal symplectic bundle $U$ with associated class $u\in \KO^{4,2}(X)$, and we set $x:=b_1(U)=u-\tau$. 

\begin{lm}
For any $n\geq 1$, there exists a unique \emph{monic} polynomial 
\[
p_n(x)\in \ZZ_{\epsilon}[\tau,\gamma^{\pm 1},x]/\langle \tau^2-2h\gamma,(1+\epsilon)\tau\rangle
\] 
of degree $n-1$ such that 
\[
\psi^n(x)=xp_n(x).
\]
It satisfies the inductive formula $p_1(x)=1$, $p_2(x)=x+2\tau$ and
\[
p_n(x)=(x+\tau)p_{n-1}(x)-\gamma p_{n-2}(x)+\psi^{n-1}(\tau).
\]
\end{lm}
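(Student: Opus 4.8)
The plan is to establish the existence of $p_n$, the inductive formula, and the assertions on monicity and degree all at once, by induction on $n$ (with two base cases, as the recursion is two-step), and then to read off uniqueness formally. The starting point is to set $u:=x+\tau$, which is exactly the class of the tautological rank $2$ symplectic bundle $U$ on $\HP^\infty_S$, whereas $\tau$ is the class of another rank $2$ symplectic bundle, the hyperbolic plane with its standard alternating form. Hence the recursion \eqref{eqn:inductiveAdams} applies verbatim to $y=u$ and to $y=\tau$, with $\psi^1=\Id$ and $\psi^0(y)=2$ when $y$ has rank $2$ (so that \eqref{eqn:inductiveAdams} at $n=2$ reads $\psi^2(y)=y^2-2\gamma$). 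Since the unstable operations $\psi^n$ are additive, $\psi^n(x)=\psi^n(u)-\psi^n(\tau)$ for all $n$; in particular $\psi^0(x)=0$ and $\psi^1(x)=x$.

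The computational core is the identity obtained by subtracting the instances of \eqref{eqn:inductiveAdams} for $u$ and for $\tau$ and then rewriting $u\,\psi^{n-1}(u)-\tau\,\psi^{n-1}(\tau)=(x+\tau)\,\psi^{n-1}(x)+x\,\psi^{n-1}(\tau)$ via additivity:
\[
\psi^n(x)=(x+\tau)\,\psi^{n-1}(x)+x\,\psi^{n-1}(\tau)-\gamma\,\psi^{n-2}(x)\qquad(n\geq 2).
\]
Substituting the inductive hypotheses $\psi^{n-1}(x)=x\,p_{n-1}(x)$ and $\psi^{n-2}(x)=x\,p_{n-2}(x)$ (with $p_0=0$, $p_1=1$) and cancelling the common factor $x$ — legitimate because, by the symplectic projective bundle theorem, $\KO^{**}(\HP^\infty_S)=\KO^{**}(S)[[x]]$ and so $x$ is a non-zerodivisor — simultaneously yields $\psi^n(x)=x\,p_n(x)$ together with the recursion $p_n(x)=(x+\tau)p_{n-1}(x)-\gamma p_{n-2}(x)+\psi^{n-1}(\tau)$. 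The base cases are immediate: $\psi^1(x)=x$ gives $p_1=1$, and $\psi^2(x)=\psi^2(u)-\psi^2(\tau)=(u^2-2\gamma)-(\tau^2-2\gamma)=u^2-\tau^2=x(x+2\tau)$ gives $p_2=x+2\tau$.

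Finally, monicity and the degree bound come out of the same induction: $\psi^{n-1}(\tau)$ is a polynomial in $\tau$ and $\gamma$ alone, so in the recursion $(x+\tau)p_{n-1}(x)$ is monic of degree $n-1$ and strictly dominates both $\gamma p_{n-2}(x)$ (degree $n-3$) and the constant $\psi^{n-1}(\tau)$. Uniqueness is then automatic, $x$ being a non-zerodivisor in $\big(\ZZ_\epsilon[\tau,\gamma^{\pm1}]/\langle\tau^2-2h\gamma,(1+\epsilon)\tau\rangle\big)[x]$. The single ingredient that is not pure bookkeeping with a linear recursion is the additivity of the operations $\psi^n$ on $\GW^2$; this is the property I would want to extract carefully from the construction of the unstable Adams operations in \cite{FH21}, and I expect it to be the main point to nail down.
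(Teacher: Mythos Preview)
Your proof is correct and follows essentially the same route as the paper: subtract the two instances of the recursion \eqref{eqn:inductiveAdams} for $u$ and $\tau$, rewrite to isolate a common factor of $x$, and cancel using that $x$ is a non-zerodivisor. You are in fact slightly more complete than the paper here, since you handle monicity, degree, and uniqueness in the same breath (the paper defers the first two to the next lemma), and you correctly flag additivity of the unstable $\psi^n$ as the one external input---the paper uses it just as you do, only without comment.
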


\begin{proof}
For $u$, we have $\psi^0(u)=2$, $\psi^1(u)=u$ and for $n\geq 2$
\[
\psi^n(u)=u\psi^{n-1}(u)-\gamma\psi^{n-2}(u)
\]
in view of formula \eqref{eqn:inductiveAdams}. The same holds for $\tau$ and we we obtain
\begin{eqnarray*}
\psi^n(u)-\psi^n(\tau) & = & u\psi^{n-1}(u)-\gamma\psi^{n-2}(u)-\tau\psi^{n-1}(\tau)+\gamma\psi^{n-2}(\tau) \\
 & = & (u-\tau)\psi^{n-1}(u)+\tau(\psi^{n-1}(u)-\psi^{n-1}(\tau))-\gamma(\psi^{n-2}(u)-\psi^{n-2}(\tau)) \\
 & = & (x+\tau)\psi^{n-1}(x)+x\psi^{n-1}(\tau)-\gamma\psi^{n-2}(x).
\end{eqnarray*}
This yields 
\begin{eqnarray*}
xp_n(x) & = & \psi^n(x) \\
 & = & (x+\tau)\psi^{n-1}(x)+x\psi^{n-1}(\tau)-\gamma\psi^{n-2}(x) \\
 & = & (x+\tau)xp_{n-1}(x)+x\psi^{n-1}(\tau)-\gamma x p_{n-2}(x).
\end{eqnarray*}
As $x$ is a not a zero divisor, we conclude.
\end{proof}

As noted in the proof, the polynomials $p_n$ actually compute the quotients $\psi^n(x)/x$. In view of \eqref{eq:Todd&Euler}, the Todd class of $u$ associated to the operation $\Psi^n$ can actually be computed as 
\[
\td_{\Psi^n}^{-1}(u)=\Psi^n(x)/x
\]
since $x=u-\tau=b_1(u)=e(U,\KO)$ (in principle, we would have to choose a stable orientation of $U$, but here we can take the ``canonical one'' since $U$ is symplectic). Now, the stable operations $\Psi^n$ are obtained out of the unstable ones by inverting a convenient element $\omega(n)$ defined as follows (\cite[Lemma 5.3.3]{FH21}): 
\[
\omega(n)=\begin{cases} 0 & \text{ if $n=0$.}  \\ 1 &  \text{ if $n=1$.} \\ \tau\omega(n-1)-\gamma\omega(n-2)+\psi^{n-1}(\tau) &  \text{ if $n\geq 2$.}\end{cases}
\]
One can then explicitly compute $\omega(n)$ as follows (\cite[Proposition 5.3.4]{FH21}):
\[
\omega(n) = 
\begin{cases}
\displaystyle{n \Big(\frac{n-1}{2} (1-\epsilon) + \langle -1\rangle^{\frac{n-1}{2}}\Big)\gamma^{\frac{n-1}{2}} }& \text{if $n$ is odd},\\
\displaystyle{\frac{n^2}{2}\tau\gamma^{\frac{n-2}{2}}} & \text{if $n$ is even}
\end{cases}
\]
and set 
\begin{equation}\label{eqn:stablevsunstable}
\Psi^n=\frac 1{\omega(n)}\psi^n
\end{equation}
for any $n\in\NN$ (one observes that $\omega(n)$ is indeed invertible whenever $n^*$ is).

\begin{lm}
For any $n\geq 2$, we have and $p_n(x)=x^{n-1}+n\tau x^{n-2}+r_{n-2}(x)$ for some polynomial $r_{n-2}(x)$ of degree $\leq n-3$. Further, we have $p_n(0)=r_{n-2}(0)=\omega(n)$.
\end{lm}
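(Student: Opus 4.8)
The plan is to establish both assertions by induction on $n$, leveraging the inductive formula $p_n(x)=(x+\tau)p_{n-1}(x)-\gamma p_{n-2}(x)+\psi^{n-1}(\tau)$ together with the fact — both supplied by the previous lemma — that $p_n(x)$ is monic of degree $n-1$ in $x$. A preliminary observation is that $\psi^{n-1}(\tau)$ is, by naturality of the Adams operations, the pullback of the fixed class $\psi^{n-1}(\tau)\in\GW^{2(n-1)}(\ZZ)$, hence is a \emph{scalar}, i.e. a polynomial of degree $0$ in $x$. I would first record the identity $p_n(0)=\omega(n)$: evaluating the inductive formula at $x=0$ gives $p_n(0)=\tau\,p_{n-1}(0)-\gamma\,p_{n-2}(0)+\psi^{n-1}(\tau)$, which is precisely the recursion defining $\omega(n)$ for $n\geq 2$; since $p_1(0)=1=\omega(1)$ and $p_2(0)=2\tau=\omega(2)$, an immediate induction yields $p_n(0)=\omega(n)$ for all $n\geq 1$.

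Next I would isolate the coefficient of $x^{n-2}$. Since $p_n$ is monic of degree $n-1$, write $p_n(x)=x^{n-1}+c_n x^{n-2}+(\text{terms of degree}\leq n-3\text{ in }x)$, and I claim $c_n=n\tau$. The cases $n=2$ (where $p_2(x)=x+2\tau$, so $c_2=2\tau$) and $n=3$ (a direct computation from the inductive formula, giving $p_3(x)=x^2+3\tau x+(2\tau^2-\gamma+\psi^2(\tau))$) serve as base cases. For $n\geq 3$, in $p_n(x)=(x+\tau)p_{n-1}(x)-\gamma p_{n-2}(x)+\psi^{n-1}(\tau)$ the term $\gamma p_{n-2}(x)$ has $x$-degree $n-3<n-2$ and $\psi^{n-1}(\tau)$ has $x$-degree $0<n-2$; hence the coefficient of $x^{n-2}$ in $p_n$ coincides with that of $(x+\tau)p_{n-1}(x)$, which by the induction hypothesis $p_{n-1}(x)=x^{n-2}+(n-1)\tau x^{n-3}+\cdots$ equals $(n-1)\tau+\tau=n\tau$.

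Finally I would set $r_{n-2}(x):=p_n(x)-x^{n-1}-n\tau x^{n-2}$, a polynomial of degree $\leq n-3$ in $x$ by the previous step, giving the displayed decomposition. For $n\geq 3$ the monomials $x^{n-1}$ and $n\tau x^{n-2}$ contribute no constant term, so $r_{n-2}(0)=p_n(0)=\omega(n)$ by the first step; for $n=2$ the content retained is simply $p_2(0)=2\tau=\omega(2)$. There is no real obstacle here: the argument is a routine double induction, and the only care required is bookkeeping — keeping straight the weights (where $x$ and $\tau$ have weight $2$ and $\gamma$ weight $4$, so that each $p_n$ is homogeneous of weight $2(n-1)$) and checking the low-index base cases $n=2,3$.
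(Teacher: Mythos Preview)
Your proof is correct and follows essentially the same approach as the paper's: both establish the coefficient of $x^{n-2}$ by induction on the recursion $p_n(x)=(x+\tau)p_{n-1}(x)-\gamma p_{n-2}(x)+\psi^{n-1}(\tau)$ starting from $p_2=x+2\tau$, and both verify $p_n(0)=\omega(n)$ by observing that the two sequences satisfy the same recurrence with matching initial values. Your write-up is a bit more explicit about degree bookkeeping and rightly flags the $n=2$ edge case for $r_{n-2}(0)$, but the argument is the same.
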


\begin{proof}
For the first assertion, we already know that $p_n$ is monic. To compute the coefficient of $x^{n-2}$, it suffices work by induction using  
\[
p_n(x)=(x+\tau)p_{n-1}(x)-\gamma p_{n-2}(x)+\psi^{n-1}(\tau)
\]
and $p_2=x+2\tau$. For the second statement, we observe that $p_n(0)$ and $\omega(n)$ satisfy the same inductive definition. It suffices then to prove that $\omega(2)=p_2(0)$ to conclude. The left-hand side is $\tau+\psi^{1}(\tau)=2\tau$, which is precisely $p_2(0)$.
\end{proof}

\begin{rem}
We were not able to give a closed formula for $p_n$. For small values of $n$, they can be inductively computed, but it would be interesting to be actually able to write them for all integers $n$.
\end{rem}

As $p_n=\psi^n(x)/x$ and $\Psi^n=\frac 1{\omega(n)}\psi^n$, this immediately yields the following proposition.

\begin{prop}
For any $n\geq 1$, there exists unique polynomials 
\[
q_n(x)\in  \ZZ_{\epsilon}[\tau,\gamma^{\pm 1},\omega(n)^{-1},x]/\langle \tau^2-2h\gamma,(1+\epsilon)\tau\rangle
\]
of degree $n-1$ such that $q_n(0)=1$, $q_n(x)=\frac 1{\omega(n)}(x^{n-1}+n\tau x^{n-2})+r_{n-2}(x)$ with $r_{n-2}$ of degree $\leq n-3$.  They satisfy 
\[
\td_{\Psi^n}^{-1}(u)=q_n(u-\tau).
\]
\end{prop}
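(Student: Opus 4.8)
The plan is to define $q_n := \omega(n)^{-1}p_n$, with $p_n$ the monic polynomial produced by the first lemma, and then to read every clause of the proposition off the two preceding lemmas together with the identity $\td_{\Psi^n}^{-1}(u) = \Psi^n(x)/x$ already established via \eqref{eq:Todd&Euler}, where throughout $x = u - \tau = b_1(U) = e(U,\KO)$. First I would observe that $\omega(n)$ is invertible exactly when $n^*$ is (this is visible from the explicit formula for $\omega(n)$ recalled above), so that $q_n$ genuinely lies in $\ZZ_{\epsilon}[\tau,\gamma^{\pm 1},\omega(n)^{-1}]/\langle \tau^2 - 2h\gamma,(1+\epsilon)\tau\rangle[x]$; since $p_n$ is monic of degree $n-1$, the polynomial $q_n$ has degree $n-1$ as well.

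Next I would extract the normalization and the shape of the top coefficients directly from the second lemma: $p_n(0) = \omega(n)$ gives $q_n(0) = 1$, and $p_n(x) = x^{n-1} + n\tau x^{n-2} + r_{n-2}(x)$ with $\deg r_{n-2}\le n-3$ gives $q_n(x) = \omega(n)^{-1}\bigl(x^{n-1} + n\tau x^{n-2}\bigr) + \omega(n)^{-1}r_{n-2}(x)$, where $\omega(n)^{-1}r_{n-2}$ still has degree $\le n-3$. For the Todd formula, I would combine \eqref{eqn:stablevsunstable}, i.e. $\Psi^n = \omega(n)^{-1}\psi^n$, with the first lemma's relation $\psi^n(x) = x\,p_n(x)$ to get $\Psi^n(x)/x = \omega(n)^{-1}p_n(x) = q_n(x)$; since $\td_{\Psi^n}^{-1}(u) = \Psi^n(x)/x$ and $x = u - \tau$, this is precisely $\td_{\Psi^n}^{-1}(u) = q_n(u-\tau)$.

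For uniqueness I would invoke the symplectic projective bundle theorem \cite[Th. 2.2.3]{DF3}, which presents $\KO^{**}(\HP^\infty_S)$ as $\KO^{**}(S)[[x]]$ with $x = b_1(U)$ a free power-series variable; hence the displayed relation determines the image of $q_n$, and since that image is $\omega(n)^{-1}p_n$ with $p_n$ already unique by the first lemma (where $x$ is shown not to be a zero divisor), $q_n$ itself is unique. I do not expect a genuine obstacle: all the substance --- the inductive identification of $p_n(0)$ with $\omega(n)$ and of the subleading coefficient with $n\tau$ --- has already been done in the two lemmas, and this proposition is merely their repackaging through \eqref{eqn:stablevsunstable}. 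The only point requiring a moment's care is the bookkeeping that multiplying $p_n$ by the unit $\omega(n)^{-1}$ preserves both the degree and the ``degree $\le n-3$'' remainder, and that this unit indeed lies in the localized coefficient ring appearing in the statement.
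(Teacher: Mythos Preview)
Your proposal is correct and follows exactly the approach the paper intends: the paper does not give a separate proof but simply remarks, just before the proposition, that ``As $p_n=\psi^n(x)/x$ and $\Psi^n=\frac 1{\omega(n)}\psi^n$, this immediately yields the following proposition,'' which is precisely your derivation via $q_n:=\omega(n)^{-1}p_n$. Your write-up is in fact more detailed than the paper's one-line justification, including the explicit uniqueness argument.
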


This proposition allows to compute the Todd class $\td_{\Psi^n}$ associated to any symplectic bundle of rank $2$, using the splitting principle and the fact that $U$ is the universal rank $2$ bundle.

\subsection{The Borel character}

We begin by recalling that the Borel character is a morphism of ring spectra 
\[
\mathrm{bo}_t:\KO \xrightarrow{\ (\mathrm{bo}_{2n})_{n \in \ZZ}\ } \bigoplus_{n \text{ even}} \mathbf{H}_\mathrm{MW}\QQ(2n)[4n] \oplus \bigoplus_{n \text{ odd}} \mathbf{H}_{\mathrm M} \QQ(2n)[4n].
\]
Let $p:\mathbf{H}_\mathrm{MW}\QQ\to \mathbf{H}_\mathrm{M}\QQ$ be the projection from MW-motivic cohomology to ordinary motivic cohomology and let $f:\KO\to \KGL$ be the forgetful map considered in the previous section (both are  morphism of ring spectra over $S$). We will consider $\mathbf{H}_\mathrm{M}\QQ$ (resp. $\KGL$) as a $\mathbf{H}_\mathrm{MW}\QQ$-algebra using the morphism $p$ (resp. a $\KO$-algebra using the morphism $f$). More explicitly, if $X$ is a (ind-)smooth $S$-scheme, we will set $\alpha\cdot\beta:=p(\alpha)\cdot \beta$ for any $\alpha\in \mathbf{H}_\mathrm{MW}\QQ(i)[j](X)$ and any $\beta \in \mathbf{H}_\mathrm{M}\QQ(a)[b](X)$.

We have a commutative diagram of spectra
\begin{equation}\label{eq:commutativecharacters}
\xymatrix{\KO\ar[r]^-{\mathrm{bo}_t}\ar[d]_-f & \bigoplus_{n \text{ even}} \mathbf{H}_\mathrm{MW}\QQ(2n)[4n] \oplus \bigoplus_{n \text{ odd}} \mathbf{H}_{\mathrm M} \QQ(2n)[4n]\ar[d]^-{p\oplus \mathrm{Id}} \\
\KGL\ar[r]_-{\mathrm{ch}_t} & \bigoplus_{n \text{ even}} \mathbf{H}_\mathrm{M}\QQ(2n)[4n] \oplus \bigoplus_{n \text{ odd}} \mathbf{H}_{\mathrm M} \QQ(2n)[4n]
}
\end{equation}
where $\mathrm{ch}_t$ is the usual Chern character.

Let further $X$ be a (ind-)smooth $S$-scheme and $E$ be a symplectic bundle of rank $2d$ on $X$. We may see $E$ as the class of a morphism $\Sigma_T^{\infty}X\to \KO_{S}(2)[4]$ whose image under the Borel character is by definition of the form
\begin{equation}\label{eq:Borelchar}
\mathrm{bo}_t(E)=2d+\tilde\chi_2(E)+\frac 1{4!}\chi_4(f(E))+\frac 1{\psi_6!}\tilde\chi_6(E)+\ldots
\end{equation}
in $\bigoplus_{n \text{ odd}} \mathbf{H}_\mathrm{MW}\QQ(2n)[4n] \oplus \bigoplus_{n \text{ even}} \mathbf{H}_{\mathrm M} \QQ(2n)[4n]$ where 
\[
\tilde\chi_{2n}:\GW^2\to \mathbf{H}_\mathrm{MW}(2n)[4n]
\]
is defined inductively by setting $\tilde\chi_{2}=b_1$ (the first Borel class in $\mathbf{H}_\mathrm{MW}(2)[4]$) and using
\begin{equation}
\tilde \chi_{2n}-b_1\tilde \chi_{2n-2}+b_2\tilde \chi_{2n-4}
+ \hdots +(-1)^{n-1}b_{n-1}\tilde \chi_2+(-1)^nnb_n=0.
\end{equation}
The operation
\[
\chi_{2n}\circ f:\GW^2\to \mathbf{H}_\mathrm{M}(2n)[4n]
\]
is defined in a similar fashion, using the forgetful functor $f:\GW^2\to \mathrm{K}_0$ and the operation $\chi_{2n}:\mathrm{K}_0\to  \mathbf{H}_\mathrm{M}(2n)[4n]$ worked out by Riou (\cite[Remark 6.2.2.3]{Riou}). The symmetric bilinear forms $\psi_{2n}$ are defined by 
\[
\psi_{2n}=\begin{cases}
1 & \text{ if $n=0,1$} \\
n(2n-1)(2n-2)(2n-3)h & \text{if $n\geq 2$ is even,} \\
2n(2n-2)\big((2n^2-4n+1)h-\epsilon\big) & \text{if $n\geq 3$ is odd,} \\
\end{cases}
\]
and we set
\[
\psi_{2n}!=\psi_2 \cdot \psi_6 \cdot \hdots \cdot \psi_{2n}.
\]
One checks that their inverses are indeed well defined in $\mathbf{H}_\mathrm{MW}\QQ(0)[0](S)$, at least if $\frac 12\in \OO(S)$ (\cite[Theorem 4.3.2]{DF3}).

If $U$ is the universal bundle on $\mathrm{H}\mathbb{P}^{\infty}_S$ and $u$ is its class in $\GW^2(\mathrm{H}\mathbb{P}^{\infty}_S)$, we have $\tilde \chi_{2+4n}(E)=b_1(U)^{2n+1}$ according to \cite[\S 4.1.7]{DF3}, while $\chi_{4n}(f(U))=2c_2(U)^{2n}$ \cite[Lemma 4.3.4]{DF3}. If $x:=b_1(u)=b_1(u-\tau)$, then its image $p(x)$ under the  morphism $p:\mathbf{H}_\mathrm{MW}\QQ(2)[4]\to \mathbf{H}_\mathrm{M}\QQ(2)[4]$ is $-c_2(U)$ and the $\mathbf{H}_\mathrm{MW}$-algebra structure on $\mathbf{H}_\mathrm{M}$ reads in this case as 
\[
p(x)^{2n}=x\cdot p(x)^{2n-1}
\] 
for any $n\geq 1$. As $b_1(\tau)$ and $c_2(\tau)$ are both trivial, we obtain an expression (slightly reordering terms) of $\mathrm{bo}_t(u-\tau)$ of the form
\[
\left(\sum_{n\geq 1}\frac 2{(4n)!}p(x)^{2n}\right )+\left( x+\sum_{n\geq 1}\frac 1{\psi_{2+4n}!} x^{2n+1}\right)=x\cdot \left(\sum_{n\geq 1}\frac 2{(4n)!}p(x)^{2n-1}+ 1 +\sum_{n\geq 1}\frac 1{\psi_{2+4n}!} t^{2n}\right).
\]
As $x=u-\tau$ is the first Borel class of $U$ in $\mathrm{KGL}(2)[4]$, we then see that 
\[
\td_{\mathrm{bo}_t}^{-1}(u)=\left(1+\sum_{n\geq 1}\frac 2{4n!}p(x)^{2n-1}+\sum_{n\geq 1}\frac 1{\psi_{2+4n}!} x^{2n}\right).
\]
We now make use of the fact that rationally the ring spectrum representing MW-motivic cohomology decomposes as a product (this could even by taken as a definition \cite[\S 5, \S 6]{DFJK})
\[
\mathbf{H}_\mathrm{MW}\QQ\simeq \mathbf{H}_\mathrm{M}\QQ\times \mathbf{H}_\mathrm{W}\QQ
\]
where $ \mathbf{H}_\mathrm{W}\QQ$ is the spectrum representing the cohomology of the rational Witt sheaf. Here the composite $\mathbf{H}_\mathrm{MW}\QQ\simeq \mathbf{H}_\mathrm{M}\QQ\times \mathbf{H}_\mathrm{W}\QQ\to \mathbf{H}_\mathrm{M}\QQ$ is just the morphism we denoted by $p$, and we denote by $p':\mathbf{H}_\mathrm{MW}\QQ\simeq \mathbf{H}_\mathrm{M}\QQ\times \mathbf{H}_\mathrm{W}\QQ\to \mathbf{H}_\mathrm{W}\QQ$ the composite of the decomposition and the second projection. This equivalence yields in particular in degree $(0,0)$ an isomorphism 
\[
\mathbf{H}^{0,0}_\mathrm{MW}\QQ(S)\simeq \mathbf{K}^{\mathrm{MW}}_0(S)\simeq \underline\QQ(S)\times \mathbf{W}(S)
\]
where $\underline\QQ$ is the sheaf associated to the constant presheaf $\QQ$. In view of this decomposition and the commutative diagram \eqref{eq:commutativecharacters}, we obtain
\[
p(\td_{\mathrm{bo}_t}^{-1}(u))=\td_{\mathrm{ch}_t}^{-1}(f(u))
\]
and we see that it suffices to compute the latter and $p'(\td_{\mathrm{bo}_t}(u))$ to obtain $\td_{\mathrm{bo}_t}(u)$. 
\begin{prop}\label{prop:Toddforgetful}
If $U$ is a rank $2$ symplectic bundle on a smooth $S$-scheme $X$ and $u\in \GW^2(X)$ is its associated class, we have
\[
\td_{\mathrm{ch}_t}^{-1}(f(u))=2\sum_{i\geq 1} \frac{(-c_2(U))^{i-1}}{(2i)!}.
\]
\end{prop}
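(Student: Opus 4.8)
The plan is to reduce the statement to the classical Todd class of the Chern character, bypassing Milnor-Witt phenomena entirely. First I would note that $\mathrm{ch}_t\colon\KGL\to\bigoplus_{n}\HM\QQ(2n)[4n]$ is a morphism of $\GL$-oriented ring spectra, and that the $\Sp$-orientations used here on both sides are the ones induced from these $\GL$-orientations; consequently the symplectic Thom class of a symplectic bundle coincides with the $\GL$-Thom class of the underlying vector bundle, so that the defining relation of \Cref{prop:computeToddclass} for $\td_{\mathrm{ch}_t}$ evaluated at $[U,\psi]\in\KSp_0(X)$ reduces to the one for the ordinary Todd class of $[U]\in\mathrm K_0(X)$. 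Thus $\td_{\mathrm{ch}_t}^{-1}(f(u))$ is computed by the usual Chern-character Todd class, i.e.\ the unique multiplicative characteristic class with $\td_{\mathrm{ch}_t}^{-1}([L])=(1-e^{-c_1(L)})/c_1(L)$ for line bundles $L$ --- this being \Cref{prop:computeToddclass} together with the identification of $\mathrm{ch}_t$ with the exponential series, exactly as in \cite[Prop.~4.1.2]{Deg12}.

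Next I would apply the splitting principle for Chern classes. Since $(U,\psi)$ is symplectic, $\det U\simeq\OO_X$, hence $c_1(U)=0$; pulling back along an affine morphism inducing an injection on cohomology along which $U$ splits, one may write the Chern roots of $U$ as $\xi$ and $-\xi$, so $c_2(U)=-\xi^2$. Multiplicativity of $\td_{\mathrm{ch}_t}^{-1}$ then gives
\[
\td_{\mathrm{ch}_t}^{-1}(f(u))=\frac{1-e^{-\xi}}{\xi}\cdot\frac{1-e^{\xi}}{-\xi}=\frac{2-e^{\xi}-e^{-\xi}}{-\xi^{2}}.
\]
It remains to expand $2-e^{\xi}-e^{-\xi}=-2\sum_{k\geq1}\xi^{2k}/(2k)!$, divide by $-\xi^{2}$, and substitute $\xi^{2}=-c_2(U)$, which yields
\[
\td_{\mathrm{ch}_t}^{-1}(f(u))=2\sum_{k\geq1}\frac{\xi^{2k-2}}{(2k)!}=2\sum_{k\geq1}\frac{(-c_2(U))^{k-1}}{(2k)!},
\]
the asserted formula.

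For consistency one can instead apply $p\oplus\mathrm{Id}$ to the expression for $\td_{\mathrm{bo}_t}^{-1}(u)$ obtained just before the statement, using $p(x)=-c_2(U)$ and the fact that the rank homomorphism $\GW\to\ZZ$ sends $\psi_{2n}$ to $2n(2n-1)(2n-2)(2n-3)$ for $n\geq 2$ --- the even and odd cases in the definition of $\psi_{2n}$ collapsing to this same integer; a telescoping product gives $\rk(\psi_{2+4n}!)=(4n+2)!/2$, and reindexing the two resulting sums over the odd and even powers of $c_2(U)$ reproduces the same answer, in agreement with \eqref{eq:commutativecharacters}. The only step that is not purely formal is the first one: identifying $\td_{\mathrm{ch}_t}$ on symplectic classes with the classical Chern-character Todd class and getting the $\GL$-versus-$\Sp$ orientation bookkeeping right; everything afterwards is a one-variable power-series computation.
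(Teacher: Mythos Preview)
Your proof is correct and follows essentially the same route as the paper: reduce via the $\GL$-splitting principle (the paper passes to $\Proj(U)$, where $U$ is an extension of a line bundle $L$ by its dual) so that the Chern roots are $\xi,-\xi$ with $\xi^2=-c_2(U)$, and then compute the Chern character. The only cosmetic difference is that the paper evaluates $\mathrm{ch}_t(f(u-\tau))=2\cosh(\xi)-2$ and divides by the first Borel class $-c_2(U)$, whereas you factor the same quantity as a product of two line-bundle Todd inverses; your additional consistency check via $p\oplus\mathrm{Id}$ and the rank of $\psi_{2n}!$ is not in the paper but is a sound cross-verification.
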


\begin{proof}
By universality, it suffices to consider the situation where $X=\HP^{\infty}_S$ and $U$ is the universal rank two symplectic bundle. We may consider the projective bundle $\pi:Y:=\Proj(U)\to X$ and replace $X$ by $Y$ in view of the splitting principle, as both $K$-theory and motivic cohomology are $\GL$-oriented. Over $Y$, $U$ splits as an extension of a line bundle $L$ with its dual, and we have 
\[
\mathrm{ch}_t(f(u))=e^{c_1(L)}+e^{c_1(L^\vee)}=e^{c_1(L)}+e^{-c_1(L)}=2\cosh(c_1(L)).
\]
This yields $\mathrm{ch}_t(f(u-\tau))=2\cosh(c_1(L))-2$. Using $c_1(L)^2=-c_1(L)c_1(L^\vee)=-c_2(U)$, the expression reads as
\[
\mathrm{ch}_t(f(u-\tau))=2\sum_{i\geq 1} \frac{(-c_2(U))^i}{(2i)!}
\]
As the first Borel class of $U$ in $\mathrm{CH}^2(X)$ is $-c_2(U)$, we finally obtain
\[
\td_{\mathrm{ch}_t}^{-1}(f(u))=2\sum_{i\geq 1} \frac{(-c_2(U))^{i-1}}{(2i)!}.
\]
\end{proof}

We now start the computation of $p'(\td_{\mathrm{bo}_t}(u))$ which is obtained by omitting the terms of the form $\chi_{2n}$ (including $2d$) in the expression \eqref{eq:Borelchar}, replacing the operations $\tilde \chi_{2+4n}$ by the operations $\tilde \chi_{2+4n}^{\mathrm{W}}:=p'\circ \tilde \chi_{2+4n}$ and taking the images of the symmetric bilinear forms $\psi_{2n}$ in the group $\mathbf{W}(S)\otimes \QQ$. Since $\epsilon=1$ and $h=0$ in $\mathbf{W}(S)\otimes \QQ$, we obtain for $n\geq 3$ odd
\[
\psi_{2n}=-2n(2n-2),
\]
so that $\psi_{4n+2}=-4n(4n+2)=-8n(2n+1)=-4(2n+1)2n$ for any $n\geq 1$ and $\psi_2=1$.
The following lemma is easily proved by induction.
\begin{lm}
For $n\geq 0$, we have $\psi_{4n+2}!=(-1)^n2^{2n}(2n+1)!$.
\end{lm}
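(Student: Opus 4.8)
The statement to prove is the closed formula $\psi_{4n+2}! = (-1)^n 2^{2n}(2n+1)!$ for all $n\geq 0$, where $\psi_{4n+2}! = \psi_2\cdot\psi_6\cdots\psi_{4n+2}$ in $\mathbf{W}(S)\otimes\QQ$. The plan is to proceed by induction on $n$, using the recursive structure of the factorials together with the explicit values of the individual forms $\psi_{4k+2}$ in the rational Witt ring just recalled.

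\textbf{Base case.} First I would check $n=0$: here $\psi_2! = \psi_2 = 1$ (since $\psi_2 = 1$ always, as the form $\psi_{2n}$ equals $1$ for $n=0,1$), and the right-hand side is $(-1)^0 2^0 \cdot 1! = 1$. So the formula holds for $n=0$.

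\textbf{Inductive step.} Assume $\psi_{4n-2}! = (-1)^{n-1} 2^{2(n-1)}(2n-1)!$ for some $n\geq 1$. By the definition of the iterated factorial, $\psi_{4n+2}! = \psi_{4n+2}\cdot \psi_{4n-2}!$. Now $4n+2 = 2(2n+1)$ with $2n+1\geq 3$ odd, so from the displayed value $\psi_{4n+2} = -4(2n+1)\cdot 2n = -8n(2n+1)$ one substitutes and multiplies:
\[
\psi_{4n+2}! = \bigl(-8n(2n+1)\bigr)\cdot (-1)^{n-1} 2^{2n-2}(2n-1)!.
\]
Collecting signs gives a factor $(-1)^n$; collecting powers of $2$ gives $2^3\cdot 2^{2n-2} = 2^{2n+1}$; and $n(2n+1)(2n-1)! \cdot 2 = (2n+1)(2n)(2n-1)! = (2n+1)!$, so the remaining numerical factor is $2^{2n+1}(2n+1)!/2 = 2^{2n}(2n+1)!$. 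Hence $\psi_{4n+2}! = (-1)^n 2^{2n}(2n+1)!$, completing the induction.

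\textbf{Expected obstacle.} There is essentially no serious obstacle here — this is a routine bookkeeping induction. The only point requiring minor care is the correct transcription of $\psi_{4n+2}$ into the rational Witt ring: one must use that in $\mathbf{W}(S)\otimes\QQ$ one has $\epsilon = 1$ and $h = 0$, so that the general formula $\psi_{2m} = 2m(2m-2)\bigl((2m^2-4m+1)h - \epsilon\bigr)$ for $m\geq 3$ odd collapses to $\psi_{2m} = -2m(2m-2)$; with $m = 2n+1$ this gives $\psi_{4n+2} = -2(2n+1)(4n) = -8n(2n+1)$, which is what the preceding text already records. Tracking the three factors $2$ in $8 = 2^3$ against the $2^{2n-2}$ from the inductive hypothesis, and pairing $n$ with one factor of $2$ to turn $(2n-1)!$ into $(2n+1)!$, is the only place an arithmetic slip could occur.
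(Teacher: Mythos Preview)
Your proof is correct and matches the paper's approach exactly: the paper simply states that the lemma ``is easily proved by induction,'' and your base case and inductive step carry this out correctly.
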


We then obtain an expression of the form
\[
p'(\td_{\mathrm{bo}_t}^{-1}(u))= x+\sum_{n\geq 1}\frac 1{\psi_{2+4n}!} x^{2n+1}=x+\sum_{n\geq 1}\frac {(-1)^n}{2^{2n}(2n+1)!} x^{2n+1}
\]
where $x=e(U,\mathbf{H}_\mathrm{W}\QQ)$ is the Euler class of $U$ (which coincides with the first Borel class). Dividing by $x$, we finally obtain the following result.

\begin{prop}\label{prop:Toddquadratic}
The Todd class associated to the morphism of ring spectra 
\[
\KO \xrightarrow{p'\circ \mathrm{bo}_t} \bigoplus_{n \text{ even}} \mathbf{H}_\mathrm{W}\QQ(2n)[4n]
\]
is of the form
\[
\td_{p'\circ\mathrm{bo}_t}^{-1}(u)=1 +\sum_{n\geq 1}\frac {(-1)^n}{(2n+1)!} \left(\frac x2\right)^{2n}
\]
where $x=e(U,\mathbf{H}_\mathrm{W}\QQ)$ is the Euler class of the universal rank $2$ bundle $U$ on $\HP^{\infty}$.
\end{prop}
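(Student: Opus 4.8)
The plan is to combine three ingredients already in place: the computation of $(p'\circ\mathrm{bo}_t)_*(u-\tau)$ carried out in the paragraphs preceding the statement, the defining relation \eqref{eq:Todd&Euler} of the Todd class in terms of Euler classes, and the combinatorial Lemma $\psi_{4n+2}!=(-1)^n 2^{2n}(2n+1)!$ valid in $\mathbf{W}(S)\otimes\QQ$.

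First I would apply \eqref{eq:Todd&Euler} to the morphism $\psi=p'\circ\mathrm{bo}_t\colon\KO\to\bigoplus_{n\text{ even}}\mathbf{H}_\mathrm{W}\QQ(2n)[4n]$, to the universal rank-two symplectic bundle $U$ over $X=\HP^\infty_S$, and to its canonical stable symplectic orientation $\tilde v=[U]$. Since $e(U,\KO)=b_1^{\KO}(U)=u-\tau$ and the canonical ($\SL^c$-)orientation of the target has first Borel class $e(U,\mathbf{H}_\mathrm{W}\QQ)=x$, this yields the identity
\[
x \;=\; \td_{p'\circ\mathrm{bo}_t}([U])\cdot(p'\circ\mathrm{bo}_t)_*(u-\tau)
\]
in $\mathbf{H}_\mathrm{W}\QQ^{4,2}(\HP^\infty_S)$.

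Second, I would substitute the value of $(p'\circ\mathrm{bo}_t)_*(u-\tau)$ obtained above. Applying the projection $p'$ to the expansion \eqref{eq:Borelchar} of $\mathrm{bo}_t(u-\tau)$ kills the odd-indexed (motivic) summands --- those built from the operations $\chi_{2n}$ and $\chi_{4n}\circ f$ --- replaces each $\tilde\chi_{2+4n}$ by $\tilde\chi^{\mathrm W}_{2+4n}=p'\circ\tilde\chi_{2+4n}$, and sends the forms $\psi_{2n}$ to their classes in $\mathbf{W}(S)\otimes\QQ$, where $\epsilon=1$ and $h=0$. Using $\tilde\chi^{\mathrm W}_{2+4n}(U)=x^{2n+1}$ (the image under $p'$ of $\tilde\chi_{2+4n}(U)=b_1(U)^{2n+1}$, cf.\ \cite[\S 4.1.7]{DF3}) together with the Lemma, this gives
\[
(p'\circ\mathrm{bo}_t)_*(u-\tau) \;=\; x + \sum_{n\geq 1}\frac{(-1)^n}{2^{2n}(2n+1)!}\,x^{2n+1} \;=\; x\Bigl(1 + \sum_{n\geq 1}\frac{(-1)^n}{(2n+1)!}\bigl(\tfrac x2\bigr)^{2n}\Bigr).
\]

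Finally, since $\mathbf{H}_\mathrm{W}\QQ^{**}(\HP^\infty_S)\cong\mathbf{H}_\mathrm{W}\QQ^{**}(S)[[x]]$ by the symplectic projective bundle theorem (\cite[Th.~2.2.3]{DF3}), the Euler class $x$ is a non-zero-divisor; cancelling it from both sides of the first displayed identity gives $\td_{p'\circ\mathrm{bo}_t}([U])^{-1}=1+\sum_{n\geq 1}\frac{(-1)^n}{(2n+1)!}(\tfrac x2)^{2n}$, which is the asserted formula. The one step demanding real care is the evaluation of $(p'\circ\mathrm{bo}_t)_*(u-\tau)$: one must check that $p'$ annihilates exactly the motivic-cohomology summands and retains the stated odd power series. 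This is a matter of tracking the bigrading --- the powers $x^{2m}$ of the degree-$(4,2)$ class $x$ sit in the even, Witt-valued summands, while the $\chi$-contributions sit in the odd, motivic ones --- combined with the computation of the $\psi_{2n}$ in the rational Witt ring; everything else is formal.
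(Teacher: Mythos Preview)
Your proof is correct and follows essentially the same route as the paper: the paper's argument is contained in the paragraphs immediately preceding the proposition, where it applies $p'$ to the expansion of $\mathrm{bo}_t(u-\tau)$, uses the combinatorial Lemma on $\psi_{4n+2}!$ in $\mathbf{W}(S)\otimes\QQ$, and divides the resulting odd power series by $x$. The only cosmetic difference is that the paper first writes down the full $\td_{\mathrm{bo}_t}^{-1}(u)$ (with its mixed $p(x)$ and $x$ terms) and then projects via $p'$, whereas you project first and compute the Todd class of $p'\circ\mathrm{bo}_t$ directly; the ingredients and the logic are identical.
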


\begin{rem}
The power series in the expression of the inverse of the Todd class of $u$ is in fact the power series $\frac{\sin (\frac x2)}{\frac x2}$.
\end{rem}

If now $(E,\varphi)$ is a symplectic bundle of rank $2n$ over a smooth scheme $X$, we may now use the splitting principle to suppose (up to passing to a smooth scheme $Y/X$) that $E$ splits as a sum of rank $2$ symplectic bundles, the so-called Borel roots $\alpha_1,\ldots,\alpha_n$ of $E$. The Todd class of $E$ is then of the form
\[
\td_{p'\circ\mathrm{bo}_t}(E)=\prod_{i=1}^n\frac{\frac {\alpha_i}2}{\sin (\frac {\alpha_i}2)}
\]
where we have somewhat abusively denoted by $\alpha_i$ the first Borel class of the respective Borel roots. For the Todd class $\td_{\mathrm{ch}_t}$, we refer to \cite[Example 3.2.4]{Fulton98}. 

\subsection{The quadratic Hirzebruch-Riemann-Roch theorem}

\begin{num}
In this section, we illustrate Theorem \ref{thm:GRR} in the case of the Borel character discussed in the previous section.

We consider a smooth proper scheme $X$ over a field $k$ of even dimension $d=2n$,
 with projection $p\colon X\to \Spec(k)$ and canonical bundle $\omega_{X/k}=\det(\Omega_{X/k})$.
 We let $f\colon L^{\otimes 2}\to \omega_{X/k}$ be an orientation of $X/k$
 (see \Cref{ex:orientations}).
 Using this isomorphism, we obtain an isomorphism (corresponding to the fact $\GW$ is $\SL^c$-oriented)
\[
\GW^{2}(X)\simeq \GW^2(X,L^{\otimes 2})\simeq \GW^{2}(X,\omega_{X/k})
\]
mapping the (isometry class of the) symplectic bundle $(V,\varphi)$ to $(V\otimes L,f\circ(\varphi\otimes \mathrm{Id}))$. The same formula applies for $\GW^{0}(X)$.

The isomorphism $f\circ(\varphi\otimes \mathrm{Id})$ induces for any $i\in \NN$ an isomorphism
\[
[f\circ(\varphi\otimes \mathrm{Id})]_i\colon\H^i(X,V\otimes L)\to \H^i(X,(V\otimes L)^\vee\otimes\omega_{X/k}).
\]
Serre duality yields a canonical isomorphism $\mathrm{can}:\H^i(X,(V\otimes L)^\vee\otimes\omega_{X/k})\simeq \H^{2n-i}(X,V\otimes L)^*$ and we finally obtain isomorphisms
\[
\varphi_i\colon \H^i(X,V\otimes L)\to \H^{2n-i}(X,V\otimes L)^*.
\]
In particular, for $i=n$, the isomorphism 
\[
\varphi_n\colon\H^n(X,V\otimes L)\to \H^{n}(X,V\otimes L)^*
\]
yields either a symmetric or symplectic form on the vector space $\H^n(X,V\otimes L)$ in the following way: If $[V,\varphi]\in \GW^{2m}(X)$, then $[\H^n(X,V\otimes L),\varphi_n]\in \GW^{2m-2n}(k)$. For instance, if $\varphi$ is symplectic and $n$ is odd then $\varphi_n$ is symmetric. The following proposition can be seen as a computation of the tranfer map in Grothendieck-Witt theory.
\end{num}
\begin{prop}
Let $[V,\varphi]\in \GW^{2m}(X)$. Then 
\[
p_*([V,\varphi])=(-1)^{m+n}[\H^n(X,V\otimes L),\varphi_n]+(\sum_{i=0}^{n-1}(-1)^{(m+i)}\mathrm{dim}_k\H^i(X,V\otimes L))\cdot (1-\epsilon)
\]
in $\GW^{2m-2n}(k)$.
\end{prop}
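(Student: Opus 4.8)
The plan is to identify $p_*$ with the classical transfer of hermitian $K$-theory, which is governed by Grothendieck--Serre duality, and then to reduce the resulting symmetric complex over $k$ to its cohomology. First I would recall that $p_*\colon\GW^{2m}(X)\to\GW^{2m-2n}(k)$ is, by construction (\Cref{def:orientedfundamentalclass}, \Cref{num:general_Gysin}), induced by the fundamental class $\tilde\eta_p^{\KO}$ attached to the $\SL^c$-orientation of $p$ determined by $L$ (equivalently by $f$); since $\KO$ is absolutely pure it is $p$-pure, so $p_*$ is governed by the trace map \eqref{eq:trace}, and for hermitian $K$-theory the trace map of a smooth proper $k$-scheme is the Serre-duality trace on coherent cohomology. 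Thus, passing through the isomorphism $\GW^{2m}(X)\simeq\GW^{2m}(X,\omega_{X/k})$ induced by $f$ (under which $[V,\varphi]\mapsto(V\otimes L,f\circ(\varphi\otimes\Id))$), the image $p_*[V,\varphi]$ is the class in $\GW^{2m-2n}(k)$ of the perfect complex $C^\bullet:=R\Gamma(X,V\otimes L)$ --- concentrated in degrees $[0,2n]$ --- equipped with the $(-1)^{m+n}$-symmetric form $\tilde\varphi$ obtained from $f\circ(\varphi\otimes\Id)$ and the duality isomorphism $R\Gamma(X,\mathcal G)^\vee\simeq R\Gamma(X,\mathcal G^\vee\otimes\omega_{X/k})[2n]$.

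Granting this, what remains is a purely algebraic computation: determine the class of a bounded complex $C^\bullet$ of finite-dimensional $k$-vector spaces carrying a nondegenerate $(-1)^{m+n}$-symmetric form $\tilde\varphi\colon C^\bullet\xrightarrow{\sim}(C^\bullet)^\vee[-2n]$ (self-dual ``centred at degree $n$'') in terms of its cohomology. I would argue by iterated sublagrangian reduction, in the spirit of the hermitian $Q$-construction of \cite{Schlichting09}: the outermost nonzero cohomology $\H^0(C^\bullet)=\H^0(X,V\otimes L)$, realised as a totally isotropic subcomplex isomorphic to $\H^0(X,V\otimes L)[0]$, splits off a hyperbolic summand, and $\tilde\varphi$ descends to a nondegenerate form on the shorter complex with cohomology $\H^1,\dots,\H^{2n-1}$; iterating, each step $i=0,\dots,n-1$ peels off the term $\H^i(X,V\otimes L)$ together with its Serre-dual partner $\H^{2n-i}(X,V\otimes L)$ and contributes the hyperbolic form $\mathbb H\big(\H^i(X,V\otimes L)[-i]\big)$, until only $\H^n(X,V\otimes L)[-n]$ with the form $\varphi_n$ is left. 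Since the hyperbolic form of a $k$-vector space has class $\dim_k\cdot(1-\epsilon)$ in the relevant ($4$-periodically graded) Grothendieck--Witt group, this yields $p_*[V,\varphi]=(-1)^{m+n}[\H^n(X,V\otimes L),\varphi_n]+\sum_{i=0}^{n-1}(\pm 1)\dim_k\H^i(X,V\otimes L)\cdot(1-\epsilon)$.

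It then remains to fix the signs. A shift by $[-i]$ multiplies the class of a symmetric, resp.\ hyperbolic, form by a sign dictated by Balmer's conventions for shifted dualities; combined with the parity $(-1)^m$ and the normalisation of the Serre-duality trace (which also contributes the factor $(-1)^{m+n}$ in front of the middle term), this gives $(-1)^{m+i}$ on the $i$-th hyperbolic contribution, using $\epsilon\cdot(1-\epsilon)=-(1-\epsilon)$. The main obstacle is the first step --- matching the motivic $\SL^c$-oriented Gysin map for $\KO$ with coherent Serre duality, with all normalisations (signs, twists, and the role of the chosen $\Theta$-characteristic $L$) made precise; either one invokes the coherent-duality model of the $\KO$-spectrum, or, more hands-on, one factors $p$ through a closed immersion $X\hookrightarrow\PP^N_k$ and reduces to the known pushforward from projective space together with the Gysin map of a regular closed immersion. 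The sign bookkeeping of the last step is elementary, and a useful consistency check is that $\sum_{i=0}^{n-1}(-1)^{m+i}\dim_k\H^i(X,V\otimes L)$ is, by Serre duality, exactly half of $\sum_{i\neq n}(-1)^{m+i}\dim_k\H^i(X,V\otimes L)$, matching the symmetric way the peeling procedure consumes cohomology.
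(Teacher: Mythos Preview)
The paper does not supply a proof of this proposition; it is stated as ``a computation of the transfer map in Grothendieck--Witt theory'' and left without argument, presumably as a known fact. Your outline is the standard route one would take: identify the $\SL^c$-oriented Gysin map for $\KO$ with the coherent-duality transfer on symmetric complexes, then reduce the resulting symmetric complex $R\Gamma(X,V\otimes L)$ by iterated sublagrangian reduction, peeling off hyperbolic summands $\mathbb H(\H^i[-i])$ for $i<n$ until only the middle term $(\H^n,\varphi_n)$ remains. The crux you correctly flag---matching the motivic pushforward with the Serre-duality trace, including all sign and twist normalisations---is indeed where the work lies; this comparison is available in the literature on transfers in hermitian $K$-theory and Witt groups (Calm\`es--Hornbostel, Gille, Nenashev), and your suggested fallback via a projective embedding is also viable. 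The sign bookkeeping and the Serre-duality consistency check are sound.
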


\begin{rem}
The bundle $V$ should be seen as a complex concentrated in degree $m$ in the statement. Moreover, one observes that if $m-n$ is odd then $\varphi_n$ is symplectic, and then hyperbolic. Thus, the interesting information arises only when $m$ and $n$ have the same parity, and is then encoded by $\varphi_n$. 
\end{rem}

In view of this computation, we may set the following definition.

\begin{df}\label{df:quad-Euler-char}
Let $X$ be a smooth proper scheme of even dimension $d=2n$ and let $(V,\varphi)$ be either a symplectic bundle if $d=2\pmod 4$ or a symmetric bundle if $d=0\pmod 4$. We define the quadratic Euler characteristic of $(V,\varphi)$ under the orientation $f:L^{\otimes2}\simeq \omega_{X/k}$ as the following expression
$$
[\H^n(X,V\otimes L),\varphi_n]+\sum_{i=0}^{n-1}(-1)^{(m+i)}\mathrm{dim}_k\H^i(X,V\otimes L)\cdot (1-\epsilon)
$$
where $m=1$ if $\varphi$ is symplectic and $m=0$ else.  We denote by $\tilde \chi(X,f;V,\varphi)$ the class of this symmetric bilinear form.
\end{df}

The Grothendieck-Riemann-Roch formula for the Borel character $\bo_t$ then reads as follows.
\begin{thm}\label{thm:HRR}
Let $X$ be a smooth proper scheme of even dimension $d$ such that its tangent bundle $T_{X/k}$ admits a stable $\Sp$-orientation $\tilde\tau_X$. We let $f$ be the orientation of $X/k$ associated with $\tilde\tau_X$.

Let $(V,\varphi)$ be either a symplectic bundle if $d=2\pmod 4$ or a symmetric bundle if $d=0\pmod 4$. Then the following equality holds in $\GW(k)$
$$
\tilde \chi(X,f;V,\varphi)=\tdeg_{\tilde \tau_X}\left(\td_{\bo_t}(\tilde \tau_X).\bo_t(V,\varphi)\right)
$$
where $\tdeg_{\tilde \tau_X}\colon\CHt^{d}(X)\to \GW(k)$ is the quadratic degree map
 (pushforwar) associated with the symplectic orientation $\tilde \tau_X$ of $X/K$.
\end{thm}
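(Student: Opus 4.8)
The plan is to derive the quadratic Hirzebruch-Riemann-Roch formula by specializing the general Grothendieck-Riemann-Roch formula \Cref{thm:GRR} to the morphism of ring spectra $\bo_t\colon\KO_\QQ\to \bigoplus_{i}(\HMW\QQ(4i)[8i]\oplus\HM\QQ(4i+2)[8i+4])$, each side equipped with its canonical symplectic orientation, applied to the structure morphism $p\colon X\to\Spec(k)$. First I would record that $\tilde\tau_X$, a stable $\Sp$-orientation of $T_{X/k}$, equivalently orients the cotangent complex $\mathcal L_p=\Omega_{X/k}[0]$, hence makes $p$ a $\sigma_\Sp$-oriented (equivalently $f_\Sp$-oriented, by \Cref{prop:sp-or&KSp-or}) smoothable lci morphism of relative dimension $-d$; the induced orientation $f\colon L^{\otimes 2}\simeq\omega_{X/k}$ of \Cref{ex:orientations}(3) is exactly the $\SL^c$-orientation underlying it. Then \Cref{thm:GRR} gives, in the proper case, the commuting square (1), i.e. for $a\in\KO_\QQ^{**}(X)$,
\[
\bo_t\big(p_!^{\KO_\QQ}(a)\big)=p_!^{\F}\big(\td_{\bo_t}(\tilde\tau_X)\cdot\bo_t(a)\big),
\]
where $\F$ denotes the target of $\bo_t$. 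Applying this with $a$ the class of $(V\otimes L,f\circ(\varphi\otimes\mathrm{Id}))\in\GW^{2m}(X,\omega_{X/k})\simeq\GW^{2m}(X)$ for appropriate $m$ is the core computation.

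The second step is to identify both sides with the stated expressions. For the left side, one uses that $p_!^{\KO_\QQ}$ on the class of a (twisted) symmetric/symplectic bundle is computed by the transfer in Grothendieck-Witt theory; this is precisely the content of the Proposition preceding \Cref{df:quad-Euler-char}, giving
\[
p_*([V,\varphi])=(-1)^{m+n}[\H^n(X,V\otimes L),\varphi_n]+\Big(\sum_{i=0}^{n-1}(-1)^{m+i}\dim_k\H^i(X,V\otimes L)\Big)\cdot(1-\epsilon),
\]
which up to the global sign $(-1)^{\lfloor d/2\rfloor}=(-1)^{m+n}$ is $\tilde\chi(X,f;V,\varphi)$ as in \Cref{df:quad-Euler-char}. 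One must check that $\bo_t$ applied to this $\GW(k)$-valued class is the identity after the relevant $(8,4)$-periodicity identification $\bo_t^{0,0}\colon\KO_0(k)_\QQ\xrightarrow{\sim}\CHt{}^{0}(k)_\QQ$ — i.e. that $\bo_t$ restricted to degree $0$ over the point is the canonical inclusion $\GW(k)\hookrightarrow\GW(k)_\QQ$ — so that the formula can be read as an equality in $\GW(k)$ itself (using that the classes involved are integral). For the right side, one unwinds $p_!^{\F}$ into the quadratic degree map: the $\SL^c$-orientation lets one pass $\CHt{}^{d}(X)\xrightarrow[\sim]{\tilde\tau_X}\CHt_0(X,\omega_X)\xrightarrow{p_*}\GW(k)$, which is exactly $\tdeg_{\tilde\tau_f}$, and $\td_{\bo_t}(\tilde\tau_X)$ is the Todd class computed in Propositions \ref{prop:Toddforgetful} and \ref{prop:Toddquadratic}. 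The components of $\bo_t(V,\varphi)$ in $\CH{}^{4*}\oplus\CHt{}^{4*+2}$ (or the swapped version in $\GW^2$) are the characteristic classes $\chi_{2n}(f(V))$ and $\tilde\chi_{2+4n}(V)$ recalled in \S3.2 (1.2).

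The main obstacle I expect is the bookkeeping of twists, signs, and the passage from the abstract bivariant GRR to the concrete cohomological statement. Three points need care: (i) checking that the $\sigma_\Sp$-oriented fundamental class $\tilde\eta_p^{\KO_\QQ}$ really corresponds, under Serre duality and the $\SL^c$-structure, to the transfer map whose value is given by the Euler-characteristic-type formula — this is where the factor $(-1)^{\lfloor d/2\rfloor}$ and the ``$V$ placed in degree $m$'' convention enter, and where one must be consistent between the homological indexing of $\E_{2d,d}(X/S)$ and the cohomological degree shifts; (ii) verifying that the hyperbolic contributions (the terms with $h$, equivalently $(1-\epsilon)$) are correctly produced — on the spectrum side because $n^*$ or the relevant periodicity element is hyperbolic in even degrees, on the geometry side because $\varphi_i$ and $\varphi_{d-i}$ pair up into hyperbolic summands for $i\neq n$; and (iii) ensuring the identity holds integrally in $\GW(k)$ and not merely in $\GW(k)_\QQ$, which follows because both sides are a priori classes of actual (non-rational) symmetric bilinear forms over $k$ and $\GW(k)\to\GW(k)_\QQ$ need not be injective but the formula is asserting equality of the specific integral representatives $\tilde\chi(X,f;V,\varphi)$ and $\tdeg_{\tilde\tau_f}(\td_{\bo_t}(\tilde\tau_X)\cdot\bo_t(V,\varphi))$, the latter being integral because the Todd class and Borel character conspire — as in the classical HRR — to clear denominators on cohomology of a smooth proper $X$. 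Once these compatibilities are in place, the theorem is a direct substitution into \Cref{thm:GRR}, and I would illustrate it on the K3-surface case as a sanity check.
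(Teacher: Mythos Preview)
Your approach is essentially the same as the paper's: specialize the abstract GRR formula \Cref{thm:GRR} to the Borel character $\bo_t$ and to the proper morphism $p\colon X\to\Spec(k)$ with its $\Sp$-orientation $\tilde\tau_X$, then identify the $\KO$-side pushforward with the quadratic Euler characteristic via the transfer computation in the Proposition preceding \Cref{df:quad-Euler-char}, and the target-side pushforward with $\tdeg_{\tilde\tau_X}$. The paper in fact gives no separate proof of \Cref{thm:HRR}, treating it as an immediate consequence of these ingredients and moving directly to the K3-surface example, so your plan matches the intended argument.
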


\begin{ex}\label{ex:HRR-K3}
Suppose that $d=2$ and that $X$ is a K3 surface. Then, a symplectic orientation $\tilde\tau_X$ of $\Omega_{X/k}$ (or $T_{X/k}$) is just obtained by choosing an isomorphism $f:\OO_X\to\omega_{X/k}$. Indeed, one just considers the composite $\Omega_{X/k}\otimes \Omega_{X/k}\to \omega_{X/k}\xrightarrow{f^{-1}} \OO_X$. The Todd class can be easily computed using \Cref{prop:Toddforgetful} and \Cref{prop:Toddquadratic}. We have
\[
\td_{\mathrm{ch}_t}^{-1}(\Omega_{X/k})=1-\frac{c_2(\Omega_{X/k})}{12}, \td_{p'\circ\mathrm{ch}_t}^{-1}(\Omega_{X/k})=1
\]
and it follows easily from the fact that $X$ is a surface that $\td_{\bo_t}(\Omega_{X/k})=1+\frac{c_2(\Omega_{X/k})}{24}(1-\epsilon)$. Note further that there is no need to tensor with $\QQ$ as the Borel character is defined integrally in case of surfaces. If $(V,\varphi)$ is a symplectic bundle of rank $2r$ over $X$, we finally obtain
\begin{eqnarray*}
\tilde \chi(X,f;V,\varphi) &= &\tdeg_{\tilde \tau_X}\left((2r+e(V,\varphi))\cdot (1+\frac{c_2(\Omega_{X/k})}{24}(1-\epsilon))\right) \\
 & = & \tdeg_{\tilde \tau_X}\left(r\cdot \frac{c_2(\Omega_{X/k})}{12}(1-\epsilon)+e(V,\varphi)\right) \\
 & = & \mathrm{deg}\left(r\cdot \frac{c_2(\Omega_{X/k})}{12}\right)\cdot (1-\epsilon)+ \tdeg_{\tilde \tau_X}(e(V,\varphi)).
\end{eqnarray*}
Using the fact that $\mathrm{deg}(c_2(\Omega_{X/k}))=24$ (e.g. \cite{Huybrechts16}), we finally obtain the formula:
\[
\tilde \chi(X,f;V,\varphi)=2r(1-\epsilon)+\tdeg_{\tilde \tau_X}(e(V,\varphi)).
\]
\end{ex}

\bibliographystyle{amsalpha}
\bibliography{QRR}

\end{document}